\documentclass[reqno,
12pt]{amsart}
\parindent = 16 pt
\parskip = 4pt
\setlength{\textheight}{8.7in}\setlength{\textwidth}{6.5in}\setlength{\oddsidemargin}{0cm}\setlength{\evensidemargin}{0cm}\setlength{\topmargin}{0mm}

\usepackage{
	amssymb,
	amsmath,
	amsthm,
	eucal,
	dsfont,
	multicol,
	mathrsfs,
	tikz,
	graphicx,
	hyperref
}
\usepackage{lipsum}
\makeatletter\renewcommand*{\eqref}[1]{%
	\hyperref[{#1}]{\textup{\tagform@{\!\!\ref*{#1}}}}%
}\makeatother 

\makeatletter

\@addtoreset{equation}{section}
\makeatother
\theoremstyle{plain}
\newtheorem{theorem}{Theorem}[section]
\newtheorem{lemma}[theorem]{Lemma}
\newtheorem{proposition}[theorem]{Proposition}
\newtheorem{corollary}[theorem]{Corollary}
\theoremstyle{definition}
\newtheorem{definition}[theorem]{Definition}
\newtheorem{remark}[theorem]{Remark}



\newcommand{\norm}[1]{{\|#1\|}}

\def\supp{\mathop{\mathrm{supp}}\nolimits}

\def\Id{\mathop{\mathrm{Id}}\nolimits}

\def\ac{\mathop{\mathrm{ac}}\nolimits}

\def\Im{\mathop{\mathrm{Im}}\nolimits}

\def\sgn{\mathop{\mathrm{sgn}}\nolimits}

\def\R{{\mathbb{R}}}

\def\N{{\mathbb{N}}}
\def\C{{\mathbb{C}}}

\def\S{{\mathcal{S}}}
\def\F{{\mathcal{F}}}
\def\H{{\mathcal{H}}}

\def\<{{\langle}}
\def\>{{\rangle}}

\def\ep{{\varepsilon}}

\DeclareMathOperator*{\slim}{s-lim}

\title[ $L^p$-boundedness of wave operators]{ The $L^p$-boundedness of wave operators for nonhomogeneous fourth-order Schr\"odinger operators in high dimensions}
\author{Zijun Wan}
\address[Z. Wan]{Department of Mathematics, Central China Normal University, Wuhan, 430079, P.R. China}
\email{zijunwan@mails.ccnu.edu.cn}
\author{Xiaohua Yao}

\address[X. Yao]{Department of Mathematics and Key Laboratory of Nonlinear Analysis and Applications(Ministry of Education), Central China Normal University, Wuhan, 430079, P.R. China}
\email{yaoxiaohua@ccnu.edu.cn}
\keywords{$L^p$-boundedness, Wave operators,    non-homogeneous fourth order Schr\"odinger operator, regular point, beam equation}

\thanks{The work is partially supported by NSFC No. 12171182}

\begin{document}


	\begin{abstract}
			This paper investigates the $L^p$-boundedness of wave operators associated with the following nonhomogeneous fourth-order Sch\"odinger operator on $\mathbb{R}^n$: $$H = \Delta^2 - \Delta + V(x).$$
Assuming the real-valued potential \( V \) exhibits sufficient decay and regularity, we prove that for all dimensions \( n \geq 5 \), the wave operators \( W_{\pm}(H, H_0) \) are bounded on \( L^{p}(\mathbb{R}^{n}) \) for all \( 1 \leq p \leq \infty \), provided that zero is a regular threshold of \( H \).

			As applications, we derive the sharp $L^p$-$L^{p'}$ dispersive estimates for Schr\"odinger group $e^{-itH}$, as well as for the solutions operators $\cos(t \sqrt{H})$ and $\frac{\sin (t \sqrt{H})}{ \sqrt{H}}$ associated with the following  beam equations with potentials:
			\begin{equation*}
				\begin{cases}
					\partial_t^2 u + \left(\Delta^2 -\Delta+ V(x) \right) u = 0, & \\
					u(0, x) = f(x), \quad \partial_t u(0, x) = g(x),& (t, x) \in \mathbb{R} \times \mathbb{R}^n,\  n\geq5,
				\end{cases}
			\end{equation*}
		where $p'$ denotes the H\"older conjugate of $p$, with $1 \leq p \leq 2$.  

      Moreover,  we remark that the same results hold for the operator $ \epsilon \Delta^2 - \Delta + V$ with a parameter $\epsilon>0,$ 
 providing greater flexibility for the analysis of related equations.
	\end{abstract}
	\maketitle

    \tableofcontents
\section{Introduction and main results}
\subsection{Introduction} 
We consider the following  nonhomogeneous fourth order Schr\"odinger operator  $$H=H_0+V(x), \  H_0=\Delta^2- \Delta,\ \  x\in\R^n, \  n\geq5.$$     Here   $V(x)$ is a real-valued potential satisfying $|V(x)|\lesssim (1+|x|)^{-\beta}$ with some  $\beta>0$. For convenience, we use the notation $\langle x \rangle:=(1+|x|).$


The considered operator arises from  the following fourth order Schr\"odinger equation:
\begin{equation}\label{fourth_order_schrodinger}
	i\partial_t u  + \epsilon \Delta^2 u- \Delta u + |u|^p u = 0, \quad u: \mathbb{R} \times \mathbb{R}^n \to \mathbb{C}, \quad \epsilon \in \mathbb{R}.
\end{equation}
Equation \eqref{fourth_order_schrodinger} was introduced by Karpman \cite{Karpman1, Karpman2} and Karpman and Shagalov \cite{Karpman-Shagalov} to account for the role of small fourth order dispersion terms in the propagation of intense laser beams in a bulk medium with Kerr nonlinearity. When $\epsilon = 0$, $n= 2$, and $p = 1$, this corresponds to the canonical model. For $2 < np < 4$, Karpman and Shagalov \cite{Karpman-Shagalov} showed, among other things, that the waveguides induced by the nonlinear Schr\"odinger equation become stable when $|\epsilon|$ is taken sufficiently large. 
The study of nonhomogeneous fourth-order equations has garnered increasing interest from researchers (see {\it e.g.} \cite{Hayashi-Naumkin, Jiang-Pausader-Shao, Pausader-Xia, Ru-Wa-Zha, Schlag_3, Schlag_4} and references therein).

By applying the scaling transformation $U_ \epsilon$   defined by
$\big(U_ \epsilon u \big)(x)=
|\epsilon |^{-\frac{n}{4}} u\big( |\epsilon |^{-\frac{1}{2}}x \big),$ it is straightforward to verify that $  \epsilon \Delta^2-\Delta   =  \epsilon^{-1}U_ \epsilon\big( \Delta^2 - \Delta  \big)U_ \epsilon^{-1}$ if $\epsilon>0$ and $  \epsilon \Delta^2-\Delta = \epsilon^{-1}U_ \epsilon\big(  \Delta^2+\Delta  \big)U_ \epsilon^{-1}$ if $\epsilon<0.$  Hence, for a fixed  $\epsilon \in \R\setminus\{0\},$
the study of $ \epsilon \Delta^2-\Delta $ can be reduced to that of  $ \Delta^2\pm\Delta$.

Compared to the operator $\Delta^2 - \Delta$, which has only one threshold at $0$, the nonhomogeneous fourth-order operator $\Delta^2 + \Delta$ features two distinct thresholds: $0$ and $-\frac{1}{4}$ (the latter being degenerate). It is noteworthy that analyzing the operator $\Delta^2 + \Delta + V$ can present greater challenges in the presence of the potential $V$.
In this paper, our primary focus is on studying the $L^p$-boundedness of the wave operators $W_\pm(H, H_0)$, where $H_0=\Delta^2-\Delta$. Additionally, we aim to investigate the more intricate operator $\Delta^2+\Delta$ in other work. 

 Now let's recall that (see  e.g.  Kuroda \cite{Kuroda}) as $\beta>1$, {\it the wave operators}
\begin{align}\label{def-wave}
	W_\pm=W_\pm(H, H_0) :=\slim_{t\to\pm\infty}e^{itH}e^{-itH_0}
\end{align}
exist as partial isometries on  $L^2(\mathbb{R}^n)$ and are asymptotically complete, namely the range of $W_\pm$ coincides with the absolutely continuous spectral subspace $\H_{\ac}(H)$ of $H$. In particular,  $W_\pm$ satisfy the  identities
$
W_\pm W_\pm^* =P_{\ac}(H)$, $W_\pm^*W_\pm=I
$ 
and exhibit {\it the intertwining property} $f(H)W_\pm=W_\pm f(H_0)$ for any Borel measurable function $f$ on $\mathbb{R}$,
where $W_\pm^*$  denotes  the dual operator of  $W_\pm$  and  $P_{\ac}(H)$ denotes the projection onto   $\H_{\ac}(H).$ These formulas, in particular, imply
\begin{align}
	\label{intertwining_1}
	f(H)P_{\ac}(H)=W_\pm f(H_0)W_\pm^*.
\end{align}
We refer to \cite{ReSi} for a detailed description on the mathematical scattering theory. Based on the equality \eqref{intertwining_1}, the $L^p$-boundedness of $W_\pm$ and $W_\pm^*$ can be directly employed to establish the $L^p$-$L^q$ estimates for the perturbed operator $f(H)$ from the same estimates for the free operator $f(H_0)$. This can be seen as follows:
\begin{align}
	\label{Lp-bound of f(H)}
	\|f(H)P_{\ac}(H)\|_{L^p\to L^q}\le \|W_\pm\|_{L^q\to L^q}\ \|f(H_0)\|_{L^p\to L^q}\|W_\pm^*\|_{L^{p}\to L^{p}}.
\end{align}
	Actually, there has been extensive research on the $L^p$-boundedness of wave operators associated with homogeneous Schr\"odinger operators  $-\Delta + V(x)$ and $\Delta^2 + V(x)$ (see further related results below). However, only a few studies have addressed nonhomogeneous type Schr\"odinger operators $H$.  Therefore, this study aims to contribute to filling some gap.
	
In this paper, we demonstrate that the wave operators $W_\pm(H, H_0)\in\mathbb{B}(L^p(\R^n))$ for all $1\leq p\leq\infty$ and $n \geq 5 $ when  zero is a regular point of $H$ (see Theorem  \ref{main_theorem}).  As  applications,  we derive the $L^p$-$L^{p'}$ estimates for the Schr\"odinger group $e^{-itH}$
 and for  the solutions to   beam equation (see Corollaries \ref{perturation decay} and \ref{theorem beam}). In particular, it is worth noting that the same results hold for the perturbed  operator $ \epsilon \Delta^2 - \Delta + V$ with a parameter $\epsilon>0,$ 
 providing greater flexibility for the analysis of related equations.
  
 We emphasize that the proofs of these results heavily rely on the iteration of the Born series to deal with singularities. This approach incorporates techniques inspired by the works of Yajima \cite{Yajima-JMSJ-95} and Erdo\u{g}an and Green \cite{Erdogan-Green21, Erdogan-Green23}.

		\subsection{Notations}\label{Notations}
	In this subsection, we collect some notations used in the paper.
	\begin{itemize}
		\item
		For $a\in \mathbb{R}$, $a\pm$ denotes $a\pm \epsilon$ for  $0<\epsilon \ll 1$.  $\left\lfloor a \right\rfloor$ denotes the greatest integer less than or equal to $a$, while $\left\lceil a \right\rceil$ denotes the smallest integer greater than or equal to $a$.
		\vskip0.1cm
		\item For $a, b\in \mathbb{R}^{+}$, $a\lesssim b$ (resp. $a\gtrsim b$) means $a\le cb$ (resp. $a\ge cb$) with some constant $c>0$. Moreover, we denote $a \lesssim_m b$ if the constant depends on the variable $m$.
		\vskip0.1cm
		\item $\mathcal{S}(\R^n)$ denotes the Schwartz class and   $S^{n-1}$ denotes the unit sphere on $\R^n.$
			\vskip0.1cm
		\item  The Fourier transform  of $f$ is  defined as
		\begin{equation*}\hat{f}(\xi)=\mathcal{F}(f)(\xi)=\frac{1}{(2\pi)^{\frac{n}{2}}}\int_{\R^3}e^{-ix\xi}f(x)dx, \  \xi\in \R^n. 
		\end{equation*}
		We denote the inverse Fourier transform of $f$ by $\check{f}(\xi)$ or $\mathcal{F}^{-1}(f)(\xi)$.
			\vskip0.1cm
		\item 
		Let $\chi \in C_c^{\infty}(\mathbb{R})$  such that $\chi(\eta) = 1$ for $|\eta| < \eta_0$ with some sufficiently small $0<\eta_0 \ll 1$, and $\chi(\eta) = 0$ for $|\eta| > 2\eta_0$. For a subset $A \subseteq \mathbb{R}^n$ with $n \in \mathbb{N}^+$, we define $\chi_A := \chi_A(x)$ as the characteristic function of $A$, given by $\chi_A(x) = 1$ if $x \in A$, and $\chi_A(x) = 0$ if $x \notin A$.
		
		\vskip0.1cm
		\item A bounded operator $K \in \mathbb B(L^2)$ (with kernel $K(x,y)$) is said to be absolutely bounded ($K \in \mathbb{AB}(L^2)$ for short) if the integral operator $|K|$ with kernel $|K(x,y)|$ is also bounded on $L^2$. Note that $K \in \mathbb{AB}(L^2)$ if  $K$ is a Hilbert--Schmidt operator on $L^2$.
	\end{itemize}

	\subsection{Main results}Before presenting our main results, we firstly introduce the regular zero spectral condition  of $H$. Let $v(x)=|V(x)|^{1/2}$ and $U(x)=\sgn V(x)$, i.e.  $U(x)=1$ if $V(x)\ge0$ and $U(x)=-1$ if $V(x)<0$. Denote $R_0(0)=(-\Delta)^{-1}-(-\Delta+1)^{-1}.$
	
	\begin{definition}\label{definition regular}
		Let $n \geq 5$ and $H=\Delta^2-\Delta+V.$ Define $T_0 := U + vR_0(0)v.$ We say that zero is a \textbf{regular threshold} of  $H$ if $T_0$ is invertible on $L^2(\mathbb{R}^n)$.
	\end{definition}
	Next, we state our main results.
		\begin{theorem}\label{main_theorem}
		Let $n\geq5$ and $V(x)$  be a real-valued potential on $\R^n$ satisfying $|V(x)| \lesssim \left \langle  x\right \rangle^{-\beta}$ with $\beta>n+5.$  For a fixed  $0<\sigma\ll1,$  there exists a constant 
		$ C(n, \sigma)$ such that  
	\begin{align}\label{V assumption}
		\begin{cases}
			\big\|\langle \cdot\rangle^{\frac{9-n}{2}+\sigma+} V\big\| _{L^2}\lesssim C(n, \sigma) & {\rm if}\ \ n=5,6, \\
			\big\| \F\big(\langle\cdot\rangle^{\frac{2(n-4+\sigma)}{n-1}+2\sigma+} V\big)\big\|_{L^{\frac{n-1}{n-4+\sigma}}} \lesssim C(n, \sigma)  & {\rm if}\ \ n\geq7.
		\end{cases}
	\end{align}
	Assume that $H=\Delta^2-\Delta+V$ has no positive embedded eigenvalue and zero is a regular point of $H$. Then the wave  operators $W_\pm$ defined by \eqref{def-wave} can be extended to into  bounded operators on $L^p(\R^n)$ for all $1 \leq p \leq \infty.$ 
	\end{theorem}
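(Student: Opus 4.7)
The overall strategy is the stationary (time-independent) method: express the wave operators through a spectral integral of the perturbed resolvent, then use the symmetric resolvent identity and an iterated Born series to reduce the proof to $L^p$ bounds for a finite collection of explicit multilinear-in-$V$ operators plus a compact remainder. This follows the scheme of Yajima \cite{Yajima-JMSJ-95} for $-\Delta+V$, adapted to fourth-order operators in the spirit of Erdo\u{g}an--Green \cite{Erdogan-Green21,Erdogan-Green23}. A preliminary step specific to the nonhomogeneous symbol $|\xi|^4+|\xi|^2$ is the partial-fraction decomposition
\begin{equation*}
\bigl(|\xi|^4+|\xi|^2-\lambda\mp i0\bigr)^{-1} = \tfrac{1}{\sqrt{1+4\lambda}}\Bigl[\bigl(|\xi|^2-\mu\mp i0\bigr)^{-1}-\bigl(|\xi|^2+\mu+1\bigr)^{-1}\Bigr],\quad \mu=\tfrac{-1+\sqrt{1+4\lambda}}{2},
\end{equation*}
which expresses $R_0^\pm(\lambda)$ as a combination of two Schr\"odinger-type resolvents, so every kernel estimate reduces to Bessel-kernel asymptotics already known from the second-order theory.

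From the stationary formula
\begin{equation*}
(W_\pm - I)f = \mp\tfrac{1}{2\pi i}\int_0^\infty R_V^\mp(\lambda)\, V\bigl[R_0^+(\lambda)-R_0^-(\lambda)\bigr] f\, d\lambda
\end{equation*}
and the symmetric resolvent identity $R_V^\pm=R_0^\pm-R_0^\pm v(M^\pm(\lambda))^{-1}vR_0^\pm$ with $M^\pm(\lambda)=U+vR_0^\pm(\lambda)v$, I would split the $\mu$-integral by a smooth cutoff $\chi$ near $\mu=0$ into a low-energy piece $\Omega_\ell$ and a high-energy piece $\Omega_h$. For $\Omega_h$, iterate the resolvent identity into a finite Born series
\begin{equation*}
R_V^\pm=\sum_{k=0}^{N-1}(-1)^k (R_0^\pm V)^k R_0^\pm + (-1)^N (R_0^\pm V)^N R_V^\pm,
\end{equation*}
with $N=N(n)$ large. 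After the spectral integration in $\mu$, each Born term becomes an oscillatory integral in $|x-y|$ with phase governed by $|\xi|^2=\mu$, which I would bound on $L^1$ (and $L^\infty$ by duality, then on all $L^p$ by interpolation) via stationary phase combined with Fourier bounds on $V$. Here the bifurcated hypothesis \eqref{V assumption} is used sharply: for $n=5,6$ the weighted $L^2$ control of $V$ suffices through Plancherel, while for $n\geq7$ the larger derivative loss of $R_0^\pm(\lambda)$ at high $\mu$ forces a Tomas--Stein/Littman-type trade whose exact cost is $\|\F(\langle\cdot\rangle^{\alpha}V)\|_{L^{(n-1)/(n-4+\sigma)}}$. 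The Born remainder, dressed by the $N$ factors of $vR_0^\pm$, becomes absolutely bounded on $L^2$ with weighted kernel and so interpolates to every $L^p$.

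For $\Omega_\ell$ the regular-threshold hypothesis gives invertibility of $T_0=U+vR_0(0)v$ on $L^2$. Expanding the Bessel kernels I would verify that $\|vR_0^\pm(\lambda)v-vR_0(0)v\|_{L^2\to L^2}\to 0$ as $\mu\to 0$, so a Neumann series yields the norm-continuous extension of $M^\pm(\mu)^{-1}$ down to the threshold. Inserting this back, $\Omega_\ell$ is a convergent integral whose kernel is pointwise dominated by a convolution of $\langle\cdot\rangle^{-\beta}$ with the low-frequency Laplacian kernel, which under the decay $\beta>n+5$ is uniformly in $L^1$ in one variable and so gives the $L^p$ bound for every $1\le p\le\infty$.

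The main obstacle I anticipate is the endpoint $L^1\to L^1$ analysis of the iterated Born terms at high energy for $n\geq 7$: the free resolvent has a $\mu^{n-4}$-type growth that cannot be absorbed by weighted decay of $V$ alone. The Fourier-side hypothesis on $V$ in \eqref{V assumption} is designed precisely to transfer this loss to a Fourier-integrability condition via Littman-type bounds on oscillatory integrals over the characteristic sphere $|\xi|^2=\mu$. Choosing $N=N(n)$ uniformly so that the Born remainder simultaneously gains enough smoothing to be absolutely bounded on $L^2$ \emph{and} has a kernel controllable in $L^\infty_x L^1_y$ after dressing with $v$ is the technical heart of the argument; the small loss $+\sigma$ in \eqref{V assumption} is exactly the room needed to close this trade-off.
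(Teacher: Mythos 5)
Your overall frame (stationary formula, the splitting identity reducing $R_0^\pm$ to two Schr\"odinger-type resolvents, symmetric resolvent identity with $M^\pm(\lambda)=U+vR_0^\pm v$, Born iteration, and a Neumann series at the regular threshold) matches the paper's, but two of your steps have genuine gaps. The first is the low-energy piece. You apply the Born iteration only at high energy and treat the low-energy part directly through $R_V^\pm=R_0^\pm-R_0^\pm v\,M^\pm(\lambda)^{-1}v\,R_0^\pm$, then assert that the resulting kernel is pointwise dominated by a convolution of $\langle\cdot\rangle^{-\beta}$ with a low-frequency kernel and hence uniformly $L^1$ in one variable. This does not close: (i) for $0<\eta\ll1$ the free kernel decays at spatial infinity only like $|x-y|^{-(n-1)/2}$ (oscillatory part) or $|x-y|^{-(n-2)}$, neither integrable on $\R^n$, so admissibility cannot come from pointwise domination alone — it must come from the oscillation $e^{i\eta(r_1\pm r_2)}$ via roughly $\lceil n/2\rceil+1$ integrations by parts in $\eta$, which forces control of that many $\eta$-derivatives of every factor, including $M^\pm(\eta)^{-1}$, with the extra twist that each derivative of the resolvent kernels costs a power of $r$ or of $\eta^{-1}$; (ii) $M^\pm(\eta)^{-1}$ is only an (absolutely) $L^2$-bounded operator with no pointwise-decaying kernel, so with a single factor of $vR_0^\pm$ on each side there is nothing to convert it into a kernel-bounded object. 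This is precisely why the paper performs the low-energy analysis on the iterated remainder $(R_0^\pm V)^kR_V^\pm(VR_0^\pm)^kV(R_0^+-R_0^-)$ with $k$ large: the dressed factors $A(\eta)=(R_0^+V)^{k-1}R_0^+$ yield, after Cauchy--Schwarz, a middle operator $\Gamma(\eta)=wAvM^{-1}vAw$ whose kernel and its $\lceil n/2\rceil+1$ derivatives obey the pointwise bound $\langle x\rangle^{-n/2-}\langle y\rangle^{-n/2-}$, and only then does the admissibility argument (including the hardest regime $r_2\gg\langle r_1\rangle$) go through.

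The second gap is the high-energy remainder: ``absolutely bounded on $L^2$ with weighted kernel and so interpolates to every $L^p$'' is a non sequitur, since $L^2$-boundedness, absolute or not, gives nothing at $p=1,\infty$. What is actually needed (and what the paper proves) is a pointwise kernel bound of the form $\langle x\rangle^{-(n-1)/2}\langle y\rangle^{-(n-1)/2}\langle|x|\pm|y|\rangle^{-N}$ with $N=\lceil n/2\rceil+1$, obtained by integration by parts in $\eta$ together with the high-energy limiting absorption principle for the \emph{perturbed} resolvent, $\|\partial_\eta^{\ell}R_V^\pm(\eta^4+\eta^2)\|_{L^{2,\sigma}\to L^{2,-\sigma}}\lesssim\eta^{-3}$, which is exactly where the hypothesis that $H$ has no positive embedded eigenvalues enters; your proposal never invokes either ingredient. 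By contrast, your treatment of the Born terms (Fourier-side bounds on $V$, $L^1$ and $L^\infty$ then interpolation, with the bifurcated hypothesis entering through a Hausdorff--Young/Hardy-type trade) is in the right spirit of the paper's Section 2, though there the terms are handled globally in energy by Yajima-style multiplier operators rather than by an energy-localized stationary-phase argument, and your version remains a sketch rather than a proof.
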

    Several remarks are given as follows:
 \begin{remark}\label{W epsilon}
For fixed $\epsilon > 0$, let $V_\epsilon(x) = \epsilon V(\epsilon^{\frac{1}{2}}x)$,  $\mathcal{H}_\epsilon = \epsilon \Delta^2 - \Delta + V$ and $H_\epsilon = \Delta^2 - \Delta + V_\epsilon$. 
Consider the scaling  unitary  transformation $U_\epsilon$ defined by
$$
(U_\epsilon u)(x) = \epsilon^{-\frac{n}{4}} u(\epsilon^{-\frac{1}{2}}x), \ \ u\in L^2(\R^n).
$$
A direct computation shows that 
$
U_\epsilon^{-1} \mathcal{H}_\epsilon U_\epsilon = \epsilon^{-1} H_\epsilon,
$
establishing the unitary equivalence between $\mathcal{H}_\epsilon$ and $\epsilon^{-1} H_\epsilon$. Then their corresponding wave operators admit the following equivalent relationship:
\begin{align}\label{equivelence}
W_\pm(\mathcal{H}_\epsilon, \epsilon \Delta^2 - \Delta) 
= \operatorname*{s-lim}_{t\to\pm\infty} e^{it\mathcal{H}_\epsilon} e^{-it(\epsilon \Delta^2 - \Delta)} & = U_\epsilon \left( \operatorname*{s-lim}_{t\to\pm\infty} e^{it\epsilon^{-1}H_\epsilon} e^{-it\epsilon^{-1}(\Delta^2 - \Delta)} \right) U_\epsilon^{-1}\nonumber\\
&=U_\epsilon \ W_\pm(H_\epsilon, \Delta^2-\Delta)\  U_\epsilon^{-1}.
\end{align}
Assuming that  $V_\epsilon$ and $H_\epsilon=\Delta^2 - \Delta + V_\epsilon$ satisfy the same conditions  in Theorem~\ref{main_theorem} for each $\epsilon>0$,
then it follows from Theorem~\ref{main_theorem} and \eqref{equivelence}  that the wave operators $W_\pm(\mathcal{H}_\epsilon, \epsilon\Delta^2 - \Delta)$ are  bounded on $L^p(\mathbb{R}^n)$ for all $1 \leq p \leq \infty$ and dimensions $n\geq5,$ and 
$$\|W_\pm(\mathcal{H}_\epsilon, \epsilon\Delta^2 - \Delta)\|_{L^p(\R^n)\rightarrow L^{p}(\R^n)}=\|W_\pm(H_\epsilon, \Delta^2 - \Delta)\|_{L^p(\R^n)\rightarrow L^{p}(\R^n)}.$$
\end{remark}
    
    \begin{remark}
For the homogeneous fourth-order operator \( H=\Delta^2 + V \) under the same condition \eqref{V assumption}, Erdo\u{g}an and Green \cite{Erdogan-Green21, Erdogan-Green23} established that \( W_\pm(H, \Delta^2) \in \mathbb{B}(L^p(\mathbb{R}^n)) \) for all \( 1 \leq p \leq \infty \) in the regular case when \( n \geq 5 \). It is important to note that the dimensional restriction \( n \geq 5 \) is necessary, as \( W_\pm(H, \Delta^2) \) is generally unbounded for \( p = 1, \infty \) even when \( V \) is compactly supported, if \( n \leq 4 \) (see, e.g., Mizutani--Wan--Yao \cite{MiWYa, MWY23} for counterexamples in dimensions \( n = 1,3 \), respectively).  

However, since the low-energy behavior of the non-homogeneous fourth-order operator \( H = \Delta^2 - \Delta + V \) resembles that of \( -\Delta + V \), the dimensional threshold in Theorem \ref{main_theorem} may be reduced to \( n \geq 3 \) with suitable modifications to certain proofs. Furthermore, we expect that the wave operators for \( H = \Delta^2 - \Delta + V \) remain bounded on \( L^p(\mathbb{R}^n) \) for all \( 1 \leq p < \frac{n}{2} \) if zero is an eigenvalue of \( H \) (see, e.g., \cite{Goldberg-Green-Advance, Yajima_2016} for analogous results of \( -\Delta + V \)).  
  \end{remark}
 \begin{remark}
For \( n \geq 7 \), since the index \( \frac{n-1}{n-4+\sigma} < 2 \), the potential condition \eqref{V assumption} essentially imposes a regularity requirement on \( V \). In fact, Erdo\u{g}an, Goldberg, and Green \cite{EGG23} showed that the wave operators for \( \Delta^2 + V \) are unbounded on \( L^p(\mathbb{R}^n) \) for \( \frac{2n}{n-7} < p \leq \infty \) if \( V \) lacks sufficient smoothness, even when \( V \) is compactly supported. Therefore, we believe that an appropriate regularity condition on the potential \( V \) is necessary for the results in Theorem \ref{main_theorem}.
    \end{remark}

    \begin{remark} 
In contrast to \( \Delta^2 - \Delta \), which has a single threshold at \( 0 \), the non-homogeneous fourth-order operator \( \Delta^2 + \Delta \) presents two distinct thresholds: \( 0 \) and \( -\frac{1}{4} \) (the latter being degenerate), which complicates the analysis. Furthermore, in the proof of Theorem \ref{main_theorem}, we need the symbol function \( \phi_k(\xi) = \left( |\xi-k|^2 + |\xi|^2 + 1 \right)^{-1} > 0 \) as defined in \eqref{phi}. However, for \( \Delta^2 + \Delta \), the corresponding  function \( \left( |\xi-k|^2 + |\xi|^2 - 1 \right)^{-1} \) changes sign, which leads to an additional obstacle of technics if we study the ``+" case. 
		\end{remark}

	\subsection{Applications}  As applications of Theorem \ref{main_theorem}, we will establish $L^p$-$L^{p'}$ estimates for the Schr\"odinger group $e^{-itH}$ and the solutions of beam equation. Firstly,  when $V=0$, 
we have the following $L^p$-$L^{p'}$  estimate (see {\it e.g.} \cite{CMY}):
	\begin{align}\label{free decay}
		\big\|e^{-it(\Delta^2-\Delta)}\big\|_{L^p(\R^n)\rightarrow L^{p'}(\R^n)}\lesssim
		\begin{cases}
			|t|^{-\frac{n}{2}
				(\frac{1}{p}-\frac{1}{2})}, & {\rm if}\ 0<|t|<1, \\
			|t|^{-n\big(\frac{1}{p}-\frac{1}{2}\big)},	 & {\rm if}\ |t|\geq1,
		\end{cases}
	\end{align}
	where $\frac{1}{p}+\frac{1}{p'}=1$ and  $1\leq p\le 2.$
	
	As a consequence of Theorem \ref{main_theorem}, by virtue of \eqref{Lp-bound of f(H)} and  \eqref{free decay}, one can immediately obtain the following Corollary \ref{perturation decay}.

\begin{corollary} \label{perturation decay}
	Let $n\geq5$ and $P_{ac}(H)$ denote the projection onto the absolutely continuous spectrum of $H$. Suppose that $H$ and $V$ satisfy the same conditions as stated  in Theorem \ref{main_theorem}.  Then for any $1\leq p\le 2,$
	\begin{align*}
		\big\|e^{-itH}P_{ac}(H)\big\|_{L^p(\R^n)\rightarrow L^{p'}(\R^n)}\lesssim
		\begin{cases}
			|t|^{-\frac{n}{2}
				(\frac{1}{p}-\frac{1}{2})}, & {\rm if}\ 0<|t|<1, \\
			|t|^{-n\big(\frac{1}{p}-\frac{1}{2}\big)},	 & {\rm if}\ |t|\geq1.
		\end{cases}
	\end{align*}
\end{corollary}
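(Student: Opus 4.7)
The plan is to assemble three ingredients that are already in hand: the intertwining identity \eqref{intertwining_1}, the free $L^p$--$L^{p'}$ dispersive estimate \eqref{free decay}, and the $L^p$-boundedness of $W_\pm$ provided by Theorem \ref{main_theorem}. Since the main analytic work is entirely absorbed into Theorem \ref{main_theorem}, I expect no new obstacle; the corollary should follow by a few lines of bookkeeping.

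First, I would apply \eqref{intertwining_1} with the bounded Borel function $f(\lambda) = e^{-it\lambda}$ to obtain
$$e^{-itH} P_{ac}(H) \;=\; W_\pm\, e^{-itH_0}\, W_\pm^{*}, \qquad H_0 = \Delta^2 - \Delta,$$
and then specialize the generic bound \eqref{Lp-bound of f(H)} with $q = p'$ to factor
$$\bigl\|e^{-itH} P_{ac}(H)\bigr\|_{L^p \to L^{p'}} \;\leq\; \|W_\pm\|_{L^{p'} \to L^{p'}}\, \bigl\|e^{-itH_0}\bigr\|_{L^p \to L^{p'}}\, \|W_\pm^{*}\|_{L^p \to L^p}.$$

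Next, by Theorem \ref{main_theorem}, $W_\pm \in \mathbb{B}(L^q(\mathbb{R}^n))$ for every $1 \leq q \leq \infty$, and by duality the adjoints $W_\pm^{*}$ inherit the same boundedness through $\|W_\pm^{*}\|_{L^q \to L^q} = \|W_\pm\|_{L^{q'} \to L^{q'}}$. Hence the two outer factors are finite constants independent of $t$. Finally, \eqref{free decay} controls the middle factor by $|t|^{-\frac{n}{2}(\frac{1}{p}-\frac{1}{2})}$ for $0 < |t| < 1$ and by $|t|^{-n(\frac{1}{p}-\frac{1}{2})}$ for $|t| \geq 1$, throughout $1 \leq p \leq 2$. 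Multiplying the three factors yields the stated bound. The only point worth a brief check is the $p=1,\infty$ endpoints of the duality step, and these are covered precisely because Theorem \ref{main_theorem} supplies $W_\pm \in \mathbb{B}(L^q)$ on the full closed range $q \in [1,\infty]$; there is essentially no further work.
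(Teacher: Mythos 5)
Your proposal is correct and follows exactly the route the paper takes: the corollary is obtained immediately from the intertwining identity \eqref{intertwining_1} via the generic bound \eqref{Lp-bound of f(H)} with $q=p'$, the $L^p$-boundedness of $W_\pm$ (and of $W_\pm^*$ by duality) from Theorem \ref{main_theorem}, and the free dispersive estimate \eqref{free decay}. No gaps; the endpoint cases $p=1,2$ are indeed covered since Theorem \ref{main_theorem} gives boundedness on all of $1\le q\le\infty$.
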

\begin{remark}
Under the same conditions as stated in Remark \ref{W epsilon}, combine with Corollary \ref{perturation decay} and $e^{-it\mathcal{H}_\epsilon}P_{ac}(\mathcal{H}_\epsilon)=U_\epsilon \big(e^{-it\epsilon^{-1}H_\epsilon}P_{ac}(H_\epsilon)\big)U_\epsilon ^{-1},$ then for any $1\leq p\le 2$ and $n\geq5,$
\begin{align*}
		\big\|e^{-it\mathcal{H}_\epsilon}P_{ac}(\mathcal{H}_\epsilon)\big\|_{L^p(\R^n)\rightarrow L^{p'}(\R^n)}\lesssim_\epsilon
		\begin{cases}
			|t|^{-\frac{n}{2}
				(\frac{1}{p}-\frac{1}{2})}, & {\rm if}\ 0<|t|<1, \\
			|t|^{-n\big(\frac{1}{p}-\frac{1}{2}\big)},	 & {\rm if}\ |t|\geq1
		\end{cases}
	\end{align*}
where $\mathcal{H}_\epsilon = \epsilon \Delta^2 - \Delta + V$ and $H_\epsilon = \Delta^2 - \Delta + V_\epsilon$ for each $\epsilon>0$.
\end{remark}

Next, we turn to establish  the time decay estimates of the solutions  to  the following beam equation:
\begin{equation}\label{beam_equation}  
	\begin{cases}
		\partial_t^2 u + \left( \Delta^2 -\Delta+ V(x) \right) u = 0, & \\
		u(0, x) = f(x), \quad \partial_t u(0, x) = g(x),& (t, x) \in \mathbb{R} \times \mathbb{R}^n,\  n\geq5.
	\end{cases}
\end{equation}
Under this assumption  that $H$ has no embedded positive eigenvalues and zero is a regular point of $H,$ we denote the negative eigenvalues of
$H$ by $\lambda_j(j \geq 1)$,
counting multiplicities of every $\lambda_j$, where
$H \phi_j=\lambda_j \phi_j ( j \geq 1 )$ for $\phi_j \in L^2\left(\mathbb{R}\right)$, and let $P_{a c}(H)$ denote the projection onto the absolutely continuous spectrum space of $H$.
Given these notations, the solution of the Beam equation
\eqref{beam_equation}  can then be expressed as
$$
u_m(t,x):=u_{m,d}(t, x)+u_{m,c}(t, x),
$$
where
$$
\begin{aligned}
	& u_{m,d}(t,x)=\sum_{j} \cosh( t\sqrt{-\lambda_j})(f,\phi_j)\phi_j(x) +\frac{\sinh (t\sqrt{-\lambda_j})}{\sqrt{-\lambda_j}}(g,\phi_j)\phi_j(x), \\
	& u_{m,c}(t,x)=\cos (t \sqrt{H})P_{ac}(H)f(x) + \frac{\sin (t \sqrt{H})}{\sqrt{H}}P_{ac}(H)g(x).
\end{aligned}
$$
The presence of negative eigenvalues in the spectrum of
$H$ leads to exponential growth in
$u_{m,d}(t,x)$ as  $t$ becomes large. As a result, it is crucial to remove the discrete component associated with these negative eigenvalues and focus on the decay estimates for the continuous part $u_{m,c}(t, x)$.

For the free case (i.e., $V=0$), note that 
$$
\cos (t \sqrt{\Delta^2-\Delta})=\frac{e^{i t \sqrt{\Delta^2-\Delta}}+e^{-i t \sqrt{\Delta^2-\Delta}}}{2}\ \ \text{and}\ \ 	\frac{\sin (t \sqrt{\Delta^2-\Delta})}{\sqrt{\Delta^2-\Delta}} = \frac{1}{2} \int_{-t}^{t} \cos(s \sqrt{\Delta^2-\Delta})ds.
$$
  Using the decay estimate of  $e^{-it\sqrt{\Delta^2-\Delta}}$  
 (cf. \textit{e.g.}, \cite{CMY}), we can derive
\begin{equation}\label{free estimate}
	\big\|\cos \big(t \sqrt{\Delta^2-\Delta}\big)\big\|_{L^p(\R^n)\rightarrow L^{p'}(\R^n)}+\Big\|\frac{\sin (t \sqrt{\Delta^2-\Delta})}{t \sqrt{\Delta^2-\Delta}}\Big \|_{L^p(\R^n)\rightarrow L^{p'}(\R^n)} \lesssim|t|^{-n\big(\frac{1}{p}-\frac{1}{2}\big)},
\end{equation}
where $\frac{1}{p}+\frac{1}{p'}=1$ and  $1\leq p\le 2.$

Thus,  as an application of  Theorem \ref{main_theorem}, taking into account  \eqref{Lp-bound of f(H)} and  \eqref{free estimate}, we establish  the following  time decay estimates for  the beam equation \eqref{beam_equation}.     
\begin{corollary}\label{theorem beam}
		Let $n\geq5$ and $P_{ac}(H)$ denote the projection onto the absolutely continuous spectrum of $H$. Suppose that $H$ and $V$ satisfy the same conditions as stated  in Theorem \ref{main_theorem}.  Then for any  $1\leq p\le 2,$
		      \begin{equation*}
		      	\big\|\cos \big(t \sqrt{H}\big)P_{ac}(H)\big\|_{L^p(\R^n)\rightarrow L^{p'}(\R^n)}+\Big\|\frac{\sin (t \sqrt{H})}{t \sqrt{H}}P_{ac}(H)\Big \|_{L^p(\R^n)\rightarrow L^{p'}(\R^n)} \lesssim|t|^{-n\big(\frac{1}{p}-\frac{1}{2}\big)}.
		      \end{equation*}	   
	\end{corollary}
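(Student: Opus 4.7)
My strategy is essentially dictated by the discussion preceding the corollary: one transfers the free estimate \eqref{free estimate} to the perturbed setting by intertwining through the wave operators, whose $L^p$-boundedness for $1\le p\le\infty$ is the content of Theorem \ref{main_theorem}.

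First, I would check that the functional calculus is well posed on the absolutely continuous subspace. The symbol of $H_0 = \Delta^2 - \Delta$ is $|\xi|^4 + |\xi|^2 \ge 0$, so $H_0 \ge 0$ and $\sigma(H_0) = [0,\infty)$; under the hypotheses of Theorem \ref{main_theorem}, asymptotic completeness yields $\sigma_{\ac}(H) = \sigma(H_0) = [0,\infty)$. Hence the bounded Borel functions
\[
f_1(\lambda) = \cos(t\sqrt{\lambda}), \qquad f_2(\lambda) = \frac{\sin(t\sqrt{\lambda})}{t\sqrt{\lambda}}
\]
(with $f_2$ extended by continuity at the origin) make sense both for $H_0$ and for the restriction $H|_{\H_{\ac}(H)}$.

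Second, I would apply the intertwining identity \eqref{intertwining_1} to each $f_i$, obtaining
\[
f_i(H)\,P_{\ac}(H) = W_\pm\, f_i(H_0)\, W_\pm^*, \qquad i = 1, 2.
\]
By Theorem \ref{main_theorem}, both $W_\pm$ and $W_\pm^*$ extend to bounded operators on $L^q(\R^n)$ for every $1 \le q \le \infty$. Feeding this into \eqref{Lp-bound of f(H)} gives
\[
\|f_i(H)\,P_{\ac}(H)\|_{L^p \to L^{p'}} \lesssim \|f_i(H_0)\|_{L^p \to L^{p'}},
\]
and the free decay bound \eqref{free estimate} then supplies $\|f_i(H_0)\|_{L^p \to L^{p'}} \lesssim |t|^{-n(1/p - 1/2)}$ for all $1 \le p \le 2$. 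Summing the two contributions yields the stated estimate.

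This is a short, essentially routine argument: all of the analytic work has been carried out in Theorem \ref{main_theorem} and in the free bound \eqref{free estimate}, so I do not foresee any genuine obstacle. The only point worth flagging is the compatibility between the Borel functional calculus of $\sqrt{H}$ on $\H_{\ac}(H)$ and the intertwining formula, which is a standard consequence of the isometry relation $W_\pm^* W_\pm = I$ together with $f(H)P_{\ac}(H) = W_\pm f(H_0) W_\pm^*$ applied to polynomials in $\sqrt{\lambda}$ and then extended to Borel $f$ by an approximation argument.
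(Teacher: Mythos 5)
Your argument is correct and coincides with the paper's own (very brief) proof: the corollary is obtained exactly by applying the intertwining identity \eqref{intertwining_1} via \eqref{Lp-bound of f(H)} to $f(\lambda)=\cos(t\sqrt{\lambda})$ and $f(\lambda)=\sin(t\sqrt{\lambda})/(t\sqrt{\lambda})$, together with the free estimate \eqref{free estimate} and the $L^p$-boundedness of $W_\pm$, $W_\pm^*$ from Theorem \ref{main_theorem}. No gaps.
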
                                           \begin{remark}
Under the same conditions as stated in Remark \ref{W epsilon},  combine with Corollary \ref{theorem beam}  and $e^{\pm it\sqrt{\mathcal{H}_\epsilon}}P_{ac}(\mathcal{H}_\epsilon)=U_\epsilon \big(e^{\pm it\sqrt{\epsilon^{-1}H_\epsilon}}P_{ac}(H_\epsilon)\big)U_\epsilon ^{-1}$, then for any $1\leq p\le 2$ and $n\geq5,$
\begin{equation*}
		      	\big\|\cos \big(t \sqrt{\mathcal{H}_\epsilon}\big)P_{ac}(\mathcal{H}_\epsilon)\big\|_{L^p(\R^n)\rightarrow L^{p'}(\R^n)}+\Big\|\frac{\sin (t \sqrt{\mathcal{H}_\epsilon})}{t \sqrt{\mathcal{H}_\epsilon}}P_{ac}(\mathcal{H}_\epsilon)\Big \|_{L^p(\R^n)\rightarrow L^{p'}(\R^n)} \lesssim_\epsilon|t|^{-n\big(\frac{1}{p}-\frac{1}{2}\big)}.
		      \end{equation*}	
\end{remark}
                                                                                                                                 \subsection{Further related results}
	\label{subsection_known_result}
	The $L^p$-boundedness of wave operators for homogeneous Schr\"odinger type operators  $H=(-\Delta)^m+V(x)$ on $\R^n$ with $m\ge1$, has been extensively studied in the literature. More specifically:
    
	For the classical Schr\"odinger operator $-\Delta+V(x)$  (i.e. $m=1$), since the seminal work by Yajima in \cite{Yajima-JMSJ-95}, there has been a significant body of works focused on the $L^p$-boundedness of the wave operators $W_\pm$.  In particular, in the one-dimensional space ($n=1$), the wave operators $W_\pm$ are bounded on $L^p(\mathbb{R})$ for $1<p<\infty$ in both regular and zero resonance cases. However, they are generally unbounded for $p=1$ and $p=\infty$ (see, for example, \cite{ArYa,DaFa,Weder}).
	In the regular case, for dimension $n=2$, the wave operators $W_\pm$ are bounded on $L^p(\mathbb{R}^2)$ for $1<p<\infty$, but the results at the endpoints are yet unknown (see \cite{Yajima-CMP-99, Jensen_Yajima_2D}). For dimensions $n\geq3$, the wave operators $W_\pm$ are bounded on $L^p(\mathbb{R}^n)$ for $1\leq p\leq\infty$ in the regular case (see, for example, \cite{BeSc,Yajima-1995,Yajima-JMSJ-95}). However, the presence of threshold resonances narrows down the range of values for $p$, depending on the dimension $n$ and asymptotic behaviors of zero resonant states and zero eigenfunctions (as discussed in \cite{EGG,Finco_Yajima_II,Goldberg-Green-Advance,Goldberg-Green-Poincare,Jensen_Yajima_4D, Yajima_2006,Yajima_2016,Yajima_2018,Yajima_2021arxiv,Yajima_2022arxiv}).
	
		For the higher order Schr\"odinger operator $(-\Delta)^m+V(x)$ with $m\ge 2$,    the first result was obtained by Goldberg and Green \cite{GoGr21} for the case $(m, n) = (2, 3)$, where the $L^p$-boundedness of wave operators was established for $1 < p < \infty$ if the zero energy is a regular point.  Additionally, Mizutani--Wan--Yao  extended the analysis to the endpoint $p=1, \infty$ and resonance cases (see  \cite{MWY23, MWY_ArXiv23_2}). 	For the case of $(m,n)=(2,1)$, Mizutani--Wan--Yao \cite{MiWYa} have obtained the $L^p$-boundedness for all $1<p<\infty$ in the cases of all kinds of resonances and counterexamples  at the endpoint $p=1,\infty$. 
		For $n>2m\ge4$, Erdo\u{g}an--Green \cite{Erdogan-Green21,Erdogan-Green23} proved the $L^p$-boundedness for all $1\le p\le \infty$ if the zero energy is a regular point and the potential $V(x)$ is sufficiently smooth. For further results in the cases $n > 4m - 1$ and $n > 4m$, see \cite{EGG23, EGL}, respectively. 
	 Moreover,  the case $n=2m=4$ was considered by Galtbayar--Yajima \cite{Yajima_2024JST} where the $L^p$-boundedness was proved for $1<p<p_0$ with suitable $p_0$ depending on the type of the singularity at the zero energy.
	
	\subsection{The outline of proof}
	Here, we provide a brief outline of the main ideas involved in the proof of  Theorem \ref{main_theorem}.
	The starting point is the following stationary formula:
		\begin{align}
		\label{stationary}
		W_\pm=\Id-\frac{1}{2\pi i}\int_0^\infty  R_V^\mp(\lambda)V\left(R_0^+(\lambda)-R_0^-(\lambda)\right)d\lambda.
	\end{align}
	To explain the formula \eqref{stationary}, we need to introduce some notations.  Let
	\begin{align*}
	R_\Delta(z)=(-\Delta-z)^{-1},\ \ 	R_0(z)=(\Delta^2-\Delta-z)^{-1},\ \ 
		R_V(z)=(H-z)^{-1},\quad z\in \C\setminus[0,\infty),
	\end{align*}
	be the resolvents of operators $-\Delta,$  $H_0=\Delta^2-\Delta$ and $H=\Delta^2-\Delta+V(x)$, respectively. Then $ R^\pm_\Delta(\lambda),$  $R^\pm_0(\lambda)$ and $R^\pm_V(\lambda)$  are defined as their proper boundary values (limiting resolvents) on $(0,\infty)$, namely for $\lambda>0,$
	\begin{align}\label{boundary}
		R_\Delta^\pm(\lambda)=\lim_{\ep \searrow 0}R_\Delta(\lambda\pm i\ep),\ \ 
	R^\pm_0(\lambda)=\lim_{\epsilon \searrow 0}R_0(\lambda\pm i\epsilon ),\ \  R_V^\pm(\lambda)=\lim_{\epsilon  \searrow 0}R_V(\lambda\pm i\epsilon ).
	\end{align}
	More specifically, the existence of $R^\pm_0(\lambda)$ as bounded operators from $L^2_s(\R^3)$ to $L^2_{-s}(\R^3)$ with $s>1/2$ follows from the limiting absorption principle for the resolvent $R_\Delta(z)$  (see e.g.  Agmon \cite{Agmon}) and   the following splitting identity  for $z\in\C\setminus[0, +\infty)$ with $0< \arg z<2\pi:$
	\begin{align}\label{the split}
		R_0(z)
		=\frac{1}{\sqrt{1+4z}}\left(R_\Delta\Big(-\frac{1}{2}+\frac{1}{2}\sqrt{1+4z}\Big)-R_{\Delta}\Big(-\frac{1}{2}-\frac{1}{2}\sqrt{1+4z}\Big)\right).
	\end{align}
	The existence of $R^\pm_V(\lambda)$ for $\lambda>0$  with  certain decay conditions on the potential has been also already shown (see  e.g. \cite{Agmon, Kuroda}).
	
	Combining with  \eqref{stationary} and  iterations of the resolvent identity, 
	$$R_V^-(\lambda)=\sum_{\ell=0}^{2k-1}(-1)^\ell (R_0^-(\lambda)V)^\ell R_0^-(\lambda)+(R_0^-(\lambda)V)^kR_V^-(\lambda)
	(VR_0^-(\lambda))^k,
	$$
	it follows that
  $  \Id-W_\pm=\sum_{j=1}^{2k}(-1)^{J-1}W_J^\pm +\Omega^\pm,$
    where
    \begin{align}\label{W_J}
W_J^{\pm}&=\frac{1}{2\pi i}\int_0^\infty(R_0^{\mp}(\lambda)V)^J\big(R_0^+(\lambda)-R_0^-(\lambda)\big)d\lambda,  \ \ \  \text{for}\  J=1, 2, \cdots, 2k.\\
\label{W^LH}
\Omega^\pm&=\frac{1}{2\pi i}\int_0^\infty(R_0^\mp(\lambda)V)^kR_V^\mp(\lambda)
(VR_0^\mp(\lambda))^kV\big(R_0^+(\lambda)-R_0^-(\lambda)\big)d\lambda.
	\end{align}
	By  changing the variable $\lambda\mapsto \eta^4+\eta^2$ and inserting  the identity $1=\chi(\eta)+\widetilde{\chi}(\eta),$  the operator $\Omega^\pm$ is further  decomposed the sum of the low energy  part  $\Omega_L^\pm$ with $\{0\le \eta\ll1\}$  and the high energy part $\Omega_H^\pm$ with $\{\eta\gtrsim1\}.$

	 Combining with  $W_J^-f=\overline{ W_J^+\bar{f}}$ and $\Omega^+f=\overline{ \Omega^-\bar{f}},$   all the problems boil down to demonstrate $W_J^+$ ($J\in\N^+$),  $\Omega_L^-$ and $\Omega_H^-\in\mathbb{B}(L^p(\R^n))$ for all $1\leq p\leq\infty$ and $n\geq5.$
	 \vskip0.2cm
	 $\bullet$\ \textit{\underline{The $L^p$-boundedness of $W_J^+$.}} For simplicity, we consider the following adjoint operator $W_J^*$,  instead of the operator $W_J^+$:
	 	$$
	 	W_J^* =- \frac{1}{2\pi i} \int_\mathbb{R} R_0^-(\lambda) \big(VR_0^+(\lambda)\big)^J \, d\lambda.
	 	$$
 In order to derive the desired result for $W_J^*$, by density, we  may assume  that  $V_1, \cdots, V_J\in\S(\R^n)$  such that their Fourier transform $\widehat{V_1}, \cdots, \widehat{V_J}\in C_0^\infty(\R^n),$ then   it suffices to show for $f\in\S(\R^n),$  the 
  limit 
	  \begin{equation} \label{ZJ1}
	  	Z_Jf := \lim_{\epsilon_1 \downarrow0} \cdots \lim_{\epsilon_J \downarrow 0} \lim_{\epsilon_0 \downarrow 0}\  \frac{1}{2\pi i} \int_ \R R_0(\lambda-i\epsilon_0)\ \prod_{j=1}^J\big(V_jR_0(\lambda+i\epsilon_j)\big)f d\lambda,
	  \end{equation}
	   exists in the strong topology of $L^p(\R^n)$ and  $Z_J$ can be extended to a bounded operator on $L^p$ for all $1 \leq p \leq \infty$ and $J\in\mathbb{N}^+$ with the proper bounds.
	    
	  Set $(\omega_j, t_j, y_j)\in \big(S^{n-1}\times(0, +\infty)\times\R^n\big)$ for $j=1,2,\cdots, J-1,$ and $(\omega_J, t_J, y_J)\in \big(S^{n-1}\times(0, \rho_J)\times\R^n\big)$ with  $\rho_J:=-\omega_J\big( x-y_J-\sum_{\ell=1}^{J-1}(y_\ell-2t_\ell\omega_\ell)
	\big),$  then
	 $Z_{J}f(x)$ can be expressed as 
	  \begin{align*}
	  	Z_{J }f(x)=\int_{\Sigma_{J-1}}\int_{S^{n-1}}\int_{-\infty}^{\rho_J}\int_{\R^n}F(\omega_1, t_1, y_1,\cdots,\omega_J, t_J, y_J)f(\bar{x}-\gamma_J)dy_Jdt_Jd\omega_J\cdots dy_1dt_1d\omega_1,
	  \end{align*}
	    where   $\bar{x}:=x-2(x\cdot\omega_J)$ and  $\Sigma_{J-1}:=\big(S^{n-1}\times(0, +\infty)\times\R^n\big)^{J-1}, $
	    $\gamma_J:=\bar{y}_J+2t_J\omega_J+\sum_{\ell=1}^{J-1}(\overline{y_\ell-2t_\ell\omega_\ell}).$  Notably,   $F(\omega_1, t_1, y_1,\cdots,\omega_J, t_J, y_J)\in L^1\big(\left(S^{n-1}\times \R\times\R^n\right)^{J}\big)$ satisfies 
	     \begin{align*}
	    	\left\|F\right\|_{L^1\big(\left(S^{n-1}\times \R\times\R^n\right)^{J}\big)}\lesssim_{n, \sigma, J}
	    	\begin{cases}
	    		\prod_{j=1}^J \ \left\|\langle \cdot\rangle^{\frac{9-n}{2}+\sigma+} V_j\right\|_{L^2} & {\rm if}\ \ n=5,6,\\ 
	    		\prod_{j=1}^J\left\| \F\left(\langle\cdot\rangle^{\frac{2(n-4+\sigma)}{n-1}+2\sigma+} V_j\right)\right\|_{L^{\frac{n-1}{n-4+\sigma}}} & {\rm if}\ \ n\geq7.
	    	\end{cases}
	    \end{align*}
	    Apply  Minkowski's inequality and note that $x\mapsto\bar{x}$ is an isometry,  then we obtain that 
	    $$\left\|Z_{J }f\right\|_{L^p}
	    \lesssim\left\|F\right\|_{L^1\big(\left(S^{n-1}\times \R\times\R^n\right)^{J}\big)}\left\|f\right\|_{L^p}$$ for  all $1\leq p\leq\infty,$
	     which yields the desired results.
	     \vskip0.2cm
	      $\bullet$\ \textit{\underline{The $L^p$-boundedness of   $\Omega_L^-$.}}	
	      		Setting $v(x)=\sqrt{|V(x)|}$, $U(x)=\sgn V(x)$ and $M(\eta)=U+vR_0^+(\eta^4+\eta^2)v$, one has the  symmetric second resolvent equation:
	      		$$
	      		R_V^+(\eta^4+\eta^2)V=R_0^+(\eta^4+\eta^2)v M^{-1}(\eta)v,
	      		$$
	      		where $M^{-1}(\eta):=[M(\eta)]^{-1}$ as long as it exists.

	    Denote	$$
	      A(\eta):=\big(R_0^+(\eta^4+\eta^2)V\big)^{k-1}R_0^+(\eta^4+\eta^2),\ \text{and}\  \Gamma(\eta):=wA(\eta)v M^{-1}(\eta)vA(\eta)w$$ with $w=V^{\frac{1}{2}}$,  then  the low energy part $\Omega_L^-$ can be expressed as 
	      \begin{align}\label{out1}
	      	\Omega_L^-:=\frac{1}{\pi i}\int_0^\infty\eta(2\eta^2+1)\chi(\eta)R_0^+(\eta^4+\eta^2)v\Gamma(\eta)v \Big(R_0^+-R_0^-\Big)(\eta^4+\eta^2)d\eta.
	      \end{align}
	Let $|V(x)|\lesssim\langle x \rangle^{-n-4-}.$  Then we can show that
	      \begin{align}\label{out2} 
	      	\widetilde{\Gamma}(x, y)=\max_{\ell\in\big\{0,1,\cdots,\lceil\frac{n}{2}\rceil +1\big\}}\sup_{0<\eta\ll1}\Big | \eta^{\max(0, \ell-1)}\partial_\eta^{\ell}\Gamma(\eta)(x,y) \Big|\lesssim \langle x\rangle^{-\frac{n}{2}-}\langle y\rangle^{-\frac{n}{2}-},
	      \end{align}
	      where $\Gamma(\eta)(x,y)$ is the kernel of the operator $\Gamma(\eta).$
	       Applying  the expression \eqref{out1} and the condition \eqref{out2}, it can be shown that  $\Omega_L^-$ is admissible, i.e. its kernel $\Omega_L^-(x, y)$ satisfies
	       $ \Omega_L^-(x, y)\in L^\infty_yL^1_x\cap L^\infty_xL^1_y, $
	       which yields the desired results.
	\vskip0.2cm
	       $\bullet$\ \textit{\underline{The $L^p$-boundedness of   $\Omega_H^-$.}}  
By iterations and  integrating by parts  $N=\lceil\frac{n}{2}\rceil+1$ times with respect to  $\eta$ for the kernel $\Omega_H^-(x, y)$, provided $\beta >2N+2,$  it can be proved that    
	       \begin{align*}
	       	\Big|\Omega_H^-(x, y)\Big|\lesssim\big\langle x\big\rangle^{-\frac{n-1}{2}}\big\langle y\big\rangle^{-\frac{n-1}{2}}\big\langle|x|\pm|y|\big\rangle^{-N}\in L^\infty_yL^1_x\cap L^\infty_xL^1_y,
	       \end{align*}
	    which establishes  the desired results.
	      \subsection{The organizations of  paper}  In Section \ref{section2}, we first decompose $\Id - W_\pm$ into the sum of a sequence of $W_J^\pm$ and $\Omega^\pm$.  Then for $n \geq 5$, we show that $W_J^+ \in \mathbb{B}(L^p(\mathbb{R}^n))$ for all $1 \leq p \leq \infty$ and $J \in \mathbb{N}^+$.
In Section \ref{section3}, we further decompose $\Omega^\pm$ into the low energy part $\Omega_L^\pm$ and the high energy part $\Omega_H^\pm$. Subsequently, we prove that $\Omega_L^-\in\mathbb{B}(L^p(\mathbb{R}^n))$ for all $1 \leq p \leq \infty$ when  zero is a regular point of $H$.
	      
In Section \ref{section4}, we establish  the high energy part $\Omega_H^- \in \mathbb{B}(L^p(\mathbb{R}^n))$ for all $1 \leq p \leq \infty$.
	   
Finally, Appendix \ref{section5} provides some  technical lemmas needed in the paper.


\section{$L^p$ boundedness of the $J$-th iteration  $W_J^\pm$}\label{section2}
In this section, we show that each $W_J^\pm$ defined by \eqref{W_J} is bounded on $L^p(\R^n)$ for all  $n\geq5$ and $1\leq p\leq\infty.$
It suffices to prove $W_J^+\in\mathbb{B}(L^p(\R^n))$ since $W_J^-f=\overline{ W_J^+\bar{f}}.$  Here,  we adapt some arguments from Yajima in \cite{Yajima-JMSJ-95}   and Erdo\u{g}an-Greeen in \cite{Erdogan-Green21}.
\begin{theorem}\label{theorem WJ}
Let $n\geq5$ and fix $0<\sigma\ll 1$. 
Then $W_J$ defined by \eqref{W_J}  can be extended to a bounded operator on $L^p(\R^n)$ for all $1 \leq p \leq\infty$ and $J\in\mathbb{N}^+$. In particular, 
\begin{align*}
	\left\|W_J^{+}\right\|_{\mathbb{B}(L^p)}\lesssim_{n, \sigma, J}
	\begin{cases}
		\left\|\langle \cdot\rangle^{\frac{9-n}{2}+\sigma+} V\right\| _{L^2}^J\left\|f\right\| _{L^p}& {\rm if}\ \ n=5,6, \\
		\left\| \F\left(\langle\cdot\rangle^{\frac{2(n-4+\sigma)}{n-1}+2\sigma+} V\right)\right\|_{L^{\frac{n-1}{n-4+\sigma}}}^J\left\|f\right\| _{L^p}  & {\rm if}\ \ n\geq7.
	\end{cases}
\end{align*}
\end{theorem}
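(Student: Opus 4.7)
The plan is to follow the outline sketched at the end of the introduction, which is a multilinear variant of Yajima's plane-wave strategy adapted to the non-homogeneous symbol. Since $W_J^- f = \overline{W_J^+ \bar f}$, it suffices to treat $W_J^+$, and by a duality manipulation this is equivalent to studying
\[
W_J^* f = -\frac{1}{2\pi i}\int_{\mathbb R} R_0^-(\lambda)\,\prod_{j=1}^{J}\bigl(V_j R_0^+(\lambda)\bigr) f\,d\lambda,
\]
where the $J$ copies of $V$ are written as $V_1,\dots,V_J$ for bookkeeping. By density it is enough to handle $V_j\in\mathcal S(\mathbb R^n)$ with $\widehat{V_j}\in C_c^\infty(\mathbb R^n)$ and $f\in\mathcal S(\mathbb R^n)$; the full result then follows once the estimate depends only on the quantitative norms of $V$ listed in \eqref{V assumption}. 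Accordingly, I would first rewrite $W_J^* f$ as the iterated limit $Z_J f$ in \eqref{ZJ1}, bringing $\epsilon_0,\epsilon_1,\dots,\epsilon_J\downarrow 0$ one at a time. Fubini is justified as long as $\epsilon_j>0$ because each $R_0(\lambda\pm i\epsilon_j)$ is $L^2$-bounded, and the smoothness of the $V_j$'s guarantees enough decay to pass the limits inside.

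Next I would use the splitting identity \eqref{the split} to express every $R_0(\lambda\pm i\epsilon_j)$ as a linear combination of two Laplacian resolvents $R_\Delta(\mu_\pm\pm i\epsilon)$ where $\mu_\pm=-\tfrac12\pm\tfrac12\sqrt{1+4(\lambda\pm i\epsilon)}$. The advantage is that the Schwartz kernel of $R_\Delta^\pm(\mu)$ admits the familiar Fourier/plane-wave representation $\int_{S^{n-1}}\int_0^\infty e^{\pm i t\omega\cdot(x-y)}\,\phi(\omega,t)\,dt\,d\omega$ type expression used by Yajima \cite{Yajima-JMSJ-95} and Erdo\u{g}an--Green \cite{Erdogan-Green21,Erdogan-Green23}. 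Plugging these representations into $Z_J f$ and performing the $\lambda$-integration first produces a delta-like constraint that identifies one of the radial variables $t_J$ with $-\omega_J\cdot\bigl(x-y_J-\sum_{\ell<J}(y_\ell-2t_\ell\omega_\ell)\bigr)=:\rho_J$. After telescoping the phases one discovers that the argument of $f$ becomes the affine expression $\bar x-\gamma_J$ announced in the outline, where $\bar x=x-2(x\cdot\omega_J)\omega_J$ is the reflection of $x$ across $\omega_J^{\perp}$ and $\gamma_J$ depends only on the new variables $(\omega_\ell,t_\ell,y_\ell)_{\ell\le J}$ (not on $x$). This is the crucial geometric identity that transforms the oscillatory integral into an iterated translation/reflection, the mechanism that makes $L^p$-boundedness possible at all endpoints.

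Once $Z_J f(x)=\int_{\Sigma_{J-1}}\int_{S^{n-1}}\int_{-\infty}^{\rho_J}\int_{\mathbb R^n}F(\omega_1,t_1,y_1,\dots,\omega_J,t_J,y_J)\,f(\bar x-\gamma_J)\,dy_J\,dt_J\,d\omega_J\cdots d\omega_1$ is in hand, the $L^p$-estimate is immediate from Minkowski's inequality: for each fixed tuple $(\omega_\ell,t_\ell,y_\ell)_{\ell\le J}$ the map $x\mapsto \bar x-\gamma_J$ is an isometry of $\mathbb R^n$, so translation-invariance of $L^p$ yields
\[
\|Z_J f\|_{L^p}\;\le\;\|F\|_{L^1((S^{n-1}\times\mathbb R\times\mathbb R^n)^J)}\,\|f\|_{L^p}
\]
for every $1\le p\le\infty$ in a single stroke. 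The entire difficulty therefore collapses to the multilinear estimate $\|F\|_{L^1}\lesssim \prod_{j=1}^J\|V_j\|_\star^{\,}$ with $\|\cdot\|_\star$ being the norm appearing in \eqref{V assumption}.

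The main obstacle will be this last step, and the reason the statement splits into the two cases $n=5,6$ versus $n\ge 7$. After unwinding the kernel representation of each $R_\Delta^\pm(\mu)$, the density $F$ factorizes into a product of $J$ terms, each of the form $\mathcal K(\omega_j,t_j,y_j;V_j)$ where $\mathcal K$ is the plane-wave-slice density of $V_j$. In dimensions $5$ and $6$, a direct Cauchy--Schwarz/Plancherel on a fixed hyperplane reduces $\|\mathcal K\|_{L^1}$ to the weighted $L^2$ norm $\|\langle x\rangle^{(9-n)/2+\sigma+}V_j\|_{L^2}$, with the exponent $(9-n)/2+\sigma$ chosen precisely to make the $t_j$-integral converge after one Cauchy--Schwarz step. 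For $n\ge 7$ the $L^2$ argument fails (the relevant trace inequality loses; equivalently, the restriction to codimension-$1$ hyperplanes no longer controls the density), and one must use a Stein--Tomas/Tomas--Stein dual restriction bound, which demands that $\widehat{V_j}$ lies in a specific $L^{(n-1)/(n-4+\sigma)}$ space, equivalently yielding the Fourier-side norm in \eqref{V assumption}. Once these two single-factor estimates are obtained, multiplying across $j=1,\dots,J$ and controlling the normalization constants produces the stated $J$-fold product bound, and thereby the theorem. I expect the technical bookkeeping of the iterated limits in \eqref{ZJ1} together with the $n\ge 7$ restriction estimate (where smoothness of $V$ is indispensable) to be the most delicate portions; the $n=5,6$ case should be essentially a direct Cauchy--Schwarz once the plane-wave decomposition is in place.
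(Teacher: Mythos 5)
Your overall architecture does match the paper's: pass to the adjoint $Z_J$, derive a translation--reflection representation $Z_Jf(x)=\int F\,f(\bar x-\gamma_J)$, conclude $\|Z_Jf\|_{L^p}\le\|F\|_{L^1}\|f\|_{L^p}$ by Minkowski plus the isometry $x\mapsto\bar x-\gamma_J$, and reduce everything to an $L^1$ bound on the density $F$. Two caveats on the route, though. The paper never invokes the splitting identity \eqref{the split} in this part; it works on the Fourier side and factors the fourth-order symbol as $|\xi-k|^4+|\xi-k|^2-|\xi|^4-|\xi|^2=\phi_k(\xi)^{-1}\bigl(|\xi-k|^2-|\xi|^2\bigr)$ with $\phi_k(\xi)=(|\xi-k|^2+|\xi|^2+1)^{-1}>0$, which produces the multiplier $T_k$ and the kernel $h_{s\omega}=c\,\mathcal F^{-1}(\phi_{s\omega})$ together with the decisive pointwise bounds \eqref{estimate h}. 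Your plan of splitting each $R_0$ into two second-order resolvents would create $2^{J+1}$ cross terms mixing oscillatory pieces at energy $\eta^2$ with exponentially damped pieces at energy $-1-\eta^2$, and you do not explain how the $\lambda$-integration, the telescoping to $\bar x-\gamma_J$, or the eventual kernel bounds survive in that mixed setting; likewise, the convergence of the iterated $\epsilon$-limits in the $L^p$ topology is a genuine step in the paper (Littlewood--Paley decomposition and integration by parts in $s$), not something that follows from "enough decay" alone.

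The genuine gap is in the final, decisive step, the estimate of $\|F\|_{L^1}$. Your claim that $F$ "factorizes into a product of $J$ terms $\mathcal K(\omega_j,t_j,y_j;V_j)$" is false: by \eqref{KJ} the weight is $K_J(k_1,\dots,k_J)=\prod_{j}\widehat{V_j}(k_j-k_{j-1})$, so consecutive momenta are coupled and the $s_j$-integrals in \eqref{F} do not decouple; there is no single-factor norm to "multiply across $j$". The paper handles this coupling by Hausdorff--Young in the $\vec t$-variables with exponent $r$, then iterated applications of Hardy's inequality and the Mikhlin multiplier theorem in $k_J,k_{J-1},\dots,k_1$, and finally a multilinear complex interpolation between the bounds for $F_\Lambda$ and for $\bigl(\prod_j\langle t_j\rangle\bigr)F_\Lambda$ to gain the weight $\langle t_j\rangle^{1/r'+}$ needed for integrability in $t_j$; this is precisely where the extra two units of weight (giving $\frac{9-n}{2}+\sigma$ rather than $\frac{7-n}{2}+\sigma$) and the $+2\sigma$ in \eqref{V assumption} originate, and none of it appears in your sketch. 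Relatedly, your mechanism for $n\ge7$ is not correct: the exponent $\frac{n-1}{n-4+\sigma}$ is not a Stein--Tomas dual restriction exponent (that numerology is $\frac{2(n+1)}{n+3}$ and is unrelated); it is the Hausdorff--Young exponent $r'$ chosen so that $n-4+\sigma+\frac{1-n}{r'}=0$ once the radial $s_j$-integrals are converted to integrals over $\mathbb R^{n}$ against $|k_j|^{(1-n)/r'}$. Without these arguments, or a worked-out substitute, the inequality $\|F\|_{L^1}\lesssim\prod_j\|V_j\|_\star$ --- which carries the entire quantitative content of the theorem --- is not established.
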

To streamline the notation somewhat, it is enough to demonstrate the results for the adjoint operator $W_J^*$ of $W_J^+$. 
 
 Note that $W_J^*$ is bounded on $L^2$ and fixed $f\in\S,$
 \begin{align}\label{W_J^*}
 W_J^* f = \lim_{\epsilon_1 \downarrow0} \cdots \lim_{\epsilon_J \downarrow 0} \lim_{\epsilon_0 \downarrow 0}\  -\frac{1}{2\pi i} \int_ \R R_0(\lambda-i\epsilon_0)\ \prod_{j=1}^J\big(VR_0(\lambda+i\epsilon_j)\big)f d\lambda,
 \end{align}
  as a weak limit. Let $(\mu, \nu)=(\frac{9-n}{2}+\sigma+, 2)$ or $(\frac{2(n-4+\sigma)}{n-1}+2\sigma+, \frac{n-1}{n-4+\sigma}).$ Consider that $\mathcal{F}(\langle x \rangle^\mu V) \in L^\nu(\R^n)$ is equivalent to $\widehat{V}\in H^{\mu}_\nu(\R^n),$ the generalized Sobolev space of order $\mu,$ and  $C^\infty_0(\R^n)$ is dense in $H^{\mu}_\nu(\R^n)$. Then in order to prove that $W_J^*\in\mathbb{B}(L^p(\R^n))$ for all $1\leq p\leq\infty$
 and $J\in\mathbb{N}^+$, by the density, it suffices to show  the following Proposition \ref{proposition Z_J}: 
 \begin{proposition}\label{proposition Z_J}
 	Let $n\geq5$ and $V_1, \cdots, V_J\in\S(\R^n)$  such that their Fourier transform $\widehat{V_1}, \cdots, \widehat{V_J}\in C_0^\infty(\R^n)$.  Fixed $f\in \mathcal{S}(\R^n),$ the operator $Z_J$ is defined  by
 	\begin{equation} \label{Z_J}
 		 Z_J f = \lim_{\epsilon_1 \downarrow0} \cdots \lim_{\epsilon_J \downarrow 0} \lim_{\epsilon_0 \downarrow 0}\  \frac{1}{2\pi i} \int_ \R R_0(\lambda-i\epsilon_0)\ \prod_{j=1}^J\big(V_jR_0(\lambda+i\epsilon_j)\big)f d\lambda,
 	\end{equation}
 	where the product should be taken from the left to the right. Then,   $Z_J$ can be extended to a bounded operator on $L^p(\R^n)$ for all $1 \leq p \leq \infty$ and $J\in\mathbb{N}^+$. In particular, fixed $0<\sigma\ll 1,$
 	\begin{align*}
 	\left\|Z_J\right\|_{\mathbb{B}(L^p)}\lesssim_{n, \sigma, J}
 	\begin{cases}
 		\prod_{j=1}^J \ \left\|\langle \cdot\rangle^{\frac{9-n}{2}+\sigma+} V_j\right\|_{L^2} & {\rm if}\ \ n=5,6,\\ 
 		\prod_{j=1}^J\left\| \F\left(\langle\cdot\rangle^{\frac{2(n-4+\sigma)}{n-1}+2\sigma+} V_j\right)\right\|_{L^{\frac{n-1}{n-4+\sigma}}} & {\rm if}\ \ n\geq7.
 	\end{cases}
 		\end{align*}
 \end{proposition}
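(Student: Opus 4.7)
The plan is to follow the strategy of Yajima and Erdoğan--Green: transform the limit in \eqref{Z_J} into an explicit multilinear plane-wave integral, and then control everything by an $L^1$ bound on the resulting amplitude. First, I would apply the splitting identity \eqref{the split} to each factor $R_0(\lambda \pm i\epsilon)$, reducing it to a difference of two Laplacian resolvents $R_\Delta$ evaluated at the shifted spectral parameters $-\tfrac{1}{2}\pm\tfrac{1}{2}\sqrt{1+4\lambda}$. After changing variable $\lambda = \eta^4+\eta^2$ with $\eta>0$ and letting the $\epsilon_j$ tend to zero in the prescribed order, the oscillatory pieces $R_\Delta^{\pm}(\eta^2)$ are represented by their plane-wave kernels (integrals over $S^{n-1}$ of $e^{\pm i\eta\omega\cdot(x-y)}$ against a smooth radial amplitude in $\eta$), while the smoothing pieces $R_\Delta(-\eta^2-1)$ are handled by the same scheme with a decaying Bessel-type kernel in place of the plane wave.

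After carrying out each $\eta$-integration and keeping track of the contour prescriptions, one exchanges each spectral variable for a one-dimensional ``time" variable $t_j\in\R$ and arrives at the representation from the outline,
\begin{align*}
Z_J f(x) = \int_{\Sigma_{J-1}}\int_{S^{n-1}}\int_{-\infty}^{\rho_J}\int_{\R^n} F(\omega_1,t_1,y_1,\dots,\omega_J,t_J,y_J)\, f(\bar{x}-\gamma_J)\, dy_J\, dt_J\, d\omega_J \cdots d\omega_1,
\end{align*}
with $\bar{x}$ the reflection of $x$ across $\omega_J^\perp$ and $\gamma_J$, $\rho_J$, $\Sigma_{J-1}$ as specified there. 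At each iteration one Fourier variable of $V_j$ gets frozen to a point of $S^{n-1}$, so $F$ factorises as a product of $J$ contributions, each depending only on its own $(\omega_j,t_j,y_j)$ and carrying a weight decaying in $t_j$ at the rate $\langle t_j\rangle^{-(n-1)/2}$ inherited from the free resolvent asymptotics.

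The main obstacle is the bound $\|F\|_{L^1((S^{n-1}\times\R\times\R^n)^J)}\lesssim \prod_j(\cdots V_j\cdots)$ announced in the outline. For $n=5,6$ the weight $\langle t_j\rangle^{-(n-1)/2}$ is square-integrable, so I would absorb it by Cauchy--Schwarz in $(t_j,\omega_j)$ and apply Plancherel in $y_j$, which produces the weighted $L^2$ norm $\|\langle\cdot\rangle^{(9-n)/2+\sigma+}V_j\|_{L^2}$; the extra $\sigma$ furnishes a logarithmic safety margin at the endpoint. For $n\ge 7$ the same weight is not square-integrable on the half-line, so one must instead invoke Stein--Tomas restriction on $S^{n-1}$ together with Hausdorff--Young at the conjugate exponent $\tfrac{n-1}{n-4+\sigma}<2$, which is precisely where the Fourier-side norm $\|\F(\langle\cdot\rangle^{\frac{2(n-4+\sigma)}{n-1}+2\sigma+}V_j)\|_{L^{(n-1)/(n-4+\sigma)}}$ arises; again the $\sigma$'s compensate for the endpoint loss in the restriction inequality.

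Once $\|F\|_{L^1}$ is controlled, the map $x\mapsto \bar{x}$ is an isometry of $\R^n$ for each fixed $\omega_J$, so Minkowski's integral inequality yields $\|Z_Jf\|_{L^p}\le \|F\|_{L^1}\|f\|_{L^p}$ uniformly for $1\le p\le\infty$, which is the required bound. The hardest technical step is the passage to the representation in the displayed formula, which requires careful justification of the iterated limits $\epsilon_0\downarrow0$, $\epsilon_J\downarrow 0,\ldots,\epsilon_1\downarrow 0$ by contour deformation and Fubini, using the assumption $\widehat{V_j}\in C_0^\infty$ to guarantee absolute convergence at each intermediate stage.
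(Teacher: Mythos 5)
Your overall skeleton (reduce $Z_J$ to an explicit representation $Z_Jf(x)=\int F\,f(\bar x-\gamma_J)$ and then conclude by Minkowski's inequality plus the isometry $x\mapsto\bar x$, so that everything hinges on $\|F\|_{L^1((S^{n-1}\times\R\times\R^n)^J)}$) matches the paper, but the two steps that carry all the weight are flawed. First, $F$ does \emph{not} factorize as a product of $J$ contributions each depending only on its own $(\omega_j,t_j,y_j)$: by \eqref{F} and \eqref{KJ} the amplitude contains $K_J(s_1\omega_1,\dots,s_J\omega_J)=\prod_{j}\widehat{V_j}(k_j-k_{j-1})$, which couples consecutive momenta precisely because each resolvent shares a variable with the neighbouring potential. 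If the kernel factorized, the $L^1$ bound would be trivial; the actual difficulty of the paper's Lemma \ref{lemma F} (cutoffs in $|y_j|$, Hausdorff--Young in $\vec t$, Hardy's inequality in the $k_j$, the Mikhlin multiplier theorem, and multilinear complex interpolation) exists exactly to disentangle this coupling. Second, your claimed decay $\langle t_j\rangle^{-(n-1)/2}$ ``inherited from the free resolvent asymptotics'' is unsubstantiated and is not what happens in this framework: decay in $t_j$ is not free, it has to be purchased by trading powers of $t_j$ for $\partial_{s_j}$-derivatives that land on $\widehat{V_j}$ (this is why the hypotheses impose weights/regularity on $V$, and why rough compactly supported potentials yield unbounded wave operators for large $p$ when $n\ge7$, cf.\ the counterexamples cited in the paper). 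The paper only obtains $\langle t_j\rangle^{-\frac12-}$ (for $n=5,6$) resp.\ $\langle t_j\rangle^{-\frac1{r'}-}$ (for $n\ge7$) after interpolating between the undifferentiated and once-differentiated bounds; with your claimed automatic $(n-1)/2$ decay no condition on $V$ beyond boundedness would be needed, which is false.

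Two further points. Your proposed route to the representation — splitting each $R_0(\lambda\pm i\epsilon)$ via \eqref{the split} into second-order resolvents at $\eta^2$ and $-1-\eta^2$ and using plane-wave kernels of $R_\Delta^\pm$ — is not what the paper does for the Born terms and would force you to control all cross terms of a $2^{J+1}$-fold product; the paper instead stays on the Fourier side of the fourth-order symbol and uses the algebraic factorization behind \eqref{phi}, namely that $|\xi-k|^4+|\xi-k|^2-|\xi|^4-|\xi|^2=\phi_k(\xi)^{-1}(|\xi-k|^2-|\xi|^2)$ with $\phi_k>0$, which is the mechanism producing the one-dimensional $t$-integral and the kernel $h_{s\omega}$ in \eqref{T_k3}; this is the step your outline leaves entirely to ``contour deformation and Fubini,'' whereas the paper needs Lemmas \ref{lemma h} and \ref{lemma G} (uniform-in-$\epsilon$ kernel bounds, Littlewood--Paley in $s$, integration by parts) to justify the $L^p$-limit. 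Finally, for $n\ge7$ the weighted Fourier norm does not come from Stein--Tomas restriction: it arises from Hausdorff--Young in the $\vec t$ variables at $r'=\frac{n-1}{n-4+\sigma}$ combined with Hardy's inequality and the multiplier theorem; you give no mechanism by which a restriction estimate would produce the stated $L^{\frac{n-1}{n-4+\sigma}}$ norm of $\F(\langle\cdot\rangle^{\cdots}V_j)$. As written, the proposal therefore has a genuine gap at its core bound on $\|F\|_{L^1}$.
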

  Next, we show the proof of Proposition \ref{proposition Z_J}, which is divided into the following two subsections:
 \subsection{The existence of $Z_Jf$ as $L^p$-limit  }In this subsection, we  show that the limit \eqref{Z_J3}
  	exists in the strong topology of $L^p(\R^n),$ i.e.  $Z_Jf$ can be written as the expression \eqref{Z_J3} in the sense of $L^p$  for all 
   $1 \leq p \leq \infty.$

 Let  $\vec{\epsilon}=(\epsilon_1, \epsilon_2, \cdots, \epsilon_J )$ and denote
\begin{align}\label{Z_J,epsilon}
	Z_{J, \vec{\epsilon}, \epsilon_0}f(x)= \frac{1}{2\pi i} \int_ \R\Big[ R_0(\lambda-i\epsilon_0)\ 
	\prod_{j=1}^J\big(V_jR_0(\lambda+i\epsilon_j)\big)f\Big](x) d\lambda,   \ \ \ \  \forall f\in\S.
\end{align}

 Take the Fourier transform for $Z_{J, \vec{\epsilon}, \epsilon_0}f(x)$ and set $\vec{k}=(k_1, k_2, \cdots, k_J ),$ then
 \begin{align}\label{FZ}
 &	\F(Z_{J, \vec{\epsilon},  \epsilon_0}f)(\xi)=\frac{-i}{(2\pi)^{\frac{n}{2}J+1}}\int_\R
 	\frac{1}{|\xi|^4+|\xi|^2-\lambda+i\epsilon_0}\times\nonumber\\
 		&\ \ \ \ \ \ \ \ \ \int_{\R^{Jn}}
 	\left(\prod_{j=1}^J
 	\frac{\widehat{V_j}(k_j)}{|\xi-\sum_{\ell=1}^{j}k_{\ell}|^4+|\xi-\sum_{\ell=1}^{j}k_{\ell}|^2-\lambda-i\epsilon_j}\right)\hat{f}(\xi-\sum_{\ell=1}^{J}k_{\ell})d\vec{k}d\lambda.
 \end{align}
  Perform the $\lambda$ integration by the residue theorem and denote $Z_{J, \vec{\epsilon} }f(x)	:=\lim_{\epsilon_0\rightarrow0^+}Z_{J, \vec{\epsilon}, \epsilon_0}f(x),$  then taking the limit $\epsilon_0\to 0^+$ for \eqref{FZ}, one has
  \small
      \begin{align*}
     \F(Z_{J, \vec{\epsilon} }f)(\xi)=\frac{1}{(2\pi)^{\frac{n}{2}J}}\int_{\R^{Jn}}&
     		\left(\prod_{j=1}^J	\frac{\widehat{V_j}(k_j)}{|\xi-\sum_{\ell=1}^{j}k_{\ell}|^4+|\xi-\sum_{\ell=1}^{j}k_{\ell}|^2-|\xi|^4-|\xi|^2-i\epsilon_j}\right)\hat{f}(\xi-\sum_{\ell=1}^{J}k_{\ell})d\vec{k}.
     	\end{align*}
 Change the variables $(k_1, k_2, \cdots, k_J )$  by 
  $(k_1-k_0, k_2-k_1, \cdots, k_J -k_{J-1})$ with $k_0=0,$  then
  \begin{align}
  \label{F(Z_J,1)}
  	\F(Z_{J, \vec{\epsilon} }f)(\xi)=\frac{1}{(2\pi)^{\frac{n}{2}J}}\int_{\R^{Jn}}
  	\left(\prod_{j=1}^J	\frac{\widehat{V_j}(k_j-k_{j-1})}{|\xi-k_{j}|^4+|\xi-k_{j}|^2-|\xi|^4-|\xi|^2-i\epsilon_j}\right)
  	\hat{f}(\xi-k_{J})d\vec{k}.
  \end{align}
   Define  the multiplier operator $T_{k, \epsilon}$  depending on the parameters $k\in\R^n\backslash{0}$ and $\epsilon>0$ by 
   \begin{align}\label{T_k,1}
   	T_{k, \epsilon}f(x)=\frac{1}{(2\pi)^{\frac{n}{2}}}\F^{-1}
   	\left(\frac{\hat{f}(\xi)}{|\xi-k|^4+|\xi-k|^2-|\xi|^4-|\xi|^2-i\epsilon}\right)(x).
   	\end{align}
  Then  $Z_{J}f(x)$ can be expressed as 
  \begin{align}\label{Z_J3}
  	Z_{J }f(x)=\lim_{\epsilon_1 \downarrow0} \cdots \lim_{\epsilon_J \downarrow 0}\int_{\R^n} T_{k_1, \epsilon_1}\int_{\R^n}T_{k_2, \epsilon_2}\int_{\R^n}\dots\int_{\R^n}K_J(k_1,\cdots, k_J)T_{k_J, \epsilon_J}f_{k_J} (x)d\vec{k},
  	\end{align}
    as a weak limit, 
  	where 
  	\begin{align}\label{KJ}
  \vec{k}=(k_1, k_2, \cdots, k_J ),\ \ f_{k_J} (x)=e^{ik_Jx}f(x),\ \ f\in\S, \ \text{and}\ 	K_J(k_1,\cdots, k_J)=\prod_{j=1}^J	\widehat{V_j}(k_j-k_{j-1}).
  	\end{align}
     As a result,   establishing Proposition \ref{proposition Z_J} reduces to proving that the limit \eqref{Z_J3} exists in the strong topology of $L^p(\mathbb{R}^n)$ for all $1 \leq p \leq \infty$ and satisfies the bounds in Proposition~\ref{proposition Z_J}. This subsection addresses the former, which is given by the following Theorem \ref{theorem Lp limit}.
  \begin{theorem} \label{theorem Lp limit}
  Let  $T_{k, \epsilon}$ and $Z_J$ be defined by \eqref{T_k,1} and \eqref{Z_J3}, respectively. It holds that
  \begin{itemize}
      \item[(i)] Fix $f(k,x)\in\S(\R^n_k, \S(\R^n_x))$ and set $h_{k}(x):
      =i(2\pi)^{-n}\F^{-1}\big((|\cdot-k|^2+|\cdot|^2+1)^{-1}\big)(x),$ then
   \begin{align}\label{T4}
 	\lim_{\epsilon\to 0^+}	T_{k, \epsilon}f(k, x)
 	=\frac{1}{|k|}\int_0^\infty e^{-i|k|t}\int_{\R^n}h_k(y)f(k, x+2t\omega-y)dydt\ \     a.e. \ \text{for}\  x\in\R^n.
 \end{align} 
 \item[(ii)] The limit \eqref{Z_J3} exists in the strong topology of $L^p(\mathbb{R}^n)$ for all $1 \leq p \leq \infty,$ namely
\begin{align}\label{Z8}
 	Z_{J }f(x)=\int_{\R^n} T_{k_1 }\int_{\R^n}T_{k_2 }\int_{\R^n}\cdots\int_{\R^n}K_J(k_1,\dots, k_J)T_{k_J}f_{k_J} (x)d\vec{k},\ \ J\in\N^+,
 \end{align}
 in the sense of $L^p$ for all $1 \leq p \leq \infty,$
where $T_{k}f(k, x):=	\lim_{\epsilon\to 0^+}	T_{k, \epsilon}f(k, x)$ and $\vec{k},$ $f_{k_J},$ $K_J$  are defined by   \eqref{KJ}.
 \end{itemize}
  \end{theorem}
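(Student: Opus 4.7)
My plan for part (i) is to reduce the multiplier calculation to an elementary integral identity and then recognize the resulting expression as the claimed formula. Observe the algebraic factorization
\[
|\xi-k|^4 + |\xi-k|^2 - |\xi|^4 - |\xi|^2 = |k|\bigl(|k|-2\omega\cdot\xi\bigr)\bigl(|\xi-k|^2+|\xi|^2+1\bigr),
\]
where $\omega=k/|k|$, so the symbol of $T_{k,\epsilon}$ can be rewritten as $\phi_k(\xi)/\bigl(|k|(|k|-2\omega\cdot\xi) - i\epsilon\phi_k(\xi)\bigr)$ with $\phi_k(\xi)=(|\xi-k|^2+|\xi|^2+1)^{-1}>0$. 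Because $\epsilon\phi_k(\xi)>0$, the identity $(a-i\delta)^{-1} = i\int_0^\infty e^{-(ia+\delta)t}\,dt$ applies; after the substitution $s=|k|t$ this gives
\[
\frac{1}{|k|(|k|-2\omega\cdot\xi) - i\epsilon\phi_k(\xi)} = \frac{i}{|k|}\int_0^\infty e^{-i|k|s}\,e^{2is\omega\cdot\xi}\,e^{-\epsilon\phi_k(\xi)s/|k|}\,ds.
\]
Moving $\F^{-1}$ inside the $s$-integral via Fubini and interpreting $e^{2is\omega\cdot\xi}\phi_k(\xi)$ on the Fourier side as convolution by $h_k$ composed with translation by $2s\omega$ produces the desired formula, modulo a damping factor $e^{-\epsilon\phi_k s/|k|}$. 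Passing $\epsilon\to 0^+$ is then handled by dominated convergence, using that $f(k,\cdot)\in\S(\R^n_x)$ ensures rapid decay of the translated $f$ uniformly in the other variables, while $h_k\in L^1(\R^n)$ comes from the integrability of the symbol $\phi_k$ for $n\ge 5$.

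For part (ii), I would argue by induction on $J$. The base case $J=1$ is essentially part (i) combined with the compact support of $\widehat{V_1}$. For the inductive step, I apply part (i) to the innermost operator $T_{k_J,\epsilon_J}$ acting on the Schwartz function $f_{k_J}\in\S(\R^n_x)$, converting it into the explicit integral representation; the outcome is a function of $(k_1,\dots,k_{J-1},x)$ with auxiliary variables $(\omega_J, t_J, y_J)$. Multiplying by $K_J$ and integrating over $k_J$ preserves Schwartz regularity in the remaining parameters because $\widehat{V_J}\in C_c^\infty$ supplies both smoothness and compactness. Iterating inward, each successive limit $\epsilon_j\to 0^+$ is pushed inside the iterated $dk$-integrals via dominated convergence, with a pointwise dominating function obtained uniformly in $\vec\epsilon$ by combining the Schwartz decay of the initial $f$ with the compactness of the $k$-supports from $K_J$. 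The final $L^p$-convergence reduces to the absolute integrability of the coefficient function in all its parameters --- precisely the content of the bounds in Proposition~\ref{proposition Z_J} --- whereupon Minkowski's inequality together with the fact that translations and the reflection $x\mapsto\bar x$ are $L^p$-isometries yields the strong limit for all $1\le p\le\infty$.

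The main obstacle will be justifying the successive interchanges of limits and iterated integrals. The factor $1/|k_j|$ is not locally integrable in one dimension, so its tameness relies on the spherical Jacobian $|k_j|^{n-1}$ for $n\ge 5$ and on the smoothness of $\widehat{V_j}$ near the origin; the unbounded $t_j$-range inherited from the formula of part (i) is only controlled at the very last step of the iteration, through the Schwartz decay of $f$ in the translated argument. A careful ordering of the integrations --- exploiting the polar decomposition $k_j=|k_j|\omega_j$ and grouping the variables into the layers $S^{n-1}\times\R^+\times\R^n$ suggested in the outline --- is what ultimately produces an absolutely integrable integrand over $\Sigma_{J-1}\times S^{n-1}\times(0,\rho_J)\times\R^n$, allowing Fubini and dominated convergence to deliver the claimed strong $L^p$-limit.
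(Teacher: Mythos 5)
Your part (i) follows the paper's route (the same factorization of the symbol, the identity $(a-i\delta)^{-1}=i\int_0^\infty e^{-(ia+\delta)t}\,dt$, the substitution $s=|k|t$, then dominated convergence), but two justifications are not right. The symbol $\phi_k(\xi)\sim|\xi|^{-2}$ at infinity is \emph{not} integrable in any dimension $n\ge 3$, so ``$h_k\in L^1$ from integrability of the symbol'' is false; what one actually needs, and what the paper proves in Lemma \ref{lemma h} by exploiting the smoothness of $\phi_k$ (bounding $\mathcal{F}^{-1}(\nabla_\xi^{N}\psi_{s\omega})$ with $N=n\pm\sigma$), is the pointwise bound $\sup_{\epsilon\ge 0}\big|h_{s\omega,\epsilon t}(x)\big|\lesssim s^{n-2}\min\big(|sx|^{-(n-\sigma)},|sx|^{-(n+\sigma)}\big)$ uniformly in $\epsilon$ and $t$. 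The fact that the damping multiplier $e^{-\epsilon\phi_k s/|k|}$ has modulus at most $1$ does \emph{not} give a pointwise domination of the kernel $h_{s\omega,\epsilon t}$, so without this lemma your dominated-convergence passage $\epsilon\to0^+$ is unjustified. This is a gap of detail rather than of approach.

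Part (ii) contains the genuine gap. First, your induction applies part (i) at each layer to a ``Schwartz'' input, but one layer does not preserve Schwartz regularity in $x$: for $g\in\mathcal{S}$, $T_{k}g(x)=\frac{1}{|k|}\int_0^\infty e^{-i|k|t}(h_k\ast g)(x+2t\omega)\,dt$ fails to decay rapidly in the direction $-\omega$ (along the ray $\{x+2t\omega\}$ the argument passes near the origin no matter how large $|x|$ is), and multiplying by $\widehat{V_J}\in C_c^\infty$ and integrating in $k_J$ cannot restore rapid $x$-decay. Second, and more seriously, your proposed source of control for the unbounded $t_j$-integrals --- ``the Schwartz decay of $f$ in the translated argument'' --- cannot yield \emph{strong} $L^p$ convergence: translation is an isometry of $L^p$, so $\|(h_{s\omega,\epsilon t}\ast f)(\cdot+2t\omega)\|_{L^p_x}$ is independent of $t$ and the $t$-integral diverges in $L^p$ unless decay in $t$ is extracted from the oscillation $e^{-ist}$ by integrating by parts in the radial variable $s$, uniformly in $\epsilon$, at the cost of $k$-derivatives of the input and of $\widehat{V_j}$. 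This is precisely the content of the paper's Lemma \ref{lemma G} (Littlewood--Paley in $s$, two integrations by parts, interpolation to get $\langle t\rangle^{-1-}$, then Minkowski and Young), whose bound $\int|k|^{-4-}\langle k\rangle^{1+}\sum_{|\alpha|\le 2}\|D_k^\alpha f(k,\cdot)\|_{L^p}\,dk$ is exactly what makes the $J$-fold iteration work without any Schwartz-preservation claim. Finally, invoking ``the bounds in Proposition \ref{proposition Z_J}'' to close the argument is misplaced at this stage: those bounds rest on the $L^1$ estimate for $F$ proved afterwards (Lemmas \ref{lemma Z} and \ref{lemma F}), and to transfer the $\vec\epsilon$-limit through that representation you would additionally need an $\epsilon$-uniform version (e.g.\ convergence of the regularized coefficient functions in $L^1$), which your proposal does not supply.
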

  We first deal with 
 {\bf the proof of Theorem \ref{theorem Lp limit} (i),} which is divided into the next three steps:
\vskip 0.1cm
    {\bf Step I.} \underline {Express the multiplier operator $T_{k, \epsilon}$ by \eqref{T_k3}.}  
   \vskip 0.1cm
    Note that 
   $$|\xi-k|^4+|\xi-k|^2-|\xi|^4-|\xi|^2-i\epsilon
   =\frac{1}{\phi_k(\xi)}\left(|\xi-k|^2-|\xi|^2-i\epsilon\phi_k(\xi)\right),
   $$
  	where 
  \begin{align}\label{phi}
  	\phi_k(\xi)	=\left(|\xi-k|^2+|\xi|^2+1\right)^{-1}.
  \end{align}
  By $	\phi_k(\xi)	>0$, one has 
  $$\frac{1}{|\xi-k|^2-|\xi|^2-i\epsilon\phi_k(\xi)}=i\int_0^\infty e^{-\epsilon\phi_k(\xi)t} e^{-i|\xi-k|^2t+i|\xi|^2t}dt.$$
  Thus $T_{k, \epsilon}$ can be expressed as
  \begin{align}
  	T_{k, \epsilon}f(x)
  	=\frac{i}{(2\pi)^{\frac{n}{2}}}\int_0^\infty
  	e^{-i|k|^2t}\F^{-1}\Big(\phi_k(\xi)e^{-\epsilon\phi_k(\xi)t}\F \big[f(\cdot+2t\omega )\big](\xi)\Big)(x)dt.
  	\end{align}
  We utilize the change of variable $|k|t\longmapsto t$
  and take  $k=s\omega,\  (s, \omega)\in \R^+\times S^{n-1},$  where $S^{n-1}$ denotes the unit sphere,  then by 
  the formula for the Fourier transform of a product, 
  \begin{align}\label{T_k3}
  	T_{k, \epsilon}f(x)
  	=s^{-1}\int_0^\infty e^{-ist} \left(h_{s\omega, \epsilon t}\ast f\right)(x+2t\omega)dt,
  	\end{align}
  where   $\ast $ denotes convolution and 
  \begin{align}\label{h}
  	h_{s\omega, \epsilon t}(x)=\frac{i}{(2\pi)^{n}}\F^{-1}\Big(\phi_{s\omega}(\xi)e^{-\frac{\phi_{s\omega}(\xi)}{s} \epsilon t}\Big).
  	\end{align}
   \vskip 0.1cm
    {\bf Step II.} \underline {Analyze $h_{s\omega, \epsilon t}$ for all $\epsilon\geq0$ by Lemma \ref{lemma h}.}  
   \vskip 0.1cm 

 \begin{lemma}\label{lemma h} 
 	Fix $0<\sigma\ll 1$ and $h_{s\omega, \epsilon t}(x)$ is defined by \eqref{h}. Then
 	\begin{align}\label{estimate h}
 		\sup_{\epsilon\geq0}\left| \partial_s^jh_{s\omega, \epsilon t}(x)\right|\lesssim s^{n-2-j}
 		\min\big(\frac{1}{|sx|^{n-\sigma}},  \frac{1}{|sx|^{n+\sigma}}\big), \  \text{for}\ j=0,1,2.
 		\end{align}
 		In particular,  $\partial_s^jh_{s\omega, \epsilon t}$ converges to $\partial_s^jh_{s\omega}$ $a.e.$ and in $L^{1}(\R^n)$ as $\epsilon\rightarrow0,$  for $j=0,1,2,$ where $h_{s\omega}:=h_{s\omega, 0}$.
 \end{lemma}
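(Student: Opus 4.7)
The plan is to rescale the Fourier integral defining $h_{s\omega,\epsilon t}$ so as to isolate the $s$-dependence, then reduce the bound to standard symbol estimates and an IBP argument. First, the substitution $\xi = s\eta$ in \eqref{h} transforms the formula into
$$
h_{s\omega,\epsilon t}(x) \,=\, \frac{is^{n-2}}{(2\pi)^n}\int_{\R^n} e^{isx\cdot\eta}\, a_s(\eta)\, d\eta,
\qquad a_s(\eta) := \psi_s(\eta)\,e^{-s^{-3}\psi_s(\eta)\epsilon t},
$$
where $\psi_s(\eta) = (|\eta-\omega|^2+|\eta|^2+s^{-2})^{-1}$. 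Completing the square gives $\psi_s(\eta) = \tfrac{1}{2}(|\eta-\omega/2|^2+c_s)^{-1}$ with $c_s = \tfrac{1}{4}+\tfrac{1}{2s^2}\geq\tfrac{1}{4}$, so $\psi_s$ is a Yukawa-type symbol of order $-2$ uniformly in $s>0$ and $\omega\in S^{n-1}$. With $u := s^{-3}\psi_s\epsilon t\geq 0$, the exponential factor is tamed by the elementary inequality $u^k e^{-u}\lesssim_k 1$, so its derivatives never worsen the H\"ormander-type symbol bounds. A direct computation then yields
$$
|\partial_s^i D_\eta^\alpha\, a_s(\eta)| \lesssim s^{-i}(1+|\eta|)^{-2-|\alpha|}, \qquad i=0,1,2,
$$
uniformly in $s>0$, $\omega\in S^{n-1}$ and $\epsilon t\geq 0$.

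Second, I would show that any symbol $b$ satisfying $|D^\alpha b|\lesssim (1+|\eta|)^{-2-|\alpha|}$ has inverse Fourier transform obeying $|\F^{-1}b(y)|\lesssim \min(|y|^{2-n}, |y|^{-N})$ for any $N > 0$. For $|y|\leq 1$ one uses a low/high split at $R=|y|^{-1}$: the low part is $\lesssim \int_{|\eta|\leq R}(1+|\eta|)^{-2}d\eta\lesssim R^{n-2}$, and after $m>n-2$ integrations by parts the high part is $\lesssim |y|^{-m} R^{n-2-m}$; both equal $|y|^{2-n}$. For $|y|\geq 1$ one integrates by parts $m\geq N$ times, giving $|y|^{-m}\int(1+|\eta|)^{-2-m}d\eta\lesssim |y|^{-N}$. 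Applied with $b=a_s$ and $y=sx$, and noting $|sx|^{2-n}\leq |sx|^{\sigma-n}$ for $|sx|\leq 1$ (since $\sigma\leq 2$), the prefactor $s^{n-2}$ then delivers the $j=0$ case of \eqref{estimate h}.

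For $j=1,2$ I would differentiate the rescaled integral under the sign. Each application of $\partial_s$ produces three classes of terms: (a) an $s^{-1}$ from the prefactor $s^{n-2}$; (b) a phase term from $\partial_s e^{isx\cdot\eta}=ix\cdot\eta\,e^{isx\cdot\eta}$, which raises the symbol order by $+1$ but contributes an extra factor $|x|=|sx|/s$ that exactly offsets the loss; (c) a symbol term $\partial_s a_s$, which by the uniform estimate remains of order $-2$ and gains an extra $s^{-1}$. In every case the resulting integrand still fits the framework of the previous paragraph (at a possibly shifted order, compensated by the $|x|$-factors), producing the desired weight $s^{n-2-j}\min(|sx|^{\sigma-n},|sx|^{-n-\sigma})$. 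The convergence $\partial_s^j h_{s\omega,\epsilon t}\to\partial_s^j h_{s\omega}$ as $\epsilon\downarrow 0$, both a.e.\ in $x$ and in $L^1(\R^n_x)$, then follows from dominated convergence: after enough IBPs the integrand is absolutely convergent, the pointwise convergence $a_s\to\psi_s$ transfers to the integral, and the $\epsilon$-uniform bound just proved supplies an integrable majorant (since $\int\min(|y|^{\sigma-n},|y|^{-n-\sigma})\,dy<\infty$ for $0<\sigma<1$).

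The principal technical obstacle is the symbol-level bookkeeping in the first step: one must verify that the exponential factor $e^{-s^{-3}\psi_s\epsilon t}$ does not degrade the H\"ormander bounds on $\psi_s$, and that each $s$-derivative produces exactly the $s^{-1}$-savings required, no more and no less, to match the statement's $s^{n-2-j}$ weight. The universal inequality $u^k e^{-u}\lesssim 1$ together with the uniform lower bound $c_s\geq 1/4$ are the key devices making every estimate uniform in $\epsilon t\geq 0$ and $\omega\in S^{n-1}$, thereby enabling the $\epsilon\downarrow 0$ limit to be controlled cleanly.
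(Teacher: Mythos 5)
Your proposal is correct and follows the same basic strategy as the paper (rescale $\xi=s\eta$ to pull out $s^{n-2}$, treat $\psi_{s\omega}$ as a symbol of order $-2$ uniformly in $s,\omega$, tame the exponential with $u^ke^{-u}\lesssim 1$, and conclude the $\epsilon\to0$ convergence by dominated convergence), but the implementation differs in two places worth noting. First, you differentiate the rescaled oscillatory integral in $s$, so phase terms $i(x\cdot\eta)e^{isx\cdot\eta}$ appear and must be absorbed by writing $|x|=|sx|/s$ and working with symbols of order $-1$ and $0$; the paper instead differentiates the unscaled formula before rescaling, so each $\partial_s^j h_{s\omega,\epsilon t}$ is again exactly $s^{n-2-j}\F^{-1}(\text{order }{-2}\text{ symbol})(sx)$ with no phase factors — cleaner bookkeeping, and it sidesteps justifying differentiation under the sign of a non-absolutely-convergent integral (in your version this, and the integrations by parts, should be done through a smooth dyadic cutoff to avoid boundary terms). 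Second, your kernel-decay step is a low/high frequency split plus IBP giving $\min(|y|^{2-n},|y|^{-N})$, which is more elementary and arguably tighter near the origin than the paper's device of taking $N=n\pm\sigma$ ``fractional'' derivatives in $L^1$ (which forces their remark about a possible distribution supported at zero); your observation that $|y|^{2-n}\le|y|^{\sigma-n}$ for $|y|\le1$ correctly reconciles the two forms. One small point you should make explicit when writing this up: the claimed uniform bound $|\partial_s a_s|\lesssim s^{-1}\langle\eta\rangle^{-2}$ is not immediate from $\partial_s\psi_s=2s^{-3}\psi_s^2$; it requires $\psi_s\le s^2$ (and likewise $\epsilon t s^{-3}\psi_s=u$ with $ue^{-u}\lesssim1$), which is exactly what makes the small-$s$ powers come out as $s^{-j}$ rather than worse. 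Finally, for the convergence you rely on pointwise convergence plus the uniform majorant, whereas the paper proves the quantitative bound $|\partial_s^j h_{s\omega,\epsilon t}-\partial_s^j h_{s\omega}|\lesssim \epsilon t\, s^{n-5-j}\min(|sx|^{\sigma-n},|sx|^{-n-\sigma})$ via $e^{-u}-1=-u\int_0^1 e^{-\theta u}d\theta$; both routes are valid, the paper's being slightly stronger.
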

 \begin{proof}
 We  first claim the estimate \eqref{estimate h} for $h_{s\omega}$ with $j=0.$ Note that 
 \begin{align}\label{lemma h1}
 	h_{s\omega}(x)&=\frac{i}{(2\pi)^{n}}\F^{-1}\Big(\frac{1}{|\xi-s\omega|^2+|\xi|^2+1}\Big)(x)\nonumber\\
 	&=\frac{i}{(2\pi)^{n}}s^{n-2}\F^{-1}\Big(\frac{1}{|\xi-\omega|^2+|\xi|^2+s^{-2}}\Big)(sx):=\frac{i}{(2\pi)^{n}}s^{n-2}\F^{-1}\Big(\psi_{s\omega}(\xi)\Big)(sx).
 	\end{align}
 Since  $\left|\nabla_\xi^N \psi_{s\omega}(\xi)\right|\lesssim\langle\xi\rangle^{-2-N},$ then
 $\nabla_\xi^N \psi_{s\omega}(\xi)\in L^1(\R^n)$ provided $N>n-2.$ 
 Furthermore, its inverse Fourier transform is a bounded function, namely:
 $$\left|\F^{-1}\Big(\nabla_\xi^N\psi_{s\omega}(\xi)\Big)(x)\right|=|x|^N\left|\F^{-1}\Big(\psi_{s\omega}(\xi)\Big)(x)\right| \lesssim1.$$
 Thus, by choosing $N=n\pm\sigma $ with $0<\sigma\ll 1,$ we obtain that
 \begin{align*}
 	\left| h_{s\omega}(x)\right|=\eta+s^{n-2}O\left(\min\big(\frac{1}{|sx|^{n-\sigma}},  \frac{1}{|sx|^{n+\sigma}}\big)\right),
 	\end{align*}
 	where $\eta$ is a distribution supported at zero. Observe that $\F\left(h_{s\omega}\right)=\frac{i}{(2\pi)^{n}}\left(|\xi-k|^2+|\xi|^2+1\right)^{-1}$ tends to zero as $|\xi|\longrightarrow\infty,$ then $\eta=0,$ which implies that we complete the proof of  the estimate \eqref{estimate h} for $h_{s\omega}$ with $j=0.$
 
 As for $\partial_s^jh_{s\omega}(x)$ with $j=1,2,$  we can check that 
 \begin{align*}
 &	\partial_sh_{s\omega}(x)=\frac{i}{(2\pi)^{n}}s^{n-3}\F^{-1}\Big(2(\xi\omega-1)\psi_{s\omega}^2\Big)(sx),\\
 &\partial_s^2h_{s\omega}(x)=\frac{i}{(2\pi)^{n}}s^{n-4}\F^{-1}\Big(8(\xi\omega-1)^2\psi_{s\omega}^3-2\psi_{s\omega}^2\Big)(sx).
 	\end{align*}
 By virtue of 
  \begin{align*}
 		\nabla_\xi^N\Big(2(\xi\omega-1)\psi_{s\omega}^2\Big)\lesssim\langle\xi\rangle^{-3-N},\ \text{and}\ \ 
 	\nabla_\xi^N\Big(8(\xi\omega-1)^2\psi_{s\omega}^3-2\psi_{s\omega}^2\Big)\lesssim\langle\xi\rangle^{-4-N},
 \end{align*}
 and analogous  to the case  for $j=0,$ we obtain the desired conclusion for  $\partial_s^jh_{s\omega}(x)$ with $j=1,2.$

 Next, we start to deal with $\sup_{\epsilon>0}\left| \partial_s^jh_{s\omega, \epsilon t}(x)\right|$ for $j=0,1,2.$ Note that
  \begin{align*}
 	&h_{s\omega,\epsilon t}(x)=\frac{i}{(2\pi)^{n}}s^{n-2}\F^{-1}\Big(\psi_{s\omega}e^{-\frac{\epsilon t}{s^3}\psi_{s\omega}}\Big)(sx),\\
 		&	\partial_sh_{s\omega,\epsilon t}(x)=\frac{i}{(2\pi)^{n}}s^{n-3}\F^{-1}\left(2(\xi\omega-1)\psi_{s\omega}^2e^{-\frac{\epsilon t}{s^3}\psi_{s\omega}}+\Big(\psi_{s\omega}-2(\xi\omega-1)\psi_{s\omega}^2\Big)\frac{\epsilon t}{s^3}\psi_{s\omega}e^{-\frac{\epsilon t}{s^3}\psi_{s\omega}}\right)(sx),\\
  &	\partial_s^2h_{s\omega, \epsilon t}(x)=\frac{i}{(2\pi)^{n}}s^{n-4}\F^{-1}\bigg[\Big(8(\xi\omega-1)^2\psi_{s\omega}^3-2\psi_{s\omega}^2\Big)e^{-\frac{\epsilon t}{s^3}\psi_{s\omega}}+\Big(-2\psi_{s\omega}+(6\xi\omega-4)\psi_{s\omega}^2\\
  	&-16(\xi\omega-1)^2\psi_{s\omega}^3\Big)\frac{\epsilon t}{s^3}\psi_{s\omega}e^{-\frac{\epsilon t}{s^3}\psi_{s\omega}}
  	+\Big(\psi_{s\omega}-4(\xi\omega-1)\psi_{s\omega}^2+4(\xi\omega-1)^2\psi_{s\omega}^3\Big)\big(\frac{\epsilon t}{s^3}\psi_{s\omega}\big)^2 e^{-\frac{\epsilon t}{s^3}\psi_{s\omega}}\bigg](sx).
  \end{align*}
 Consider that $\sup_{\alpha>0} \alpha^{r}e^{-\alpha}\lesssim 1$ for a fixed $r>0$, which yields that the bound in the bracket is independent of  $\epsilon,$   and combine with 
 $\left|\nabla_\xi^N \psi_{s\omega}(\xi)\right|\lesssim\langle\xi\rangle^{-2-N},$  analogous to the proof  of the case for $\partial_s^jh_{s\omega}(x)$ with $j=0,1,2,$ we obtain the estimate \eqref{estimate h}.
 
 In the following, we prove that $\partial_s^jh_{s\omega, \epsilon t}$ converges to $\partial_s^jh_{s\omega}$ $a.e.$ and in $L^{1}(\R^n)$ as $\epsilon\rightarrow0,$  for $j=0,1,2.$ According to $\left|\nabla_\xi^N \psi_{s\omega}(\xi)\right|\lesssim\langle\xi\rangle^{-2-N}$ and 
 \begin{align*}
 &\left|h_{s\omega, \epsilon t}(x)-h_{s\omega}(x)\right|=\frac{i}{(2\pi)^{n}}s^{n-2}\F^{-1}\left(\big (e^{-\frac{\epsilon t}{s^3}\psi_{s\omega}}-1\big)\psi_{s\omega}\right)(sx),\\ &e^{-\frac{\epsilon t}{s^3}\psi_{s\omega}}-1=-\frac{\epsilon t}{s^3}\psi_{s\omega}\int_0^1 e^{-\theta\frac{\epsilon t}{s^3}\psi_{s\omega}}d\theta,
 \end{align*}
 it's not hard to obtain that 
 $$\left| \nabla_\xi^N\left(\psi_{s\omega}\big(e^{-\frac{\epsilon t}{s^3}\psi_{s\omega}}-1\big)\right)\right|
 \lesssim\frac{\epsilon t}{s^3}\langle\xi\rangle^{-4-N},$$
 which gives that 
 $$\left|h_{s\omega, \epsilon t}(x)-h_{s\omega}(x)\right|\lesssim \epsilon t s^{n-5}	\min\big(\frac{1}{|sx|^{n-\sigma}},  \frac{1}{|sx|^{n+\sigma}}\big),$$
 followed  from the way to get the estimate \eqref{estimate h}.
 Hence,  $h_{s\omega, \epsilon t}$ converges to $h_{s\omega}$ $a.e.$ as $\epsilon\rightarrow0,$  and by Lebesgue's dominated convergence theorem,  $h_{s\omega, \epsilon t}\longrightarrow h_{s\omega}$ in $L^{1}(\R^n)$ as $\epsilon\rightarrow0.$ 
 Similarly, 
 \begin{align*}
  \left|\partial_s^jh_{s\omega, \epsilon t}(x)-\partial_s^jh_{s\omega}(x)\right|\lesssim \epsilon t s^{n-5-j}	\min\big(\frac{1}{|sx|^{n-\sigma}},  \frac{1}{|sx|^{n+\sigma}}\big),  \ \ \text{for}\ j=1,2, 
 \end{align*}
 which gives that 
  $\partial_s^jh_{s\omega, \epsilon t}$ converges to $\partial_s^jh_{s\omega}$ $a.e.$ and in $L^{1}(\R^n)$ as $\epsilon\rightarrow0,$  for $j=1,2.$ 
  Thus we complete the proof.
 \end{proof}
    {\bf Step III.} \underline {Compelte the proof of Theorem \ref{theorem Lp limit} (i).}  
   \vskip 0.2cm 
Let $f(k,x)\in\S(\R^n_k, \S(\R^n_x))$ and take into account Lemma \ref{lemma h}, then by \eqref{T_k3} and  Lebesgue's dominated convergence theorem, 
 \begin{align*}
 	\lim_{\epsilon\to 0^+}	T_{k, \epsilon}f(k, x)
 	=\frac{1}{|k|}\int_0^\infty e^{-i|k|t}\int_{\R^n}h_k(y)f(k, x+2t\omega-y)dydt\ \     a.e. \ \text{for}\  x\in\R^n.
 \end{align*}
 Thus we finish the proof of Theorem \ref{theorem Lp limit} (i).
 \vskip0.2cm
 Next,  we come to show {\bf the proof of Theorem \ref{theorem Lp limit} (ii),}  which is divided into the following two claims:
 \vskip 0.1cm 
  {\bf Claim A.} \underline {Show the proof of Theorem \ref{theorem Lp limit} (ii) for $J=1$ by Lemma \ref{lemma G}.} 
    \vskip 0.1cm 
 Set  $T_{k}f(k, x):=	\lim_{\epsilon\to 0^+}	T_{k, \epsilon}f(k, x)$, $f(k,x)\in\S(\R^n_k, \S(\R^n_x))$ and define 
 \begin{align}\label{G1}
 &	G_\epsilon f (x):=\int_{\R^n}T_{k, \epsilon}f(k, x)dk=\int_0^\infty\int_{S^{n-1}}\int_0^\infty e^{-ist}s^{n-2}\big(h_{s\omega, \epsilon t}\ast f(s\omega, \cdot)\big)(x+2t\omega)dsd\omega dt,\\
\label{G0}
 &G_0f (x):=\int_{\R^n}T_{k}f(k, x)dk=\int_0^\infty\int_{S^{n-1}}\int_0^\infty e^{-ist}s^{n-2}\big(h_{s\omega}\ast f(s\omega, \cdot)\big)(x+2t\omega)dsd\omega dt.
 \end{align}
 Actually,  let $f(k_1 , x):=\widehat{V_1}(k_1)f_{k_1}(x) \in \S(\R^{n}_{k_1}, \S(\R^n_x))$, then by \eqref{Z_J3},  we find that for $J=1,$  
 	$$Z_{1}f(x)=\lim_{\epsilon_1 \downarrow0}\int_{\R^n}
 	\widehat{V_1}(k_1)	T_{k_1, \epsilon_1}f_{k_1} (x)dk_1=\lim_{\epsilon_1 \downarrow0}G_{\epsilon_1}f(x),$$
where $f$ appeared in $G_{\epsilon_1}f$ denotes $f(k_1 , x)$ for short.

 Next, we demonstrate that $G_\epsilon f$ converges to $G_0 f$ in $L^p$ as $\epsilon \to 0$ for all $1 \leq p \leq\infty$, as stated in Lemma \ref{lemma G}. We first present the  Littlewood-Paley decomposition.
 
  Let $\varphi $ be a smooth function such that $\varphi(s)=1$ for
 $|s| \leq \frac{1}{2}$ and $\varphi(s)=0$ for $|s| \geq 1$. $\varphi_N(s):=\varphi\left(2^{-N} s\right)-\varphi\left(2^{-N+1} s\right), N \in \mathbb{Z}$. Then $\varphi_N(s)=\varphi_0\left(2^{-N} s\right),    \operatorname{supp} \varphi_0 \subset$ $\left[\frac{1}{4}, 1\right]$ and
 \begin{equation}\label{varphi_0}
 	\sum_{N=-\infty}^{\infty} \varphi_0\left(2^{-N} s\right)=1,\quad  s \in \mathbb{R} \backslash\{0\}.
 \end{equation}
 Additionally, we give some notations. For multi-indices $\alpha=(\alpha_1,\cdots, \alpha_n),$ ($\alpha_1,\cdots, \alpha_n \in\N^+\cup\{0\}$), we denote
 $D_x^\alpha:=\partial^{\alpha_1}_{ x_1}\partial^{\alpha_2}_{ x_2}\cdots\partial^{\alpha_n}_{ x_n},$  and $|\alpha|:=\alpha_1+\alpha_2+\cdots+\alpha_n,$ where  $$\partial^{\alpha_j}_{ x_j}=\frac{\partial^{\alpha_j}}{\partial x_j^{\alpha_j}},\  \ j=1,2,\cdots, n,\ \text{and}\  x=(x_1, x_2, \cdots, x_n)\in\R^n.$$
 \begin{lemma}\label{lemma G}
 	Let $\epsilon>0$ and $f(k,x)\in\S(\R^n_k,\S (\R^n_x))$. Then for all $n\geq5$ and $1\leq p\leq\infty,$
 	\begin{align} \label{estimate G}
 		\left\| G_\epsilon f\right\|_{L^p}\lesssim\int_{\R^n}
 	|k|^{-4-}\langle k \rangle^{1+}\sum_{|\alpha|\leqslant2}
 	\left\| D_k^\alpha f(k, \cdot) \right\|_{L^p}dk.
 \end{align}
 Moreover, $ G_\epsilon f$ converges to $ G_0 f$ in $L^p(\R^n)$ as $\epsilon$ tends to zero for all $1\leq p\leq\infty.$
 \end{lemma}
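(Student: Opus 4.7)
My plan is to extract oscillatory $t$-decay from the factor $e^{-ist}$ in \eqref{G1} so that the triple integral defining $G_\epsilon f$ becomes absolutely convergent with a weight matching \eqref{estimate G}, and then to pass to the limit $\epsilon\downarrow 0$ by dominated convergence using the $L^1$-convergence $h_{s\omega,\epsilon t}\to h_{s\omega}$ furnished by Lemma \ref{lemma h}. The key preliminary observation is that \eqref{estimate h}, after the change of variables $y=sx$ and the integrability of $\min(|y|^{-n+\sigma},|y|^{-n-\sigma})$ on $\R^n$, yields the uniform $L^1$-bounds $\sup_{\epsilon\ge 0}\|\partial_s^j h_{s\omega,\epsilon t}\|_{L^1_x}\lesssim s^{-2-j}$ for $j=0,1,2$, with constants independent of $t$, $\omega$, and $\epsilon$.

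First I will insert the Littlewood-Paley partition $1=\sum_{N\in\Z}\varphi_0(2^{-N}s)$ from \eqref{varphi_0} and write $G_\epsilon f=\sum_N G_\epsilon^N f$ with each $G_\epsilon^N$ localized to $s\sim 2^N$. For each $N$ I split the $t$-integral at the natural scale $t=2^{-N}$. On the non-oscillatory piece $t\le 2^{-N}$ no integration by parts is needed: Minkowski and Young combined with $\|h_{s\omega,\epsilon t}\|_{L^1}\lesssim s^{-2}$ produce a bound of the form $2^{-N}\int_{S^{n-1}}d\omega\int s^{n-4}\varphi_0(2^{-N}s)\|f(s\omega,\cdot)\|_{L^p}ds$, which under $dk=s^{n-1}ds\,d\omega$ becomes $\int_{|k|\sim 2^N}|k|^{-4}\|f(k,\cdot)\|_{L^p}dk$. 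On the oscillatory piece $t>2^{-N}$ I exploit $e^{-ist}=(-it)^{-2}\partial_s^2 e^{-ist}$ and integrate by parts twice in $s$; by the Leibniz rule the two derivatives distribute over $s^{n-2}\varphi_0(2^{-N}s)$, over $h_{s\omega,\epsilon t}$ (controlled by the uniform bounds above), and over $f(s\omega,\cdot)$ (producing at most $D_k^\alpha f$ with $|\alpha|\le 2$ through $\partial_s f(s\omega,\cdot)=\omega\cdot\nabla_k f(s\omega,\cdot)$). Each term is dominated by $s^{n-4}\sum_{|\alpha|\le 2}\|D_k^\alpha f(s\omega,\cdot)\|_{L^p}$ on $s\sim 2^N$, and $\int_{2^{-N}}^\infty t^{-2}dt=2^N$ yields the companion bound $\int_{|k|\sim 2^N}|k|^{-2}\sum_{|\alpha|\le 2}\|D_k^\alpha f(k,\cdot)\|_{L^p}dk$. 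Summing both regimes over $N\in\Z$ gives \eqref{estimate G}, since $|k|^{-4-}\langle k\rangle^{1+}$ absorbs the $|k|^{-4}$-type weight near the origin and the milder weight at infinity.

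For the convergence $G_\epsilon f\to G_0 f$ in $L^p$, dominated convergence applies once we note that Lemma \ref{lemma h} gives $h_{s\omega,\epsilon t}\to h_{s\omega}$ in $L^1_x$ as $\epsilon\downarrow 0$; this lifts via Young's inequality to pointwise $L^p_x$-convergence $h_{s\omega,\epsilon t}\ast f(s\omega,\cdot)\to h_{s\omega}\ast f(s\omega,\cdot)$, and the $\epsilon$-uniform majorant constructed in the preceding paragraph dominates the integrand so that the limit can be exchanged with the triple integral. The main obstacle is the combinatorial bookkeeping for the two integrations by parts: one must verify that \emph{every} term produced by the Leibniz expansion of $\partial_s^2[s^{n-2}\varphi_0(2^{-N}s)(h_{s\omega,\epsilon t}\ast f(s\omega,\cdot))]$ is uniformly dominated by the single master bound $s^{n-4}\sum_{|\alpha|\le 2}\|D_k^\alpha f(s\omega,\cdot)\|_{L^p}$ on $s\sim 2^N$, which relies decisively on the $\epsilon$-uniformity of the $L^1$-bounds for $\partial_s^j h_{s\omega,\epsilon t}$ from Lemma \ref{lemma h}. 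A secondary issue is checking that the aggregate weight in $k$ is locally integrable at $k=0$ (which is precisely where the dimensional restriction $n\ge 5$ enters) and that the $k$-integral converges at infinity (which uses the Schwartz decay of $f(k,\cdot)$ in $k$).
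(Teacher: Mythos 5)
Your overall framework (Littlewood--Paley in $s$, the $\epsilon$-uniform kernel bounds from Lemma \ref{lemma h}, two integrations by parts in $s$ against $e^{-ist}$, Minkowski/Young, and dominated convergence for $\epsilon\downarrow0$) is the same as the paper's, but the way you combine the two regimes does not produce the stated weight, so the key inequality \eqref{estimate G} is not actually obtained. First, a bookkeeping slip: after two integrations by parts, the Leibniz terms in which derivatives fall on $h_{s\omega,\epsilon t}$, on $s^{n-2}$, or on $\varphi_0(2^{-N}s)$ are only bounded by $s^{n-5}$ and $s^{n-6}$ (times the convolution factor), not by $s^{n-4}$; the correct master bound is $s^{n-6}\langle s\rangle^{2}\sum_{|\alpha|\le2}\|D_k^\alpha f(s\omega,\cdot)\|_{L^p}$, as in \eqref{K2}--\eqref{K3}. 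More seriously, your splitting of the $t$-integral at $t=2^{-N}$ loses a full power of $|k|$ at infinity: on the oscillatory piece you pay $\int_{2^{-N}}^{\infty}t^{-2}\,dt=2^{N}\sim s$, and after converting $s^{n-1}ds\,d\omega$ to $dk$ this gives (even with your own, too-optimistic, $s^{n-4}$ bound) a contribution of size $\int_{|k|\sim2^N}|k|^{-2}\sum_{|\alpha|\le2}\|D_k^\alpha f(k,\cdot)\|_{L^p}\,dk$. Since $|k|^{-2}$ is \emph{not} dominated by $|k|^{-4-}\langle k\rangle^{1+}\approx|k|^{-3}$ for $|k|\gtrsim1$, your concluding sentence that the stated weight ``absorbs \ldots the milder weight at infinity'' is false, and \eqref{estimate G} does not follow from the bounds you derived (you only get the weaker weight $|k|^{-4}\langle k\rangle^{2}$, resp.\ $|k|^{-4}+|k|^{-2}$).

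The paper avoids this by \emph{interpolating} pointwise between the no-integration-by-parts bound \eqref{K1} and the twice-integrated bound \eqref{K3}, taking the geometric mean with exponent $\theta=\tfrac12+$; this yields the single majorant $\langle t\rangle^{-1-}s^{n-5-}\langle s\rangle^{1+}$ of \eqref{K5}, whose $t$-integral is $O(1)$ and whose $s$-weight becomes exactly $|k|^{-4-}\langle k\rangle^{1+}$ after the change to $dk$. Equivalently, your hard splitting can be repaired by placing the cut at the crossover scale $t\sim\langle s\rangle/s$ (which is $s^{-1}$ only for $s\lesssim1$, but $\sim1$ for $s\gtrsim1$) rather than at $t=2^{-N}$; then both pieces give $\lesssim|k|^{-4}\langle k\rangle$, which is admissible. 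Your treatment of the limit $\epsilon\downarrow0$ via the $L^1$-convergence of $h_{s\omega,\epsilon t}$ and an $\epsilon$-uniform majorant is the same device the paper uses and is fine once a genuinely integrable majorant with the correct weight has been produced.
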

 \begin{proof}
 	By  \eqref{G1} and  Littlewood-Paley decomposition \eqref{varphi_0}, 
 it follows  that 
 	\begin{align}\label{G2}
 		\left| G_\epsilon f(x)\right|\leq
 		&\sum_{N=-\infty}^{\infty} \int_0^\infty\int_{S^{n-1}}|K_{N, \epsilon}(x, \omega, t)|
 		d\omega dt,
 		\end{align}
 		where  
 			\begin{align}\label{K}
 				K_{N, \epsilon}(x, \omega, t)=\int_0^\infty e^{-ist}\varphi_0\left(2^{-N} s\right) s^{n-2}\big(h_{s\omega, \epsilon t}\ast f(s\omega, \cdot)\big)(x+2t\omega)ds.
 					\end{align}
 					Note that $\supp \varphi_0\left(2^{-N} s\right)\subset [2^{N-2}, 2^N]$ and \eqref{estimate h}:
 					$$
 					\sup_{\epsilon\geq0}\left| \partial_s^jh_{s\omega, \epsilon t}(x)\right|\lesssim s^{n-2-j}H(sx), \ \  
 				H(sx):=	\min\big(\frac{1}{|sx|^{n-\delta}},  \frac{1}{|sx|^{n+\delta}}\big), \ \   j=0,1,2.$$
 	Then combining with  $s\sim2^N$  such that $s^{n-4}=(2^N)^{n-4},$
 	\begin{align}\label{K1}			
 		|K_{N, \epsilon}(x, \omega, t)|
 		&\lesssim(2^N)^{n-4}\int_{2^{N-2}}^{2^N} \Big[\big(s^nH(s\cdot)\big)\ast
 		\big(\big|\partial_s^j f(s\omega, \cdot)\big|\big)\Big](x+2t\omega)ds.
 	\end{align}
 Integrating by parts twice, 
 $$|K_{N, \epsilon}(x, \omega, t)|=\left|\frac{1}{t^2}\int_{2^{N-2}}^{2^N} e^{-ist}\partial_s^2\Big(\varphi_0\left(2^{-N} s\right) s^{n-2}\big(h_{s\omega, \epsilon t}\ast f(s\omega, \cdot)\big)(x+2t\omega)\Big)ds\right|.$$	
Take into account that $\big|\partial_s^2  \big(\varphi_0\left(2^{-N} s\right) s^{n-2}\big(h_{s\omega, \epsilon t}\ast f(s\omega, \cdot)\big)(x+2t\omega)\big)\big|$ is dominated by 	$$\lesssim s^{n-6}\langle s \rangle^2\Big[\big(s^nH(s\cdot)\big)\ast
	\Big(\sum_{j=0}^{2}\big|\partial_s^j f(s\omega, \cdot)\big|\Big)\Big](x+2t\omega).$$
Moreover, according to $s\sim2^N$  such that $s^{n-6}\langle s \rangle^2=(2^N)^{n-6}
\langle 2^N \rangle^2,$
\begin{align}\label{K2}			
	|K_{N, \epsilon}(x, \omega, t)|\lesssim\frac{1}{t^2}(2^N)^{n-6}
	\langle 2^N \rangle^2
	\int_{2^{N-2}}^{2^N} \Big[\big(s^nH(s\cdot)\big)\ast
	\Big(\sum_{j=0}^{2}\big|\partial_s^j f(s\omega, \cdot)\big|\Big)\Big](x+2t\omega)ds.
\end{align}
 Combining with \eqref{K1} and \eqref{K2},  it follows that 
 	\begin{align}\label{K3}			
 		K_{N, \epsilon}(x, \omega, t)
 		&\lesssim \frac{1}{\langle t \rangle^2}(2^N)^{n-6}
 		\langle 2^N \rangle^2\int_{2^{N-2}}^{2^N} \Big[\big(s^nH(s\cdot)\big)\ast
 		\Big(\sum_{j=0}^{2}\big|\partial_s^j f(s\omega, \cdot)\big|\Big)\Big](x+2t\omega)ds.
 	\end{align}					
By \eqref{K1} and \eqref{K3}, take $0<\theta<1$ and note that $2^N\sim s,$ then $K_{N, \epsilon}(x, \omega, t)=K_{N, \epsilon}^{\theta}K_{N, \epsilon}^{1-\theta}$ satisfies
 \begin{align*}
 	&\lesssim\frac{1}{\langle t \rangle^{2\theta}}\int_{2^{N-2}}^{2^N}
 	s^{n-4-2\theta}
 	\langle s \rangle^{2\theta}
 	 \Big[\big(s^nH(s\cdot)\big)\ast
 	\Big(\sum_{j=0}^{2}\big|\partial_s^j f(s\omega, \cdot)\big|\Big)\Big](x+2t\omega)ds.
 	\end{align*}
 Choose $\theta=\frac{1}{2}+,$ then $|K_{N, \epsilon}(x, \omega, t)|$ is controlled by 
 \begin{align}\label{K5}
 \lesssim	\frac{1}{\langle t \rangle^{1+}}
 	\int_{2^{N-2}}^{2^N} s^{n-5-}\langle s \rangle^{1+}\Big[\big(s^nH(s\cdot)\big)\ast
 	\Big(\sum_{j=0}^{2}\big|\partial_s^j f(s\omega, \cdot)\big|\Big)\Big](x+2t\omega)ds=:K_{N}(x, \omega, t).
 \end{align}
 Consider  \eqref{G2} and   Lebesgue's dominated convergence theorem, then it suffices to proved that 
 $$\varPi(x):=\sum_{N=-\infty}^{\infty} \int_0^\infty\int_{S^{n-1}}K_{N}(x, \omega, t)
 dtd\omega \in L^p(\R^n),$$
 for all $1\leq p\leq\infty$ and its $L^p$ norm  satisfies the bound \eqref{estimate G}.
  
 Indeed,  in virtue of  Minkowski's inequality, Young inequality and 
 performing  the  integral calculation   on $t$,  we conclude that
 $$\big\|  \varPi(\cdot)\big\|_{L^p} \lesssim\int_0^\infty\int_{S^{n-1}}s^{n-5-}\langle s \rangle^{1+}\left\| s^nH(s\cdot)\right\|_{L^1}
 \Big\|\sum_{j=0}^{2}\big|\partial_s^j f(s\omega, \cdot)\big|\Big\|_{L^p}dsd\omega.$$
 Let $k=s\omega$ and 
 note that $\left\| s^nH(s\cdot)\right\|_{L^1}\lesssim1,$ then we finally get the desired result. 
 \end{proof}
 As a result,  
  for $J=1,$
 	$$Z_{1}f(x)=\lim_{\epsilon_1 \downarrow0}\int_{\R^n}
 	\widehat{V_1}(k_1)	T_{k_1, \epsilon_1}f_{k_1} (x)dk_1=\int_{\R^n}
 	\widehat{V_1}(k_1)	T_{k_1}f_{k_1} (x)dk_1,$$
    in the sense of  $L^p$  for all $1\leq p\leq\infty.$
 \vskip 0.1cm 
  {\bf Claim B.} \underline {Complete the proof of Theorem \ref{theorem Lp limit} (ii).} 
    \vskip 0.1cm 
 Return to $Z_J$  defined by \eqref{Z_J3}   for $\forall J\in\N^+,$ and let $f(\vec{k}, x)=K_J(k_1,\cdots, k_J)f_{k_J}\in \S(\R^{nJ}_{\vec{k}}, \S(\R^n_x))$, by the repeated use of Lemma \ref{lemma G} and taking $\epsilon_J,\cdots, \epsilon_1\to 0^+,$ then we conclude that $Z_Jf\in L^p(\R^n)$ for all $1\leq p\leq\infty$ and can be expressed as 
 \begin{align*}
 	Z_{J }f(x)=\int_{\R^n} T_{k_1 }\int_{\R^n}T_{k_2 }\int_{\R^n}\cdots\int_{\R^n}K_J(k_1,\dots, k_J)T_{k_J}f_{k_J} (x)d\vec{k},
 \end{align*}
 in the sense of $L^p,$
where  
$\vec{k},$ $f_{k_J},$ and $K_J$  are defined by   \eqref{KJ}.
\subsection{The $L^p$-bound of $Z_J$}
After completing the proof of Theorem \ref{theorem Lp limit}, it remains to study the $L^p$-bound for operator $Z_J$ for all $1\leq p\leq \infty,$ which is given by the the following Theorem \ref{theorem bound}. 
\begin{theorem}\label{theorem bound}
Let   $Z_J$ be defined by  \eqref{Z8}, then $Z_J$ can be extended to a bounded operator on $L^p(\R^n)$ for all $1 \leq p \leq \infty$ and $J\in\mathbb{N}^+$.  In particular, fixed $0<\sigma\ll 1,$
 	\begin{align*}
 	\left\|Z_J\right\|_{\mathbb{B}(L^p)}\lesssim_{n, \sigma, J}
 	\begin{cases}
 		\prod_{j=1}^J \ \left\|\langle \cdot\rangle^{\frac{9-n}{2}+\sigma+} V_j\right\|_{L^2} & {\rm if}\ \ n=5,6,\\ 
 		\prod_{j=1}^J\left\| \F\left(\langle\cdot\rangle^{\frac{2(n-4+\sigma)}{n-1}+2\sigma+} V_j\right)\right\|_{L^{\frac{n-1}{n-4+\sigma}}} & {\rm if}\ \ n\geq7.
 	\end{cases}
 		\end{align*}  
\end{theorem}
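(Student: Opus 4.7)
The plan is to substitute the explicit formula from Theorem \ref{theorem Lp limit}(i) into \eqref{Z8}, carry out the $s_J$-integration to obtain a representation of the form
$$Z_Jf(x)=\int_{\Sigma_{J-1}}\int_{S^{n-1}}\int_{-\infty}^{\rho_J}\int_{\R^n}F(\omega_1,t_1,y_1,\ldots,\omega_J,t_J,y_J)\,f(\bar x-\gamma_J)\,dy_J\,dt_J\,d\omega_J\cdots dy_1\,dt_1\,d\omega_1,$$
and then invoke Minkowski's integral inequality using the fact that $x\mapsto \bar x = x - 2(x\cdot\omega_J)\omega_J$ is an isometry of $\R^n$.

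First I would pass to polar coordinates $k_j=s_j\omega_j$ with $dk_j = s_j^{n-1}ds_jd\omega_j$ and iteratively apply the representation
$$T_{k_j}g(k_j,x)=\frac{1}{s_j}\int_0^\infty e^{-is_jt_j}\int_{\R^n}h_{k_j}(y_j)\,g(k_j,x+2t_j\omega_j-y_j)\,dy_j\,dt_j$$
from the innermost factor $T_{k_J}f_{k_J}$ outward. The twist $f_{k_J}(x)=e^{ik_J\cdot x}f(x)$ combines with $e^{-is_Jt_J}$ to produce the phase $e^{is_J(t_J+\omega_J\cdot(x-y_J))}$; propagating through the $J-1$ outer reflections, the argument of $f$ assumes the announced form $\bar x-\gamma_J$. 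Computing the $s_J$-integral against the amplitude $s_J^{n-2}$ from the Jacobian together with $h_{s_J\omega_J}$, whose low- and high-frequency behaviour is controlled by Lemma \ref{lemma h}, produces the Heaviside cut-off at $\rho_J$ and yields the expansion above with an explicit amplitude $F$ built from $\prod_{j=1}^J\widehat{V_j}(k_j-k_{j-1})$ and the kernels $h_{s_j\omega_j}$.

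The heart of the argument is to establish $F\in L^1((S^{n-1}\times\R\times\R^n)^J)$ with the norm bounded by the product on the right-hand side of the theorem. Following the template of Lemma \ref{lemma G}, in each $s_j$ variable I would dyadically decompose via the Littlewood--Paley partition \eqref{varphi_0}, integrate by parts twice in $s_j$ to gain a $\langle t_j\rangle^{-1-}$ factor that makes the $t_j$-integration convergent, and then estimate each dyadic block using the pointwise bound \eqref{estimate h} for $h_{s_j\omega_j}$ and its $s_j$-derivatives. The resulting spherical average $\int_{S^{n-1}}|\widehat{V_j}(s_j\omega_j-k_{j-1})|\,d\omega_j$ is controlled by Cauchy--Schwarz in terms of the weighted $L^2$-norm $\|\langle\cdot\rangle^{\frac{9-n}{2}+\sigma+}V_j\|_{L^2}$ when $n=5,6$ (this is a trace-type estimate on the sphere); for $n\geq 7$, the $L^2$-trace on spheres is no longer available and one instead invokes a Fourier restriction/sphere-averaging estimate matched to the scaling of the weight, yielding the bound by $\|\mathcal F(\langle\cdot\rangle^{\frac{2(n-4+\sigma)}{n-1}+2\sigma+}V_j)\|_{L^{\frac{n-1}{n-4+\sigma}}}$. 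Iterating over $j=1,\ldots,J$ gives the required product.

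With $F\in L^1$ in hand, Minkowski's integral inequality, combined with the fact that for each fixed value of the other variables the map $x\mapsto\bar x-\gamma_J$ is an isometry of $\R^n$, yields
$$\|Z_Jf\|_{L^p}\leq \|F\|_{L^1((S^{n-1}\times\R\times\R^n)^J)}\,\|f\|_{L^p}$$
uniformly in $1\leq p\leq\infty$, which is the stated bound. The principal obstacle is the sharp $L^1$-estimate on $F$: although the phases $s_j\Phi_j$ are linear and thus tame, the amplitudes $s_j^{n-2-j}h_{s_j\omega_j}$ are mildly singular as $s_j\to 0$ and only polynomially decaying at infinity, so one must carefully match the two-fold integration by parts in $s_j$ (curing the low-frequency divergence) against the correct regularity/decay exponent for $V_j$, which is precisely what forces the dichotomy between the $L^2$-weight assumption in dimensions $n=5,6$ and the $L^{\frac{n-1}{n-4+\sigma}}$-Fourier assumption in dimensions $n\geq 7$.
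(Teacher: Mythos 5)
Your first step---substituting the representation of $T_k$ into \eqref{Z8}, rewriting $Z_Jf$ as an integral of $f(\bar x-\gamma_J)$ against the amplitude $F$ from \eqref{F}, and then applying Minkowski's inequality together with the fact that $x\mapsto\bar x$ is an isometry---coincides with the paper's Lemma \ref{lemma Z} and is fine. The genuine gap is in the crucial estimate $\|F\|_{L^1\left(\left(S^{n-1}\times\R\times\R^n\right)^J\right)}\lesssim\prod_j(\cdots)$, which you only gesture at. Your mechanism (Littlewood--Paley in $s_j$, two integrations by parts to gain $\langle t_j\rangle^{-1-}$, then the pointwise bound \eqref{estimate h} on each dyadic block) destroys the oscillation in $s_j$, so you are forced to bound an absolutely convergent $s_j$-integral, i.e.\ a weighted $L^1_{k_j}$-norm of $\widehat{V_j}(\cdot-k_{j-1})$, uniformly in $k_{j-1}$. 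After the $y_j$-integration (cf.\ \eqref{estimate h10}) the amplitude contributes $s_j^{n-4\pm\sigma}$, i.e.\ the Cartesian weight $|k_j|^{-3\pm\sigma}$; for $n=5,6$ this is not square-integrable near $k_j=0$, so the Cauchy--Schwarz/trace argument you invoke cannot yield the stated bound by $\|\langle\cdot\rangle^{\frac{9-n}{2}+\sigma+}V_j\|_{L^2}$, and for large $s_j$ an $L^2$ trace bound on the spheres $\{s_j\omega-k_{j-1}:\omega\in S^{n-1}\}$ is uniform in $k_{j-1}$ but carries no decay in $s_j$, so the $s_j$-integral diverges. The ``restriction/sphere-averaging estimate'' you appeal to for $n\geq7$ is never formulated, and nothing in the sketch produces the precise exponents $\frac{2(n-4+\sigma)}{n-1}+2\sigma+$ or the Lebesgue index $\frac{n-1}{n-4+\sigma}$.

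The paper's Lemma \ref{lemma F} avoids exactly this trap by never taking absolute values in the $s_j$-integrals: it estimates $F_\Lambda$ and $t_1^{\alpha_1}\cdots t_J^{\alpha_J}F_\Lambda$ in $L^r_{\vec t}$ by the Hausdorff--Young inequality, with $r'=2$ for $n=5,6$ and $r'=\frac{n-1}{n-4+\sigma}$ for $n\geq7$ chosen precisely so that the net power $n-4\pm\sigma+\frac{1-n}{r'}$ of $|k_j|$ is nonpositive; it then removes these singular weights via Hardy's inequality (Lemma \ref{Appendix 5}) and converts derivatives of $\widehat{V_j}$ into the stated weighted norms through the multiplier theorem and Plancherel; finally the $\langle t_j\rangle^{\frac{1}{r'}+}$ decay needed to pass from $L^r_{\vec t}$ to $L^1_{\vec t}$ is obtained by multilinear complex interpolation between the unweighted and fully $t$-weighted estimates, followed by H\"older in $\vec t$. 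These steps are the source both of the dichotomy $n=5,6$ versus $n\geq7$ and of the exact exponents in the statement; without them (or a worked-out substitute) the theorem is not proved. Note also that running your Lemma \ref{lemma G}-style argument quantitatively would control $Z_J$ only by quantities of the type $\int_{\R^n}|k|^{-4-}\langle k\rangle^{1+}\sum_{|\alpha|\leq2}\big|D_k^\alpha\widehat{V_j}(k)\big|\,dk$, which are not the norms appearing in the statement.
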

Next, we provide {\bf the proof of Theorem \ref{theorem bound}} by dividing into the following  two steps:
\vskip 0.1cm
 {\bf Step (i).} \underline{Establish Lemma \ref{lemma Z} }, which reduces the study of the $L^p$-bound of the operator $Z_J$ to the $L^1((S^{n-1} \times \mathbb{R} \times \mathbb{R}^n)^J)$-bound of the function $F$ defined by \eqref{F}.
 \begin{lemma}\label{lemma Z}
 	Let $(s_j, \omega_j, t_j, y_j)\in \R^+\times S^{n-1}\times \R \times\R^n$ for $j=1,\cdots,J (J\in \N^+),$ and denote $\bar{x}:=x-2(x\cdot\omega_J).$   Then 
 	\begin{align*}
 			Z_{J }f(x)=\int_{\Sigma_{J-1}}\int_{S^{n-1}}\int_{-\infty}^{\rho_J}\int_{\R^n}F(\omega_1, t_1, y_1,\cdots,\omega_J, t_J, y_J)f(\bar{x}-\gamma_J)dy_Jdt_Jd\omega_J\cdots dy_1dt_1d\omega_1,
 		\end{align*}
 where $\Sigma_m:=\big(S^{n-1}\times(0, +\infty)\times\R^n\big)^{m}, \forall m\in \N^+,$ 
 $\rho_J:=-\omega_J\big( x-y_J-\sum_{\ell=1}^{J-1}(y_\ell-2t_\ell\omega_\ell)
 \big),$ $\gamma_J:=\bar{y}_J+2t_J\omega_J+\sum_{\ell=1}^{J-1}(\overline{y_\ell-2t_\ell\omega_\ell}),$  and 
  $F(\omega_1, t_1, y_1,\cdots,\omega_J, t_J, y_J)$ is given by 
\begin{align}\label{F}
	F:=\int_{(0,  +\infty)^J}\bigg(\prod_{j=1}^J e^{-is_jt_j}s_j^{n-2}h_{s_j\omega_j}(y_j)\bigg)K_J(s_1\omega_1,\cdots, s_J\omega_J)ds_J\cdots s_1.
\end{align}
 In particular,  $\left\|Z_{J }f\right\|_{L^p}
 \lesssim\left\|F\right\|_{L^1\big(\left(S^{n-1}\times \R\times\R^n\right)^{J}\big)}\left\|f\right\|_{L^p}$ for  all $1\leq p\leq\infty.$
 	\end{lemma}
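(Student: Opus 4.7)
The starting point is the expression \eqref{Z8} for $Z_J f(x)$ obtained from Theorem \ref{theorem Lp limit}. I would first substitute the explicit formula \eqref{T4} for each operator $T_{k_j}$. Writing $k_j=s_j\omega_j\in(0,+\infty)\times S^{n-1}$ and using $dk_j=s_j^{n-1}ds_jd\omega_j$, the factor $s_j^{-1}$ in \eqref{T4} combines with the Jacobian to produce $s_j^{n-2}$. Composing the $J$ operators, the $J$ successive shifts add up, so the interior function $f_{k_J}(z)=e^{is_J\omega_J\cdot z}f(z)$ is evaluated at
\begin{equation*}
z=x+\sum_{j=1}^{J}(2t_j\omega_j-y_j).
\end{equation*}
This yields
\begin{equation*}
Z_J f(x)=\int\prod_{j=1}^{J}d\omega_j\,dt_j\,dy_j\Bigl[\int_{(0,\infty)^J}\prod_{j=1}^{J}s_j^{n-2}e^{-is_jt_j}h_{s_j\omega_j}(y_j)K_J\,ds_1\cdots ds_J\Bigr]e^{is_J\omega_J\cdot z}f(z),
\end{equation*}
with $t_j\in(0,\infty)$ for each $j$.

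Next, I would combine the two $s_J$-dependent exponentials into a single phase. A direct computation gives $\omega_J\cdot z-t_J=t_J-\rho_J$, using the identity
\begin{equation*}
-\rho_J=\omega_J\cdot x-\omega_J\cdot y_J+\sum_{\ell=1}^{J-1}\omega_J\cdot(2t_\ell\omega_\ell-y_\ell),
\end{equation*}
so that $e^{-is_Jt_J}e^{is_J\omega_J\cdot z}=e^{is_J(t_J-\rho_J)}$. I would then perform the change of variable $t_J\mapsto\rho_J-t_J$, under which the domain $(0,\infty)$ is sent to $(-\infty,\rho_J)$ and the exponential becomes $e^{-is_Jt_J}$ (after renaming). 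This converts the bracketed integral into exactly the function $F$ defined by \eqref{F}.

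The key algebraic check---and the main technical obstacle---is verifying that after the substitution the shift of $f$ becomes $\bar x-\gamma_J$. Setting $w:=x-y_J+\sum_{\ell<J}(2t_\ell\omega_\ell-y_\ell)$, we have $\omega_J\cdot w=-\rho_J$, so the reflection satisfies $\bar w=w+2\rho_J\omega_J$. Using linearity of the reflection $v\mapsto\bar v=v-2(v\cdot\omega_J)\omega_J$, one obtains $\bar w=\bar x-\bar y_J-\sum_{\ell<J}\overline{y_\ell-2t_\ell\omega_\ell}$. After the substitution $t_J\leftarrow\rho_J-t_J$, the new shift becomes
\begin{equation*}
x+\sum_{\ell<J}(2t_\ell\omega_\ell-y_\ell)+2(\rho_J-t_J)\omega_J-y_J=w+2(\rho_J-t_J)\omega_J=\bar w-2t_J\omega_J=\bar x-\gamma_J,
\end{equation*}
which is exactly the desired expression. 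This yields the integral representation claimed in the lemma.

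Finally, for the $L^p$-bound, I would apply Minkowski's inequality by taking the $L^p_x$-norm inside all of the exterior integrations. Writing $\chi_{(-\infty,\rho_J(x))}(t_J)$ to account for the $x$-dependent upper bound, and using that $x\mapsto\bar x$ is an orthogonal reflection on $\R^n$ (hence an $L^p$-isometry), we obtain
\begin{equation*}
\|\chi_{(-\infty,\rho_J(x))}(t_J)f(\bar x-\gamma_J)\|_{L^p_x}\leq\|f(\bar x-\gamma_J)\|_{L^p_x}=\|f\|_{L^p}.
\end{equation*}
Thus $\|Z_Jf\|_{L^p}\lesssim\|F\|_{L^1((S^{n-1}\times\R\times\R^n)^J)}\|f\|_{L^p}$, which completes the lemma. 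The numerical bounds then follow by estimating $\|F\|_{L^1}$ in a subsequent step via Lemma \ref{lemma h} and the hypotheses on $V_j$.
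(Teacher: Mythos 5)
Your proposal is correct and follows essentially the same route as the paper: both substitute the explicit kernel formula for $T_k$, use that only $f_{k_J}$ carries the modulation $e^{ik_J\cdot z}$ so the $s_J$-phase combines to $e^{is_J(t_J-\rho_J)}$, change variables $t_J\mapsto\rho_J-t_J$ to obtain the domain $(-\infty,\rho_J)$ and the reflected argument $\bar{x}-\gamma_J$, and conclude via Minkowski's inequality together with the fact that $x\mapsto\bar{x}$ is an $L^p$-isometry. The only difference is organizational: the paper composes the operator $G_0$ iteratively, tracking $\rho_j,\gamma_j$ step by step, whereas you write out the full $J$-fold composition at once and perform a single change of variable at the end, which amounts to the same computation.
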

 	\begin{proof}
 		 Observing that $G_0f (x)$ and $Z_{J }f(x)$ are given by \eqref{G0} and \eqref{Z8}, respectively,   $Z_J$ is essentially the $J$-fold composition of the operator $G_0.$ 
 		
 	Let   $\Theta_i=(\omega_j, t_j, y_j)$ and $k_j=s_j\omega_j$	for  $j=1,\cdots,J. $  We firstly  use  \eqref{G0} to rewrite  the most inner integral in \eqref{Z8} to derive that $	\big(G_0\left(K_Jf_{k_J}\right)\big)(x, k_1, \cdots, k_{J-1})$ can be written as
 		\begin{align*}
 			\int_{\Sigma_1}\left(\int_0^{+\infty}e^{is_J\big(t_J+\omega_J(x-y_J)\big)}s_J^{n-2}h_{s_J\omega_J}(y_J)K_J(s_1\omega_1,\cdots,  s_J\omega_J)ds_J\right)f(x-y_J+2t_J\omega_J)d\Theta_J.
 		\end{align*}
 	By  changing the variable $t_J+\omega_J(x-y_J)\mapsto -t_J,$ we rewrite
 		$	\big(G_0\left(K_Jf_{k_J}\right)\big)(x, k_1, \cdots, k_{J-1})$  as
 			\begin{align*}
 			\int_{S^{n-1}}\int_{-\infty}^{\rho_1}\int_{\R^n}\left(\int_0^{+\infty}e^{-is_Jt_J}s_J^{n-2}h_{s_J\omega_J}(y_J)K_J(s_1\omega_1,\cdots, s_J\omega_J)ds_J\right)f(\bar{x}-\gamma_1)d\Theta_J,
 		\end{align*}
 	where $\rho_1=-\omega_J(x-y_J),  \gamma_1=\bar{y}_J+2t_J\omega_J,$	and  $\bar{x}:=x-2(x\cdot\omega_J)$ is the reflection of $x$ along the $\omega_J$ axis.  
 	
 	Set 		$\rho_j:=-\omega_J\big( x-y_J-\sum_{\ell=J-j+1}^{J-1}(y_\ell-2t_\ell\omega_\ell)
 \big)$ and $\gamma_j:=\bar{y}_J+2t_J\omega_J+\sum_{\ell=J-j+1}^{J-1}(\overline{y_\ell-2t_\ell\omega_\ell}),$  for $j=2,3, \cdots, J.$ Next, continue to use \eqref{G0} to rewrite  the integral of $k_{J-1}$ in \eqref{Z8} to get that $\big(G_0 G_0\left(K_Jf_{k_J}\right)\big)(x, k_1, \cdots, k_{J-2})$ is equal to 
 		\begin{align*}
 		\int_{\Sigma_1}\int_{S^{n-1}}\int_{-\infty}^{\rho_2}\int_{\R^n} & \bigg(\int_{(0, +\infty)^2}\Big(\prod_{j=J-1}^J e^{-is_jt_j}s_j^{n-2}h_{s_j\omega_j}(y_j)\Big) K_J(s_1\omega_1,\cdots,  s_J\omega_J)ds_J s_{J-1}\bigg)\\
 	&\ \ \ \ \ \ \ \ \  \times 	f(\bar{x}-\gamma_2)d\Theta_Jd\Theta_{J-1}.
 		\end{align*}
 		
 		Go on this way  to rewrite the integral of $k_{J-j+1}$
 		for $j=1,\cdots, J$ in \eqref{Z8} to obtain  that $\big(G_0^j\left(K_Jf_{k_J}\right)\big)(x, k_1, \cdots, k_{J-j})$ is expressed as 
 			\begin{align*}
 			\int_{\Sigma_{j-1}}\int_{S^{n-1}}\int_{-\infty}^{\rho_j}\int_{\R^n} & \bigg(\int_{(0, +\infty)^j}\Big(\prod_{j=J-j+1}^J e^{-is_jt_j}s_j^{n-2}h_{s_j\omega_j}(y_j)\Big)K_J(s_1\omega_1,\cdots,  s_J\omega_J)ds_J\cdots ds_{J-j+1}\bigg)\\
 			&\ \ \ \ \ \ \ \ \  \times 	f(\bar{x}-\gamma_j)d\Theta_J\cdots d\Theta_{J-j+1},
 		\end{align*}
 		where	$G_0^j= G_0 \circ G_0 \circ \cdots \circ G_0,  (j \text{ times}).$	Hence, we  conclude that 
 			\begin{align*}
 			Z_{J }f(x)=\int_{\Sigma_{J-1}}\int_{S^{n-1}}\int_{-\infty}^{\rho_J}\int_{\R^n}F(\omega_1, t_1, y_1,\cdots,\omega_J, t_J, y_J)f(\bar{x}-\gamma_J)dy_Jdt_Jd\omega_J\cdots dy_1dt_1d\omega_1,
 		\end{align*}
 		where  $F(\omega_1, t_1, y_1,\cdots,\omega_J, t_J, y_J)$ is given by \eqref{F}.
 		 Furthermore, it's easy to find that 
 		 $$\big| 	Z_{J }f\big|\leq\int_{\left(S^{n-1}\times\R\times\R^n\right)^{J}}\big|F(\omega_1, t_1, y_1,\cdots,\omega_J, t_J, y_J)\big|\big|f(\bar{x}-\gamma_J)\big|d\Theta_J\cdots\varTheta_1.$$
 		 Applying  Minkowski's inequality and considering that $x\mapsto\bar{x}$ is an isometry,   we finally derive that 
 		  $\left\|Z_{J }f\right\|_{L^p}
 		 \lesssim\left\|F\right\|_{L^1\big(\left(S^{n-1}\times \R\times\R^n\right)^{J}\big)}\left\|f\right\|_{L^p}$ for  all $1\leq p\leq\infty.$
 		 
 		   Thus we complete the proof.
 	\end{proof}
    	\begin{remark}
 	In Lemma \ref{lemma Z} with $J=1,$ 	it is not hard to check that $\bar{x}:=x-2(x\cdot\omega_1),$  
 	 $\rho_1:=-\omega_1( x-y_1),$ $\gamma_1:=\bar{y}_1+2t_1\omega_1,$ and 
 	$$
 		Z_{1 }f(x)=\int_{S^{n-1}}\int_{-\infty}^{\rho_1}\int_{\R^n}F(\omega_1, t_1, y_1)f(\bar{x}-\gamma_1) dy_1dt_1d\omega_1. $$
  	\end{remark}
 	 {\bf Step (ii).} \underline{Compelte the proof of Therorem \ref{theorem bound}.}
 	\vskip0.1cm
     By Lemma \ref{lemma Z},  it suffices to show that the bound for $\|F\|_{L^1((S^{n-1} \times \mathbb{R} \times \mathbb{R}^n)^J)}$ coincides with the bound for $\|Z_J\|_{\mathbb{B}(L^p)}$ as stated in Therorem \ref{theorem bound}.
 	\begin{lemma}\label{lemma F}
 		Let $F(\omega_1, t_1, y_1,\cdots,\omega_J, t_J, y_J)$ be  defined by \eqref{F} and  fix $0<\sigma\ll 1.$ Then
 		\begin{align*}
 			\left\|F\right\|_{L^1\big(\left(S^{n-1}\times \R\times\R^n\right)^{J}\big)}\lesssim_{n, \sigma, J}
 			\begin{cases}
 				\prod_{j=1}^J \ \left\|\langle \cdot\rangle^{\frac{9-n}{2}+\sigma+} V_j\right\|_{L^2} & {\rm if}\ \ n=5,6,\\ 
 				\prod_{j=1}^J\left\| \F\left(\langle\cdot\rangle^{\frac{2(n-4+\sigma)}{n-1}+2\sigma+} V_j\right)\right\|_{L^{\frac{n-1}{n-4+\sigma}}} & {\rm if}\ \ n\geq7.
 			\end{cases}
 		\end{align*}
 		\end{lemma}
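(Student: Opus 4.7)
The plan is to bound $\|F\|_{L^1}$ by integrating sequentially in the blocks $(t_j, y_j, \omega_j)$ for $j = 1, \ldots, J$, exploiting oscillation in $t_j$ to gain integrable decay, and converting the resulting $(s_j, \omega_j)$-integrals into norms of $V_j$ (either spatial $L^2$ or Fourier-side $L^{(n-1)/(n-4+\sigma)}$, depending on the dimension).

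The first step handles the $t_j$-oscillation in the spirit of the proof of Lemma \ref{lemma G}: apply a Littlewood--Paley decomposition $\sum_N \varphi_0(2^{-N}s_j) = 1$ in each $s_j$, then on each annulus $s_j \sim 2^N$ derive two competing bounds for the $t_j$-integrand, one from no integration by parts (cost $(2^N)^{n-4}$, uniform in $t_j$) and one from two integrations by parts in $s_j$ (cost $(2^N)^{n-6}\langle 2^N\rangle^2 \cdot t_j^{-2}$). Interpolation with $\theta = 1/2+\sigma$ yields $\langle t_j\rangle^{-(1+2\sigma)}$-decay and an amplitude of size $s_j^{n-5-\sigma}\langle s_j\rangle^{1+\sigma}$, which is integrable in $t_j\in\R$ after integration. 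The two integrations by parts introduce up to two derivatives in $s_j$, which by Leibniz may hit $s_j^{n-2}h_{s_j\omega_j}(y_j)$ or $\widehat{V_j}(s_j\omega_j - s_{j-1}\omega_{j-1})$; a derivative on $\widehat{V_j}$ becomes $\omega_j \cdot (\nabla\widehat{V_j})$, which on the physical side corresponds to multiplication by $\omega_j\cdot x$ and is absorbed into a polynomial weight $\langle x\rangle^\gamma V_j$.

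Next, the $y_j$-integration is handled pointwise by the bound $\int_{\R^n}|\partial_{s_j}^\ell h_{s_j\omega_j}(y_j)|\, dy_j \lesssim s_j^{-2-\ell}$ for $\ell = 0, 1, 2$, obtained from \eqref{estimate h} via the scaling $y_j \mapsto s_j y_j$ and the integrability of $\min(|z|^{-(n-\sigma)}, |z|^{-(n+\sigma)})$ on $\R^n$. This yields a scalar power of $s_j$ that compounds with the $s_j^{n-2}$ factor and with the powers already accumulated in the $t_j$-step, leaving an integral involving only $(s_j, \omega_j)$ and the Fourier-transformed potentials $\widehat{V_j}(s_j\omega_j - s_{j-1}\omega_{j-1})$.

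Finally, the angular integration over $\omega_j \in S^{n-1}$ is where the case split emerges. For $n = 5, 6$, a direct Cauchy--Schwarz on $S^{n-1}$ (which has finite measure), combined with the spherical-to-Cartesian change of variables $\int_0^\infty\int_{S^{n-1}} s_j^{n-1}\,ds_j\,d\omega_j = \int_{\R^n}\,dk_j$, reduces the $(s_j,\omega_j)$-integral to a weighted $L^2(\R^n)$-norm of $\widehat{V_j}$, which by Plancherel equals $\|\langle x\rangle^{(9-n)/2+\sigma+}V_j\|_{L^2}$. For $n \geq 7$, the $L^2$ pairing on the sphere is too lossy in dimension, and we instead use an $L^{(n-1)/(n-4+\sigma)}$--$L^{(n-1)/(3-\sigma)}$ H\"older pairing on $\omega_j$; after re-expressing in Cartesian coordinates, this converts into the Fourier-side norm $\|\F(\langle x\rangle^{2(n-4+\sigma)/(n-1)+2\sigma+}V_j)\|_{L^{(n-1)/(n-4+\sigma)}}$, matching the second branch of the stated estimate. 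The main obstacle is the careful tracking of $s_j$-powers and Sobolev/Fourier exponents through the three integration steps, especially in the $n \geq 7$ case where the H\"older/Hausdorff--Young index must be fine-tuned to produce exactly the weight $2(n-4+\sigma)/(n-1)+2\sigma+$; the coupling of $\omega_j$-variables across factors through $K_J$ further prevents any term-by-term separation until the final angular estimate is applied.
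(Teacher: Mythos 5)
Your plan diverges from the paper's proof at the decisive step, and the divergence creates a genuine gap. You propose to take absolute values and force pointwise integrable decay $\langle t_j\rangle^{-(1+2\sigma)}$ in each $t_j$ by Littlewood--Paley in $s_j$ plus two integrations by parts, interpolated with $\theta=\tfrac12+\sigma$. That is the mechanism of Lemma \ref{lemma G}, and it produces (after $\int_0^\infty\int_{S^{n-1}}s_j^{n-1}\,ds_j\,d\omega_j=\int_{\R^n}dk_j$) the weight $|k_j|^{-4-}\langle k_j\rangle^{1+}$ acting on $D^{\le 2}_{k_j}\widehat{V_j}$, exactly as in \eqref{estimate G}. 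That bound is adequate when the data is Schwartz in $k$, but it cannot be dominated by the norms in the statement: for $n=5,\dots,8$ the singularity $|k_j|^{-4-}$ at the origin is not controllable by Cauchy--Schwarz or by Hardy's inequality (Lemma \ref{Appendix 5} would require roughly $4$ derivatives and $n>8$), and the growth $\langle k_j\rangle^{1+}$ at infinity demands decay of $\widehat{V_j}$, i.e.\ smoothness of $V_j$, which the $n=5,6$ norm $\|\langle\cdot\rangle^{\frac{9-n}{2}+\sigma+}V_j\|_{L^2}$ does not encode at all. The paper avoids this precisely by \emph{not} taking absolute values in $\vec t$: it estimates the mixed norm $\|F_\Lambda\|_{L^1_{\omega,y}L^r_{\vec t}}$, uses the Hausdorff--Young inequality to dualize the $\vec t$-integral as a Fourier transform in $\vec s$ (so only the mild weights $|k_j|^{\,n-4\pm\sigma+\frac{1-n}{r'}}$ appear, e.g.\ $|k_j|^{\frac{n-7}{2}\pm\sigma}$ for $r'=2$), gains $\langle t_j\rangle$-weights by dualizing $t_j^{\alpha_j}\leftrightarrow\partial_{s_j}^{\alpha_j}$ with $\alpha_j\in\{0,1\}$, absorbs the radial weights into $\widehat{V_j}$ via Hardy's inequality and a multiplier theorem, and only needs $\langle t_j\rangle^{\frac1{r'}+}$ decay (obtained by multilinear complex interpolation) before closing with H\"older/Cauchy--Schwarz in $\vec t$.

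A second, related problem is your final step for $n\ge 7$: the exponent $\frac{n-1}{n-4+\sigma}$ in the paper is the Hausdorff--Young exponent $r'$ chosen so that $n-4+\sigma+\frac{1-n}{r'}=0$; the crucial factor $|k_j|^{\frac{1-n}{r'}}$ comes from rewriting the $L^{r'}(ds\,d\omega)$ norm in Cartesian coordinates, not from an $L^{\frac{n-1}{n-4+\sigma}}$--$L^{\frac{n-1}{3-\sigma}}$ H\"older pairing in the angular variable alone. Since your scheme has already discarded the oscillatory ($\vec s\to\vec t$ Fourier) structure by taking absolute values, there is no mechanism left to generate this exponent, and the sharp weights $\frac{9-n}{2}+\sigma+$ and $\frac{2(n-4+\sigma)}{n-1}+2\sigma+$ cannot be recovered. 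To repair the argument you would need to adopt the mixed-norm/Hausdorff--Young framework (together with the small/large $|y_j|$ splitting, Hardy's inequality, and the interpolation between the unweighted and $t$-weighted bounds) rather than the pointwise-decay strategy of Lemma \ref{lemma G}.
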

 		\begin{proof}
 			Insert the identity
 			$$1=\prod_{j=1}^{J}\Big(\chi(|y_j|)+\widetilde{\chi}(|y_j|)\Big)=\sum_{\Lambda\subset E}\Big[\Big(\prod_{j\in\Lambda}\chi(|y_j|)\Big)\Big(\prod_{j\notin\Lambda}\
 			\widetilde{\chi}(|y_j|)\Big)\Big],$$
 			into \eqref{F}, where $E=\{1,2, \cdots, J\}.$ Then $F(\omega_1, t_1, y_1,\cdots,\omega_J, t_J, y_J)$ can be expressed as 
 			$$\sum_{\Lambda\subset E}F(\omega_1, t_1, y_1,\cdots,\omega_J, t_J, y_J)\Big(\prod_{j\in\Lambda}\chi(|y_j|)\Big)\Big(\prod_{j\notin\Lambda}\
 			\widetilde{\chi}(|y_j|)\Big):=\sum_{\Lambda\subset E}F_\Lambda(\omega_1, t_1, y_1,\cdots,\omega_J, t_J, y_J).$$
 		Thus it suffices to claim that every $\left\|F_\Lambda\right\|_{L^1\big(\left(S^{n-1}\times \R\times\R^n\right)^{J}\big)}$ satisfies  the desired bound. 
 			
 			Fixed $r\geq2$,  note that $\left\|F_\Lambda\right\|_{L^1\big(\left(S^{n-1}\times \R^n\right)^{J}, \ L_{\vec{t}}^r(\R^J)\big)}$ can be expressed as
 			\begin{align*}
 				\int_{\left(S^{n-1}\times \R^n\right)^{J}}\bigg[\int_{\R^J} &
 				\bigg|\int_{\R^J}\bigg(\prod_{j=1}^J e^{-is_jt_j}\chi_{(0, +\infty)}(s_j)s_j^{n-2}h_{s_j\omega_j}(y_j)\bigg)K_J(s_1\omega_1,\cdots, s_J\omega_J)d\vec{s} \   
 				\bigg|^{r} d\vec{t} \ \bigg]^{\frac{1}{r}}\\
 				&\times\Big(\prod_{j\in\Lambda}\chi(|y_j|)\Big)\Big(\prod_{j\notin\Lambda}\
 				\widetilde{\chi}(|y_j|)\Big)d\vec{y}d\vec{\omega},
 				\end{align*}
 				where $\vec{t}=(t_1, \cdots, t_J), \ \vec{s}=(s_1, \cdots, s_J),$ and $ \vec{\omega}=(\omega_1, \cdots, \omega_J).$ Then utilize Hausdorff-Young inequality with $\frac{1}{r'}+\frac{1}{r}=1$ to obtain that $\left\|F_\Lambda\right\|_{L^1\big(\left(S^{n-1}\times \R^n\right)^{J},\  L_{\vec{t}}^r(\R^J)\big)}$  is dominated by
 					\begin{align*}
 				\lesssim\int_{\left(S^{n-1}\times \R^n\right)^{J}}\bigg[\int_{(0, +\infty)^J} &
 					\bigg| \Big(\prod_{j=1}^J s_j^{n-2}h_{s_j\omega_j}(y_j)\Big)K_J\bigg|^{r'}d\vec{s} \  \bigg]^{\frac{1}{r'}}\Big(\prod_{j\in\Lambda}\chi(|y_j|)\Big)\Big(\prod_{j\notin\Lambda}\
 					\widetilde{\chi}(|y_j|)\Big)d\vec{y}d\vec{\omega}.
 				\end{align*}
 				Take the identity $1=\chi(|y_j|)+\widetilde{\chi}(|y_j|)$ into the bound \eqref{estimate h},  fixed $0<\sigma\ll 1,$ then
 				\begin{align}\label{estimate h10}
 				\big|\partial_{s_j}^\ell h_{s_j\omega_j}(y_j)\big|\lesssim s_j^{-2-\ell}\left(s_j^\sigma\frac{1}{|y_j|^{n-\sigma}}\chi(|y_j|)+s_j^{-\sigma}\frac{1}{|y_j|^{n+\sigma}}\widetilde{\chi}(|y_j|)\right),
 				\end{align}
 				for  $\ell=0,1,2, $ and $j=1, \cdots, J.$
 				Hence, it can be checked that 
 					\begin{align*}
 			\left\|F_\Lambda\right\|_{L^1\big(\left(S^{n-1}\times \R^n\right)^{J},\  L_{\vec{t}}^r(\R^J)\big)}\lesssim&\int_{\left(S^{n-1} \times \R^n\right)^{J}} \bigg[\int_{(0, +\infty)^J} 
 					\bigg| \Big(\prod_{j\in\Lambda}s_j^{n-4+\sigma}\Big)\Big(\prod_{j\notin\Lambda}s_j^{n-4-\sigma}\Big)K_J\bigg|^{r'}d\vec{s} \  \bigg]^{\frac{1}{r'}}\\
 					&\times
 					\Big(\prod_{j\in\Lambda}\frac{1}{|y_j|^{n-\sigma}}\chi(|y_j|)\Big)\Big(\prod_{j\notin\Lambda}
 				\frac{1}{|y_j|^{n+\sigma}}	\widetilde{\chi}(|y_j|)\Big)d\vec{y}d\vec{\omega}.
 				\end{align*}
 				Next, perform the integrals for $y_J, y_{J-1}, \cdots, y_1$  and use  H\"older's  inequality to derive
 					\begin{align*}
 					\left\|F_\Lambda\right\|_{L^1\big(\left(S^{n-1}\times \R^n\right)^{J},\  L_{\vec{t}}^r(\R^J)\big)} \lesssim	\bigg(\int_{\left(S^{n-1}\right)^{J}}\int_{(0, +\infty)^J} 
 					\bigg| \Big(\prod_{j\in\Lambda}s_j^{n-4+\sigma}\Big)\Big(\prod_{j\notin\Lambda}s_j^{n-4-\sigma}\Big)K_J\bigg|^{r'}d\vec{s} 
 					d\vec{\omega}\bigg)^{\frac{1}{r'}},
 				\end{align*}
 				which yields that $\left\|F_\Lambda\right\|_{L^1\big(\left(S^{n-1}\times \R^n\right)^{J}, \ L_{\vec{t}}^r(\R^J)\big)}$ is controlled by 
 					\begin{align}\label{Ft0}
 					 \lesssim	\bigg(\int_{\R^{nJ}} 
 					\bigg| \Big(\prod_{j\in\Lambda}|k_j|^{n-4+\sigma+\frac{1-n}{r'}}\Big)\Big(\prod_{j\notin\Lambda}|k_j|^{n-4-\sigma+\frac{1-n}{r'}}\Big)K_J\bigg|^{r'}d\vec{k}\bigg)^{\frac{1}{r'}},
 				\end{align}
 				where $\vec{k}=(k_1, \cdots, k_J)\in\R^{nJ}.$ 
 				
 			Fix $\alpha_j\in\{0,1\}$ for $j=1, \cdots, J.$	Similarly, by Hausdorff-Young inequality, we derive that $	\left\|t_1^{\alpha_1}\cdots t_J^{\alpha_J} F_\Lambda\right\|_{L^1\big(\left(S^{n-1}\times \R^n\right)^{J},\  L_{\vec{t}}^r(\R^J)\big)}$  satisfies the bound
 				\begin{align}\label{tF}
 			\lesssim	\int_{\left(S^{n-1}\times \R^n\right)^{J}}  \bigg[\int_{(0, +\infty)^J} &
 					\bigg| \partial_{s_1}^{\alpha_1}\cdots\partial_{s_J}^{\alpha_J}  \bigg(\Big(\prod_{j=1}^J s_j^{n-2}h_{s_j\omega_j}(y_j)\Big)K_J\bigg)\bigg|^{r'}d\vec{s} \ \bigg]^{\frac{1}{r'}}\nonumber\\
 				& \ \ \ \ \ \ \ \  \ \ \ \  \ \ \ \  	\times\Big(\prod_{j\in\Lambda}\chi(|y_j|)\Big)\Big(\prod_{j\notin\Lambda}\
 					\widetilde{\chi}(|y_j|)\Big)d\vec{y}d\vec{\omega}.
 				\end{align}
 				Note that $\Big| \partial_{s_1}^{\alpha_1}\cdots\partial_{s_J}^{\alpha_J}\Big(\big(\prod_{j=1}^J s_j^{n-2}h_{s_j\omega_j}(y_j)\big)K_J\Big)\Big|$ is dominated by
 					\begin{align}\label{E1}
 				\lesssim\sum_{\tilde{\Lambda}\subset E}\ 
 					\bigg|\bigg[\prod_{\ell\in\tilde{\Lambda}}\partial_{s_\ell}^{\alpha_{\ell}}\Big(\prod_{j=1}^J s_j^{n-2}h_{s_j\omega_j}(y_j)\Big)\bigg]\bigg(\Big(\prod_{\ell\notin\tilde{\Lambda}}\partial_{s_\ell}^{\alpha_{\ell}}\Big)K_J\bigg)\bigg|.
 				\end{align}
 				Combining with \eqref{estimate h10}, it follows that
 					\begin{align}\label{E2}
 					&\bigg|\bigg[\prod_{\ell\in\tilde{\Lambda}}\partial_{s_\ell}^{\alpha_{\ell}}\Big(\prod_{j=1}^J s_j^{n-2}h_{s_j\omega_j}(y_j)\Big)\bigg]\bigg(\Big(\prod_{\ell\notin\tilde{\Lambda}}\partial_{s_\ell}^{\alpha_{\ell}}\Big)K_J\bigg)\bigg|\Big(\prod_{j\in\Lambda}\chi(|y_j|)\Big)\Big(\prod_{j\notin\Lambda}\
 					\widetilde{\chi}(|y_j|)\Big)\nonumber\\
 					&\ \ \ \ \ \ \ \ \ \ \  \ \  \lesssim\bigg(\prod_{j\in\Lambda}
 					s_j^{n-4+\sigma}	\frac{\chi(|y_j|)}{|y_j|^{n-\sigma}}	\bigg)\bigg(\prod_{j\notin\Lambda}
 				s_j^{n-4-\sigma}	\frac{\widetilde{\chi}(|y_j|)}{|y_j|^{n+\sigma}}	\bigg)\Big(\prod_{\ell\in\tilde{\Lambda}}s_\ell^{-\alpha_\ell}\Big)\bigg|\Big(\prod_{\ell\notin\tilde{\Lambda}}\partial_{s_\ell}^{\alpha_{\ell}}\Big)K_J\bigg|.
 				\end{align}
 				Consider \eqref{E1} and \eqref{E2} into \eqref{tF},  then 
 we firstly take the integrals for $\vec{y}$ and later utilize H\"older's  inequality to get $	\left\|t_1^{\alpha_1}\cdots t_J^{\alpha_J} F_\Lambda\right\|_{L^1\big(\left(S^{n-1}\times \R^n\right)^{J},\  L_{\vec{t}}^r(\R^J)\big)}$  is bounded by
 					\begin{align*}
 					\lesssim\sum_{\tilde{\Lambda}\in E}		\bigg(\int_{\R^{nJ}} 
 					\bigg| \Big(\prod_{j\in\Lambda}|k_j|^{n-4+\sigma+\frac{1-n}{r'}}\Big) \Big(\prod_{j\notin\Lambda}|k_j|^{n-4-\sigma+\frac{1-n}{r'}}\Big)    \Big(\prod_{\ell\in\tilde{\Lambda}}|k_\ell|^{-\alpha_\ell}\Big)\bigg(\Big(\prod_{\ell\notin\tilde{\Lambda}} \nabla_{k_\ell}^{\alpha_{\ell}}\Big)K_J\bigg)\bigg|^{r'}d\vec{k}\bigg)^{\frac{1}{r'}}.
 				\end{align*}
 	By  Hardy's inequality (see Lemma \ref{Appendix 5}), $\left\|t_1^{\alpha_1}\cdots t_J^{\alpha_J} F_\Lambda\right\|_{L^1\big(\left(S^{n-1}\times \R^n\right)^{J},\  L_{\vec{t}}^r(\R^J)\big)}$ is dominated by 
 	\begin{align}\label{Ft1}
	\lesssim\bigg(\int_{\R^{nJ}} 
 		\bigg| \Big(\prod_{j\in\Lambda}|k_j|^{n-4+\sigma+\frac{1-n}{r'}}\Big)\Big(\prod_{j\notin\Lambda}|k_j|^{n-4-\sigma+\frac{1-n}{r'}}\Big) \bigg(\Big(\prod_{\ell=1}^J \nabla_{k_\ell}^{\alpha_{\ell}}\Big)K_J\bigg)\bigg|^{r'}d\vec{k}\bigg)^{\frac{1}{r'}}.
 	\end{align}			
 Take into account  the bound above and 
 $\prod_{j=1}^J\big(1+|t_j|\big)=\sum_{\alpha_1, \dots, \alpha_J\in\{0,1 \}}\big|t_1^{\alpha_1}t_2^{\alpha_2}\cdots t_J^{\alpha_J}\big|,$	then 			
we get  that $	\big\|\big(\prod_{j=1}^J\langle t_j\rangle\big)  F_\Lambda\big\|_{L^1\big(\left(S^{n-1}\times \R^n\right)^{J},\  L_{\vec{t}}^r(\R^J)\big)}$ satisfies the bound \eqref{Ft1}.
 \vskip0.2cm	
 $\bullet$	\underline{When $n=5, 6$,}  we choose $r'=2 \ (r=2)$ to make $n-4\pm\sigma+\frac{1-n}{r'}=\frac{n-7}{2}\pm\sigma<0$.

 	Applying  Hardy's inequality (see Lemma \ref{Appendix 5}) to the integral for $k_J$ in \eqref{Ft0}, the multiplier theorem and  the definition $K_J$ given by \eqref{KJ},  we derive that 
 	\small
 	\begin{align*}
 		\int_{\R^n}	\Big||k_J|^{\frac{n-7}{2}\pm\sigma}	\widehat{V}_J(k_J-k_{J-1})\Big|^2dk_J&\lesssim	\int_{\R^n}	\Big|\nabla_{k_J}^{\frac{7-n}{2}\mp\sigma}	\widehat{V}_J(k_J-k_{J-1})\Big|^2dk_J\\
 		&\lesssim\left\| \F\left(\langle \cdot \rangle^{\frac{7-n}{2}+\sigma}V_J(\cdot)\right) \right\|_{L^2}^2
 		\lesssim\left\| \langle \cdot \rangle^{\frac{7-n}{2}+\sigma}V_J(\cdot) \right\|_{L^2}^2,
 	\end{align*}
 	here the last inequality is followed from the Plancherel's theorem.			
 Go on this way to deal with the integrals for $k_{J-1}, \cdots,  k_1,$ in \eqref{Ft0}, then	
 \begin{align}	\label{Ft00}		
 		\left\|F_\Lambda\right\|_{L^1\big(\left(S^{n-1}\times \R^n\right)^{J},\  L_{\vec{t}}^2(\R^J)\big)}\lesssim\prod_{j=1}^J\left\| \langle \cdot \rangle^{\frac{7-n}{2}+\sigma}V_j(\cdot) \right\|_{L^2}.
  	\end{align}
  	Furthermore,  utilize	 Hardy's inequality  (see Lemma \ref{Appendix 5}) to the integral for $k_J$ in \eqref{Ft1} with $r'=2,$
  	\small
  	\begin{align}\label{k_J}
  		&\int_{\R^n}	\Big||k_J|^{\frac{n-7}{2}\pm\sigma}	\nabla_{k_J}^{\alpha_J}\widehat{V}_J(k_J-k_{J-1})\Big|^2dk_J+ \int_{\R^n}	\Big||k_J|^{\frac{n-7}{2}\pm\sigma}	\nabla_{k_{J-1}}^{\alpha_{J-1}}\nabla_{k_J}^{\alpha_J}\widehat{V}_J(k_J-k_{J-1})\Big|^2dk_J\nonumber\\
  	&\lesssim\int_{\R^n}	\Big|\nabla_{k_J}^{\frac{7-n}{2}\mp\sigma}	\nabla_{k_J}^{\alpha_J}\widehat{V}_J(k_J-k_{J-1})\Big|^2dk_J+ \int_{\R^n}	\Big|\nabla_{k_J}^{\frac{7-n}{2}\mp\sigma}	\nabla_{k_{J-1}}^{\alpha_{J-1}}\nabla_{k_J}^{\alpha_J}\widehat{V}_J(k_J-k_{J-1})\Big|^2dk_J.
  	\end{align}					
  	By the multiplier theorem,  \eqref{k_J} satisfies the bound $\big\| \langle \cdot \rangle^{\frac{7-n}{2}+2+\sigma}V_J(\cdot) \big\|_{L^2}^2.$ Continuing this process to deal with the integrals for $k_{J-1}, \cdots,  k_1,$ in \eqref{Ft1}, then	
  	\begin{align}\label{Ft11}		
  			\Big\|\Big(\prod_{j=1}^J\langle t_j\rangle\Big)  F_\Lambda\Big\|_{L^1\big(\left(S^{n-1}\times \R^n\right)^{J},\  L_{\vec{t}}^2(\R^J)\big)}\lesssim\prod_{j=1}^J\left\| \langle \cdot \rangle^{\frac{7-n}{2}+2+\sigma}V_j(\cdot) \right\|_{L^2}.
  	\end{align}
  	Take into account \eqref{Ft00} and \eqref{Ft11}, then by multilinear complex interpolation,
  	\begin{align*}
  		\Big\|\Big(\prod_{j=1}^J
  		\langle t_j\rangle^{\frac{1}{2}+}\Big)  F_\Lambda\Big\|_{L^1\big(\left(S^{n-1}\times \R^n\right)^{J},\  L_{\vec{t}}^2(\R^J)\big)}\lesssim\prod_{j=1}^J\left\| \langle \cdot \rangle^{\frac{9-n}{2}+\sigma+}V_j(\cdot) \right\|_{L^2}.
  	\end{align*}
  	Apply Cauchy-Schwarz inequality get that	$\big\|F_\Lambda\big\|_{L^1\big(\left(S^{n-1}\times \R\times\R^n\right)^{J}\big)}$ is controlled by 
  \begin{align}\label{FT1}
  	\lesssim
  	\Big\| \prod_{j=1}^J
  	\langle t_j\rangle^{-\frac{1}{2}-}\Big\|_{L_{\vec{t}}^2(\R^J)}	\Big\|\Big(\prod_{j=1}^J
  	\langle t_j\rangle^{\frac{1}{2}+}\Big)  F_\Lambda\Big\|_{L^1\big(\left(S^{n-1}\times \R^n\right)^{J},\  L_{\vec{t}}^2(\R^J)\big)}
  \lesssim\prod_{j=1}^J\left\| \langle \cdot \rangle^{\frac{9-n}{2}+\sigma+}V_j(\cdot) \right\|_{L^2}.
  \end{align}
 \vskip0.2cm 
$\bullet$  	\underline{When $ n\geq7$, } we choose $r'=\frac{n-1}{n-4+\sigma} \ (r=\frac{n-1}{3-\sigma}>2)$ to make $n-4+\sigma+\frac{1-n}{r'}=0$ and $n-4-\sigma+\frac{1-n}{r'}=-2\sigma<0$. 

By the same way to get \eqref{Ft00},
 we apply  Hardy's inequality (see Lemma \ref{Appendix 5}) to the integrals for $k_J, \cdots,k_1$ in \eqref{Ft0} and  the multiplier theorem to get 
  \begin{align}	\label{Ft000}		
 	\left\|F_\Lambda\right\|_{L^1\big(\left(S^{n-1}\times \R^n\right)^{J},\  L_{\vec{t}}^r(\R^J)\big)}\lesssim\Bigg(\int_{\R^{nJ}} 
 	\bigg| \Big(\prod_{j\notin\Lambda}|k_j|^{-2\sigma}\Big)K_J\bigg|^{r'}d\vec{k}\Bigg)^{\frac{1}{r'}}\lesssim\prod_{j=1}^J\left\|\F\Big(\langle \cdot \rangle^{2\sigma}V_j(\cdot)\Big) \right\|_{L^{r'}}.
 \end{align}
 Similar to derive \eqref{Ft11},		
we apply  Hardy's inequality  (see Lemma \ref{Appendix 5}) to the integrals for $k_J, \cdots,k_1$ in \eqref{Ft1}  and the multiplier theorem to get that $	\big\|\Big(\prod_{j=1}^J\langle t_j\rangle\Big)  F_\Lambda\big\|_{L^1\big(\left(S^{n-1}\times \R^n\right)^{J},\  L_{\vec{t}}^r(\R^J)\big)}$ satisfies
 	\begin{align}\label{Ft111}
 &\lesssim\bigg(\int_{\R^{nJ}} 
 	\bigg| \Big(\prod_{j\notin\Lambda}|k_j|^{-2\sigma}\Big) \bigg(\Big(\prod_{\ell=1}^J \nabla_{k_\ell}^{\alpha_{\ell}}\Big)K_J\bigg)\bigg|^{r'}d\vec{k}\bigg)^{\frac{1}{r'}}
 	\lesssim\prod_{j=1}^J\left\|\F\Big(\langle \cdot \rangle^{2+2\sigma}V_j(\cdot)\Big) \right\|_{L^{r'}}.
 \end{align}			
Utilize  H\"older's  inequality  to the  integral with respect to $\vec{t}$, then it follows that  \begin{align}\label{FT2}
	\big\|F_\Lambda\big\|_{L^1\big(\left(S^{n-1}\times \R\times\R^n\right)^{J}\big)}&\lesssim
	\Big\| \prod_{j=1}^J
	\langle t_j\rangle^{-\frac{1}{r'}-}\Big\|_{L_{\vec{t}}^
		{r'}(\R^J)}	\Big\|\Big(\prod_{j=1}^J
	\langle t_j\rangle^{\frac{1}{r'}+}\Big)  F_\Lambda\Big\|_{L^1\big(\left(S^{n-1}\times \R^n\right)^{J},\  L_{\vec{t}}^r(\R^J)\big)}
	\nonumber\\
	&\lesssim\prod_{j=1}^J\Big\|\F\Big(\langle \cdot \rangle^{\frac{2(n-4+\sigma)}{n-1}+2\sigma+}V_j(\cdot)\Big) \Big\|_{L^{\frac{n-1}{n-4+\sigma}}},
\end{align}
 where the last inequality is established by  multilinear complex interpolation since  \eqref{Ft000} and \eqref{Ft111}.
 
Combining with the bounds \eqref{FT1} and \eqref{FT2}, while carefully tracking their dependence on $n, \sigma$ and $J,$  we obtain the desired result.
Thus  we complete the proof.
 		\end{proof}
 	By uniting Theorems \ref{theorem Lp limit} and \ref{theorem bound}, we complete the proof of Proposition \ref{proposition Z_J}, which, in turn, concludes the proof of Theorem \ref{theorem WJ}.
 	\section{$L^p$ boundedness of the low energy part $\Omega_L^\pm$}\label{section3}
 	In this section,  we aim to demonstrate that the low energy  part $\Omega_L^\pm$ defined by \eqref{WL} is bounded on $L^p$ for all $1\leq p\leq\infty$ when dimensions $n\geq5.$ 
 	
 		By  changing the variable $\lambda\mapsto \eta^4+\eta^2,$ the operator $\Omega^\pm$ defined by 
 \eqref{W^LH} is expressed  as 
 \small
 \begin{align}\label{WLH1}
 	\frac{1}{\pi i}\int_0^\infty\eta(2\eta^2+1)(R_0^\mp V)^kR_V^\mp
 	(VR_0^\mp)^kV\big(R_0^+-R_0^-\big)d\eta,
 	\end{align}
  where $R_0^\mp:=R_0^\mp(\eta^4+\eta^2)$ and $R_V^\mp:=R_V^\mp(\eta^4+\eta^2)$ for short.
 	Insert the identity $1=\chi(\eta)+\widetilde{\chi}(\eta)$ into \eqref{WLH1}, then $\Omega^\pm=\Omega_L^\pm+\Omega_H^\pm,$ where $\Omega_L^\pm$ denotes the low energy  part with $0< \eta\ll1$:
 	 \begin{align}\label{WL}
 	\Omega_L^\pm:=\frac{1}{\pi i}\int_0^\infty\eta(2\eta^2+1)\chi(\eta)(R_0^\mp V)^kR_V^\mp(VR_0^\mp)^kV\big(R_0^+-R_0^-\big)d\eta,
 	\end{align}
 	and  $\Omega_H^\pm$ denotes the high energy part  with $\{\eta\gtrsim1\}$:
 	 \begin{align*}
 		\Omega_H^\pm:=\frac{1}{\pi i}\int_0^\infty\eta(2\eta^2+1)\widetilde{\chi}(\eta)(R_0^\mp V)^kR_V^\mp(VR_0^\mp)^kV\big(R_0^+-R_0^-\big)d\eta.
 	\end{align*}
 	
 	Next, we focus on the $L^p$ boundedness for $\Omega_L^\pm.$ Since $\Omega_L^+f=\overline{ \Omega_L^-\bar{f}},$ it reduces to deal with $\Omega_L^-,$ which is given by the following Theorem \ref{theorem WL}.
 	\begin{theorem}\label{theorem WL}
 		Let  $n\geq5$ and $V(x)\lesssim\langle x \rangle^{-n-4-}$. Then provided $k$ enough large depending on
 		$n$, the operator $\Omega_L^-$ defined by \eqref{WL} 
 		extends to a bounded operator on $L^p(\R^n)$ for all $1\leq p\leq\infty.$
 	\end{theorem}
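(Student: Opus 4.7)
The overall plan is to apply the symmetric resolvent identity together with the regular threshold hypothesis to recast $\Omega_L^-$ in the form \eqref{out1}, then establish the weighted kernel bound \eqref{out2} on $\Gamma(\eta)$, and finally conclude via integration by parts in $\eta$ that the kernel $\Omega_L^-(x,y)$ is admissible, i.e., lies in $L^\infty_x L^1_y \cap L^\infty_y L^1_x$. Such admissibility is equivalent, via Schur's test, to $\|\Omega_L^-\|_{L^p\to L^p} < \infty$ for all $1 \leq p \leq \infty$, which is exactly the conclusion of Theorem~\ref{theorem WL}.

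The first step is algebraic. Starting from \eqref{WL}, I would insert the symmetric second resolvent identity $R_V^+(\eta^4+\eta^2)V = R_0^+(\eta^4+\eta^2)\,v\,M^{-1}(\eta)\,v$ (and its $R_V^-$ analogue, handled via the conjugation relation $\Omega_L^+ f = \overline{\Omega_L^- \bar f}$) into the middle factor, and split each outer $V$ as $w\cdot w$, so that every copy of $V$ is absorbed into the compact piece $\Gamma(\eta) = w A(\eta) v M^{-1}(\eta) v A(\eta) w$. This yields the representation \eqref{out1}, whose only remaining free pieces are the outer kernels $R_0^+(\eta^4+\eta^2) v$ and the oscillatory difference $(R_0^+-R_0^-)(\eta^4+\eta^2)$.

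The main step is the derivative kernel estimate \eqref{out2}. Via the splitting identity \eqref{the split}, $R_0^+(\eta^4+\eta^2)$ reduces to two copies of $R_\Delta$ at shifted parameters depending smoothly on $\eta$, so classical low-energy asymptotic expansions of the Laplacian resolvent on $\R^n$ give smoothness of $M(\eta) = U + v R_0^+(\eta^4+\eta^2) v$ as an operator-valued function of $\eta$, with bounds controlled by the decay $|V(x)|\lesssim\langle x\rangle^{-n-4-}$. The regular threshold hypothesis that $T_0 = M(0)$ is invertible on $L^2$ then combines with a Neumann perturbation argument to produce $M^{-1}(\eta)$ and its first $\lceil n/2\rceil+1$ $\eta$-derivatives for $0 < \eta \ll 1$; the weight $\eta^{\max(0,\ell-1)}$ appearing in \eqref{out2} is exactly what is required to absorb the mild singularities of $\partial_\eta^\ell M^{-1}(\eta)$ at the threshold. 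Provided $k$ is chosen large enough depending on $n$, the flanking factors $w A(\eta) v$ and $v A(\eta) w$ inherit enough spatial decay from repeated integration against $V$ to supply the claimed pointwise bound $\langle x\rangle^{-n/2-}\langle y\rangle^{-n/2-}$.

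Once \eqref{out2} is available, admissibility of $\Omega_L^-(x,y)$ follows from integrating by parts $\lceil n/2\rceil+1$ times in $\eta$ against the oscillatory kernel of $R_0^+ - R_0^-$, whose phase is essentially $\eta$ times a spatial length. The cutoff $\chi(\eta)$ kills all boundary terms, and each integration by parts converts an $\eta$-singularity in $\partial_\eta^\ell\Gamma$ (controlled by \eqref{out2}) into inverse-spatial-length factors that, together with the $z$-integrable weights $v(z_i)\langle z_i\rangle^{-n/2-}$, render the spatial double integral absolutely convergent and produce enough decay in $x,y$ to conclude $\Omega_L^-(x,y)\in L^\infty_x L^1_y\cap L^\infty_y L^1_x$. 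The principal obstacle is the middle step: establishing the uniform derivative bounds on $M^{-1}(\eta)$ near the threshold in the presence of the two-scale symbol $\eta^4+\eta^2$ requires a delicate expansion mixing the low-energy behavior of $R_\Delta$ at the two shifted energies appearing in \eqref{the split}, which is more subtle than either the pure biharmonic or pure Schr\"odinger analog already treated in the literature.
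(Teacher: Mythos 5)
Your overall route coincides with the paper's: the symmetric resolvent identity gives the representation \eqref{out1} (i.e.\ \eqref{WL2}), the regular-threshold hypothesis plus a Neumann series yields the derivative bounds on $M^{-1}(\eta)$ for $0<\eta\ll1$, and together with the spatial decay of $A(\eta)$ for $k$ large this gives \eqref{out2} (i.e.\ \eqref{Gamma}). The genuine gap is in your concluding step. You assert that admissibility ``follows from integrating by parts $\lceil n/2\rceil+1$ times in $\eta$,'' but this fails in the region $r_2:=|y-z_2|\gg\langle r_1\rangle$, $r_1:=|x-z_1|$, which is precisely the case the paper singles out as the most difficult ($K_4$ in Proposition \ref{proposition-WL}). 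On the non-oscillatory range $\eta\lesssim r_2^{-1}$ no integration by parts is available, and the best bound obtainable from \eqref{out2} alone is $\int_0^{r_2^{-1}}\eta^{n-1}\,d\eta\sim r_2^{-n}$; multiplied by the outer singularity $r_1^{-(n-2)}$ this gives a kernel bounded by $\langle z_1\rangle^{-n-}\langle z_2\rangle^{-n-}r_1^{-(n-2)}r_2^{-n}$, whose $y$-integral over $\{r_2\gg\langle r_1\rangle\}$ diverges logarithmically, so the Schur test does not close at this exponent (Lemma \ref{Appendix 3} needs strictly more than $r_2^{-n}$).

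The paper repairs this with a zero-energy cancellation that your sketch does not contain: one writes $R_0^+(\eta^4+\eta^2)\Gamma(\eta)=R_0(0)\Gamma(0)+R_0^+(\eta^4+\eta^2)\big(\Gamma(\eta)-\Gamma(0)\big)+\big(R_0^+(\eta^4+\eta^2)-R_0(0)\big)\Gamma(0)$. The last two terms gain an extra factor of $\eta$ by the mean value theorem (estimates \eqref{B(eta)} and \eqref{partial-E}), which upgrades $r_2^{-n}$ to $r_2^{-n-1}$ in the small-phase regime, while the leading term is evaluated exactly through the functional calculus identity $\frac{1}{\pi i}\int_0^\infty\eta\chi(\eta)\big(R_\Delta^+(\eta^2)-R_\Delta^-(\eta^2)\big)\,d\eta=\chi\big(\sqrt{-\Delta}\big)$, whose kernel decays faster than any polynomial in $r_2$. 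Without this subtraction (or an equivalent idea exploiting cancellation between $\Gamma(\eta),R_0^+$ and their zero-energy limits), your final argument cannot produce more than $r_2^{-n}$ decay in the critical region, and admissibility --- hence the endpoint bounds $p=1,\infty$ --- is not obtained. The remaining regions ($r_1,r_2\lesssim1$; $r_1\approx r_2\gg1$; $r_1\gg\langle r_2\rangle$) are indeed handled by the direct estimates and integrations by parts you describe.
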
 
 \subsection{The low energy analyses of $R_0^\pm$ and  $R_V^\pm$}
Before the proof of Theorem \ref{theorem WL}, we need to research  the boundary operator \texorpdfstring{$R_0^\pm(\eta^4+\eta^2)$}{$R_0^\pm$} and \texorpdfstring{$R_V^+(\eta^4+\eta^2)$}{$R_V^+$} for $0<\eta\ll1$.

 	According to  \eqref{boundary}-\eqref{the split} and  the  limiting absorption principle (see e.g. \cite{Agmon}),  it concludes that
 		\begin{align}\label{split}
 	R_0^{\pm}(\eta^4+\eta^2)
 			=\frac{1}{1+2\eta^2}\Big(R^{\pm}_\Delta\big(\eta^2\big)-R_{\Delta}\big(-1-\eta^2\big)\Big), \ \ \eta>0.
 		\end{align}
 		
 		Recall the kernel of 
 		the second order  free Schr\"odinger  resolvent $R_\Delta(z)(x,y)$(see {\it e.g.} \cite{Jensen-80, Jensen-Kato-79}	):
 			\begin{itemize}
 			\item If $n\geq5$  and $n$ is odd, then for $\Im(z)>0,$
 			\begin{align}\label{2-odd}
 				R_\Delta(z^2)(x,y)
 				=\frac{e^{iz|x-y|}}{|x-y|^{n-2}}\sum_{j=0}^
 				{\frac{n-3}{2}}c_j|x-y|^jz^j.
 			\end{align}
 			\item If $n\geq5$  and $n$ is even, then for $\Im(z)>0$ and $\big|z|x-y|\big|\ll1,$
 			 	\begin{align}\label{2-even<}
 			 	R_\Delta(z^2)(x,y)
 			 	=\frac{1}{|x-y|^{n-2}} \bigg(\sum_{j=0}^
 			 	{\frac{n}{2}-2}a_j\big(z|x-y|\big)^{2j}
 			 	+\sum_{j=\frac{n}{2}-1}^
 			 	{\infty}\Big(a_j+d_j\ln \big(z|x-y|\big) \Big)\big(z|x-y|\big)^{2j}\bigg).
 			 \end{align}
 			 	\item If $n\geq5$  and $n$ is even, then for $\Im(z)>0$ and $\big|z|x-y|\big|\gtrsim1,$
 			 \begin{align}\label{2-even>}
 			 R_\Delta(z^2)(x,y)
 			 =\frac{e^{iz|x-y|}}{|x-y|^{n-2}}\big(z|x-y|\big)^
 			 {\frac{n-2}{2}}w\big(z|x-y|\big).
 			 \end{align}
 		\end{itemize}
 		Here, the constants $a_j, c_j, d_j$ dependent on $n$ can be computed and $\big|\frac{d^\ell}{dz^\ell}w(z)\big|\lesssim\big(1+|z|\big)^{-\frac{1}{2}-\ell}.$
 		\vskip0.2cm
 		Observe that   $0<\arg\big(\sqrt{1+4z}\big)<\pi$ and $\pi<\arg\big(-\sqrt{1+4z}\big)<2\pi$ for  $z\in\C\setminus[0, +\infty)$ with $0< \arg z<2\pi,$ which gives that
 		$\Im\Big(-\frac{1}{2}+\frac{1}{2}\sqrt{1+4z}\Big)^{\frac{1}{2}}>0$ and 	$\Im\Big(-\frac{1}{2}-\frac{1}{2}\sqrt{1+4z}\Big)^{\frac{1}{2}}>0.$ 
 			Hence combine with  the splitting identity \eqref{split} and  the kernel $R_\Delta(z)(x,y)$ \eqref{2-odd}-\eqref{2-even>}, then  by   the  limiting absorption principle and denoting $r=|x-y|$, it concludes that for $\eta>0,$ we have
 			\begin{itemize}
 			\item If $n\geq5$  and $n$ is odd, then 
 			\begin{align}\label{4-odd}
 				R_0^\pm(\eta^4+\eta^2)(x,y)
 				=\frac{1}{(1+2\eta^2)r^{n-2}}\bigg[e^{\pm i\eta r}\sum_{j=0}^
 				{\frac{n-3}{2}}c_j\big(\pm\eta r\big)^j-e^{ -r\sqrt{1+\eta^2}}\sum_{j=0}^
 				{\frac{n-3}{2}}c_j\big(ir\sqrt{1+\eta^2}\big)^j\bigg].
 			\end{align}
 			\item If $n\geq5$  and $n$ is even, then for  $\eta r\ll1$ and $r\ll1,$
 			\begin{align}\label{4-even<}
 				&R_0^\pm(\eta^4+\eta^2)(x,y)
 				=\frac{1}{(1+2\eta^2)r^{n-2}}\bigg[ \bigg(\sum_{j=0}^
 				{\frac{n}{2}-2}a_j\big(\pm\eta r\big)^{2j}
 				+\sum_{j=\frac{n}{2}-1}^
 				{\infty}\Big(a_j+d_j\ln\big (\pm\eta r\big) \Big)\big(\pm\eta r\big)^{2j}\bigg)\nonumber\\
 			&	\ \ \ \ \ -\bigg(\sum_{j=0}^
 			{\frac{n}{2}-2}a_j\big(ir\sqrt{1+\eta^2}\big)^{2j}
 			+\sum_{j=\frac{n}{2}-1}^
 			{\infty}\big(ir\sqrt{1+\eta^2}\big)^{2j}\Big(a_j+\frac{ \pi i}{2}d_j+d_j\ln \big( r\sqrt{1+\eta^2}\big) \Big)\bigg)\bigg].
 			\end{align}
 			\item If $n\geq5$  and $n$ is even, then for  $\eta r\ll1$ and $r\gtrsim1,$
 			\begin{align}\label{4-even<>}
 				R_0^\pm(\eta^4+\eta^2)(x,y)
 				=\frac{1}{(1+2\eta^2)r^{n-2}}\bigg[ &\bigg(\sum_{j=0}^
 				{\frac{n}{2}-2}a_j\big(\pm\eta r\big)^{2j}
 				+\sum_{j=\frac{n}{2}-1}^
 				{\infty}\Big(a_j+d_j\ln\big (\pm\eta r\big) \Big)\big(\pm\eta r\big)^{2j}\bigg)\nonumber\\
 				&	\ \ \ \ \ -e^{ -r\sqrt{1+\eta^2}}\big(ir\sqrt{1+\eta^2}\big)^{\frac{n-2}{2}}w\big(ir\sqrt{1+\eta^2}\big)\bigg].
 			\end{align}
 			\item If $n\geq5$  and $n$ is even, then for  $\eta r\gtrsim1,$ 
 				\begin{align}\label{4-even>}
 				R_0^\pm(\eta^4+\eta^2)(x,y)
 				=\frac{1}{(1+2\eta^2)r^{n-2}}\bigg(&e^{\pm i\eta r}\big(\pm\eta r\big)^{\frac{n-2}{2}}w_{\pm}\big(\eta r\big)\nonumber\\
 				&-e^{ -r\sqrt{1+\eta^2}}\big(ir\sqrt{1+\eta^2}\big)^{\frac{n-2}{2}}w\big(ir\sqrt{1+\eta^2}\big)\bigg).
 			\end{align}
 		Here, $w_{\pm}(z)\in C^\infty(\R)$ such that  $\big|\frac{d^\ell}{dz^\ell}w_{\pm}(z)\big|\lesssim\big(1+|z|\big)^{-\frac{1}{2}-\ell}.$
 	\end{itemize}
 	Next, we will use  \eqref{4-odd}-\eqref{4-even>} to establish some estimates about  the kernel $	R_0^\pm(\eta^4+\eta^2)(x,y).$
 	Denote
 			\begin{align}\label{R0F}
 			R_0^+(\eta^4+\eta^2)(x,y)
 			:=\frac{e^{i\eta|x-y|}}{(1+2\eta^2)r^{n-2}}F(\eta, |x-y|).
 		\end{align}
 		Then we  discuss the function $F(\eta, |x-y|)$ for $0<\eta\ll1$, classified  into two cases i.e. $n$ is odd and $n$ is even, which is given by the following  two Lemmas \ref{lemma F odd}-\ref{lemma F even}, respectively.
 		\begin{lemma}\label{lemma F odd}
 			Let $n\geq5$ be odd and $r:=|x-y|$. Then for $0<\eta\ll1,$
 				\begin{align}\label{F1 odd}
 			\Big|\partial_\eta^{\ell}F(\eta, r)\Big|\lesssim
 				 r^\ell\big\langle\eta r \big\rangle^{\frac{n-3}{2}-\ell}+
 				\big\langle r \big\rangle^{\frac{n-3}{2}+\ell}e^{-r},
 				\ \ \   \ \ell=0,1,2,\cdots.
 			\end{align}
 			In particular,  for $0<\eta\ll1,$
 				\begin{align}\label{F2 odd}
 				\Big|\partial_\eta^{\ell}F(\eta, r)\Big|\lesssim \eta^{-\ell}\big\langle\eta r \big\rangle^{\frac{n-3}{2}}, \ \ \ \ell=0,1,2,\cdots.
 			\end{align}
 		\end{lemma}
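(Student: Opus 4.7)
The plan is to read $F(\eta,r)$ directly off the odd-dimensional kernel formula \eqref{4-odd}. Comparing \eqref{4-odd} (with the $+$ sign) against the defining identity \eqref{R0F} yields the explicit decomposition
$$F(\eta,r) = \sum_{j=0}^{\frac{n-3}{2}} c_j (\eta r)^j \; - \; e^{-i\eta r}\, e^{-r\sqrt{1+\eta^2}}\sum_{j=0}^{\frac{n-3}{2}} c_j \bigl(ir\sqrt{1+\eta^2}\bigr)^j \; =: \; P(\eta,r) - Q(\eta,r).$$
The first piece $P$ is a polynomial of degree $(n-3)/2$ in $\eta$ (with $r$-dependent coefficients), while $Q$ inherits the exponential damping $e^{-r\sqrt{1+\eta^2}}$ from the massive branch $R_\Delta(-1-\eta^2)$ in the splitting identity \eqref{split}. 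I will bound $\partial_\eta^\ell P$ and $\partial_\eta^\ell Q$ separately, sum them to obtain \eqref{F1 odd}, and then deduce the ``in particular'' bound \eqref{F2 odd} from \eqref{F1 odd}.

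The polynomial part is direct: differentiating term-by-term,
$$\partial_\eta^\ell P(\eta,r) = r^\ell \sum_{j=\ell}^{\frac{n-3}{2}} \frac{j!}{(j-\ell)!}\, c_j\, (\eta r)^{j-\ell}$$
(vanishing when $\ell > (n-3)/2$), so uniformly in $0<\eta\ll 1$ one has $|\partial_\eta^\ell P|\lesssim r^\ell \langle \eta r\rangle^{(n-3)/2-\ell}$. For the damped piece I will apply the Leibniz rule to the triple product $e^{-i\eta r}\cdot e^{-r\sqrt{1+\eta^2}}\cdot (ir\sqrt{1+\eta^2})^j$. A $\partial_\eta$ acting on $e^{-i\eta r}$ produces the factor $-ir$; acting on $e^{-r\sqrt{1+\eta^2}}$ it produces $-r\eta/\sqrt{1+\eta^2}$ which is of size $O(r)$; and acting on $(ir\sqrt{1+\eta^2})^j$ it lowers the degree in $r$ by one while contributing a factor uniformly bounded on $\eta\in[0,1]$. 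After $\ell$ derivatives, $\partial_\eta^\ell Q$ is therefore a finite sum of terms of the form (polynomial in $r$ of degree $\le (n-3)/2+\ell$ with $\eta$-bounded coefficients) $\times\, e^{-r\sqrt{1+\eta^2}}$. Using $\sqrt{1+\eta^2}\ge 1$ to bound the exponential by $e^{-r}$ gives $|\partial_\eta^\ell Q|\lesssim \langle r\rangle^{(n-3)/2+\ell}\, e^{-r}$. Adding the two bounds proves \eqref{F1 odd}.

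To derive \eqref{F2 odd} from \eqref{F1 odd}, observe that the tail $\langle r\rangle^{(n-3)/2+\ell} e^{-r}$ is uniformly bounded in $r\ge 0$, while $\eta^{-\ell}\langle \eta r\rangle^{(n-3)/2}\ge 1$ for $\eta\ll 1$, so the tail is absorbed into the right-hand side of \eqref{F2 odd}. For the main term, multiplying the desired inequality $r^\ell\langle \eta r\rangle^{(n-3)/2-\ell}\lesssim \eta^{-\ell}\langle \eta r\rangle^{(n-3)/2}$ through by $\eta^\ell>0$ reduces it to $(\eta r)^\ell \lesssim \langle \eta r\rangle^\ell$, which is trivial.

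I expect no structural obstacle; the only delicate step is the Leibniz bookkeeping for $\partial_\eta^\ell Q$. In particular one has to verify that differentiating $(ir\sqrt{1+\eta^2})^j$ does not produce additional $r$-weights (only $\eta$-bounded factors on $(0,1]$), so that all powers of $r$ come from the original factor $(ir\sqrt{1+\eta^2})^j$ together with the $-ir$ factors from $\partial_\eta e^{-i\eta r}$ and the $O(r)$ factors from $\partial_\eta e^{-r\sqrt{1+\eta^2}}$; this is exactly what keeps the polynomial degree in $r$ at $(n-3)/2+\ell$.
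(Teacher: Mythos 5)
Your proposal is correct and follows essentially the same route as the paper: the same decomposition of $F$ into the polynomial part $\sum_j c_j(\eta r)^j$ and the exponentially damped part carrying $e^{-r\sqrt{1+\eta^2}}$, the same term-by-term/Leibniz bounds yielding \eqref{F1 odd}, and the same elementary absorption argument $(\eta r)^\ell\lesssim\langle\eta r\rangle^\ell$ together with $\langle r\rangle^{\frac{n-3}{2}+\ell}e^{-r}\lesssim 1\lesssim\eta^{-\ell}\langle\eta r\rangle^{\frac{n-3}{2}}$ to get \eqref{F2 odd}. The only (harmless) slip is the remark that an $\eta$-derivative of $(ir\sqrt{1+\eta^2})^j$ ``lowers the degree in $r$''---it in fact preserves it while producing an $\eta$-bounded factor, which is exactly what your final degree count uses.
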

 		\begin{proof}
 			Take into account \eqref{R0F} and \eqref{4-odd} to obtain that
 				\begin{align}\label{F3 odd}
 				F(\eta, r)
 				=\sum_{j=0}^
 				{\frac{n-3}{2}}c_j\big(\eta r\big)^j-e^{ \big(-\sqrt{1+\eta^2}-i\eta\big)r}\sum_{j=0}^
 				{\frac{n-3}{2}}c_j\big(ir\sqrt{1+\eta^2}\big)^j:=F_1(\eta r)-F_2(\eta, r).
 			\end{align}
 			Since $F_1(\eta r)$ is a polynomial of degree $\frac{n-3}{2}$ about $\eta r$, it follows that 
 			\begin{align*}
 			 	\Big|\partial_\eta^{\ell}F_1(\eta  r)\Big|\lesssim r^\ell\big\langle\eta r \big\rangle^{\frac{n-3}{2}-\ell}, \ \ \ \ell=0,1,2,\cdots.
 			 \end{align*}
 			 Since $0<\eta\ll1,$ then $\sqrt{1+\eta^2}\approx 1,$ and  $\big|\frac{d^\ell}{d\eta^\ell}\big(\sqrt{1+\eta^2}\big)\big|\lesssim1$.   Moreover,
 			 	\begin{align*}
 			 	\Big|\partial_\eta^{\ell}F_2(\eta,   r)\Big|\lesssim \big\langle r \big\rangle^{\frac{n-3}{2}+\ell}e^{-r\sqrt{1+\eta^2}}\lesssim \big\langle r \big\rangle^{\frac{n-3}{2}+\ell}e^{-r},  \ \ \ell=0,1,2,\cdots.
 			 \end{align*}
 			 
 			 It is obvious that 
 			 \begin{align}\label{F10 odd}
 			 	r^\ell\big\langle\eta r \big\rangle^{\frac{n-3}{2}-\ell}=\eta^{-\ell}(\eta r)^{\ell}\big\langle\eta r \big\rangle^{\frac{n-3}{2}-\ell}\leq\eta^{-\ell}\big\langle\eta r \big\rangle^{\frac{n-3}{2}}, \ \ \ \ell=0,1,2,\cdots.
 			 	\end{align}
 			 And consider $0<\eta\ll1$ to  derive that  
 			 $$ \big\langle r \big\rangle^{\frac{n-3}{2}+\ell}e^{-r}\lesssim1\lesssim\eta^{-\ell}\lesssim\eta^{-\ell}\big\langle\eta r \big\rangle^{\frac{n-3}{2}},\ \ \ \ \ \ell=0,1,2,\cdots.$$
 			  			Thus we complete the proof. 
 		\end{proof}	
 		\begin{lemma}\label{lemma F even}
 			Let $n\geq6$ be even and $r:=|x-y|$. Then for $0<\eta\ll1$ and $\eta r\gtrsim1,$
 			\begin{align}\label{F1 even}
 				\Big|\partial_\eta^{\ell}F(\eta, r)\Big|\lesssim r^\ell\big\langle\eta r \big\rangle^{\frac{n-3}{2}-\ell}, \ \ \ \ell=0,1,2,\cdots.
 			\end{align}
 			Moreover, for $0<\eta\ll1$ and $\eta r\ll1,$
 			\begin{align}\label{F2 even}
 			\Big|\partial_\eta^{\ell}F(\eta, r)\Big|\lesssim
 				\begin{cases}
 					r^\ell & {\rm if}\ \ 0\leq \ell\leq3,\\ 
 					r^4\big|\ln(\eta r)\big| & {\rm if}\ \ \ell=4,\\
 					r^\ell(\eta r)^{-(\ell-4)}& {\rm if}\ \ \ell>4.
 				\end{cases}
 			\end{align}
 			In particular,  for $0<\eta\ll1,$
 			\begin{align}\label{F3 even}
 				\Big|\partial_\eta^{\ell}F(\eta, r)\Big|\lesssim \eta^{-\ell}\big\langle\eta r \big\rangle^{\frac{n-3}{2}}, \ \ \ \ell=0,1,2,\cdots.
 			\end{align}
 		\end{lemma}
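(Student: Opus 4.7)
\medskip

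The plan is to proceed directly from the explicit kernel formulas \eqref{4-even<}, \eqref{4-even<>}, \eqref{4-even>} together with the definition \eqref{R0F}, which give an explicit expression for $F(\eta,r)=e^{-i\eta r}(1+2\eta^2)r^{n-2}R_0^+(\eta^4+\eta^2)(x,y)$ in each of the three regimes $(\eta r\gtrsim 1)$, $(\eta r\ll 1,\ r\ll 1)$, and $(\eta r\ll 1,\ r\gtrsim 1)$. In each regime I would split $F=F_1-F_2$ as in the proof of Lemma~\ref{lemma F odd}, where $F_1$ collects the ``oscillatory'' pieces coming from $R^\pm_\Delta(\eta^2)$ and $F_2$ the ``regular'' exponentially decaying pieces coming from $R_\Delta(-1-\eta^2)$. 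The bound on $F_2$ is universal and almost automatic: the factor $e^{-r\sqrt{1+\eta^2}}$ is bounded by $e^{-r/2}$ uniformly in $0<\eta\ll 1$, and any polynomial or logarithmic factor in $r$ or $\sqrt{1+\eta^2}$ produced by differentiating in $\eta$ at most $\ell$ times is absorbed by this exponential; so $|\partial_\eta^\ell F_2|\lesssim e^{-r/2}\langle r\rangle^{(n-3)/2+\ell}$, and this term is harmless in every case below.

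For the regime $\eta r\gtrsim 1$ I would write $F_1(\eta,r)=g(\eta r)$ with $g(z)=(iz)^{(n-2)/2}w_+(z)$, so that $\partial_\eta^\ell F_1=r^\ell g^{(\ell)}(\eta r)$. Applying Leibniz together with the pointwise bound $|w_+^{(j)}(z)|\lesssim\langle z\rangle^{-\frac12-j}$ and the fact that $(iz)^{(n-2)/2}$ is a polynomial of degree $\frac{n-2}{2}$ (since $n$ is even) yields $|g^{(\ell)}(z)|\lesssim\langle z\rangle^{(n-3)/2-\ell}$ for $z\gtrsim 1$; combining with the exponentially small $F_2$ contribution (here $r\gtrsim\eta^{-1}\gg 1$, so $e^{-r/2}\langle r\rangle^{(n-3)/2+\ell}$ is dominated by $r^\ell(\eta r)^{(n-3)/2-\ell}$) establishes \eqref{F1 even}.

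For the regime $\eta r\ll 1$ I would use the convergent Taylor/Taylor--log expansion in \eqref{4-even<} (when $r\ll 1$) or in \eqref{4-even<>} (when $r\gtrsim 1$). The even polynomial part $\sum_{j\le n/2-2}a_j(\eta r)^{2j}$ differentiates to $r^\ell$ times a polynomial in $\eta r$ which is bounded since $\eta r\ll 1$. The logarithmic tail $\sum_{j\ge n/2-1}(a_j+d_j\ln(\eta r))(\eta r)^{2j}$ is the delicate piece: for $\ell\le 3$ every such term is $O(r^\ell)$ (the $(\eta r)^{2j-\ell}$ factor with $2j\ge n-2\ge 4$ kills the log); at $\ell=4$, only the case $n=6$ produces a genuine $\ln(\eta r)$ factor (from $\partial_\eta^4[(\eta r)^4\ln(\eta r)]$), giving the bound $r^4|\ln(\eta r)|$, while for $n\ge 8$ the analogous term is even smaller; for $\ell>4$ further differentiation of this log produces a factor $(\eta r)^{-(\ell-4)}$, which is the worst contribution. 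The $r\sqrt{1+\eta^2}$ series is smooth in $\eta$ (every $\eta$-derivative brings down an extra bounded factor $\eta/\sqrt{1+\eta^2}$ together with a factor $r$), hence yields at most $r^\ell$; the prefactor $e^{-i\eta r}$ only contributes bounded factors of $ir$ per derivative. Summing these contributions and absorbing the exponentially small $F_2$ gives \eqref{F2 even}. Finally, \eqref{F3 even} follows by elementary manipulations from \eqref{F1 even} and \eqref{F2 even}: in the regime $\eta r\gtrsim 1$ one writes $r^\ell\langle\eta r\rangle^{(n-3)/2-\ell}=\eta^{-\ell}(\eta r)^\ell\langle\eta r\rangle^{(n-3)/2-\ell}\le\eta^{-\ell}\langle\eta r\rangle^{(n-3)/2}$, while in $\eta r\ll 1$ one uses $\langle\eta r\rangle\approx 1$ and $r<\eta^{-1}$ to bound every case in \eqref{F2 even} by $\eta^{-\ell}$.

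The main obstacle I expect is the careful bookkeeping in the regime $\eta r\ll 1$ near $\ell=4$, where one must track precisely how derivatives act on the logarithmic terms and verify that the infinite series expansions in \eqref{4-even<}, \eqref{4-even<>} can be differentiated termwise uniformly on compact subsets of $\{(\eta,r):\eta r<\varepsilon\}$. The oscillatory regime $\eta r\gtrsim 1$ is more routine once the decomposition $F_1(\eta,r)=g(\eta r)$ is observed, since the scaling reduces everything to a one-variable estimate on $g^{(\ell)}$.
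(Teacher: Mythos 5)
Your proposal follows essentially the same route as the paper's proof: the same splitting $F=F_1-F_2$ into the oscillatory part (from $R_\Delta^\pm(\eta^2)$) and the regular part (from $R_\Delta(-1-\eta^2)$), the same case analysis in $\eta r$, the scaling reduction $\partial_\eta^\ell F_1=r^\ell g^{(\ell)}(\eta r)$ with $|w_+^{(j)}(z)|\lesssim\langle z\rangle^{-\frac12-j}$ when $\eta r\gtrsim1$, termwise differentiation of the power--log expansions when $\eta r\ll1$, and the same elementary passage from \eqref{F1 even}--\eqref{F2 even} to \eqref{F3 even}.

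One caveat worth fixing: your opening claim that $F_2$ is ``universally'' controlled via $e^{-r\sqrt{1+\eta^2}}\le e^{-r/2}$ is only valid when $r\gtrsim1$, where the exponential representation \eqref{2-even>} applies; for $r\ll1$ the term coming from $R_\Delta(-1-\eta^2)$ has no exponential factor and is instead the power--log series in $ir\sqrt{1+\eta^2}$ (the paper's \eqref{F5 even}), and a mere $O(1)$ bound there would not yield \eqref{F2 even} for $1\le\ell\le3$ since $r^\ell\ll1$. Your third paragraph does treat this series, so the argument is self-repairing, but two statements there are imprecise: the factor of $r$ per $\eta$-derivative comes from the prefactor $e^{-i\eta r}$ and from the surviving powers $r^{2j}$ with $j\ge1$ (differentiating $(1+\eta^2)^j$ or the logarithm produces bounded factors but no extra $r$), and for $\ell\ge4$ this contribution is only $O(r^4)$ (resp.\ $r^4|\ln(\langle\eta\rangle r)|$ at $\ell=4$), not $O(r^\ell)$ --- which is nevertheless dominated by the right-hand side of \eqref{F2 even}, exactly as in the paper's estimate \eqref{F9 even}.
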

 		\begin{proof}
 			Let 
 			\begin{align}\label{F4 even}
 			F_1(\eta r):=
 			\begin{cases}
 				e^{-i\eta r}\bigg(\sum_{j=0}^
 				{\frac{n}{2}-2}a_j\big(\eta r\big)^{2j}
 				+\sum_{j=\frac{n}{2}-1}^
 				{\infty}\Big(a_j+d_j\ln\big (\eta r\big) \Big)\big(\eta r\big)^{2j}\bigg),& {\rm if}\ \ \eta r\ll 1,\\ 
 				\big(\eta r\big)^{\frac{n-2}{2}}w_{+}\big(\eta r\big),& {\rm if}\ \ \eta r\gtrsim1.
 			\end{cases}
 		\end{align}
 			 When $\langle \eta \rangle r\ll 1,$ we denote
 				\begin{align}\label{F5 even}
 				F_2(\eta, r)=
 				e^{-i\eta r}	\bigg[\sum_{j=0}^
 					{\frac{n}{2}-2}a_j\big(ir\sqrt{1+\eta^2}\big)^{2j}
 					+\sum_{j=\frac{n}{2}-1}^
 					{\infty}\big(ir\sqrt{1+\eta^2}\big)^{2j}\Big(a_j+\frac{ \pi i}{2}d_j+d_j\ln \big( r\sqrt{1+\eta^2}\big) \Big)\bigg].
 			\end{align}
 			When  $\langle \eta \rangle r\gtrsim 1,$ we  denote
 			\begin{align}\label{F6 even}
 				F_2(\eta, r)=
 					e^{ \big(-\sqrt{1+\eta^2}-i\eta\big)r}\big(ir\sqrt{1+\eta^2}\big)^{\frac{n-2}{2}}w\big(ir\sqrt{1+\eta^2}\big).
 				\end{align}
 				
 			By \eqref{4-even<}-\eqref{4-even>} and \eqref{R0F},  we derive that $F(\eta, r)=F_1(\eta r)-F_2(\eta, r).$ Next, we  start to estimate $\partial_\eta^{\ell}F_1(\eta r)$ and $\partial_\eta^{\ell}F_2(\eta, r)$, respectively, for
 				$\ell\in\N^+\cup\{0\}.$ 
 				
 			$\bullet$ \underline{	Firstly, we deal with $\partial_\eta^{\ell}F_1(\eta r).$}
 			For $\eta r\ll 1$ and noting that $n\geq6$ such that $\frac{n}{2}-1\geq2,$  the parenthesis in \eqref{F4 even} i.e. $e^{i\eta r}F_1(\eta r)$ has the following estimate:
 					\begin{align}\label{F7 even}
 					\Big|\partial_\eta^{\ell}\Big(e^{i\eta r}F_1(\eta r)\Big)\Big|\lesssim
 					\begin{cases}
 						r^\ell & {\rm if}\ \ 0\leq \ell\leq3,\\ 
 						r^4\big|\ln(\eta r)\big| & {\rm if}\ \ \ell=4,\\
 						r^\ell(\eta r)^{-(\ell-4)}& {\rm if}\ \ \ell>4.
 					\end{cases}
 				\end{align}
 		Then by Leibniz formula, it is 	easy to find that  $\Big|\partial_\eta^{\ell}F_1(\eta r)\Big|= \Big|\partial_\eta^{\ell}\Big[e^{-i\eta r}\Big(e^{i\eta r}F_1(\eta r)\Big)\Big]\Big|$ has the same results as above for $\eta r\ll 1$ and $\ell=0,1,2,\cdots.$
 		
 	For	$\eta r\gtrsim1$, consider that $\big|\partial_\eta^\ell w_+(\eta r)\big|\lesssim r^\ell(\eta r)^{-\frac{1}{2}-\ell}$  $(\ell=0,1,2,\cdots)$
 	to obtain that
 					\begin{align}\label{F8 even}
 					\Big|\partial_\eta^{\ell}F_1(\eta r)\Big|\lesssim
 						r^\ell\big\langle\eta r \big\rangle^{\frac{n-3}{2}-\ell}, \ \  \ell=0,1,2,\cdots.
 				\end{align}
 				
$\bullet$ \underline{Next, we focus on  $\partial_\eta^{\ell}F_2(\eta, r).$} 
 	For $\langle\eta \rangle r\ll 1$ namely $r\ll 1$ ( in the case, it's obvious that $\eta r\ll 1$ by $0<\eta\ll1$),  considering that $n\geq6$ such that $\frac{n}{2}-1\geq2,$  the bracket in \eqref{F5 even} i.e. $e^{i\eta r}F_2(\eta, r)$ satisfies that
 \begin{align}\label{F9 even}
 	\Big|\partial_\eta^{\ell}\Big(e^{i\eta r}F_2(\eta, r)\Big)\Big|\lesssim
 	\begin{cases}
 		r^\ell & {\rm if}\ \ 0\leq \ell\leq3,\\ 
 		r^4\big|\ln(\langle\eta \rangle r)\big| & {\rm if}\ \ \ell=4,\\
 		r^4& {\rm if}\ \ \ell>4,
 	\end{cases}
 \end{align}
 		which is dominated by \eqref{F7 even} since $r\ll 1$ and $\eta r\ll 1$ with $0<\eta\ll1$.
 		Note that  $\big|\partial_\eta^{\ell}F_2(\eta, r)\big|$ has the same results as above for $\langle \eta \rangle r\ll 1$ and $\ell=0,1,2,\cdots.$
 			
 				For $\langle\eta \rangle r\gtrsim 1$ with  $0<\eta\ll1$, namely $r\gtrsim1$ but $\eta r\ll1$ or $r\gtrsim1$ and $\eta r\gtrsim1$. In virtue of \eqref{F6 even}, it follows that
 			\begin{align}\label{F10 even}
 				\Big|\partial_\eta^{\ell}F_2(\eta, r)\Big|\lesssim \big\langle r \big\rangle^{\frac{n-3}{2}}e^{-r}\lesssim r^\ell\big\langle\eta r \big\rangle^{\frac{n-3}{2}-\ell}, \ \  \ell=0,1,2,\cdots.
 			\end{align}
 		Observe  that  the bound above (i.e. $r^\ell\big\langle\eta r \big\rangle^{\frac{n-3}{2}-\ell}$) is controlled by \eqref{F7 even} when $r\gtrsim1$ but $\eta r\ll1$.
 			 
 			$\bullet$ \underline{Return to $F(\eta, r).$}
 				For $\eta r\ll 1$ and $0<\eta\ll1$,  since $F(\eta, r)=F_1(\eta r)-F_2(\eta, r),$ then   combining with \eqref{F7 even}, \eqref{F9 even} and \eqref{F10 even}, $\big|\partial_\eta^{\ell}F(\eta, r)\big|$ satisfies the bound \eqref{F2 even}.
 				
 			For $\eta r\gtrsim 1$ and $0<\eta\ll1$,  by \eqref{F8 even} and \eqref{F10 even}, $\big|\partial_\eta^{\ell}F(\eta, r)\big|$ satisfies the bound \eqref{F1 even}.
 				
 				Note that for $\eta r\ll 1$ and $\ell\in\N^+\cup\{0\},$  one has $r^\ell \lesssim\eta^{-\ell}\lesssim\eta^{-\ell}\big\langle\eta r \big\rangle^{\frac{n-3}{2}}$ and 
 					\begin{align*}
 						&r^4\big|\ln(\eta r)\big| =\eta^{-4}(\eta r)^4\big|\ln(\eta  r)\big|\lesssim\eta^{-4}\big\langle\eta r \big\rangle^{\frac{n-3}{2}}\ \
 						r^\ell(\eta r)^{-(\ell-4)}=\eta^{-\ell}(\eta r)^4\lesssim\eta^{-\ell}\big\langle\eta r \big\rangle^{\frac{n-3}{2}}.
 					\end{align*}
 				Furthermore,	combining with \eqref{F10 odd}, we derive \eqref{F3 even}.
 				
 				Thus we complete the proof. 
 		\end{proof}
 Next  we will establish  the estimate of the following kernel  for fixed $k\in\N^+$ enough large depending on $n$:
 		\begin{align}\label{A}
 		 A(\eta)(z_1, z_2):=\big(R_0^+(\eta^4+\eta^2)V\big)^{k-1}R_0^+(\eta^4+\eta^2)(z_1, z_2), 
 	\end{align}
 	which plays an important role in studying the low energy part $\Omega_L^+$ defined by \eqref{WL}. 
 	\begin{lemma}\label{lemma A}
 		Let $n\geq 5$ and $V(x)\lesssim\langle x \rangle^{-\beta}.$ Then for fixed $k\in\N^+$ enough large depending on $n$ and $
 		  \ell=0,1,\cdots, \Big\lceil\frac{n}{2}\Big\rceil +1,$ provided $\beta>\frac{n+1}{2}+2,$ one has
 		\begin{align}\label{A1}
 			\sup_{0<\eta\ll 1}\Big| \eta^{\max(0, \ell-1)}\partial_\eta^\ell A(\eta)(z_1, z_2) \Big|\lesssim	\begin{cases}
 				\langle z_1\rangle^2\langle z_2\rangle^2  & {\rm if}\  n\  {\rm is\  odd},\\
 			\langle z_1\rangle^{\frac{3}{2}}\langle z_2\rangle^{\frac{3}{2}}  & {\rm if}\  n\  {\rm is \ even}.
 			\end{cases}
 			\end{align}
 		\end{lemma}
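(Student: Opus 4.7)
The plan is to differentiate the explicit iterated-integral representation
\[
A(\eta)(z_1,z_2)=\int_{\R^{n(k-1)}}R_0^+(\eta^4+\eta^2)(z_1,w_1)\prod_{j=1}^{k-1}V(w_j)R_0^+(\eta^4+\eta^2)(w_j,w_{j+1})\,dw_1\cdots dw_{k-1},
\]
with $w_k:=z_2$, via the Leibniz rule and to control each resulting summand separately. The factorization \eqref{R0F} together with Lemmas \ref{lemma F odd}--\ref{lemma F even} yields, for every $a\in\N\cup\{0\}$ and $0<\eta\ll1$, the pointwise bound
\[
\bigl|\partial_\eta^{a}R_0^+(\eta^4+\eta^2)(x,y)\bigr|\lesssim \frac{|x-y|^{a}\langle\eta|x-y|\rangle^{(n-3)/2}}{|x-y|^{n-2}}+\frac{\langle|x-y|\rangle^{(n-3)/2+a}e^{-|x-y|}}{|x-y|^{n-2}},
\]
together with the complementary bound $|\partial_\eta^a F|\lesssim\eta^{-a}\langle\eta r\rangle^{(n-3)/2}$ that trades spatial growth for negative powers of $\eta$.

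For each Leibniz allocation $(a_0,\ldots,a_{k-1})$ with $a_0+\cdots+a_{k-1}=\ell$ I would separate the two endpoint contributions (derivatives on $R_0^+(z_1,w_1)$ and on $R_0^+(w_{k-1},z_2)$) from the interior contributions. Endpoint derivatives generate factors $|z_1-w_1|^{a_0-(n-2)}\langle\eta|z_1-w_1|\rangle^{(n-3)/2}$ and analogously at the other end; using $\langle\eta r\rangle\lesssim\langle r\rangle$ for $0<\eta\ll1$ and the fact that the adjacent $V(w_1),V(w_{k-1})$ factors (with decay $V\lesssim\langle x\rangle^{-n-4-}$, well within the hypothesis $\beta>(n+1)/2+2$) localize $w_1,w_{k-1}$ to a bounded region, these factors are dominated, after $w$-integration, by $\langle z_1\rangle^{a_0-(n-1)/2}\langle z_2\rangle^{a_{k-1}-(n-1)/2}$ for $|z_i|$ large. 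Since $a_0,a_{k-1}\leq\ell\leq\lceil n/2\rceil+1$, the worst possible power on either side is $\lceil n/2\rceil+1-(n-1)/2$, equal to $2$ for odd $n$ and to $3/2$ for even $n$, matching the claim exactly. Interior derivatives are estimated with the complementary bound: each internal $\partial_\eta$ costs an $\eta^{-1}$, and the prefactor $\eta^{\max(0,\ell-1)}$ absorbs up to $\ell-1$ of these, the one remaining ``free'' derivative being the one allowed to land on an extremal resolvent without $\eta$-penalty. Summing over Leibniz allocations and applying an iterated Schur-type bound along the interior $R_0^+V$-chain (which converges provided $k$ is sufficiently large depending on $n$) produces the claimed uniform estimate.

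The main technical obstacle is the endpoint bookkeeping. When many derivatives concentrate on, say, $R_0^+(z_1,w_1)$, one must simultaneously handle the near regime $\eta|z_1-w_1|\lesssim1$, where the bound reduces to $|z_1-w_1|^{a_0-(n-2)}$ with no small-$\eta$ factor available for cancellation, and the far regime $\eta|z_1-w_1|\gtrsim1$, where the bound becomes $\eta^{(n-3)/2-a_0}|z_1-w_1|^{-(n-1)/2}$ and must rely on the $\eta^{\max(0,\ell-1)}$ weight for smallness. Verifying that both regimes give at most $\langle z_1\rangle^{2}$ (odd) or $\langle z_1\rangle^{3/2}$ (even) growth for every $\ell$ up to $\lceil n/2\rceil+1$, and that the interior Schur bounds fit together to keep the constant finite, is the step requiring the most care; once this endpoint bookkeeping is settled, the rest is a routine iterated estimate exploiting the decay of $V$ and the freedom to choose $k$ large.
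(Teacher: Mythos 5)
Your overall skeleton coincides with the paper's: Leibniz expansion of the iterated kernel, pointwise bounds on $\partial_\eta^a R_0^+(\eta^4+\eta^2)$ obtained from \eqref{R0F} and Lemmas \ref{lemma F odd}--\ref{lemma F even}, an iterated convolution estimate along the $R_0^+V$-chain with $k$ large, and the observation that the worst endpoint exponent is $\lceil \frac n2\rceil+1-\frac{n-1}{2}$, i.e.\ $2$ for odd $n$ and $\frac32$ for even $n$. However, two of the steps you actually propose would fail as written. First, your displayed pointwise bound $|\partial_\eta^{a}R_0^+|\lesssim r^{a-(n-2)}\langle\eta r\rangle^{(n-3)/2}+\cdots$ is false for even $n$ once $a\ge 4$: by Lemma \ref{lemma F even}, in the regime $\eta r\ll1$ one only has $|\partial_\eta^a F|\lesssim r^4|\ln(\eta r)|$ (for $a=4$) or $r^a(\eta r)^{-(a-4)}$ (for $a>4$), and e.g.\ $r^5(\eta r)^{-1}=\eta^{-1}r^4$ is not $\lesssim r^5\langle\eta r\rangle^{(n-3)/2}$ when $\eta r\ll1$. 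The uniform statement that makes the argument close is the weighted one the paper uses, $\eta^{\max(0,\ell-1)}|\partial_\eta^{\ell}F(\eta,r)|\lesssim\langle r\rangle^{\ell}\langle\eta r\rangle^{\frac{n-3}{2}}$ (see \eqref{Fodd even}); the prefactor $\eta^{\max(0,\ell-1)}$ is there precisely to tame these even-dimensional log/negative-power terms, not to pay for interior derivatives.

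Second, your interior accounting --- ``each internal $\partial_\eta$ costs an $\eta^{-1}$, the prefactor absorbs $\ell-1$ of these, and the one remaining free derivative lands on an extremal resolvent'' --- does not cover the Leibniz allocations in which all $\ell$ derivatives fall on interior factors; nothing in the expansion forces a derivative onto an endpoint resolvent, and in that case your scheme leaves an uncontrolled factor $\eta^{-1}$ as $\eta\to0$. The paper never uses the $\eta^{-a}$ bound on interior factors: it bounds every factor by $\langle u_{j-1}-u_j\rangle^{\frac{n-3}{2}+s_j}|u_{j-1}-u_j|^{-(n-2)}$ and absorbs the interior polynomial growth into the decay of $V$, integrating the chain from both ends toward the factor carrying the derivatives. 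This is also exactly where the hypothesis $\beta>\frac{n+1}{2}+2$ (and not the stronger decay $\langle x\rangle^{-n-4-}$ you invoke, which exceeds what the lemma assumes) enters, through Lemma \ref{Appendix 1} and the iterated $\Pi$-estimates with $k\ge\lceil\frac n2\rceil+1$; likewise ``$V$ localizes $w_1$ to a bounded region'' is not literally true --- the endpoint growth is transferred to $\langle z_1\rangle^{2}$ or $\langle z_1\rangle^{3/2}$ via that same convolution lemma. With these repairs your argument becomes the paper's proof, but as proposed the even-dimensional bound and the all-interior allocation are genuine gaps.
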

 		\begin{proof} 
 			By virtue of the bounds  \eqref{F1 odd}, \eqref{F1 even}, and \eqref{F2 even}, we can check that
 			\begin{align}\label{Fodd even}
 				\Big| \eta^{\max(0, \ell-1)}\partial_\eta^\ell F(\eta, |x-y|) \Big|\lesssim\langle x-y\rangle^\ell\big\langle\eta r \big\rangle^{\frac{n-3}{2}} \ \ \ \  \ell=0,1,\cdots, \Big\lceil\frac{n}{2}\Big\rceil +1.
 			\end{align}
 		Then for  $0<\eta\ll 1,$   considering  \eqref{R0F}, \eqref{Fodd even} and  Leibniz formula,  it follows that
 				\begin{align*}
 				\Big| \eta^{\max(0, \ell-1)}\partial_\eta^\ell A(\eta)(z_1, z_2) \Big|\lesssim \sum_{t=1}^{k}\sum_{s=0}^{\ell}&\int_{\R^{(k-1)n}}
 			|u_{t-1}-u_t|^s\bigg(\prod_{j=1}^{k-1}
 				\frac{\big \langle u_{j-1}-u_j\big\rangle^{\frac{n+1}{2}-2}}{|u_{j-1}-u_j|^{n-2}}\big|V(u_j)\big|\bigg)	\\
 				&\times	\frac{\big \langle u_{k-1}-u_k\big\rangle^{\frac{n+1}{2}-2}}{|u_{k-1}-u_k|^{n-2}}d\vec{u}:=\sum_{t=1}^{k}\sum_{s=0}^{\ell}A_{s,t }(z_1, z_2),
 			\end{align*}
 				where $u_0=z_1,$ $u_k=z_2,$ and $\vec{u}=(u_1, u_2,\dots, u_{k-1}).$
 	We only focus on the following $A_{\ell,k}(z_1, z_2)$ for $\ell = 0, 1, \cdots, \Big\lceil \frac{n}{2} \Big\rceil + 1$, as the remaining cases can be deduced accordingly.
 		\begin{align*}
 		 A_{\ell,k}(z_1, z_2)= \int_{\R^{(k-1)n}}
 		\bigg(\prod_{j=1}^{k-1}
 		\frac{\big \langle u_{j-1}-u_j\big\rangle^{\frac{n+1}{2}-2}}{|u_{j-1}-u_j|^{n-2}}\big|V(u_j)\big|\bigg)
 		\frac{\big \langle u_{k-1}-u_k\big\rangle^{\frac{n+1}{2}-2}}{|u_{k-1}-u_k|^{n-2-\ell}}d\vec{u}.
 	\end{align*}
 For simplicity, 	set $$\Pi_A:=	\int_{A} 
 	\frac{\big \langle u_0-u_j\big\rangle^{\frac{n+1}{2}-2a}}{|u_0-u_j|^{n-2a}}\big|V(u_j)\big|
 	\frac{\big \langle u_j-u_{j+1}\big\rangle^{\frac{n+1}{2}-2}}{|u_j-u_{j+1}|^{n-2}}du_j,$$
 	where $A\subseteq\R^n$ denotes the integration region.
 	
 \underline{ we  first claim that for $a\in \N^+$ and fixed  $0<\delta\ll1,$ }
 	 \begin{align*}
\Pi_{\R^n}  \lesssim
 		\begin{cases}
 		{\big \langle u_0-u_{j+1}\big\rangle^{\frac{n+1}{2}-2(a+1)}}{\big|u_0-u_{j+1}\big|^{-n+2(a+1)}},& \ \  {\rm if}\ 1\leq a\leq
 		\lceil\frac{n}{2}\rceil-2,\\ 
 	|u_0-u_{j+1}|^{-\delta} \big \langle u_0-u_{j+1}\big\rangle^{-\frac{n-1}{2}+\delta},& \ \  {\rm if}\ a=\lceil\frac{n}{2}\rceil-1,
 		\end{cases}
 	\end{align*}
 	and  if $a>\lceil\frac{n}{2}\rceil-1$, then  $\Pi_{\R^n} \lesssim \big \langle u_0-u_{j+1}\big\rangle^{-\frac{n-1}{2}}.$ 
 	
 	Indeed, we decompose $\mathbb{R}^n$ into the four parts:
 		\begin{align*}
 &	E_1:=\big\{u_j\in\R^n\big| |u_0-u_j|\geq1,\ |u_j-u_{j+1}|\geq1 \big\},\ \ 	E_2:=\big\{u_j\in\R^n\big| |u_0-u_j|\geq1,\ |u_j-u_{j+1}|<1 \big\},\\
 &	E_3:=\big\{u_j\in\R^n\big| |u_0-u_j|<1,\ |u_j-u_{j+1}|\geq1 \big\},\ \ 	E_4:=\big\{u_j\in\R^n\big| |u_0-u_j|<1,\ |u_j-u_{j+1}|<1 \big\}.
 	\end{align*}
 	Note that for $x\in\R^n,$ $\langle x\rangle\approx 1$ if $|x|\leq1$ and
$\langle x\rangle\approx |x|$ if $|x|>1.$   Then $	\Pi_{E_1}$ satisfies
 	 	 \begin{align}\label{E1-1}
 		\overset{(1)}\lesssim 	\int_{E_1}
 		\frac{\big|V(u_j)\big|}{|u_0-u_j|^{\frac{n-1}{2}}|u_j-u_{j+1}|^{\frac{n-1}{2}}}du_j
 \lesssim 	\int_{E_1}\Big(
 		\frac{1}{|u_0-u_j|^{n-1}}+\frac{1}{|u_j-u_{j+1}|^{n-1}}\Big)\big|V(u_j)\big|du_j,
 		\end{align}
 		which is dominated  by $\lesssim1,$ followed from
 		Lemma 
 		\ref {Appendix 1} provided $\beta>1.$ Additionally, observe that
 		 \begin{align*}
 			\frac{1}{|u_0-u_j|^{\frac{n-1}{2}}|u_j-u_{j+1}|^{\frac{n-1}{2}} } \lesssim\frac{1}{|u_0-u_{j+1}|^{\frac{n-1}{2}}}
 			\left(\frac{1}{|u_0-u_j|^{\frac{n-1}{2}} }+\frac{1}{|u_j-u_{j+1}|^{\frac{n-1}{2}} }\right),
 		\end{align*}
 		where the inequality is established by $$|u_0-u_{j+1}|^{\frac{n-1}{2}}\lesssim|u_0-u_j|^{\frac{n-1}{2}}+|u_j-u_{j+1}|^{\frac{n-1}{2}}.$$ 
 		Putting the inequality above  into the first bound in  \eqref{E1-1}, by Lemma \ref{Appendix 1}, we derive $	\Pi_{E_1}\lesssim |u_0-u_{j+1}|^{-\frac{n-1}{2}},$ provided $\beta>\frac{n+1}{2}.$ Considering that $	\Pi_{E_1}\lesssim1$, we conclude that 
 		\begin{align*}
 			\Pi_{E_1}
 			\lesssim\big \langle u_0-u_{j+1}\big\rangle^{-\frac{n-1}{2}}, \ \  a\in\N^+.
 		\end{align*}
 	By the same way,  we can check that $\Pi_{E_i}$ satisfies $\lesssim\big \langle u_0-u_{j+1}\big\rangle^{-\frac{n-1}{2}}$ for $i=2,3$ and $a\in\N^+.$
 	Additionally, it is not hard to find that 
 		$$
 	\big \langle u_0-u_{j+1}\big\rangle^{-\frac{n-1}{2}}	\lesssim	{\big \langle u_0-u_{j+1}\big\rangle^{\frac{n+1}{2}-2(a+1)}}{\big|u_0-u_{j+1}\big|^{-n+2(a+1)}},\  \  \  a=1,2, \cdots, 
 		\Big\lceil\frac{n}{2}\Big\rceil-2. $$
 		As for the case of the integration region $E_4,$ 
 	we derive for $a=1,2, \dots,
 	\lceil\frac{n}{2}\rceil-2,$
 			 \begin{align*}
 				\Pi_{E_4}\lesssim 	\int_{E_4}
 			\frac{\big|V(u_j)\big|}{|u_0-u_j|^{n-2a}|u_j-u_{j+1}|^{n-2}}du_j\lesssim 
 			\frac{1}{|u_0-u_{j+1}|^{n-2(a+1)}},
 		\end{align*}
 		 where the last inequality follows from Lemma \ref{Appendix 2} since $n-2a+n-2-n>0$ for $a\leq
 		 \lceil\frac{n}{2}\rceil-2$.
 		 
 		 If $a=
 		 \lceil\frac{n}{2}\rceil-1$  and $n$ is even (i.e. $a=\frac{n}{2}-1$ and $n$ is even), then $0<\delta\ll1,$
 		  \begin{align*}
 		 	\int_{E_4}
 		 	\frac{\big|V(u_j)\big|}{|u_0-u_j|^{2}|u_j-u_{j+1}|^{n-2}}du_j
 		 	\lesssim \frac{1}{	|u_0-u_{j+1}|^{\delta}}\int_{E_4}
 		 	\frac{|u_0-u_j|^{\delta}+|u_j-u_{j+1}|^{\delta}}{|u_0-u_j|^{2}|u_j-u_{j+1}|^{n-2}}du_j\lesssim \frac{1}{	|u_0-u_{j+1}|^{\delta}},
 		 \end{align*}
 		 where the finally inequality is deduced by the  integration region $E_4$ and $n-\delta <n.$
 		 
 		 If $a\notin\big\{1,2, \cdots,
 		\lceil\frac{n}{2}\rceil-2\big\}\bigcup\big\{\lceil\frac{n}{2}\rceil-1\big|\  n\ \text{ is even}\big\},$ then $n-2a+n-2<n,$ which yields that
 		 $$	\int_{E_4}
 		 \frac{\big|V(u_j)\big|}{|u_0-u_j|^{n-2a}|u_j-u_{j+1}|^{n-2}}du_j \lesssim 1.
 		$$
 		Hence we complete the first claim.
 		
 Additionally,	note that $\big \langle u_0-u_{j}\big\rangle^{-\frac{n-1}{2}}\lesssim	|u_0-u_j|^{-\delta}\big \langle u_0-u_j\big\rangle^{-\frac{n-1}{2}+\delta}$ and 
 	\begin{align*}
 			&\int_{\R^n}
 				|u_0-u_j|^{-\delta}\big \langle u_0-u_j\big\rangle^{-\frac{n-1}{2}+\delta}\big|V(u_j)\big|
 			\frac{\big \langle u_j-u_{j+1}\big\rangle^{\frac{n+1}{2}-2}}{|u_j-u_{j+1}|^{n-2}}du_j
 			\lesssim \big \langle u_0-u_{j+1}\big\rangle^{-\frac{n-1}{2}},
 		\end{align*}
 		by Lemma \ref{Appendix 1} with $\beta>\frac{n+1}{2}.$
 	Thus, choose $k$ enough large $ (e.g. \  k\geq\lceil\frac{n}{2}\rceil+1)$ to conclude  that 
 	\begin{align*}
 	 A_{\ell,k}(z_1, z_2)\lesssim	\int_{\R^n}\big \langle u_0-u_{k-1}\big\rangle^{-\frac{n-1}{2}}
 	 \big|V(u_{k-1})\big|
 	 \frac{\big \langle u_{k-1}-u_k\big\rangle^{\frac{n+1}{2}-2}}{|u_{k-1}-u_k|^{n-2-\ell}}du_{k-1}.
 	 \end{align*}
 	 For $|u_{k-1}-u_k|\leq1,$ it is easy to get $ A_{\ell,k}(z_1, z_2)\lesssim1.$ For $|u_{k-1}-u_k|\geq1$ and  $\ell=0,1,\cdots, \Big\lceil\frac{n}{2}\Big\rceil +1,$  we derive that
 	 \begin{align*}
 		\int_{\R^n}\big \langle u_0-u_{k-1}\big\rangle^{-\frac{n-1}{2}}
 	\frac{	\big|V(u_{k-1})\big|}{|u_{k-1}-u_k|^
 		{\frac{n-1}{2}-\ell}}du_{k-1}
 	& \lesssim
 	 \int_{\R^n}\frac{\big|V(u_{k-1})\big|}{\big \langle u_0-u_{k-1}\big\rangle^{\frac{n-1}{2}}} \big|u_{k-1}-u_k\big|^{\lceil\frac{n}{2}\rceil-\frac{n}{2}+\frac{3}{2}}du_{k-1},
 	 \end{align*}
 	which is bounded by $\lesssim\langle u_k \rangle^{\lceil\frac{n}{2}\rceil-\frac{n}{2}+\frac{3}{2}},$  deduced by Lemma \ref{Appendix 1}, provided $\beta>\frac{n+1}{2}+2.$
 	 
 	 Similarly, by choosing $k$ sufficiently large depending on $n$, we obtain $A_{\ell,1}(z_1, z_2) \lesssim \langle u_0 \rangle^{\lceil\frac{n}{2}\rceil-\frac{n}{2}+\frac{3}{2}}$. Furthermore, if $t \neq 1, k$, we integrate starting from the farthest end up to the $t$-th term, yielding the bound $\lesssim 1$. 	 Thus we complete the proof. 
 		\end{proof}
 		
 	Next, setting $v(x)=|V(x)|^{1/2},$  we claim that  the operator 
 $\mathcal{R}_\ell$ with kernel:    
 	\begin{align}\label{Rl}
 		\mathcal{R}_\ell(x,y):=v(x)v(y)\sup_{0<\eta\ll1}
 		\Big|\eta^{\max(0, \ell-1)}\partial_\eta^{\ell}R_0^+(\eta^4+\eta^2)(x,y) \Big|
 		\end{align}	    
 	is bounded on $L^2,$ under the condition that 
 	 $|V(x)|\lesssim\langle x \rangle^{-\beta} $ for some $\beta>0$.
 	\begin{lemma}\label{lemma Rj}
 		Let $n\geq5$ and $r=|x-y|.$ Then for  $\ell\in\N^+\cup\{0\}, $ 
 		\begin{align*}
 			\sup_{0<\eta\ll1}
 			\Big|\eta^{\max(0, \ell-1)}\partial_\eta^{\ell}R_0^+(\eta^4+\eta^2)(x,y) \Big|\lesssim	
 			r^{-n+2} + r^{-\frac{n-1}{2}+\ell}.
 			\end{align*}
 			Specifically, for  $\ell\in\N^+\cup\{0\}, $ 
 				\begin{align*}
 				\sup_{0<\eta\ll1}
 				\Big|\eta^{\max(0, \ell-1)}\partial_\eta^{\ell}R_0^+(\eta^4+\eta^2)(x,y) \Big|\lesssim
 				\begin{cases}
 					r^{-n+2}+r^{-n+3} & {\rm if}\ \eta r\ll1,\\ 
 					r^{-\frac{n-1}{2}+\ell} & {\rm if}\ \eta r\gtrsim1.
 				\end{cases}
 			\end{align*}
 			Moreover, $\mathcal{R}_\ell\in\mathbb{B}(L^2)$ for $\ell=0,1,2,\cdots, \lceil\frac{n}{2}\rceil +1,$  provided  
 			$|V(x)|\lesssim\langle x \rangle^{- n-4-}.$
 	\end{lemma}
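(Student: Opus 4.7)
The plan has three stages. First, I establish the pointwise bounds starting from the factorization
$$
R_0^+(\eta^4+\eta^2)(x,y) = \frac{e^{i\eta r}}{(1+2\eta^2)\, r^{n-2}}\, F(\eta, r)
$$
from \eqref{R0F} and applying Leibniz to $\partial_\eta^\ell$. Since $|\partial_\eta^m[e^{i\eta r}/(1+2\eta^2)]|\lesssim \langle r\rangle^m$ for $0<\eta\ll 1$, combining this with Lemma \ref{lemma F odd} and Lemma \ref{lemma F even} yields
$$
\bigl|\partial_\eta^\ell R_0^+(\eta^4+\eta^2)(x,y)\bigr|\lesssim r^{-n+2}\sum_{k=0}^\ell \langle r\rangle^{\ell-k}\,|\partial_\eta^k F(\eta,r)|.
$$
In the regime $\eta r\gtrsim 1$, inserting $|\partial_\eta^k F|\lesssim r^k\langle \eta r\rangle^{(n-3)/2-k}$ and using $\eta^{(n-3)/2}\le 1$ collapses the sum to $r^{-(n-1)/2+\ell}$ after the prefactor $\eta^{\max(0,\ell-1)}$ is absorbed. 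In the regime $\eta r\ll 1$, the bound $|\partial_\eta^k F|\lesssim r^k$ produces $r^\ell r^{-n+2}$ for the Leibniz sum; the key identity $\eta^{\ell-1} r^\ell = (\eta r)^{\ell-1}\cdot r\le r$ (valid for $\ell\ge 1$ since $\eta r\le 1$) then reduces the derivative cost to a single factor of $r$, giving $r^{-n+3}$ for $r\ge 1$ while leaving $r^{-n+2}$ intact for $r\le 1$. The unified bound $r^{-n+2}+r^{-(n-1)/2+\ell}$ is then the maximum over these two regimes.

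For the $L^2$-boundedness of $\mathcal{R}_\ell$, I would decompose its kernel as $K_1+K_2$, where $K_1(x,y)\lesssim v(x)v(y)\,|x-y|^{-n+2}$ carries the Riesz-type singularity and $K_2(x,y)\lesssim v(x)v(y)\,|x-y|^{-(n-1)/2+\ell}$ captures the polynomial behavior. The operator with kernel $K_1$ is, up to a dimensional constant, pointwise dominated by $M_v\circ (-\Delta)^{-1}\circ M_v$, which by Hardy--Littlewood--Sobolev is bounded on $L^2(\R^n)$ whenever $v\in L^n(\R^n)$; the hypothesis $|V(x)|\lesssim \langle x\rangle^{-n-4-}$ trivially gives $v\in L^n$. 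For $K_2$, since $-(n-1)/2+\ell\le 2$ for all $\ell\le \lceil n/2\rceil+1$, the Hilbert--Schmidt norm
$$
\iint v(x)^2\,v(y)^2\,|x-y|^{-(n-1)+2\ell}\,dx\,dy
$$
is finite: near the diagonal $|x-y|^{-(n-1)+2\ell}\in L^1_{\mathrm{loc}}(\R^n)$ since the singular exponent is strictly less than $n$, while for $|x-y|$ large the bound $|x-y|^{2\ell-(n-1)}\lesssim \langle x\rangle^{4}+\langle y\rangle^{4}$ is comfortably absorbed by the strict $\langle\cdot\rangle^{-n-4-}$ decay of $v^2=|V|$.

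The main obstacle I anticipate is the Leibniz bookkeeping that produces the $\eta$--$r$ cancellation uniformly across all $\ell\le \lceil n/2\rceil+1$ in the $\eta r\ll 1$ regime: the $\eta^{-k}$ factors in the general bound $|\partial_\eta^k F|\lesssim \eta^{-k}\langle \eta r\rangle^{(n-3)/2}$ must be balanced exactly by the prefactor $\eta^{\ell-1}$, and for even $n$ the logarithmic factor $|\log(\eta r)|$ appearing in the $\ell=4$ case of \eqref{F2 even} is borderline, requiring the trade $|\log(\eta r)|\lesssim (\eta r)^{-\epsilon}$ for arbitrarily small $\epsilon>0$. A secondary delicacy is that the Hardy--Littlewood--Sobolev argument for $K_1$ is essential: for $n\ge 5$ the $r^{-n+2}$ kernel fails to be locally square-integrable, so the Hilbert--Schmidt strategy used for $K_2$ cannot handle the Riesz piece.
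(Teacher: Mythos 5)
Your proposal is correct and follows essentially the same route as the paper: the pointwise bounds come from the factorization \eqref{R0F}, the Leibniz rule, and the case split $\eta r\ll1$ versus $\eta r\gtrsim1$ using Lemmas \ref{lemma F odd}--\ref{lemma F even} (with the same $\eta^{\ell-1}r^{\ell}=(\eta r)^{\ell-1}r$ cancellation), and the $L^2$-bound splits the kernel of $\mathcal{R}_\ell$ into the Hilbert--Schmidt piece $v(x)v(y)r^{-\frac{n-1}{2}+\ell}$ and the Riesz piece $v(x)v(y)r^{-n+2}$. The only deviation is that you handle the Riesz piece directly via Hardy--Littlewood--Sobolev and H\"older with $v\in L^n$, whereas the paper cites \cite[Proposition 3.2]{Goldberg-Visan-CMP}; both arguments suffice under the stated decay of $V$.
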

 	\begin{proof}
 	At beginning, 	for $n\geq5,$ taking into account  \eqref{R0F},  \eqref{F1 odd}, \eqref{F1 even} and \eqref{F2 even}, we derive the desired result for $\ell=0.$
 		
 	Additionally,	by Leibniz formula and \eqref{R0F} ,  for $\ell\in\N^+,$
 		\begin{align}\label{D-F}
 			\Big|\eta^{ \ell-1}\partial_\eta^{\ell}R_0^+(\eta^4+\eta^2)(x,y) \Big|&=
 			\bigg| 	r^{-n+2}\ \eta^{ \ell-1}\sum_{\ell_1+\ell_2+\ell_3=\ell} 
 			\Big(\frac{d^{\ell_1}}{d\eta^{\ell_1}}
 			\big(1+2\eta^2\big)^{-1}\Big)
 			\Big(\partial_\eta^{\ell_2}e^{i\eta r}\Big)
 			\partial_\eta^{\ell_3}F(\eta, r) \bigg|\nonumber\\
 			&\lesssim r^{-n+2}\ \eta^{ \ell-1}\sum_{\ell_1+\ell_2+\ell_3=\ell} 
 			r^{\ell_2}\big|\partial_\eta^{\ell_3}F(\eta, r)\big|.
 		\end{align}
 		
 When $n\geq5$ and $n$ is odd, for $\ell\in\N^+,$ if $ \eta r\gtrsim1$ (noting that $r\gtrsim1$ by $0<\eta\ll1$ ), then followed from \eqref{F1 odd},
 		 $$\Big|\partial_\eta^{\ell_3}F(\eta, r)\Big|\lesssim r^{\ell_3}\big(\eta r\big)^{\frac{n-3}{2}-\ell_3}, \ \  \ell_3=0,1,2,\cdots.$$
 		 Hence, by virtue of \eqref{D-F}, we derive for $\ell\in\N^+,$  $ \eta r\gtrsim1$ and  $0<\eta\ll1$,
 		       	\begin{align}\label{D-F-O1}
 		       	\Big|\eta^{ \ell-1}\partial_\eta^{\ell}R_0^+(\eta^4+\eta^2)(x,y) \Big|
 		       	&\lesssim r^{-n+2}\ \eta^{ \ell-1}\sum_{\ell_1+\ell_2+\ell_3=\ell} 
 		       	r^{\ell_2+\ell_3}\big(\eta r\big)^{\frac{n-3}{2}}
 		       	\lesssim  r^{-\frac{n-1}{2}+\ell} .
 		       \end{align}
 		       As for $  \eta r\ll1$, combining with \eqref{F1 odd} and \eqref{D-F}, it follows that
 		       	\begin{align*}
 		       	\Big|\eta^{ \ell-1}\partial_\eta^{\ell}R_0^+(\eta^4+\eta^2)(x,y) \Big|
 		       	&\lesssim r^{-n+2}\ \eta^{ \ell-1}\sum_{\ell_1+\ell_2+\ell_3=\ell} 
 		       	r^{\ell_2}\Big(r^{\ell_3}+	\big\langle r \big\rangle^{\frac{n-3}{2}+\ell_3}e^{-r}\Big).
 		       \end{align*}
 		 Since    for $\ell\in\N^+,$  $ \eta r\ll1$ and $0<\eta\ll1$,    
 		       	\begin{align*}
 		       & r^{-n+2}\ \eta^{ \ell-1}\sum_{\ell_1+\ell_2+\ell_3=\ell} 
 		       	r^{\ell_2+\ell_3}= r^{-n+2}\sum_{0\leq\ell_1\leq\ell-1} 
 		       r\big(\eta r\big)^{\ell-\ell_1-1}\eta^{\ell_1}+r^{-n+2}\ \eta^{ \ell-1}\lesssim r^{-n+3}+r^{-n+2},\\
 		       & r^{-n+2}\ \eta^{ \ell-1}\sum_{\ell_1+\ell_2+\ell_3=\ell} 	r^{\ell_2}\big\langle r \big\rangle^{\frac{n-3}{2}+\ell_3}e^{-r}\lesssim r^{-n+2}.
 		       \end{align*}
 		       Then  it follows that for $\ell\in\N^+,$  $ \eta r\ll1$ and $0<\eta\ll1$,
 		       	\begin{align}\label{low R}
 		       	\Big|\eta^{ \ell-1}\partial_\eta^{\ell}R_0^+(\eta^4+\eta^2)(x,y) \Big|
 		       	\lesssim r^{-n+3}+r^{-n+2}.
 		       \end{align}
 		       
 		  We consider the case  where $n\geq5$ and $n$ is even  by employing a similar decomposition based on $ \eta r\gtrsim1$ and $ \eta r\ll1$.  
 		  
 		  Next, we omit the details since the argument proceeds in the same manner as in the case when $n$ is odd.
 		     For $ \eta r\gtrsim1$ and $\ell\in \N^+$,  by virtue of   \eqref{F1 even} and  \eqref{D-F}, we also obtain  the bound \eqref{D-F-O1}.
 		      For $ \eta r\ll1,$ $0<\eta\ll1$ and $\ell\in \N^+$, considering \eqref{F2 even} and \eqref{D-F}, we also get the bound 
\eqref{low R}.
  
  As a result, it follows that for $ \ell\in\N^+\cup\{0\},$
  \begin{align*}
  	\sup_{0<\eta\ll1}
  	\Big|\eta^{\max(0, \ell-1)}\partial_\eta^{\ell}R_0^+(\eta^4+\eta^2)(x,y) \Big|\lesssim	
  	r^{-n+2} +	r^{-n+3}+ r^{-\frac{n-1}{2}+\ell}\lesssim	
  	r^{-n+2} + r^{-\frac{n-1}{2}+\ell}.
  \end{align*}   
  
  Finally, we establish that $\mathcal{R}_\ell \in \mathbb{B}(L^2) $ for $0 \leq\ell \leq \lceil n/2 \rceil + 1 $. By Lemma \ref{Appendix 1}, we deduce that 
  $$
  v(x)v(y)r^{-\frac{n-1}{2}+\ell} \in L^2(\mathbb{R}^{2n}),
  $$
  provided that 
  $
  |V(x)| \lesssim \langle x \rangle^{- 2\lceil n/2 \rceil - 3 -}
  $ (noting that $2\lceil n/2 \rceil +3 =n+3$, if $n$ is even and  $2\lceil n/2 \rceil +3 =n+4$, if $n$ is odd).
   This implies that the operator with kernel $ v(x)v(y)r^{-\frac{n-1}{2}} $  is a Hilbert--Schmidt operator. Furthermore, by \cite[Proposition 3.2]{Goldberg-Visan-CMP}, the operator with kernel $v(x)v(y)r^{-n+2}$ is bounded on $ L^2$, provided that 
  $
  |V(x)| \lesssim \langle x \rangle^{-2}.
  $
 Thus we complete the proof. 
	\end{proof}
 	
 	Now we give the expression of $(R_0^+-R_0^-)(\eta^4+\eta^2).$
 	
 	\begin{lemma}\label{lemma R0-R0}
 		Let $n\geq5$ and $r=|x-y|.$ Then 
 		\begin{align}\label{F+-}
 			\Big[(R_0^+-R_0^-)(\eta^4+\eta^2)\Big](x, y)=\frac{\eta^{n-2}}{1+2\eta^2}\Big(e^{i\eta r}\Phi_+(\eta r)+e^{-i\eta r}\Phi_-(\eta r)\Big),
 			\end{align}
 			where $\Phi_\pm\in C^{\infty}(\R)$ such that
 			\begin{align*}
 		\Big|\partial_\eta^{\ell}\Phi_\pm(\eta r)\Big|\lesssim\eta^{-\ell}\big\langle \eta r\big\rangle^{\frac{1-n}{2}},  \ \  \  \ell=0,1,2,\cdots.
 			\end{align*}
 	\end{lemma}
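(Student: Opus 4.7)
The plan is to reduce to the spectral density of $-\Delta$ via the splitting identity \eqref{split}, and then exploit its Bessel-function structure. Substituting \eqref{split} into $(R_0^+ - R_0^-)(\eta^4+\eta^2)$, the term $R_\Delta(-1-\eta^2)$ is independent of the $\pm$ prescription and cancels, so
\begin{align*}
(R_0^+ - R_0^-)(\eta^4+\eta^2) \;=\; \frac{1}{1+2\eta^2}\bigl(R_\Delta^+(\eta^2) - R_\Delta^-(\eta^2)\bigr).
\end{align*}

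I would then express this via the standard Hankel representation $R_\Delta^+(\eta^2)(x,y) = c_n(\eta/r)^{(n-2)/2}H^{(1)}_{(n-2)/2}(\eta r)$, with $R_\Delta^-$ its complex conjugate for real $\eta,r>0$. Using $H^{(1)}+H^{(2)}=2J_\nu$ yields
\begin{align*}
(R_\Delta^+ - R_\Delta^-)(\eta^2)(x,y) \;=\; \gamma_n\,\eta^{n-2}\,\widetilde J(\eta r), \qquad \widetilde J(s):=\frac{J_{(n-2)/2}(s)}{s^{(n-2)/2}},
\end{align*}
for some constant $\gamma_n$. Since $J_\nu(s)/s^\nu$ is entire in $s^2$, the function $\widetilde J$ is smooth on $[0,\infty)$ with $|\partial_s^\ell \widetilde J(s)|\lesssim_\ell 1$ uniformly on bounded sets---no logarithmic terms appear, so this step is uniform in the parity of $n$. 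For $s\gtrsim 1$, the classical large-argument asymptotic $H^{(\pm)}_{(n-2)/2}(s) = \sqrt{2/(\pi s)}\,e^{\pm i(s - (n-2)\pi/4 - \pi/4)}\,\omega_\pm(s)$, with $\omega_\pm$ smooth, $\omega_\pm(s)\to 1$ and $|\partial_s^k\omega_\pm(s)|\lesssim_k s^{-k}$, gives
\begin{align*}
\widetilde J(s) \;=\; s^{-(n-1)/2}\bigl(e^{is}A_+(s)+e^{-is}A_-(s)\bigr) \qquad(s\gtrsim 1),
\end{align*}
with smooth $A_\pm$ satisfying $|\partial_s^\ell A_\pm(s)|\lesssim s^{-\ell}$.

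Third, I would glue the low- and high-$s$ pieces with a smooth cutoff $\chi$ equal to $1$ near $0$ and supported in a bounded set, defining
\begin{align*}
\Phi_+(s) &:= \tfrac{\gamma_n}{2}\,e^{-is}\chi(s)\widetilde J(s) \;+\; \gamma_n\,(1-\chi(s))\,s^{-(n-1)/2}A_+(s),\\
\Phi_-(s) &:= \tfrac{\gamma_n}{2}\,e^{is}\chi(s)\widetilde J(s) \;+\; \gamma_n\,(1-\chi(s))\,s^{-(n-1)/2}A_-(s).
\end{align*}
A direct check gives $e^{is}\Phi_+(s)+e^{-is}\Phi_-(s)=\gamma_n\widetilde J(s)$, so multiplying by $\eta^{n-2}/(1+2\eta^2)$ recovers \eqref{F+-}. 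Leibniz on each summand produces $|\partial_s^\ell \Phi_\pm(s)|\lesssim s^{-\ell}\langle s\rangle^{(1-n)/2}$. Since $\partial_\eta^\ell\Phi_\pm(\eta r)=r^\ell \Phi_\pm^{(\ell)}(\eta r)$, a case split into $\eta r\lesssim 1$ (where $r^\ell\lesssim \eta^{-\ell}$ and $\langle\eta r\rangle\sim 1$) and $\eta r\gtrsim 1$ (where $r^\ell(\eta r)^{-(n-1)/2-\ell} = \eta^{-\ell}(\eta r)^{-(n-1)/2}$) delivers $|\partial_\eta^\ell\Phi_\pm(\eta r)|\lesssim\eta^{-\ell}\langle\eta r\rangle^{(1-n)/2}$.

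The main technical point is the precise derivative decay $|\partial_s^k\omega_\pm(s)|\lesssim s^{-k}$ in the Hankel asymptotics; this is classical (via the Hankel integral representation) but is the only nontrivial ingredient beyond routine bookkeeping. The decomposition into outgoing and incoming parts is unambiguous in the high-$s$ region, but in the low-$s$ region there is no canonical splitting---the factor $\tfrac{1}{2}e^{\mp is}$ is merely a convenient choice that matches the large-$s$ phase convention and preserves smoothness of $\Phi_\pm$ across the matching region.
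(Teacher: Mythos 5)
Your argument is correct, and its first step is exactly the paper's: substitute the splitting identity \eqref{split} and observe that the $R_\Delta(-1-\eta^2)$ term is common to both boundary values, so $(R_0^+-R_0^-)(\eta^4+\eta^2)=\tfrac{1}{1+2\eta^2}\bigl(R_\Delta^+(\eta^2)-R_\Delta^-(\eta^2)\bigr)$. Where you diverge is in how the structure of $R_\Delta^+-R_\Delta^-$ is obtained: the paper simply quotes Erdo\u{g}an--Green (their Lemma 2.4 and identity (6) in the cited work), which asserts precisely the representation $\eta^{n-2}\bigl(e^{i\eta r}\Phi_+(\eta r)+e^{-i\eta r}\Phi_-(\eta r)\bigr)$ with symbol-type bounds, whereas you open that black box and prove it: the difference of boundary values kills the $Y_\nu$ (hence all logarithmic) contribution and leaves $\gamma_n\eta^{n-2}J_{(n-2)/2}(\eta r)/(\eta r)^{(n-2)/2}$, which you then split into outgoing/incoming pieces using the entirety of $J_\nu(s)/s^\nu$ in $s^2$ near $s=0$ and the Hankel large-argument asymptotics with $|\partial_s^k\omega_\pm(s)|\lesssim s^{-k}$ for $s\gtrsim1$, glued by a cutoff; the Leibniz/chain-rule bookkeeping then yields $|\partial_\eta^{\ell}\Phi_\pm(\eta r)|\lesssim \eta^{-\ell}\langle\eta r\rangle^{\frac{1-n}{2}}$, uniformly in the parity of $n$. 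The trade-off is clear: the paper's proof is two lines but rests on an external lemma, while yours is self-contained at the cost of invoking classical Bessel/Hankel facts; your version also makes explicit (and correctly notes) that the low-frequency splitting into $\Phi_\pm$ is a non-canonical but harmless choice. I see no gap in your argument.
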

 	\begin{proof}
 		By the splitting identity \eqref{split}, it follows that
 			\begin{align*}
 			\Big[R_0^+(\eta^4+\eta^2)-R_0^-(\eta^4+\eta^2)\Big](x, y)=\frac{1}{1+2\eta^2}\Big[R_\Delta^+(\eta^2)-R_\Delta^-(\eta^2)\Big](x, y).
 		\end{align*}
 		Combining with  \cite[Lemma 2.4 and (6)]{Erdogan-Green23}, we  establish this lemma.
 	\end{proof}

 	In the following, we start to research  $R_V^+(\eta^4+\eta^2).$ Consider  that 
 	\begin{align*}
 		R_V^+(\eta^4+\eta^2)V=R_0^+(\eta^4+\eta^2)v M^{-1}(\eta)v,
 	\end{align*}
 	  where $M(\eta)=U+vR_0^+(\eta^4+\eta^2)v,$ $v(x)=|V(x)|^{1/2}$ and $U(x)=\sgn V(x)$, namely $U(x)=1$ if $V(x)\ge0$ and $U(x)=-1$ if $V(x)<0$. Here, denote $M^{-1}(\eta):=[M(\eta)]^{-1}$ as long as it exists.
 	  
 	  In the following, we show the existence of $ M^{-1}(\eta)$  and  analyze the operator $\partial_\eta^\ell M^{-1}(\eta)$  for  $\ell=0,1,\cdots, \big\lceil\frac{n}{2}\big\rceil +1$, under the condition that
 	  $|V(x)|\lesssim\langle x \rangle^{- n-4-}$ (noting that the decay rate $\beta>n+4$ is required by Lemma \ref{lemma Rj}).
 	  
 	  First, we define the operator:
 	  \begin{align*} 
 	  	T_0:=U+vR_0(0)v=U+v\Big((-\Delta)^{-1}-(-\Delta+1)^{-1}\Big)v\in \mathbb{AB}(L^2).
 	  \end{align*}
 	  Note that  the assumption that zero is a regular point of $H$ i.e.  the operator $T_0$ is invertible on $L^2$ (See Definition \ref{definition regular}) and we emphasize that $T_0^{-1}\in \mathbb{AB}(L^2).$
 	  
 	  Continue to denote the operator $E(\eta)$ with kernel:
 	   $$	E(\eta)(x,y)=v(x)\Big(R_0^+(\eta^4+\eta^2)-R_0(0)\Big)(x, y)v(y).$$
 	 By the mean value theorem, it follows that 
 	 \begin{align} \label{E-eta}
 	 	\big|E(\eta)(x,y)\big|=v(x)\Big|\Big(R_0^+(\eta^4+\eta^2)-R_0(0)\Big)(x, y)\Big|v(y)\lesssim\eta \mathcal{R}_1(x,y),
 	 \end{align}
 		where $\mathcal{R}_1(x,y)$  is defined by $\eqref{Rl}.$   Combine with Lemma \ref{lemma Rj} and \eqref{E-eta}, then  $E(\eta)\in \mathbb{AB}(L^2)$ and $\norm{E(\eta)}_{\mathbb{B}(L^2)}\to 0$ as $\eta\to 0.$ Furthermore, by a Neumann series expansion and the invertibility of $T_0,$ we obtain that $M^{-1}(\eta)$ is invertible  and 
 		\begin{align} \label{M-1}
 		M^{-1}(\eta)=\Big(T_0+E(\eta)\Big)^{-1}
 		=\sum_{j=0}^{\infty}(-1)^j\ T_0^{-1}\Big(E(\eta)T_0^{-1}\Big)^j\in \mathbb{AB}(L^2).
 		\end{align}
 		In particular, by \eqref{E-eta} and \eqref {M-1}, the operator $\widetilde{M}_0$ with kernel: 
 			\begin{align} \label{M0}
 			\widetilde{M}_0(x, y)=\sup_{0<\eta\ll1}\Big |  M^{-1}(\eta)(x, y) \Big|,
 		\end{align}
 		is bounded on $L^2.$ Since $M(\eta)=U+vR_0^+(\eta^4+\eta^2)v,$ then we can check that the operator $\eta^{\ell-1}\partial_\eta^{\ell}M^{-1}(\eta)$  (for  $\ell\in\N^+$)  is a linear combination of operators of the form 
 			\begin{align*} 
 	 M^{-1}(\eta)\eta^{-1}\prod_{j=1}^J\left[v\bigg(\eta^{\ell_j}\partial_\eta^{\ell_j}R_0^+\big(\eta^4+\eta^2\big)\bigg)
 	 v M^{-1}(\eta)\right],
 		\end{align*}
 		where $\sum_{j=1}^{J}\ell_j=\ell,$ and  $\ell_j\in\N^+$. Hence, utilize Lemma \ref{lemma Rj} and \eqref{M0} to establish that the operator $\widetilde{M}_\ell$ with kernel
 			\begin{align} \label{MM}
 			\widetilde{M}_\ell(x, y)=\sup_{0<\eta\ll1}\Big | \eta^{\max(0, \ell-1)}\partial_\eta^{\ell}M^{-1}(\eta)(x, y)  \Big|,
 		\end{align}
 		is bounded on $L^2$ for $\ell=0,1,2,\cdots, \lceil\frac{n}{2}\rceil +1,$  provided $|V(x)|\lesssim\langle x \rangle^{- n-4-}.$
 		
 		Now, we aim to  provide  another expression of $\Omega_L^-$ defined by \eqref{WL}.
 	 Let $w(x)= |V(x)|^{1/2}\sgn V(x).$  Note that
 			\begin{align*} 
 			\big(R_0^+ V\big)^kR_V^+\big(VR_0^+\big)^kV\big(R_0^+-R_0^-\big)
 			=R_0^+v\Big(wA(\eta)v M^{-1}(\eta)vA(\eta)w\Big)v \big(R_0^+-R_0^-\big).
 		\end{align*}
 		Define  the operator  $\Gamma(\eta)$: $$\Gamma(\eta):=wA(\eta)v M^{-1}(\eta)vA(\eta)w.$$
 	 Hence, the low energy part $\Omega_L^-$ defined by \eqref{WL} can be expressed as:
  \begin{align}\label{WL2}
 	\Omega_L^-:=\frac{1}{\pi i}\int_0^\infty\eta(2\eta^2+1)\chi(\eta)R_0^+(\eta^4+\eta^2)v\Gamma(\eta)v \Big(R_0^+(\eta^4+\eta^2)-R_0^-(\eta^4+\eta^2)\Big)d\eta.
 \end{align}
 
We conclude that provided $|V(x)|\lesssim\langle x \rangle^{- n-4-},$
 		    	\begin{align} \label{Gamma}
 		    	\widetilde{\Gamma}(x, y)=\max_{\ell\in\big\{0,1,\cdots,\lceil\frac{n}{2}\rceil +1\big\}}\sup_{0<\eta\ll1}\Big | \eta^{\max(0, \ell-1)}\partial_\eta^{\ell}\Gamma(\eta)(x,y) \Big|\lesssim \langle x\rangle^{-\frac{n}{2}-}\langle y\rangle^{-\frac{n}{2}-},
 		    \end{align}
 		    where $\Gamma(\eta)(x,y)$ is the kernel of the operator $\Gamma(\eta).$
 		    
 		Specifically, 	by  Lemma \ref{lemma A} and \eqref {MM},   $\big | \eta^{\max(0, \ell-1)}\partial_\eta^{\ell}\Gamma(\eta)(x,y) \big|$ is bounded by
 		\begin{align*}    
 	 &v(x)v(y)\sum_{\ell_1+\ell_2+\ell_3=\ell}\eta^{\max(0, \ell-1)}\int_{\R^{2n}}\Big|\partial_\eta^{\ell_1}A(\eta)(x, u_1)\Big| v(u_1)\Big|\partial_\eta^{\ell_2}M^{-1}(\eta)(x, y)  \Big|v(u_2)\Big|\partial_\eta^{\ell_3}A(\eta)( u_2, y)\Big|du_1du_2\\
 	&\lesssim\big\langle x \big\rangle^{-\frac{n}{2}-}
 	  \big\langle y \big\rangle^{-\frac{n}{2}-}\norm{\langle \cdot \rangle^{-\frac{n}{2}-}}_{L^2}^2\sum_{\ell_2=0}^{\lceil\frac{n}{2}\rceil +1} \norm{\widetilde{M}_{\ell_2}}_{\mathbb{B}(L^2)}\lesssim\big\langle x \big\rangle^{-\frac{n}{2}-}
 	  \big\langle y \big\rangle^{-\frac{n}{2}-},
 	 \end{align*}  
	for $0<\eta\ll1$ and $\ell=0,1,2,\cdots, \lceil\frac{n}{2}\rceil +1,$  provided $|V(x)|\lesssim\langle x \rangle^{- n-4-}.$  
 \subsection{The proof of Theorem \ref{theorem WL}} 	
Next, we employ \eqref{WL2} and \eqref{Gamma} to demonstrate the $L^p$ boundedness of the low energy part  $\Omega_L^-$ for $1 \leq p \leq \infty$. This result is covered in Proposition \ref{proposition-WL} below.
\begin{proposition}\label{proposition-WL}
 		    Let $n\geq5$ and  $\Gamma(\eta)\in \mathbb{AB}(L^2)$  be an operator-valued function with respect to the variable $\eta$ such that 
 		    	\begin{align}\label{Gamma2} 
 		    	\widetilde{\Gamma}(x, y):=\max_{\ell\in\big\{0,1,\cdots,\lceil\frac{n}{2}\rceil +1\big\}}\sup_{0<\eta\ll1}\Big | \eta^{\max(0, \ell-1)}\partial_\eta^{\ell}\Gamma(\eta)(x,y) \Big|\lesssim \langle x\rangle^{-\frac{n}{2}-}\langle y\rangle^{-\frac{n}{2}-}.
 		    \end{align}
 		    Then the operator $K$ with kernel:
 		    	\begin{align*} 
 K(x, y)=\frac{1}{\pi i}\int_0^\infty\eta(2\eta^2+1)\chi(\eta)\left[R_0^+(\eta^4+\eta^2)v\Gamma(\eta)v \Big(R_0^+(\eta^4+\eta^2)-R_0^-(\eta^4+\eta^2)\Big)\right](x, y)d\eta,
 		    \end{align*}
 		    is bounded on $L^p(\R^n)$ for all $1\leq p \leq \infty,$ provided $|V(x)|\lesssim\langle x \rangle^{- n-}.$
 		    \end{proposition}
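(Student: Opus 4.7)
The plan is to show that the kernel $K(x,y)$ is \emph{admissible}, i.e.\ that $K \in L^\infty_y L^1_x \cap L^\infty_x L^1_y$. Once this is established, Schur's test and interpolation immediately yield $K \in \mathbb{B}(L^p(\mathbb{R}^n))$ for every $1 \leq p \leq \infty$. By the symmetry of the construction it suffices to bound $\sup_y \int_{\mathbb{R}^n} |K(x,y)|\,dx$.

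The first step is to substitute the representation \eqref{R0F} for $R_0^+(\eta^4+\eta^2)(x,u)$, together with the pointwise bounds on $F(\eta,|x-u|)$ from Lemmas \ref{lemma F odd} and \ref{lemma F even}, and the decomposition \eqref{F+-} from Lemma \ref{lemma R0-R0}. This recasts $K(x,y)$ as a sum of triple integrals in $(u,w,\eta)$ with oscillatory factor $e^{i\eta c_\pm}$, where $c_\pm := |x-u|\pm|w-y|$, and smooth amplitude built from $\eta^{n-1}\chi(\eta)$, $|x-u|^{-(n-2)}F(\eta,|x-u|)$, $v(u)\Gamma(\eta)(u,w)v(w)$ and $\Phi_\pm(\eta|w-y|)$. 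The core of the argument is then to integrate by parts in $\eta$ exactly $N := \lceil n/2 \rceil + 1$ times using $\partial_\eta e^{i\eta c_\pm} = ic_\pm e^{i\eta c_\pm}$; the leading $\eta^{n-1}$ combined with the inequality $N \leq n-1$ (valid for $n \geq 5$) guarantees that all boundary terms at $\eta = 0$ vanish, and $\chi$ removes those at infinity. Applying Leibniz together with the derivative bounds $|\partial_\eta^\ell F(\eta,a)| \lesssim \eta^{-\ell}\langle\eta a\rangle^{(n-3)/2}$ and $|\partial_\eta^\ell \Phi_\pm(\eta b)| \lesssim \eta^{-\ell}\langle\eta b\rangle^{(1-n)/2}$, the hypothesis \eqref{Gamma2}, and the decay $v(u)v(w) \lesssim \langle u\rangle^{-n/2-}\langle w\rangle^{-n/2-}$ coming from $|V(x)|\lesssim\langle x\rangle^{-n-}$, would yield a pointwise estimate schematically of the form
\[
|K(x,y)| \lesssim \sum_{\pm}\int_{\mathbb{R}^{2n}} \frac{\langle u\rangle^{-n-}\langle w\rangle^{-n-}}{|x-u|^{n-2}\,\langle c_\pm\rangle^{N}}\,\Psi_\pm(|x-u|,|w-y|)\,du\,dw,
\]
where $\Psi_\pm$ records the outcome of the $\eta$-integration and grows at most mildly in its arguments; the region $|c_\pm|\lesssim 1$ is handled by the direct (no integration by parts) estimate.

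The main obstacle is to verify that this pointwise bound is integrable in $x$ uniformly in $y$: the interior singularity $|x-u|^{-(n-2)}$ combined with the slow polynomial growth $\langle\eta a\rangle^{(n-3)/2}$ produced by $F$ would, in the absence of oscillation, generate a non-integrable tail in $x$, and it is precisely to compensate this that $N$ must be taken slightly above $n/2$. A careful region decomposition based on the relative sizes of $\eta|x-u|$ and $\eta|w-y|$ is required to show that the $\eta$-integral is controlled by an integrable function of $(|x-u|,|w-y|)$; once this is done, the classical estimate $\int_{\mathbb{R}^n} |x-u|^{-(n-2)}\langle |x-u|\pm|w-y|\rangle^{-N}\, dx \lesssim 1$ (for $N>2$, uniformly in $u$ and $|w-y|$), followed by the $u,w$ integrations absorbed by the strong $\langle u\rangle^{-n-}\langle w\rangle^{-n-}$ decay, yields the admissibility of $K$ and hence the proposition.
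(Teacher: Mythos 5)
Your plan treats the whole kernel by brute force: write both resolvent factors as oscillatory integrals, integrate by parts $N=\lceil n/2\rceil+1$ times in $\eta$, and then take absolute values. This is essentially how the paper handles its pieces $K_1,K_2,K_3$, but it genuinely fails in the region $r_2:=|y-z_2|\gg\langle r_1\rangle$, $r_1:=|x-z_1|$ (the paper's $K_4$), and your appeal to ``symmetry of the construction'' hides this: the construction is not symmetric, since the factor on the $x$-side is $R_0^+$ with kernel $\sim r_1^{-(n-2)}e^{i\eta r_1}F(\eta,r_1)$, while the factor on the $y$-side is $R_0^+-R_0^-$ with kernel $\sim\frac{\eta^{n-2}}{1+2\eta^2}e^{\pm i\eta r_2}\Phi_\pm(\eta r_2)$ (Lemma \ref{lemma R0-R0}), which carries no $r_2^{-(n-2)}$ factor. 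In that region absolute-value estimates cannot beat $r_2^{-n}$: for the low-frequency piece $\eta r_2\lesssim1$ one gets $\int_0^{r_2^{-1}}\eta^{n-1}\,d\eta\approx r_2^{-n}$, and after $N$ integrations by parts the high-frequency piece gives $r_2^{-N}\cdot r_2^{\frac{1-n}{2}}\int_{r_2^{-1}}^1\eta^{\frac{n-1}{2}-N}\,d\eta\approx r_2^{-n}$ as well; but $\int_{r_2\gg1}r_2^{-n}\,dy$ diverges logarithmically, so admissibility in the $y$-variable does not follow. This is exactly the point where the paper abandons pure stationary-phase bookkeeping: it writes $R_0^+(\eta^4+\eta^2)(x,z_1)\Gamma(\eta)(z_1,z_2)=R_0(0)\Gamma(0)+R_0^+\mathcal{E}(\eta)+B(\eta)\Gamma(0)$ with $\mathcal{E}(\eta)=\Gamma(\eta)-\Gamma(0)$, $B(\eta)=R_0^+(\eta^4+\eta^2)-R_0(0)$; the mean value theorem gives an extra factor of $\eta$ in the last two pieces (upgrading $r_2^{-n}$ to $r_2^{-(n+1)}$ resp.\ $r_2^{-(n+\frac12)}$), and the frozen piece is evaluated exactly through functional calculus, $\frac{1}{\pi i}\int_0^\infty\eta\chi(\eta)\big(R_\Delta^+(\eta^2)-R_\Delta^-(\eta^2)\big)\,d\eta=\chi(\sqrt{-\Delta})$, whose kernel is Schwartz and hence decays like $r_2^{-L}$ for any $L$. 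Without some version of this cancellation (or an equivalent exploitation of the spectral-measure structure of $R_0^+-R_0^-$), your scheme cannot close.

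A secondary problem is your final ``classical estimate'' $\int_{\R^n}|x-u|^{-(n-2)}\langle|x-u|\pm|w-y|\rangle^{-N}\,dx\lesssim1$: for the minus sign this is false uniformly in $|w-y|$, because the shell $\big||x-u|-|w-y|\big|\lesssim1$ has measure $\approx|w-y|^{n-1}$ and contributes $\approx|w-y|^{n-1}\cdot|w-y|^{-(n-2)}=|w-y|$. The resonant region $r_1\approx r_2\gg1$ therefore needs its own argument; the paper (its $K_2$) uses there the cancellation $\langle\eta r_1\rangle^{\frac{n-3}{2}}\langle\eta r_2\rangle^{\frac{1-n}{2}}\approx\langle\eta r_1\rangle^{-1}$ together with only two integrations by parts in $\eta$ and a polar-coordinate computation of the $x$-integral before integrating in $\eta$, which produces $\eta^{-2}$ and leaves the convergent factor $\int_0^1\eta^{n-3}\,d\eta$. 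So both the resonant region and, more seriously, the $r_2$-dominant region require ideas that are absent from your proposal.
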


 Before  the proof of Proposition \ref{proposition-WL}, we introduce the definition of an admissible operator. Specifically, we say that an operator $K$ is admissible if its kernel $K(x, y)$ satisfies the condition 
 \begin{align}\label{Schur test}
 	\sup_{x \in \mathbb{R}^n} \int_{\mathbb{R}^n} \big|K(x, y)\big| \, dy 
 	+ \sup_{y \in \mathbb{R}^n} \int_{\mathbb{R}^n} \big|K(x, y)\big| \, dx 
 	< +\infty.
 \end{align}
We note that, by the Schur test, every admissible operator is bounded on $L^p$ for all $1 \leq p \leq \infty$.
 \begin{proof}[Proof of Proposition \ref{proposition-WL}]	  
Utilize $\eqref{R0F}$  and $ \eqref{F+-}$ to  get  $K(x, y)=\mathcal{K}^+(x, y)+\mathcal{K}^-(x, y),$  where
\begin{align*}
	\mathcal{K}^{\pm}(x, y):=\frac{1}{\pi i}\int_{\R^{2n}}\frac {v_{12}}{r_1^{n-2}}&\int_0^\infty e^{i\eta(r_1\pm r_2)}\chi(\eta)\frac{\eta^{n-1}}{1+2\eta^2}F(\eta, r_1)\Phi_\pm(\eta r_2)\Gamma(\eta)(z_1, z_2)d\eta dz_1dz_2.
\end{align*}
Here $r_1:=|x-z_1|, r_2:=|y-z_2|,$ and $ v_{12}:=v(z_1)v(z_2)$ for short.

Note that
$\big|\partial_\eta^{\ell}F(\eta, r)\big|\lesssim \eta^{-\ell}\big\langle\eta r \big\rangle^{\frac{n-3}{2}}$ and $\big|\partial_\eta^{\ell}\Phi_\pm(\eta r)\big|\lesssim\eta^{-\ell}\big\langle \eta r\big\rangle^{\frac{1-n}{2}}$ for $\ell\in\N^+\cup\{0\}$
followed from Lemma \ref{lemma F odd}-\ref{lemma F even}
and Lemma \ref{lemma R0-R0}. These estimates are used frequently in the whole proof.

We decompose $K(x, y)$  into $$K(x,y)=\sum_{j=1}^{4}K_j(x, y),$$
where the integrand of $K_j(x, y)$ is defined as the restriction of the integrand of $K(x, y)$ to the following subsets:  
\begin{align*}
	&K_1(x, y): \quad r_1, r_2 \lesssim 1; \ \ \ 
	K_2(x, y): \quad r_1 \approx r_2 \gg 1; \\
	&K_3(x, y): \quad r_1 \gg \langle r_2 \rangle; \ \ \ 
	K_4(x, y): \quad r_2 \gg \langle r_1 \rangle.
\end{align*}
Similarly, we define $	\mathcal{K}_j^{\pm}(x, y)$ from $	\mathcal{K}^{\pm}(x, y)$ for $j=1,2,3,4.$

Next,  it suffices to  show that each operator $K_j$ is admissible  for $j=1,2,3,4.$  
 \vskip0.2cm
 $\bullet$ \underline{ First we deal with the operator $K_1$ } with kernel $K_1(x,y)=\mathcal{K}^+_1(x,y)+\mathcal{K}^-_1(x,y),$ where
\begin{align*}
	\mathcal{K}_1^{\pm}(x, y)=\frac{1}{\pi i}\int_{\R^{2n}}\frac {v_{12}\chi_{r_1, r_2\lesssim1}}{r_1^{n-2}}\int_0^\infty e^{i\eta(r_1\pm r_2)}\chi(\eta)\frac{\eta^{n-1}}{1+2\eta^2}F(\eta, r_1)\Phi_\pm(\eta r_2)\Gamma(\eta)(z_1, z_2)d\eta dz_1dz_2.
\end{align*}
Note that $r_1, r_2\lesssim1$  and such that $|F|$ and $|\Phi_\pm|$ has the bound $\lesssim1,$ then  
\begin{align*}
\Big|\mathcal{K}_1^{\pm}(x, y)\Big|\lesssim\int_{\R^{2n}}\frac {v_{12}\widetilde{\Gamma}(z_1, z_2)}{r_1^{n-2}}\chi_{r_1, r_2\lesssim1}\int_0^1 \eta^{n-1}d\eta dz_1dz_2.
\end{align*}
Furthermore, by  \eqref {Gamma2} and $|V(x)|\lesssim\langle x \rangle^{- n-},$ it follows that 
\begin{align*}
	\Big|\mathcal{K}_1^{\pm}(x, y)\Big|\lesssim\int_{\R^{2n}}\frac {\langle z_1\rangle^{-n-}\langle z_2\rangle^{-n-}}{r_1^{n-2}}\chi_{r_1, r_2\lesssim1}dz_1dz_2.
\end{align*} 
Since $r_1, r_2\lesssim1,$  it is easy to check that
 the bound above satisfies \eqref{Schur test}, (i.e. $K_1$ is admissible). 
 \vskip0.2cm
 $\bullet$\underline{ Next, we focus on the operator $K_2$ } with kernel $K_2(x,y)=\mathcal{K}^+_2(x,y)+\mathcal{K}^-_2(x,y),$ where
\begin{align*}
	\mathcal{K}_2^{\pm}(x, y)=\frac{1}{\pi i}\int_{\R^{2n}}\frac {v_{12}\chi_{r_1\approx r_2\gg1}}{r_1^{n-2}}\bigg(\int_0^\infty e^{i\eta(r_1\pm r_2)}\chi(\eta)\frac{\eta^{n-1}}{1+2\eta^2}F(\eta, r_1)\Phi_\pm(\eta r_2)\Gamma(\eta)(z_1, z_2)d\eta\bigg) dz_1dz_2.
\end{align*}
Take the identity $1=\chi\big(\eta |r_1\pm r_2|\big)+\widetilde{\chi}\big(\eta |r_1\pm r_2|\big)$ into the integrand  in  the parenthesis above to decompose the integral of $\eta$ into two terms. For the first term linked with $\chi\big(\eta |r_1\pm r_2|\big)$ , one has
\begin{align}\label{K31}
	\bigg|\int_0^\infty e^{i\eta(r_1\pm r_2)} & \chi(\eta)\chi\big(\eta |r_1\pm r_2|\big)\frac{\eta^{n-1}}{1+2\eta^2}F(\eta, r_1)\Phi_\pm(\eta r_2)\Gamma(\eta)(z_1, z_2)d\eta\bigg|\nonumber\\
	&\ \ \ \ \ \ \ \ \ \  \lesssim\widetilde{\Gamma}(z_1, z_2)\int_0^1 \chi\big(\eta |r_1\pm r_2|\big)\eta^{n-1}
	\big\langle \eta r_1 \big\rangle^{\frac{n-3}{2}} \big\langle \eta r_2 \big\rangle^{\frac{1-n}{2}} d\eta.
\end{align}
Additionally, integrate by parts twice for the second term, then  it  can be written as  
\begin{align}\label{K32}
	&\bigg|\int_0^\infty e^{i\eta(r_1\pm r_2)}
	\frac{1}{|r_1\pm r_2|^2}\partial_\eta^2\bigg( \chi(\eta)\widetilde{\chi}\big(\eta |r_1\pm r_2|\big)\frac{\eta^{n-1}}{1+2\eta^2}F(\eta, r_1)\Phi_\pm(\eta r_2)\Gamma(\eta)(z_1, z_2)\bigg)d\eta\bigg|\nonumber\\
	&\  \lesssim\widetilde{\Gamma}(z_1, z_2)\int_0^1 \big(\eta |r_1\pm r_2|\big)^{-2}\widetilde{\chi}\big(\eta |r_1\pm r_2|\big)\eta^{n-1}
	\big\langle \eta r_1 \big\rangle^{\frac{n-3}{2}} \big\langle \eta r_2 \big\rangle^{\frac{1-n}{2}} d\eta.
\end{align}
Combine with \eqref{K31} and \eqref{K32} to derive that
\begin{align*}
	\Big|\mathcal{K}_2^{\pm}(x, y)\Big|\lesssim\int_{\R^{2n}}\frac {v_{12}
		\widetilde{\Gamma}(z_1, z_2)}{r_1^{n-2}}\chi_{r_1\approx r_2\gg1}
	\int_0^1 \frac{\eta^{n-1}}{\big\langle \eta (r_1\pm r_2) \big\rangle^2}
	\big\langle \eta r_1 \big\rangle^{\frac{n-3}{2}} \big\langle \eta r_2 \big\rangle^{\frac{1-n}{2}} d\eta dz_1dz_2.
\end{align*}
We claim that $\mathcal{K}_2^{\pm}(x, y)$ satisfy \eqref{Schur test}, which yields that $K_2$ is admissible. In fact, noting that $r_1\approx r_2$ such that $\big\langle \eta r_1 \big\rangle\approx\big\langle \eta r_2 \big\rangle,$ it follows that
\begin{align*}
	\int_{\R^n}\Big|\mathcal{K}_2^{\pm}(x, y)\Big|dx \lesssim\int_{\R^{2n}}v_{12}
		\widetilde{\Gamma}(z_1, z_2)	\int_0^1 \eta^{n-1}
\bigg(\int_{\R^n}	\frac{\chi_{r_1\approx r_2\gg1}}{r_1^{n-2}\big\langle \eta r_1 \big\rangle\big\langle \eta (r_1\pm r_2)\big\rangle^2}dx\bigg)
		d\eta dz_1dz_2.
\end{align*}
We first deal with  the  parenthesis above (i.e. the integral about $x$ ).   By the polar coordinate transformation: 
$x - z_1 = r\omega, \ (r, \omega) \in \mathbb{R}^+ \times S^{n-1}$ and the change of variable $\eta r\longmapsto r,$ one has
\begin{align*}
	\int_{\R^n}	\frac{\chi_{r_1\approx r_2\gg1}}{r_1^{n-2}\big\langle \eta r_1 \big\rangle\big\langle \eta (r_1\pm r_2)\big\rangle^2}dx\lesssim \eta^{-2}\int_0^\infty\frac{r}{\langle  r \rangle\big\langle r\pm \eta r_2\big\rangle^2}dr\lesssim\eta^{-2}.
\end{align*}
Hence, taking into account  \eqref {Gamma2} and $|V(x)|\lesssim\langle x \rangle^{- n-},$ we conclude that 
\begin{align*}
	\int_{\R^n}\Big|\mathcal{K}_2^{\pm}(x, y)\Big|dx \lesssim
	\int_{\R^{2n}}\langle z_1\rangle^{-n-}\langle z_2\rangle^{-n-}\Big(\int_0^1 \eta^{n-3}
	d\eta \Big)dz_1dz_2\lesssim1,
\end{align*}
uniformly for $y.$ By symmetry, we also get the  $y$-integral is bounded
	uniformly for $x.$
	\vskip0.2cm
 $\bullet$	\underline{ Now, we start to analyze  the operator $K_3$} with  kernel $K_3(x,y)=\mathcal{K}^+_3(x,y)+\mathcal{K}^-_3(x,y),$ where
	\begin{align*}
		\mathcal{K}_3^{\pm}(x, y)=\frac{1}{\pi i}\int_{\R^{2n}}\frac {v_{12}\chi_{r_1\gg \langle r_2\rangle}}{r_1^{n-2}}\bigg(\int_0^\infty e^{i\eta(r_1\pm r_2)}\chi(\eta)\frac{\eta^{n-1}}{1+2\eta^2}F(\eta, r_1)\Phi_\pm(\eta r_2)\Gamma(\eta)(z_1, z_2)d\eta\bigg) dz_1dz_2.
	\end{align*}
	Insert the identity  $1=\chi(\eta r_1)+\widetilde{\chi}(\eta r_1)$ into the integrand above to decompose $\mathcal{K}_3^{\pm}(x, y)$ into the sum of $\mathcal{K}_{31}^{\pm}(x, y)$ and $\mathcal{K}_{32}^{\pm}(x, y)$. 
	
	 For $\mathcal{K}_{31}^{\pm}(x, y)$ corresponding to $\chi(\eta r_1)$,  note that
 $\eta r_2\ll\eta r_1\lesssim1$ such that $|F|,$ $|\Phi_\pm|\lesssim1,$ then
	\begin{align*}
\Big|\mathcal{K}_{31}^{\pm}(x, y) \Big|	&\lesssim \int_{\R^{2n}}\frac {v(z_1)v(z_2)\widetilde{\Gamma}(z_1, z_2)}{r_1^{n-2}}\chi_{r_1\gg \langle r_2\rangle}\Big(\int_0^{r_1^{-1}}\eta^{n-1}d\eta\Big) dz_1dz_2\\
&\lesssim\int_{\R^{2n}}
	\frac{\langle z_1\rangle^{-n-}\langle z_2\rangle^{-n-}\chi_{r_1\gg\langle r_2 \rangle}}{r_1^{2n-2}}dz_1dz_2.
\end{align*}
By Lemma \ref{Appendix 3} with $k=2n-2$ and  $\alpha=0$, one has  $\mathcal{K}_{31}$ is admissible. 

As for $\mathcal{K}_{32}^{\pm}(x, y)$ corresponding to $\widetilde{\chi}(\eta r_1)$,  integrating by parts $N=\lceil\frac{n}{2}\rceil+1$ times in $\eta$, note that   $\big\langle \eta r_2 \big\rangle^{\frac{1-n}{2}}\leq1,$ and $\eta r_1\gtrsim1$ such that $\big\langle \eta r_1 \big\rangle \approx \eta r_1,$  then
	\begin{align*}
\Big|\mathcal{K}_{32}^{\pm}(x, y)\Big|&\lesssim \int_{\R^{2n}}\frac {v_{12}\widetilde{\Gamma}(z_1, z_2)}{r_1^{n-2+N}}\chi_{r_1\gg \langle r_2\rangle}\bigg(\int_{r_1^{-1}}^1 \eta^{n-1-N}	\big( \eta r_1 \big)^{\frac{n-3}{2}} \big\langle \eta r_2 \big\rangle^{\frac{1-n}{2}} d\eta\bigg) dz_1dz_2\\
&\lesssim \int_{\R^{2n}}\frac {v_{12}\widetilde{\Gamma}(z_1, z_2)}
{r_1^{\frac{n-1}{2}+N}}\chi_{r_1\gg \langle r_2\rangle}\bigg(\int_{r_1^{-1}}^1 \eta^{n-1-N
	+\frac{n-3}{2}}d\eta\bigg) dz_1dz_2.
\end{align*}
Moreover, observe that $\frac{n-1}{2}+N=n+1,$ $n-1-N
+\frac{n-3}{2}=n-4$ if $n$ is odd and $\frac{n-1}{2}+N=n+\frac{1}{2},$ $n-1-N
+\frac{n-3}{2}=n-\frac{7}{2}$ if $n$ is even, then we conclude that 
	\begin{align*}
\Big|\mathcal{K}_{32}^{\pm}(x, y)\Big|
		\lesssim
		\int_{\R^{2n}}
		\frac{\langle z_1\rangle^{-n-}\langle z_2\rangle^{-n-}\chi_{r_1\gg\langle r_2 \rangle}}{r_1^{n+\frac{1}{2}}}dz_1dz_2.
\end{align*}
By Lemma \ref{Appendix 3} with $k=n+\frac{1}{2}$ and  $\alpha=0$, one has  $\mathcal{K}_{32}$ is admissible. Thus, we obtain $K_3$ is admissible.
\vskip0.2cm
 $\bullet$\underline{ Finally, we aim to establish  that $K_4$ is admissible} with the following kernel $K_4(x, y)$:
\begin{align}\label{K4(x,y)} 
\frac{1}{\pi i}\int_{\R^{2n}}v_{12}
	\chi_{r_2\gg\langle  r_1  \rangle}
	&\int_0^\infty\eta(2\eta^2+1)\chi(\eta)R_0^+(x, z_1)\Gamma(\eta)(z_1, z_2) \big(R_0^+-R_0^-\big)(z_2, y)d\eta dz_1dz_2,
\end{align}
where  $R_0^\pm:=R_0^\pm(\eta^4+\eta^2)$ for short.
We emphasize that  showing $K_4$ admissible is   the most challenging part of the analysis.
Denote
 $$B(\eta)(|x-y|):=R_0^+(\eta^4+\eta^2)(x, y)-R_0(0)(x, y),\ \ \ 
\mathcal{E}(\eta)(x, y):=
\Gamma(\eta)(x, y)-\Gamma(0)(x, y).$$
Note that $R_0(0)=(-\Delta)^{-1}-(-\Delta+1)^{-1},$ then by  \eqref{2-odd}-\eqref{2-even>},  its kernel  $R_0(0)(x, y)$ satisfies $\big|R_0(0)(x, y)\big|\lesssim |x-y|^{-n+2}.$
 Utilize  the mean value theorem and Lemma \ref{lemma Rj} to get for  $\ell\in\N^+\cup\{0\}, $ 
 	\begin{align}\label{B(eta)}
 	\sup_{0<\eta\ll1}
 	\Big|\partial_\eta^{\ell}B(\eta)(|x-y|) \Big|\lesssim
 	\begin{cases}
 	\eta^{1-\ell}	\big(|x-y|^{-n+2}+|x-y|^{-n+3} \big)& {\rm if}\ \eta |x-y|\ll1,\\ 
 	\eta^{1-\ell} |x-y|^{-\frac{n-1}{2}+\ell} & {\rm if}\ \eta |x-y|\gtrsim1.
 	\end{cases}
 \end{align}
 Going on to use the mean value theorem and combining with the definition $\widetilde{\Gamma}(x, y)$ given by \eqref{Gamma2},  we derive that
 \begin{align}\label{partial-E}
 	\sup_{0<\eta\ll1}
 	\Big|\partial_\eta^{\ell}\mathcal{E}(\eta)(x, y) \Big|\lesssim
 		\eta^{1-\ell}\widetilde{\Gamma}(x, y),\ \ \ \ell=0,1,\cdots,\Big\lceil\frac{n}{2}\Big\rceil +1.
 \end{align}
 
 Note that the identity:
 \begin{align*}
& R_0^+(\eta^4 + \eta^2)(x, z_1) \Gamma(\eta)(z_1, z_2)
 \\
&=R_0(0)(x, z_1) \Gamma(0)(z_1, z_2) + R_0^+(\eta^4 + \eta^2)(x, z_1) \mathcal{E}(\eta)(z_1, z_2) + B(\eta)(r_1) \Gamma(0)(z_1, z_2),
\end{align*}
 and insert it into the integrand of \eqref{K4(x,y)} to decompose $K_4(x, y)$ into the sum of $ K_{41}(x, y),$  $K_{42}(x, y)$, and $ K_{43}(x, y),$ corresponding to the terms $ R_0(0)(x, z_1) \Gamma(0)(z_1, z_2)$,  $ R_0^+(\eta^4 + \eta^2)(x, z_1) \mathcal{E}(\eta)(z_1, z_2)$, and $ B(\eta)(r_1) \Gamma(0)(z_1, z_2)$, respectively. 
 
 Hence,   it suffices to show that  each operator $K_{4j}$ with kernel  $ K_{4j}(x, y)$ $(j=1,2,3)$ is admissible.
 
 ${\rm(i)}$  \underline{ First, we take into account  the operator $ K_{41}$}. According to the splitting identity \eqref{split}, its kernel
 $ K_{41}(x, y)$ can be expressed as 
 \begin{align*}
 \frac{1}{\pi i}\int_{\R^{2n}}v_{12}
 	\chi_{r_2\gg\langle  r_1  \rangle} \Gamma(0)(z_1, z_2)R_0(0)(x, z_1)
 	\bigg[\int_0^\infty\eta\chi(\eta)\Big(R_\Delta^+(\eta^2)
 	-R_\Delta^-(\eta^2)\Big)(z_2, y)d\eta\bigg] dz_1dz_2.
 \end{align*}
 By  functional calculus,  it follows that
 \begin{align*}
	\frac{1}{\pi i}
	\int_0^\infty\eta\chi(\eta)\Big(R_\Delta^+(\eta^2)
	-R_\Delta^-(\eta^2)\Big)d\eta =\chi\big(\sqrt{-\Delta}\big),
\end{align*}
which gives that
\begin{align*}
	\frac{1}{\pi i}
	\int_0^\infty\eta\chi(\eta)\Big(R_\Delta^+(\eta^2)
	-R_\Delta^-(\eta^2)\Big)(z_2, y)d\eta =\frac{1}{(2\pi)^{\frac{n}{2}}}\Big[\mathcal{F}^{-1}\Big(\chi\big(|\cdot|\big)\Big)\Big](y-z_2).
\end{align*}
Since $\chi(|x|)\in \mathcal{S}(\R^n),$ then $\Big[\mathcal{F}^{-1}\Big(\chi\big(|\cdot|\big)\Big)\Big](y-z_2)=O\big(\big\langle y-z_2 \big\rangle^{-L}\big)$ for any $L\in\N^+.$ Hence,  choosing $L=n+1$ and combining with $\big|R_0(0)(x, z_1)\big|\lesssim r_1^{-n+2},$  we have 
\begin{align*}
K_{41}(x, y)\lesssim\int_{\R^{2n}}\frac {v_{12}\widetilde{\Gamma}(z_1, z_2)}
{r_1^{n-2}r_2^{n+1}}
	\chi_{r_2\gg\langle  r_1  \rangle}  dz_1dz_2
	\lesssim	\int_{\R^{2n}}
	\frac{\langle z_1\rangle^{-n-}\langle z_2\rangle^{-n-}\chi_{r_2\gg\langle r_1 \rangle}}{r_1^{n-2}r_2^{n+1}}dz_1dz_2.
\end{align*}
Utilize Lemma \ref{Appendix 3} with $k=n+1$ and $\alpha=n-2$ to obtain that $K_{41}$ is admissible.

${\rm(ii)}$  \underline{ Next, we analyze the operator $K_{42}$.} Its kernel $K_{42}(x,y)=\mathcal{K}^+_{42}(x,y)+\mathcal{K}^-_{42}(x,y),$ where
\begin{align*}
	\mathcal{K}_{42}^{\pm}(x, y)=\frac{1}{\pi i}\int_{\R^{2n}}\frac {v_{12}\chi_{r_2\gg \langle r_1\rangle}}{r_1^{n-2}}\bigg(\int_0^\infty e^{i\eta(r_1\pm r_2)}\chi(\eta)\frac{\eta^{n-1}}{1+2\eta^2}F(\eta, r_1)\Phi_\pm(\eta r_2)\mathcal{E}(\eta)(z_1, z_2)d\eta\bigg) dz_1dz_2.
\end{align*}
	Insert the identity  $1=\chi(\eta r_2)+\widetilde{\chi}(\eta r_2)$ into the integrand above to decompose $\mathcal{K}_{42}^{\pm}(x, y)$ into the sum of $\mathcal{K}_{421}^{\pm}(x, y)$ and $\mathcal{K}_{422}^{\pm}(x, y)$ corresponding to  $\chi(\eta r_2)$ and $\widetilde{\chi}(\eta r_2),$ respectively. 
	
For $\mathcal{K}_{421}^{\pm}(x, y),$ 	since $\eta r_1\ll\eta r_2\lesssim1$ such that $|F|,$ $|\Phi_\pm|\lesssim1,$ 
and  combining with  \eqref{partial-E},  then we obtain  that $	\Big|\mathcal{K}_{421}^{\pm}(x, y) \Big|$ satisfies 
	\begin{align*}
		&\lesssim \int_{\R^{2n}}\frac {v(z_1)v(z_2)\widetilde{\Gamma}(z_1, z_2)}{r_1^{n-2}}\chi_{r_2\gg \langle r_1\rangle}\Big(\int_0^{r_2^{-1}}\eta^n d\eta\Big) dz_1dz_2\lesssim\int_{\R^{2n}}
		\frac{\langle z_1\rangle^{-n-}\langle z_2\rangle^{-n-}\chi_{r_2\gg\langle r_1 \rangle}}{r_1^{n-2}r_2^{n+1}}dz_1dz_2.
	\end{align*}
	By Lemma \ref{Appendix 3} with $k=n+1$ and  $\alpha=n-2$, one has  $\mathcal{K}_{421}^\pm(x, y)$ satisfy \eqref{Schur test}.
	
	As for $\mathcal{K}_{422}^{\pm}(x, y)$, integrating  by parts $N=\lceil\frac{n}{2}\rceil+1$ times in $\eta$, note that \eqref{partial-E} and $\eta r_2\gtrsim1$ such that $\big\langle \eta r_2 \big\rangle \approx \eta r_2,$  then
	\begin{align*}
		\Big|\mathcal{K}_{422}^{\pm}(x, y)\Big|&\lesssim \int_{\R^{2n}} v_{12}\widetilde{\Gamma}(z_1, z_2)\chi_{r_2\gg \langle r_1\rangle}\bigg(\frac{1}{r_1^{n-2}r_2^N}\int_{r_2^{-1}}^1 \eta^{n-N}	\big\langle \eta r_1 \big\rangle^{\frac{n-3}{2}} \big(\eta r_2 \big)^{\frac{1-n}{2}} d\eta\bigg) dz_1dz_2.
	\end{align*}
	We first estimate the bound in the parenthesis by inserting $\big\langle \eta r_1 \big\rangle^{\frac{n-3}{2}} \lesssim1+\big( \eta r_1 \big)^{\frac{n-3}{2}}.$ Specifically,   
		\begin{align*}
		\frac{1}{r_1^{n-2}r_2^N}\int_{r_2^{-1}}^1 \eta^{n-N}	\big\langle \eta r_1 \big\rangle^{\frac{n-3}{2}} \big(\eta r_2 \big)^{\frac{1-n}{2}} d\eta&\lesssim
		\frac{1}{r_1^{n-2}r_2^{N+\frac{n-1}{2}}}\int_{r_2^{-1}}^1 \eta^{\frac{n+1}{2}-N} d\eta+
		\frac{1}{r_1^{\frac{n-1}{2}}r_2^{N+\frac{n-1}{2}}}\int_{r_2^{-1}}^1 \eta^{n-N-1} d\eta\\
		&\lesssim
		\frac{1}{r_1^{n-2}r_2^{n+\frac{1}{2}}}+	\frac{1}{r_1^{\frac{n-1}{2}}
			r_2^{n+\frac{1}{2}}},
	\end{align*}
	the last inequality is established by observing that $\frac{n+1}{2}-N=-1,$ $N+\frac{n-1}{2}=n+1,$  $n-N-1
	=\frac{n}{2}-\frac{5}{2}$  if $n$ is odd and  $\frac{n+1}{2}-N=-\frac{1}{2},$ $N+\frac{n-1}{2}=n+\frac{1}{2},$
	$n-N-1
	=\frac{n}{2}-2$ if $n$ is even. 
	Furthermore,  combining with \eqref{Gamma2}, we conclude that 
	\begin{align*}
		\Big|\mathcal{K}_{422}^{\pm}(x, y)\Big|
			&\lesssim \int_{\R^{2n}}
			\langle z_1\rangle^{-n-}\langle z_2\rangle^{-n-}\chi_{r_2\gg\langle r_1 \rangle}\bigg(	\frac{1}{r_1^{n-2}r_2^{n+\frac{1}{2}}}+	\frac{1}{r_1^{\frac{n-1}{2}}
				r_2^{n+\frac{1}{2}}}\bigg)dz_1dz_2.
	\end{align*}
	By Lemma \ref{Appendix 3} with $(k, \alpha)=(n+\frac{1}{2},  n-2)$ and  $(k, \alpha)=(n+\frac{1}{2}, \frac{n-1}{2}),$  it follows that  $\mathcal{K}_{422}^\pm(x, y)$ satisfy \eqref{Schur test}. Hence  the operator $K_{42}$ is admissible.
	
${\rm(iii)}$ 	\underline{ It remains to deal with  the operator $K_{43}.$} Observe that its kernel $K_{43}(x,y)=\mathcal{K}^+_{43}(x,y)+\mathcal{K}^-_{43}(x,y),$ where
	\begin{align*}
		\mathcal{K}_{43}^{\pm}(x, y)=\frac{1}{\pi i}\int_{\R^{2n}} v_{12}\chi_{r_2\gg \langle r_1\rangle}\Gamma(0)(z_1, z_2)\bigg(\int_0^\infty 
		e^{\pm i\eta r_2}\chi(\eta)\eta^{n-1}B(\eta) (r_1)\Phi_\pm(\eta r_2)d\eta\bigg) dz_1dz_2.
	\end{align*}
	Insert the identity $1=\chi(\eta r_2)+\widetilde{\chi}(\eta r_2)\chi(\eta r_1)+\widetilde{\chi}(\eta r_1)$ into the integrand above to decompose $\mathcal{K}_{43}^{\pm}(x, y)$ into the sum of $\mathcal{K}_{431}^{\pm}(x, y),$ $\mathcal{K}_{432}^{\pm}(x, y)$ and 
	$\mathcal{K}_{433}^{\pm}(x, y)$
	 corresponding to  $\chi(\eta r_2),$ $  \widetilde{\chi}(\eta r_2)\chi(\eta r_1)$ and $\widetilde{\chi}(\eta r_1),$  respectively. 
	 
	$\bullet$ For $\mathcal{K}_{431}^{\pm}(x, y),$ 
	since $\eta r_1\ll\eta r_2\lesssim1$ then by \eqref{B(eta)},  \eqref{Gamma2} and $\big|\Phi_\pm(\eta r_2)\big|\lesssim1,$  we derive 
	\begin{align*}
	\Big|\mathcal{K}_{431}^{\pm}(x, y)\Big|&\lesssim	\int_{\R^{2n}} v(z_1)v(z_2)\chi_{r_2\gg \langle r_1\rangle}
	\widetilde{\Gamma}(z_1, z_2)\bigg(\int_0^{r_2^{-1}} 
		\eta^n\Big(r_1^{-n+2}+r_1^{-n+3}\Big)d\eta\bigg) dz_1dz_2\\
		&\lesssim \int_{\R^{2n}}
		\langle z_1\rangle^{-n-}\langle z_2\rangle^{-n-}\chi_{r_2\gg\langle r_1 \rangle}\bigg(	\frac{1}{r_1^{n-2}r_2^{n+1}}+	\frac{1}{r_1^{n-3}
			r_2^{n+1}}\bigg)dz_1dz_2.
	\end{align*}
Utilize Lemma \ref{Appendix 3} with $(k, \alpha)=(n+1,  n-2)$ and  $(k, \alpha)=(n+1, n-3)$   to obtain that $\mathcal{K}_{431}^\pm(x, y)$ satisfy \eqref{Schur test}.

$\bullet$ For $\mathcal{K}_{432}^{\pm}(x, y)$, integrating by parts $N=\lceil\frac{n}{2}\rceil+1$ times in $\eta$ and setting $\Psi:=\Psi(\eta, r_1, r_2)=\chi(\eta)\chi(\eta r_1)\widetilde{\chi}(\eta r_2)$ for short,  it follows that
$\big|\mathcal{K}_{432}^{\pm}(x, y)\big|$ is dominated by
\begin{align*}
	\lesssim\bigg| \frac{1}{\pi i}\int_{\R^{2n}}\frac {v_{12}\chi_{r_2\gg \langle r_1\rangle}}{r_2^N}\Gamma(0)(z_1, z_2)\bigg[\int_0^\infty e^{\pm i\eta r_2}	\partial_\eta^N\bigg(\Psi \eta^{n-1}B(\eta) (r_1)\Phi_\pm(\eta r_2)\bigg)d\eta\bigg] dz_1dz_2\bigg|.
\end{align*}
Using \eqref{B(eta)} for $\eta r_1\lesssim1$ and  considering  $\eta r_2\gtrsim1$ such that $\big\langle \eta r_2 \big\rangle \approx \eta r_2,$ 
we get
	\begin{align*}
	\Big|\mathcal{K}_{432}^{\pm}(x, y)\Big|
	&\lesssim\int_{\R^{2n}}\frac{ v_{12}\widetilde{\Gamma}(z_1, z_2)}{r_2^{N+\frac{n-1}{2}}
		}\big(r_1^{-n+2}+r_1^{-n+3}\big)	\chi_{r_2\gg \langle r_1\rangle}
	\Big(\int_{r_2^{-1}}^1 \eta^{\frac{n+1}{2}-N}\Big)
	dz_1dz_2.
\end{align*}
Moreover,	observe  that $\frac{n+1}{2}-N=-1,$ $N+\frac{n-1}{2}=n+1,$   if $n$ is odd and  $\frac{n+1}{2}-N=-\frac{1}{2},$ $N+\frac{n-1}{2}=n+\frac{1}{2},$  if $n$ is even. Then 
	\begin{align*}
\Big|\mathcal{K}_{432}^{\pm}(x, y)\Big| \lesssim \int_{\R^{2n}}
	\langle z_1\rangle^{-n-}\langle z_2\rangle^{-n-}\chi_{r_2\gg\langle r_1 \rangle}\bigg(	\frac{1}{r_1^{n-2}r_2^{n+\frac{1}{2}}}+	\frac{1}{r_1^{n-3}
		r_2^{n+\frac{1}{2}}}\bigg)dz_1dz_2.
\end{align*}
	By Lemma \ref{Appendix 3} with $(k, \alpha)=(n+\frac{1}{2},  n-2)$ and  $(k, \alpha)=(n+\frac{1}{2}, n-3),$  we derive that  $\mathcal{K}_{432}^\pm(x, y)$ satisfy \eqref{Schur test}. 
	  
$\bullet$ As for $\mathcal{K}_{433}^{\pm}(x, y),$ note that  $\mathcal{K}_{433}^{\pm}(x, y)$  is written as
\begin{align*}
	 \frac{1}{\pi i}\int_{\R^{2n}} v_{12}\chi_{r_2\gg \langle r_1\rangle}\Gamma(0)(z_1, z_2)\bigg(\int_0^\infty 
	e^{\pm i\eta r_2}\chi(\eta)\widetilde{\chi}(\eta r_1)\eta^{n-1}B(\eta) (r_1)\Phi_\pm(\eta r_2)d\eta\bigg) dz_1dz_2.
\end{align*}
Set
$$\Lambda^{\pm}(x, y, z_1, z_2):=\int_0^\infty 
e^{\pm i\eta r_2}\chi(\eta)\widetilde{\chi}(\eta r_1)\eta^{n-1}B(\eta) (r_1)\Phi_\pm(\eta r_2)d\eta.$$
Next, we first to deal with $\Lambda^{\pm}(x, y, z_1, z_2).$ 
Since  $ B(\eta)(r_1):=R_0^+(\eta^4+\eta^2)(x, z_1)-R_0(0)(x, z_1),$ then
\begin{align*}
\Big|  \Lambda^{\pm}(x, y, z_1, z_2)  \Big|	\lesssim&
\frac{1}{r_1^{n-2}}\bigg|\int_0^\infty 
	e^{i\eta(r_1\pm r_2)}\chi(\eta)\widetilde{\chi}(\eta r_1)\frac{\eta^{n-1}}{1+2\eta^2}F(\eta, r_1)\Phi_\pm(\eta r_2)d\eta\bigg|\\
	&+\bigg |R_0(0)(x, z_1)\int_0^\infty 
	e^{\pm i\eta r_2}\chi(\eta)\widetilde{\chi}(\eta r_1)\eta^{n-1}\Phi_\pm(\eta r_2)d\eta\bigg|.
\end{align*}
Furthermore,   note that $\big|R_0(0)(x, z_1)\big|\lesssim r_1^{-n+2}.$   Integrate by parts $N=\lceil\frac{n}{2}\rceil+1$ times in $\eta$ for the two integrals above  and combine with $r_2\gg \langle r_1\rangle $  such that $|r_1\pm r_2|\approx r_2$ to derive that
\begin{align*}
	\Big|  \Lambda^{\pm}&(x, y, z_1, z_2)  \Big|	
\lesssim
\frac{1}{r_1^{n-2}r_2^N}\bigg( \int_{r_1^{-1}}^1
\eta^{n-1-N}(\eta r_1)^{\frac{n-3}{2}}(\eta r_2)^{\frac{1-n}{2}}d\eta
+\int_{r_1^{-1}}^1
\eta^{n-1-N}(\eta r_2)^{\frac{1-n}{2}}d\eta\bigg)\\
&\lesssim\frac{1}{r_1^{\frac{n-1}{2}}r_2^{N+\frac{n-1}{2}}}
\int_{r_1^{-1}}^1
\eta^{n-N-2}d\eta+\frac{1}{r_1^{n-2}r_2^{N+\frac{n-1}{2}}}
\int_{r_1^{-1}}^1
\eta^{\frac{n-1}{2}-N}d\eta\lesssim\frac{1}{r_1^{\frac{n-1}{2}}r_2^{n+\frac{1}{2}}}+\frac{1}{r_1^{n-3}r_2^{n+\frac{1}{2}}},
\end{align*}
the last inequality is followed by noting that  $n-N-2=\frac{n-1}{2}-3$, $N+\frac{n-1}{2}=n+1,$ $\frac{n-1}{2}-N=-2,$  if $n$ is odd and  $n-N-2=\frac{n}{2}-3,$ $N+\frac{n-1}{2}=n+\frac{1}{2},$   $\frac{n-1}{2}-N=-\frac{3}{2},$ if $n$ is even. 
Hence, we conclude that 
	\begin{align*}
	\Big|\mathcal{K}_{433}^{\pm}(x, y)\Big|  \lesssim \int_{\R^{2n}}
	\langle z_1\rangle^{-n-}\langle z_2\rangle^{-n-}\chi_{r_2\gg\langle r_1 \rangle}\bigg(\frac{1}{r_1^{\frac{n-1}{2}}r_2^{n+\frac{1}{2}}}+\frac{1}{r_1^{n-3}r_2^{n+\frac{1}{2}}}\bigg)dz_1dz_2.
\end{align*}
By Lemma \ref{Appendix 3} with $(k, \alpha)=(n+\frac{1}{2},  \frac{n-1}{2})$ and  $(k, \alpha)=(n+\frac{1}{2}, n-3),$  we get  that  $\mathcal{K}_{433}^\pm(x, y)$ satisfy \eqref{Schur test}.   Thus  $K_{43}$ is admissible. Thus we complete the proof. 
 \end{proof}		    
 		    
Putting  \eqref{WL2},  \eqref{Gamma} and  Proposition \ref{proposition-WL} together, we have finished the proof of Theorem \ref{theorem WL}.    
 			\section{$L^p$ boundedness of the high energy part $\Omega_H^\pm$}\label{section4}
In this section,  we are devoted to  establishing   the $L^p$ boundedness of the  high  energy part $\Omega_H^\pm$ defined by  \eqref{WH}   for all $1\leq p\leq\infty$ when dimensions $n\geq5.$ 		    
 		     \begin{align}\label{WH}
 		    	\Omega_H^\pm:=\frac{1}{\pi i}\int_0^\infty\eta(2\eta^2+1)\widetilde{\chi}(\eta)(R_0^\mp V)^kR_V^\mp(VR_0^\mp)^kV\big(R_0^+-R_0^-\big)d\eta.
 		    \end{align}
 		    Here, $R_0^\mp:=R_0^\mp(\eta^4+\eta^2)$  and $R_V^\mp:=R_V^\mp(\eta^4+\eta^2)$  for short.
 		    
 		   As the same before, because of $\Omega_H^+f = \overline{\Omega_H^-\bar{f}}$, the following analysis deal with only  $\Omega_H^-$.
 		    
 	\begin{theorem}\label{theorem WH}
 		Let  $n\geq5$ and $V(x)\lesssim\langle x \rangle^{-n-5-}$. Then provided $k$ enough large depending on
 		$n$, the operator  $\Omega_H^-$ defined by \eqref{WH} 
 		extends to a bounded operator on $L^p(\R^n)$ for all $1\leq p\leq\infty.$
 	\end{theorem}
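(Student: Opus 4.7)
The plan is to reduce $\Omega_H^-(x,y)$ to a pointwise kernel estimate which is admissible in the sense of \eqref{Schur test}, after which the Schur test yields $L^p$ boundedness for all $1 \le p \le \infty$ in complete analogy with Proposition~\ref{proposition-WL}. The first step is to apply the symmetric second resolvent identity $R_V^+(\eta^4+\eta^2)V = R_0^+(\eta^4+\eta^2)\,v\,M^{-1}(\eta)\,v$ to the central factor in \eqref{WH}. This expresses the integrand as an iterated product of free resolvents $R_0^+(\eta^4+\eta^2)$ interleaved with $V$'s, a sandwiched operator $vM^{-1}(\eta)v$, and the spectral measure $R_0^+-R_0^-$. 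In the high-energy regime $\eta \gtrsim 1$, the splitting identity \eqref{split} combined with the classical high-frequency expansion of $R_\Delta^\pm(\eta^2)$ coming from \eqref{2-odd}--\eqref{2-even>}, together with the hypothesis that $H$ has no embedded positive eigenvalue, yields uniform $L^2_s\to L^2_{-s}$ bounds for $M^{-1}(\eta)$ and suitably $\eta$-weighted bounds for the derivatives $\partial_\eta^\ell M^{-1}(\eta)$, valid for all $0 \le \ell \le N := \lceil n/2 \rceil + 1$ provided $s$ is sufficiently large. The assumption $|V(x)| \lesssim \langle x\rangle^{-n-5-}$ (so that $\beta = n+5 > 2N+2$) is calibrated to match this number of derivatives.

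Inserting the explicit kernel expansions \eqref{4-odd}--\eqref{4-even>} for each factor $R_0^\pm(\eta^4+\eta^2)(z_j,z_{j+1})$ and using Lemma~\ref{lemma R0-R0} for the final spectral factor, one can write $\Omega_H^-(x,y)$ as a finite sum, over sign vectors $\vec\varepsilon = (\varepsilon_0, \ldots, \varepsilon_{2k+1})$, of oscillatory integrals of the schematic form
\begin{equation*}
\int_{\R^{(2k+1)n}} \bigg(\prod_{j=0}^{2k+1} \frac{v(z_j)v(z_{j+1})}{|z_j-z_{j+1}|^{n-2}}\bigg) \int_0^\infty \widetilde\chi(\eta)\, e^{i\eta\Phi(\vec\varepsilon)}\, a_{\vec\varepsilon}(\eta; \vec z)\, d\eta\, d\vec z,
\end{equation*}
with $z_0 := x$, $z_{2k+2} := y$, phase $\Phi(\vec\varepsilon) = \sum_j \varepsilon_j |z_j - z_{j+1}|$, and amplitude $a_{\vec\varepsilon}$ absorbing the symbolic factors $F, \Phi_\pm, w_\pm$ from Lemmas~\ref{lemma F odd}--\ref{lemma F even} in their high-$\eta r$ form together with the kernel of $M^{-1}(\eta)$; by the weighted bounds from the previous step, it satisfies $|\partial_\eta^\ell a_{\vec\varepsilon}| \lesssim \eta^{-\ell}$ times polynomial growth in $\eta|z_j - z_{j+1}|$ of degree at most $(n-3)/2$.

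I then integrate by parts $N$ times in $\eta$; this is legal since $\widetilde\chi(\eta)$ localizes to $\eta \gtrsim 1$ and each derivative of $a_{\vec\varepsilon}$ loses at most one power of $\eta^{-1}$. Each integration by parts produces a factor $\Phi(\vec\varepsilon)^{-1}$, so, combined with the trivial estimate for $|\Phi| \lesssim 1$, the inner $\eta$-integral is dominated by $\langle \Phi(\vec\varepsilon) \rangle^{-N}$. Integrating out the intermediate variables $z_1, \ldots, z_{2k+1}$ via Lemma~\ref{Appendix 3} and using the fast decay of $v$ to essentially localize these variables near the origin (so that $|z_0 - z_1| \approx |x|$, $|z_{2k+1} - z_{2k+2}| \approx |y|$, and the remaining $|z_j - z_{j+1}|$ are bounded), one arrives at
\begin{equation*}
|\Omega_H^-(x,y)| \lesssim \langle x \rangle^{-(n-1)/2} \langle y \rangle^{-(n-1)/2} \langle |x| \pm |y| \rangle^{-N}.
\end{equation*}
Since $N > (n+1)/2$, a direct polar-coordinate computation shows the right-hand side lies in $L^\infty_x L^1_y \cap L^\infty_y L^1_x$, so $\Omega_H^-$ is admissible, and the Schur test delivers $\Omega_H^- \in \mathbb{B}(L^p(\R^n))$ for all $1 \le p \le \infty$.

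The hard part will be the high-energy analysis of $M^{-1}(\eta)$: unlike the low-energy setting of Section~\ref{section3}, where invertibility was guaranteed by the regularity of zero through a Neumann series argument near $\eta = 0$, invertibility at $\eta \to \infty$ is governed by the absence of embedded positive eigenvalues, and extracting the quantitative $\eta$-derivative bounds requires carefully combining \eqref{split} with sharp high-frequency Agmon-type estimates for $R_\Delta^\pm(\eta^2)$. A secondary technical point is bookkeeping the sign configurations $\vec\varepsilon$ and verifying that for each one $\Phi(\vec\varepsilon)$ is bounded below by $\big| |x| \pm |y| \big|$ up to errors controllable by the $\langle z_j \rangle$-weights supplied by $v$; this is handled by regional decompositions of the $\vec z$-integrals in the same spirit as the case analysis performed in the proof of Proposition~\ref{proposition-WL}.
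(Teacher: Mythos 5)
Your overall strategy coincides with the paper's final step: extract the oscillatory factors $e^{i\eta(|x|\pm|y|)}$, integrate by parts $N=\lceil n/2\rceil+1$ times in $\eta$, obtain the kernel bound $\langle x\rangle^{-\frac{n-1}{2}}\langle y\rangle^{-\frac{n-1}{2}}\langle |x|\pm|y|\rangle^{-N}$, and conclude $L^p$-boundedness. However, the way you propose to get there has two genuine gaps. First, you put the perturbed part into a pointwise oscillatory-integral expansion by absorbing ``the kernel of $M^{-1}(\eta)$'' into an amplitude satisfying $|\partial_\eta^\ell a|\lesssim\eta^{-\ell}$. No such pointwise, $\eta$-decaying kernel bounds for $M^{-1}(\eta)$ (equivalently for $R_V^+(\eta^4+\eta^2)$) are available in the high-energy regime, and your assertion that the splitting identity, the high-frequency expansion of $R_\Delta^\pm$, and the absence of embedded eigenvalues ``yield'' uniform bounds for $\partial_\eta^\ell M^{-1}(\eta)$ up to order $\lceil n/2\rceil+1$ is exactly the hard unproven input: a Neumann series works only for $\eta$ very large, while on the compact intermediate range $\eta\in[\eta_0,C]$ one needs a genuine limiting-absorption/Birman--Schwinger argument. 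The paper avoids this entirely: it never inverts $M(\eta)$ at high energy, but keeps the middle block $(R_0^+V)^{k-s}R_V^+(VR_0^+)^{k-s+1}$ as an operator and invokes the weighted-$L^2$ bounds $\|\partial_\eta^\ell R_V^\pm(\eta^4+\eta^2)\|_{L^{2,\sigma}\to L^{2,-\sigma}}\lesssim\eta^{-3}$ (Lemma \ref{lemma_high_4}, cited from Feng), composing these with pointwise estimates only for the outer factors $(R_0^+V)^s$, $s=\lfloor n/8\rfloor+1$, via the conjugated kernels $\mathcal{G}^\pm(\eta)$ of Lemma \ref{lemmaRRV}.

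Second, your convergence bookkeeping in $\eta$ fails as written. Integration by parts gains only powers of $\Phi(\vec\varepsilon)^{-1}$, not decay in $\eta$; with your stated amplitude bounds each of the $2k+2$ free-resolvent factors may still contribute growth in $\eta$ (pointwise one only has $|R_0^+(\eta^4+\eta^2)(x,y)|\lesssim\eta^{\frac{n+1}{2}-4}r^{-\frac{n-1}{2}}+\cdots$, which grows for $n\geq 8$), together with the prefactor $\eta(2\eta^2+1)$, so $\int_1^\infty d\eta$ diverges no matter how many times you integrate by parts. The decay that saves the integral comes from the $\eta^{-3}$ per resolvent available only in the weighted operator-norm framework, and one must choose $k$ large so that the net power (the paper's count is $\eta^{-6k+s(n-1)-3}$) is integrable; this is precisely where ``$k$ large depending on $n$'' enters, and your proposal never secures it. With these two ingredients replaced by the paper's scheme (weighted-LAP treatment of the middle block plus the explicit power count in $\eta$), the remaining steps of your outline --- the phase comparison $\Phi\approx |x|\pm|y|$ modulo $\langle z_j\rangle$-weights and the final admissibility/$L^p$ conclusion via the bound $\langle x\rangle^{-\frac{n-1}{2}}\langle y\rangle^{-\frac{n-1}{2}}\langle|x|\pm|y|\rangle^{-N}$ --- are sound.
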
 
 	\subsection{The higher energy analyses of $R_0^\pm$ and $R_V^\pm$}
 		Before presenting the proof of Theorem~\ref{theorem WH}, we establish several estimates for  $R_0^\pm(\eta^4+\eta^2)$ and $R_V^\pm(\eta^4+\eta^2)$ in the regime $\eta \gtrsim 1$  by the following two Lemmas \ref{lemmaRRV} and \ref{lemma_high_4}.

 	\begin{lemma}\label{lemmaRRV}
 	Let $r=|x-y|$  and $\eta\gtrsim1.$   Then for  $\ell\in\N^+\cup\{0\} $,
 		\begin{align}\label{RR1}
 			\Big|\partial_\eta^{\ell}R_0^+(\eta^4+\eta^2)(x,y) \Big|
 		\lesssim
 		\begin{cases}
 			\eta^{\frac{n+1}{2}-4}	\big(r^{-\frac{n-1}{2}+\ell}+r^{-\frac{n-1}{2}}\big)
 	 & {\rm if}\ 5\leq n\leq7,\\ 
 				\eta^{\frac{n+1}{2}-4}	\big(r^{-\frac{n-1}{2}+\ell}+r^{-n+4}\big)& {\rm if}\ n>7.
 		\end{cases}
 	\end{align}
 	Moreover,  denote $\mathcal{G}^\pm(\eta)(x, y):=	e^{\mp i\eta|x|} R_0^\pm(\eta^4+\eta^2)(x, y).$ Then
 	\begin{align}\label{GG1}
 		\Big|\partial_\eta^{\ell}\mathcal{G}^\pm(\eta)(x, y)\Big|
 		\lesssim
 		\begin{cases}
 		\langle y \rangle^{\ell}\	\eta^{\frac{n+1}{2}-4}	r^{-\frac{n-1}{2}} & {\rm if}\ 5\leq n\leq7,\\ 
 		\langle y \rangle^{\ell}\	\eta^{\frac{n+1}{2}-4}	\big(r^{-\frac{n-1}{2}}+r^{-n+4}\big)& {\rm if}\ n>7.
 		\end{cases}
 	\end{align}
 		\end{lemma}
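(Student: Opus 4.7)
\textbf{Proof plan for Lemma \ref{lemmaRRV}.}
The starting point is the splitting identity \eqref{split}: for $\eta \gtrsim 1$,
\begin{equation*}
R_0^{\pm}(\eta^4+\eta^2) = \frac{1}{1+2\eta^2}\Bigl[R_\Delta^{\pm}(\eta^2) - R_\Delta(-1-\eta^2)\Bigr],
\end{equation*}
where the prefactor satisfies $|\partial_\eta^{a}(1+2\eta^2)^{-1}|\lesssim\eta^{-2-a}$. The oscillatory piece $R_\Delta^{\pm}(\eta^2)(x,y)$ has the form $\frac{e^{\pm i\eta r}}{r^{n-2}}P_\pm(\eta, r)$, where $P_\pm$ is described by \eqref{2-odd} (polynomial of degree $(n-3)/2$ in $\eta r$ for odd $n$), by \eqref{2-even>} (profile $(\eta r)^{(n-2)/2}w(\eta r)$ with symbol-type $w$ in the far-field for even $n$), or by \eqref{2-even<} (a polynomial-plus-logarithm expansion in the near-field for even $n$). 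The term $R_\Delta(-1-\eta^2)$ has a Bessel-type kernel carrying exponential decay $e^{-r\sqrt{1+\eta^2}}$.

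To prove \eqref{RR1}, I will split into the oscillatory regime $\eta r \gtrsim 1$ and the near-diagonal regime $\eta r \ll 1$. In the former, symbol-type estimates $|r^c \partial_\eta^c P_\pm(\eta, r)|\lesssim \eta^{-c}\bigl[(\eta r)^{(n-3)/2} + 1\bigr]$ follow from the explicit formulas above, and the Leibniz expansion
\begin{equation*}
\partial_\eta^\ell R_0^+ = \sum_{a+b+c = \ell}C_{a,b,c}\,\partial_\eta^{a}(1+2\eta^2)^{-1} \cdot \partial_\eta^{b}e^{i\eta r}\cdot \partial_\eta^{c}P_+\cdot r^{-(n-2)}
\end{equation*}
shows that the dominant contribution arises from letting all $\ell$ derivatives fall on the oscillator $e^{\pm i\eta r}$ (producing $r^\ell$), yielding the bound $\eta^{(n-7)/2} r^{-(n-1)/2+\ell}$; the exponentially decaying contribution from $R_\Delta(-1-\eta^2)$ is harmless here, being $\lesssim (\eta r)^{-N}$ times the oscillatory bound for arbitrary $N$. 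In the near-diagonal regime $\eta r \ll 1$ (equivalently $r < 1/\eta$), the leading $r^{-(n-2)}$ singularities of $R_\Delta^{\pm}(\eta^2)$ and $R_\Delta(-1-\eta^2)$ cancel at the level of $R_0^\pm$, as the fourth-order resolvent inherits only the milder $r^{-(n-4)}$ singularity of the biharmonic Green's function. A Taylor expansion of both kernels around $r = 0$ (using \eqref{2-odd}/\eqref{2-even<}) confirms that the profile $F(\eta,r)$ in \eqref{R0F} satisfies $F(\eta,r) = c\,(1+2\eta^2)\,r^2 + O((1+2\eta^2)^2\,r^3)$ for $r\to 0$, so $R_0^\pm(\eta^4+\eta^2)(x,y) = c\,r^{-(n-4)} + O(\eta^2 r^{-(n-6)})$ with a leading coefficient independent of $\eta$; the $\eta$-derivatives of this part then contribute $\eta^{(n-7)/2} r^{-(n-4)}$ for $n > 7$, while for $5 \leq n \leq 7$ this bound is majorized by $\eta^{(n-7)/2} r^{-(n-1)/2}$ since $-(n-4) \geq -(n-1)/2$ on $r \leq 1$.

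To prove \eqref{GG1}, write
\begin{equation*}
\mathcal{G}^\pm(\eta)(x, y) = \frac{e^{\pm i\eta(|x-y|-|x|)}}{(1+2\eta^2)\,|x-y|^{n-2}}\,F(\eta, |x-y|),
\end{equation*}
with the same profile $F$ as in \eqref{R0F}. The combined phase satisfies $\bigl||x-y|-|x|\bigr| \leq |y|$ by the triangle inequality, so each $\eta$-derivative of the oscillatory factor produces an amplitude bounded by $\langle y \rangle$ rather than by $r=|x-y|$. Since every $r^b$ factor arising in the proof of \eqref{RR1} came precisely from differentiating the oscillator $e^{\pm i\eta|x-y|}$ (the non-oscillator pieces $P_\pm$ and $(1+2\eta^2)^{-1}$ yield only $\eta^{-1}$ per derivative), replacing each such $r^b$ by $\langle y\rangle^b$ in the Leibniz bookkeeping yields \eqref{GG1}.

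The main obstacle will be verifying the leading-singularity cancellation in the near-diagonal regime of \eqref{RR1}. One must trace through the explicit Taylor coefficients of \eqref{2-odd}--\eqref{2-even>} uniformly in $\eta \gtrsim 1$ to confirm that each surviving coefficient carries a factor of $(1+2\eta^2)$ which cancels the prefactor, leaving the sharp $\eta^{(n-7)/2}$ scaling rather than the weaker $\eta^{-2}$ or $\eta^{-1}$ bound that a naive triangle-inequality splitting would yield.
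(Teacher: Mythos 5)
Your plan follows essentially the same route as the paper's proof: the splitting identity \eqref{split} combined with the explicit Schr\"odinger resolvent expansions \eqref{2-odd}--\eqref{2-even>}, a case split between $\eta r\gtrsim1$ and $\eta r\ll1$, cancellation of the leading near-diagonal singularities (note you also need the absence of an $r^{-(n-3)}$ term, which comes from $\kappa_1=0$ for odd $n$ and from the evenness of the expansion \eqref{2-even<} for even $n$), and, for $\mathcal{G}^\pm$, factoring out $e^{\pm i\eta(|x-y|-|x|)}$ and using $\big||x-y|-|x|\big|\le|y|$ so that each derivative landing on the phase costs $\langle y\rangle$ instead of $r$. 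Two pieces of bookkeeping need repair, though neither is fatal. First, in the near-diagonal regime with $5\le n\le 7$, your justification ``since $-(n-4)\ge-(n-1)/2$ on $r\le1$'' only yields $r^{-(n-4)}\le r^{-(n-1)/2}$; it does not produce the decaying prefactor $\eta^{\frac{n-7}{2}}$ (e.g.\ $\eta^{-1}$ when $n=5$) that \eqref{RR1} requires. The correct conversion uses the defining feature of the regime, $r\lesssim\eta^{-1}$: $r^{-(n-4)}=r^{-\frac{n-1}{2}}\,r^{\frac{7-n}{2}}\lesssim\eta^{\frac{n-7}{2}}\,r^{-\frac{n-1}{2}}$ since $\frac{7-n}{2}\ge0$.

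Second, the mechanism you anticipate for your ``main obstacle'' --- that every surviving Taylor coefficient carries a factor $(1+2\eta^2)$ cancelling the prefactor --- is not literally true: for $n=5$ the $j=3$ term of $R^+_\Delta(\eta^2)-R_\Delta(-1-\eta^2)$ has coefficient $\mp i\eta^3+(1+\eta^2)^{3/2}\sim(1+2\eta^2)^{3/2}$, so a factor of order $\eta$ survives the division. But no exact cancellation beyond the $j=0,2,4$ terms is needed: as in the paper's expansion \eqref{>>oddR2}, once the $j=0,1$ terms are gone one has $R_0^\pm=r^{-n+4}\sum_{j\ge2}c_j(\eta)(\eta r)^{j-2}$ with coefficients $c_j(\eta)$ bounded together with all their $\eta$-derivatives for $\eta\gtrsim1$, giving the uniform bound $|\partial_\eta^\ell R_0^\pm|\lesssim r^{-n+4}$ in this regime; \eqref{RR1} then follows by the $\eta r\ll1$ conversion above for $n\le7$, and trivially from $\eta^{\frac{n-7}{2}}\gtrsim1$ for $n>7$. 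Relatedly, the ``sharpness in $\eta$'' you worry about at the end is not at stake: the lemma asserts only an upper bound, and the genuine gain from the cancellation is in the $r$-singularity ($r^{-(n-4)}$ versus the naive $\eta^{-2}r^{-(n-2)}$, which fails the target bound because of its stronger spatial singularity), not in the power of $\eta$. With these adjustments your argument, including the Leibniz bookkeeping for \eqref{GG1}, matches the paper's.
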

 		\begin{proof}
 			First, we consider the case $n\geq5$ is odd.  By  \eqref{4-odd}, $R_0^\pm(\eta^4+\eta^2)(x,y)$ can be written as 
 			\begin{align}\label{>>oddR}
 			\frac{\eta^{\frac{n+1}{2}-4}}{r^{\frac{n-1}{2}}}
 			\Bigg(\frac{\eta^2}{(1+2\eta^2)}(\eta r)^{-\frac{n-3}{2}}\bigg(e^{\pm i\eta r}\sum_{j=0}^
 			{\frac{n-3}{2}}c_j\big(\pm\eta r\big)^j-e^{ -r\sqrt{1+\eta^2}}\sum_{j=0}^
 			{\frac{n-3}{2}}c_j\big(ir\sqrt{1+\eta^2}\big)^j\bigg)\Bigg).
 			\end{align}
 			For $\eta r\gtrsim1$ such that $\eta^{-1}\lesssim r,$
 		applying the expression  \eqref{>>oddR}, we can check that  for  $\ell\in\N^+\cup\{0\} $,
 			\begin{align}\label{>>oddR1}
 			\Big|\partial_\eta^{\ell}R_0^\pm(\eta^4+\eta^2)(x,y) \Big|
 			\lesssim	\eta^{\frac{n+1}{2}-4}r^{-\frac{n-1}{2}+\ell}.
 			\end{align}
 	 When $\eta r\ll1$ (noting that $\eta r\ll1$ such that $r\ll1$ since $\eta\gtrsim1$  ),  it follows that  (see {\it e.g.} \cite{Jensen-80})
 	\begin{align*}
 		R_\Delta^\pm(\eta^2)(x,y)=r^{-n+2}\sum_{j=0}^
 		{\infty}\kappa_j\big(\pm i\eta r\big)^j, \ \ \ 
 		R_\Delta^\pm(-1-\eta^2)(x,y)=r^{-n+2}\sum_{j=0}^
 		\infty\kappa_j\big(-r\sqrt{1+\eta^2}\big)^j,
 	\end{align*}
 		where $\kappa_j$  depending on $n$ can be computed 
 		and  $\kappa_j=0$ if $j$ is odd with $0<j<n-2.$
 		  Hence, by the  splitting identity \eqref{split} and $\kappa_1=0,$ for $\eta r\ll 1$, we get 
 		  	\begin{align}\label{>>oddR2}
 		  	R_0^\pm(\eta^4+\eta^2)(x,y)=r^{-n+4}\sum_{j=2}^{\infty}\Bigg[\kappa_j\left(
 		  	\frac{(\pm i)^j \eta^2}{1+2\eta^2}-\Big(
 		  	\frac{-\sqrt{1+\eta^2}}{\eta}\Big)^{j-2}
 		  	\frac{1+\eta^2}{1+2\eta^2}\right)\big(\eta r\big)^{j-2}\Bigg].
 		  	 \end{align}
 For  $\eta r\ll 1,$	note  that $\eta\gtrsim1$, then by  \eqref{>>oddR2},  it follows that 
 	\begin{align*}
 	\Big|\partial_\eta^{\ell}R_0^\pm(\eta^4+\eta^2)(x,y) \Big|
 	\lesssim	r^{-n+4},\ \ \ \ \ \ell\in\N^+\cup\{0\}.
 \end{align*}
 Thus for $\eta r\ll 1$ and $\eta\gtrsim1$, combining with  $n>7$ such that $\frac{n+1}{2}-4>0$ and $n\leq7$  such that $(\eta r)^{\frac{n+1}{2}-4}\gtrsim1$,  we derive  that for $\ell\in\N^+\cup\{0\} $,
 	\begin{align}\label{oddRR3}
 	\Big|\partial_\eta^{\ell}R_0^+(\eta^4+\eta^2)(x,y) \Big|
 	\lesssim
 	\begin{cases}
 		\eta^{\frac{n+1}{2}-4}\	r^{-\frac{n-1}{2}}
 		& {\rm if}\ 5\leq n\leq7,\\ 
 		\eta^{\frac{n+1}{2}-4}\	r^{-n+4}& {\rm if}\ n>7.
 	\end{cases}
 \end{align}
 
As for  $\mathcal{G}^\pm(\eta)( x, y)$,  by \eqref{>>oddR}, it can be expressed as 
\begin{align*}
	\frac{\eta^{\frac{n+1}{2}-4}}{r^{\frac{n-1}{2}}}
	\Bigg(\frac{\eta^2}{(1+2\eta^2)}(\eta r)^{-\frac{n-3}{2}}&e^{\pm i\eta \big(|x-y|-|x|\big)}\bigg(\sum_{j=0}^
	{\frac{n-3}{2}}c_j\big(\pm\eta r\big)^j
	-e^{ -r\sqrt{1+\eta^2}\mp i\eta r}\sum_{j=0}^
	{\frac{n-3}{2}}c_j\big(ir\sqrt{1+\eta^2}\big)^j\bigg)\Bigg).
\end{align*}
When $\eta r\gtrsim1,$ by the expression above and $\eta\gtrsim1,$ we get
	\begin{align}\label{Godd2}
	\Big|\partial_\eta^{\ell}\mathcal{G}^\pm(\eta)(x, y)\Big|
	\lesssim
\langle y \rangle^{\ell}\	\eta^{\frac{n+1}{2}-4}	r^{-\frac{n-1}{2}},\ \ \ \ell\in\N^+\cup\{0\}.
\end{align}
Additionally,  consider that $\eta r\ll1$ with $\eta\gtrsim1$ such that $r\lesssim1$, then by \eqref{oddRR3} and  Leibniz formula, it follows that for $\ell\in\N^+\cup\{0\},$
	\begin{align}\label{Godd3}
\Big|\partial_\eta^{\ell}\mathcal{G}^\pm(\eta)(x, y)\Big|&=	\bigg|\partial_\eta^{\ell}\bigg(e^{\pm i\eta \big(|x-y|-|x|\big)}\Big(e^{\mp i\eta r}R_0^+(\eta^4+\eta^2)(x,y)\Big)\bigg) \bigg|\nonumber
	\\
	&\lesssim
	\begin{cases}
	\langle y \rangle^{\ell}\	\eta^{\frac{n+1}{2}-4}\	r^{-\frac{n-1}{2}}
		& {\rm if}\ 5\leq n\leq7,\\ 
	\langle y \rangle^{\ell}\	\eta^{\frac{n+1}{2}-4}\	r^{-n+4}& {\rm if}\ n>7.
	\end{cases}
\end{align}
Hence, taking into account  \eqref{>>oddR1}, \eqref{oddRR3} and \eqref{Godd2}, \eqref{Godd3},  we derive the desire results.

We deal with  the case $n\geq5$ is even by a similar decomposition based on $ \eta r\gtrsim1$ and $ \eta r\ll1$.

 Next,  we omit details for  simplicity. 
Analogously, 
	for $\eta r\gtrsim1$ and $\ell\in\N^+\cup\{0\},$ by the expression \eqref{4-even>}, we also obtain that the bounds \eqref{>>oddR1} and \eqref{Godd2}.
For  $\eta r\ll1$,  applying the expression \eqref{4-even<}, we also derive the estimates \eqref{oddRR3} and \eqref{Godd3}. Thus we complete the proof. 
 			\end{proof}

 		Before presenting the next lemma, we introduce the notation for the weighted $L^2$ spaces. Specifically, for $\sigma \in \mathbb{R}$, the weighted $L^2$ space is defined as  
 		$$
 		L^{2, \sigma}(\mathbb{R}^n) := \{ f \in L_{\text{loc}}^2(\mathbb{R}^n) \mid \langle \cdot \rangle^\sigma f \in L^2(\mathbb{R}^n) \}.
 		$$	
 	
 	\begin{lemma}[{\cite[Theorem 5.1]{Feng}}]
 		\label{lemma_high_4}
 		Let  $|V(x)|\lesssim \langle x\rangle^{-\beta}$ with $\beta>2+2\ell$ for $\ell=0,1,2,\cdots.$ Assume that $H=\Delta^2-\Delta+V$ has no positive embedded eigenvalues. Then, for any $\sigma>\ell+\frac{1}{2}$, the map $(0,\infty)\ni\eta \mapsto \langle x\rangle^{-\sigma}R_V^\pm(\eta^4+\eta^2) \langle x\rangle^{-\sigma}$ is of $C^\ell$-class in the norm topology on $L^2$ and satisfies
 		$$
 		\big\| \partial_\eta^{\ell} \big(R_V^\pm(\eta^4+\eta^2)\big)\big\|_{L^{2, \sigma}\to L^{2, -\sigma}}\lesssim\eta^{-3},\ \  \ \ \  \eta\gtrsim1.
 		$$
 	\end{lemma}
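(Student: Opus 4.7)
The plan is twofold: first obtain the required weighted $L^2$ estimate for the free resolvent $R_0^\pm(\eta^4+\eta^2)$ via the splitting identity \eqref{split} and Agmon's classical limiting absorption principle for $-\Delta$, and then transfer the bound to the perturbed resolvent $R_V^\pm(\eta^4+\eta^2)$ by inverting $I+VR_0^\pm(\eta^4+\eta^2)$ on $L^{2,-\sigma}$, where the absence of positive embedded eigenvalues of $H$ will supply the needed Fredholm invertibility.

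For the free estimate, by \eqref{split} one writes
$$R_0^\pm(\eta^4+\eta^2)=\frac{1}{1+2\eta^2}\Bigl(R_\Delta^\pm(\eta^2)-R_\Delta(-1-\eta^2)\Bigr).$$
Agmon's theorem yields the $C^\ell$-regularity of $\eta\mapsto R_\Delta^\pm(\eta^2)$ as a $\mathbb{B}(L^{2,\sigma},L^{2,-\sigma})$-valued map for $\sigma>\ell+\tfrac12$, together with the high-energy bound $\|\partial_\eta^{j}R_\Delta^\pm(\eta^2)\|_{L^{2,\sigma}\to L^{2,-\sigma}}\lesssim \eta^{-1}$ for $0\leq j\leq \ell$ and $\eta\gtrsim 1$; the negative-energy piece $R_\Delta(-1-\eta^2)$ is holomorphic in $\eta$ with exponentially decaying kernel, so it trivially satisfies the same estimates. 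Combining these via Leibniz with the bounds $\partial_\eta^j(1+2\eta^2)^{-1}=O(\eta^{-2-j})$ (for $\eta\gtrsim 1$) yields
$$\bigl\|\partial_\eta^{\ell}R_0^\pm(\eta^4+\eta^2)\bigr\|_{L^{2,\sigma}\to L^{2,-\sigma}}\lesssim \eta^{-3},\qquad \eta\gtrsim 1.$$

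To extend the bound to $R_V^\pm$, I would use the symmetrized resolvent identity
$$R_V^\pm(\eta^4+\eta^2)=R_0^\pm(\eta^4+\eta^2)\Bigl(I+VR_0^\pm(\eta^4+\eta^2)\Bigr)^{-1}.$$
The hypothesis $\beta>2+2\ell$ on $V$ makes $VR_0^\pm(\eta^4+\eta^2)$ a $C^\ell$-family of compact operators on $L^{2,-\sigma}$ whose operator norm is of order $\eta^{-3}$ for $\eta\gtrsim 1$, so a Neumann expansion provides the inverse once $\eta$ is sufficiently large. For every $\eta\in(0,\infty)$ the Fredholm alternative then reduces invertibility to the absence of a nontrivial $\phi\in L^{2,-\sigma}$ satisfying $\phi=-VR_0^\pm(\eta^4+\eta^2)\phi$; a standard bootstrap identifies any such $\phi$ with an $L^2$-eigenfunction of $H$ at the embedded energy $\eta^4+\eta^2>0$, which is ruled out by assumption. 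Thus $(I+VR_0^\pm)^{-1}$ is well defined and uniformly bounded on $L^{2,-\sigma}$ for $\eta\gtrsim 1$, with its $\eta$-derivatives generated by the identity $\partial_\eta(A^{-1})=-A^{-1}(\partial_\eta A)A^{-1}$ applied iteratively. A final Leibniz expansion in the product $R_0^\pm(I+VR_0^\pm)^{-1}$ produces the claimed $\eta^{-3}$ bound for $\partial_\eta^{\ell}R_V^\pm(\eta^4+\eta^2)$.

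The main obstacle is tracking the loss of $\eta$-decay under differentiation. Naively each $\partial_\eta$ could cost a factor of $\eta$ via the derivative of $e^{\pm i\eta|x-y|}$ in the kernel of $R_\Delta^\pm$; this loss is absorbed by the weights $\langle x\rangle^{-\sigma}\langle y\rangle^{-\sigma}$ once $\sigma>\ell+\tfrac12$, the extra factors of $|x-y|$ being controlled by the extra decay of the weights. Carrying this improvement through the explicit kernel expansions \eqref{2-odd}--\eqref{2-even>} of $R_\Delta^\pm$ is the only computationally heavy part of the argument; once it is in place, the remaining analysis is essentially algebraic.
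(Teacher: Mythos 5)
The paper does not prove this lemma at all: it is imported verbatim from the literature as \cite[Theorem 5.1]{Feng}, so there is no internal proof to compare with, and your sketch amounts to a reproof of the cited result. Your route is the standard one (and is essentially how such statements are proved in the reference): use the splitting identity \eqref{split} together with Agmon's limiting absorption principle for $-\Delta$, noting that each $\partial_\eta$ applied to $R_\Delta^\pm(\eta^2)$ costs a factor $|x-y|$ that is absorbed by the weights once $\sigma>\ell+\tfrac12$, while $\partial_\eta^{j}(1+2\eta^2)^{-1}=O(\eta^{-2-j})$ and the piece $R_\Delta(-1-\eta^2)$ is harmless; this does give $\big\|\partial_\eta^{j}R_0^\pm(\eta^4+\eta^2)\big\|_{L^{2,\sigma}\to L^{2,-\sigma}}\lesssim\eta^{-3}$ for $\eta\gtrsim1$, and then the resolvent identity plus iterated differentiation of the inverse yields the claim for $R_V^\pm$. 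I regard the proposal as correct in outline.

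Two points in the perturbative step deserve more care than your sketch gives them. First, with the identity $R_V^\pm=R_0^\pm\big(I+VR_0^\pm\big)^{-1}$ the inverse must be taken on $L^{2,\sigma}$, not $L^{2,-\sigma}$ (or else use $\big(I+R_0^\pm V\big)^{-1}R_0^\pm$ on $L^{2,-\sigma}$); in either form one needs $V:L^{2,-\sigma}\to L^{2,\sigma}$, i.e. $2\sigma\le\beta$, which is exactly what the hypothesis $\beta>2+2\ell$ provides for $\sigma$ slightly above $\ell+\tfrac12$. Second, the Neumann series gives uniform invertibility only for $\eta$ large; on the remaining compact portion of $\{\eta\gtrsim1\}$ (and for the $C^\ell$ statement on all of $(0,\infty)$) you need pointwise invertibility plus norm-continuity of $\eta\mapsto\big(I+VR_0^\pm\big)^{-1}$. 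The pointwise invertibility is where the spectral hypothesis genuinely enters: if $(I+VR_0^\pm)\phi=0$ with $\phi\neq0$, then $\psi:=R_0^\pm\phi$ solves $(H-\eta^4-\eta^2)\psi=0$ in $L^{2,-\sigma}$, and one must run Agmon's bootstrap — applicable here because the singular part of $R_0^\pm(\eta^4+\eta^2)$ is $(1+2\eta^2)^{-1}R_\Delta^\pm(\eta^2)$ — to upgrade $\psi$ to $L^2$, thereby excluding positive resonances and contradicting the absence of embedded eigenvalues. You assert this step as "standard" rather than carrying it out; it is indeed classical, but it is the real content behind the lemma, so in a self-contained write-up it (or a precise citation for it) would have to be supplied.
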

 	\subsection{The proof of Theorem  \ref{theorem WH}.}
 We start to show that $\Omega_H^-\in\mathbb{B}(L^p)$ for all $1\leq p\leq\infty.$  First, we provide   Lemma \ref{lermmaK} to show that an integral operator is admissible,  which plays an important role in the proof of Theorem  \ref{theorem WH}.
 	\begin{lemma}[{\cite[Lemma 5.2]{Erdogan-Green21}}]
 		\label{lermmaK}
 		Let $K$ be  an integral operator with kernel $K(x, y).$ If there exists $\varepsilon>0$ such that $K(x, y)$ satisfies the  bound:
 		\begin{align*} 
 			\Big| K(x, y)\Big|\lesssim\frac{1}{\big\langle x \big\rangle^{\frac{n-1}{2}}\big\langle y \big\rangle^{\frac{n-1}{2}}\big\langle |x| \pm |y| \big\rangle^{\frac{n+1}{2}+\varepsilon}}.
 			\end{align*}
 			Then $K\in\mathbb{B}\big(L^{p}(\R^n)\big)$ for all $1\leq p\leq\infty.$
 	\end{lemma}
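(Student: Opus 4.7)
The plan is to prove this by Schur's test. It suffices to establish
\[
\sup_{x\in\R^n}\int_{\R^n}|K(x,y)|\,dy + \sup_{y\in\R^n}\int_{\R^n}|K(x,y)|\,dx <\infty,
\]
since this yields boundedness on $L^1$ and $L^\infty$, from which the full range $1\le p\le\infty$ follows by the Riesz--Thorin interpolation theorem (and the $L^\infty$ bound by duality). By the symmetry of the hypothesis in $x$ and $y$, it is enough to handle the first supremum. Fix $x$ and pass to polar coordinates $y=r\omega$ with $(r,\omega)\in(0,\infty)\times S^{n-1}$. Since the bound on $|K(x,y)|$ is radial in $y$, the angular integral contributes a fixed constant $|S^{n-1}|$, and the problem reduces to showing that
\[
I_\pm(x):=\frac{1}{\langle x\rangle^{\frac{n-1}{2}}}\int_0^\infty \frac{r^{n-1}}{\langle r\rangle^{\frac{n-1}{2}}\,\langle |x|\pm r\rangle^{\frac{n+1}{2}+\varepsilon}}\,dr
\]
is uniformly bounded in $x\in\R^n$.

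For $I_+(x)$, I will split the radial integral at $r=\langle x\rangle$. On $0\le r\le\langle x\rangle$ we have $\langle|x|+r\rangle\simeq\langle x\rangle$, and since $r^{n-1}/\langle r\rangle^{(n-1)/2}\lesssim\langle r\rangle^{(n-1)/2}$, the integral is bounded by $\langle x\rangle^{(n+1)/2}/\langle x\rangle^{(n+1)/2+\varepsilon}=\langle x\rangle^{-\varepsilon}$. On $r\ge\langle x\rangle$ we have $\langle|x|+r\rangle\simeq r$, so the integrand reduces to $r^{-1-\varepsilon}$, which integrates to $\lesssim\langle x\rangle^{-\varepsilon}$. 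Altogether $I_+(x)\lesssim\langle x\rangle^{-\frac{n-1}{2}-\varepsilon}$, which is uniformly bounded.

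For $I_-(x)$, the singularity of $\langle|x|-r\rangle$ near $r=|x|$ makes this the main obstacle. Assume $|x|\ge 1$ (the case $|x|\le 1$ is handled trivially via $\langle|x|-r\rangle\simeq\langle r\rangle$). I will split into three regions: $0\le r\le |x|/2$, $|x|/2\le r\le 2|x|$, and $r\ge 2|x|$. In the outer two regions, $\langle|x|-r\rangle\gtrsim\langle x\rangle$ or $\langle|x|-r\rangle\simeq r$ respectively, and the same calculation as for $I_+$ gives a contribution of at most $\langle x\rangle^{(n-1)/2}\cdot\langle x\rangle^{-\varepsilon}$ after restoring the prefactor. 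The critical middle region contributes
\[
\frac{1}{\langle x\rangle^{\frac{n-1}{2}}}\int_{|x|/2}^{2|x|}\frac{r^{n-1}\,dr}{\langle r\rangle^{\frac{n-1}{2}}\,\langle r-|x|\rangle^{\frac{n+1}{2}+\varepsilon}}\lesssim \frac{\langle x\rangle^{\frac{n-1}{2}}}{\langle x\rangle^{\frac{n-1}{2}}}\int_{\R}\frac{du}{\langle u\rangle^{\frac{n+1}{2}+\varepsilon}},
\]
where I used $r\simeq|x|$ throughout and changed variables $u=r-|x|$; the last integral converges since $\frac{n+1}{2}+\varepsilon>1$ for $n\ge 1$. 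Thus $I_-(x)$ is uniformly bounded, Schur's test applies, and the conclusion follows.
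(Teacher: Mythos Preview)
Your argument is correct and is exactly the natural Schur-test approach: the paper itself does not supply a proof of this lemma but simply quotes it from \cite[Lemma 5.2]{Erdogan-Green21}, where the same polar-coordinate reduction and region splitting are used. One small slip in your write-up: for the outer regions of $I_-$ you wrote that the contribution is ``at most $\langle x\rangle^{(n-1)/2}\cdot\langle x\rangle^{-\varepsilon}$ after restoring the prefactor,'' but the prefactor is $\langle x\rangle^{-(n-1)/2}$, so the contribution is $\langle x\rangle^{-(n-1)/2-\varepsilon}$ (exactly as you obtained for $I_+$); this is just a sign typo and does not affect the argument.
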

 In what follows, we return to  prove
 	Theorem \ref{theorem WH}.
 	
 	\begin{proof}[Proof of Theorem \ref{theorem WH}]
 		In order to show that the high energy part $\Omega_H^-\in\mathbb{B}\big(L^{p}(\R^n)\big)$ for all $1\leq p\leq\infty,$ it suffices to prove that  its kernel  $\Omega_H^-(x, y)$ satisfies  the bound stated in Lemma \ref{lermmaK}.
 		
	By \eqref{WH}, $\Omega_H^-(x, y)$  can be written as
 		\begin{align}\label{high3}
 			\Omega_H^-(x, y)=&\frac{1}{\pi i}\int_0^\infty \eta(2\eta^2+1)\widetilde{\chi}(\eta)
 			\int_{\R^{n}}\Big(\big(R_0^+ V\big)^{s}\Big)(x, z_1)\times\nonumber\\
 			&\Bigg[\bigg(\big(R_0^+ V\big)^{k-s}R_V^+\big(VR_0^+\big)^{k-s+1}\bigg) \bigg(\big(VR_0^+\big)^{s-1}\Big(V\big(R_0^+-R_0^-\big)\Big)\bigg)\Bigg](z_1, y)dz_1d\eta,
 			\end{align}
 		where $s\in\N^+,$ and  $R_0^\mp:=R_0^\mp(\eta^4+\eta^2),$  $R_V^\mp:=R_V^\mp(\eta^4+\eta^2)$  for short.
 		
 	$\bullet$\underline{	Next,  taking $s=\lfloor \frac{n}{8}\rfloor+1, $  $\beta > \frac{n+1}{2}$ and $2(\beta  - \sigma) > n$ for $\sigma>\frac{1}{2}$}, we claim that   
 		\begin{align}\label{high1}
 		&\big\|\big(R_0^+ V\big)^{s}(x, \cdot)\big\|_{L^{2, \sigma}}\lesssim\eta^{s\big(\frac{n+1}{2}-4\big)} \big\langle x\big\rangle^{-\frac{n-1}{2}},\nonumber\\
 	&\big\|\big(VR_0^+\big)^{s-1}\big(V\big(R_0^+-R_0^-\big)\big)(\cdot, y)	\big\|_{L^{2, \sigma}}\lesssim\eta^{s\big(\frac{n+1}{2}-4\big)}\big\langle y\big\rangle^{-\frac{n-1}{2}}.
 		\end{align}
 	{\rm(i)	} For $5\leq n\leq7$ such that $s=\lfloor \frac{n}{8}\rfloor+1=1, $   by  \eqref{RR1} and Lemma \ref{Appendix 1},   provided $2(\beta-\sigma)>n,$ it follows that 
 			\begin{align*}
 			\left\|\big(R_0^+ V\big)(x, \cdot)\right\|_{L^{2, \sigma}}\lesssim\eta^{\frac{n+1}{2}-4}\bigg(\int_{\R^n}\bigg|
 			\frac{\langle z_1\rangle^{\sigma}V(z_1)}{|x-z_1|^{\frac{n-1}{2}}}\bigg|^2dz_1\bigg)^{\frac{1}{2}}\lesssim\eta^{\frac{n+1}{2}-4} \big\langle x\big\rangle^{-\frac{n-1}{2}}.
 		\end{align*}
 		 	{\rm(ii)	}	For $ n=8$ such that $s=\lfloor \frac{n}{8}\rfloor+1=2, $  utilize  \eqref{RR1} to derive that 
 		\begin{align}\label{n=8R}
 			\left\|\big(R_0^+ V\big)^2(x, \cdot)\right\|_{L^{2, \sigma}}\lesssim\eta^{2\big(\frac{n+1}{2}-4\big)}\bigg(\int_{\R^n}&\bigg|\langle z_1\rangle^{-(\beta-\sigma)}
 			\int_{\R^n}\Big(|x-u|^{-\frac{n-1}{2}}+|x-u|^{-n+4}\Big)\nonumber\\
 			&\times\langle u \rangle^{-\beta}\Big(|u-z_1|^{-\frac{n-1}{2}}+|u-z_1|^{-n+4}\Big)du\bigg|^2dz_1\bigg)^{\frac{1}{2}}.
  		\end{align}
 		Note that $n>7$ such that $n-4>\frac{n-1}{2}.$ Then we can check that 
 			\begin{align*}
 			&\Big(\frac{1}{|x-u|^{\frac{n-1}{2}}}+\frac{1}{|x-u|^{n-4}}\Big)\Big(\frac{1}{|u-z_1|^{\frac{n-1}{2}}}+\frac{1}{|u-z_1|^{n-4}}\Big)\\
 			&\lesssim\frac{1}{|x-z_1|^{\frac{n-1}{2}}}\bigg(\frac{1}{|x-u|^{\frac{n-1}{2}}}+\frac{1}{|x-u|^{n-4}}+\frac{1}{|x-u|^{\frac{3n+1}{2}-8}}+\frac{1}{|u-z_1|^{\frac{n-1}{2}}}+\frac{1}{|u-z_1|^{n-4}}+\frac{1}{|u-z_1|^{\frac{3n+1}{2}-8}}\bigg).
 		\end{align*}
 	Thus,	considering  $n=8$ such that $\frac{3n+1}{2}-8<n$, by \eqref{n=8R} and Lemma \ref{Appendix 1}, one has for  $n=8$,
 			\begin{align*}
 			\left\|\big(R_0^+ V\big)^2(x, \cdot)\right\|_{L^{2, \sigma}}\lesssim\eta^{2\big(\frac{n+1}{2}-4\big)}
 			\bigg(\int_{\R^n}\bigg|
 			\frac{\big\langle z_1\big\rangle^{-(\beta-\sigma)}}{|x-z_1|^{\frac{n-1}{2}}}\bigg|^2dz_1\bigg)^{\frac{1}{2}}\lesssim\eta^{2\big(\frac{n+1}{2}-4\big)}\big\langle x\big\rangle^{-\frac{n-1}{2}}. 
 		\end{align*}
 		{\rm(iii)	} For $n>8,$ 
 		utilize \eqref{RR1} to get   $\left\|\big(R_0^+ V\big)^s(x, \cdot)\right\|_{L^{2, \sigma}}$ is bounded by
 		\begin{align}\label{n>>R}
 		\lesssim \eta^{s\big(\frac{n+1}{2}-4\big)}\Bigg(\int_{\R^n}
 		\bigg| \big\langle u_s \big\rangle^{-(\beta-\sigma)}\int_{\R^{(s-1)n}}
 			\bigg(& \prod_{j=1}^{s-1}\Big(\frac{1}{|u_{j-1}-u_j|^{\frac{n-1}{2}}}+\frac{1}{|u_{j-1}-u_j|^{n-4}}\Big)\langle u_j \rangle^{-\beta}\bigg)\nonumber\\
 		&\ \  \times\Big(\frac{1}{|u_{s-1}-u_s|^{\frac{n-1}{2}}}+\frac{1}{|u_{s-1}-u_s|^{n-4}}\Big)d\vec{u}\bigg|^2\Bigg)^{\frac{1}{2}},
 	\end{align} 
 		where $x=u_0,\  z_1=u_s$  and $\vec{u}=(u_1, u_2, \cdots, u_{s-1}).$
 	For simplicity, set 
 	$$\Pi(u_0, u_j, u_{j+1}):=	\Big(\frac{1}{|u_0-u_j|^{\frac{n-1}{2}}}
 	+\frac{1}{|u_0-u_j|^{n-4a}}\Big)\langle u_j \rangle^{-\beta} \Big(\frac{1}{|u_j-u_{j+1}|^{\frac{n-1}{2}}}+\frac{1}{|u_j-u_{j+1}|^{n-4}}\Big).$$
 Next, we demonstrate  that 
 		\begin{align}\label{n>>R1}
 	\int_{\R^n}	\Pi(u_0, u_j, u_{j+1})du_j
 	&\lesssim
 	\begin{cases}
 	{|u_0-u_{j+1}|^{-\frac{n-1}{2}}}+{|u_0-u_{j+1}|^{-n+4(a+1 )}}
& {\rm if}\ a=1, 2, \cdots, \big\lfloor\frac{n}{8}\big\rfloor,\\
 	{|u_0-u_{j+1}|^{-\frac{n-1}{2}}} & {\rm if}\ a=\big\lfloor\frac{n}{8}\big\rfloor.
 	\end{cases}
 	\end{align}
 	Indeed, note that
 	\begin{align*}
\Pi(u_0, u_j, u_{j+1})=&\frac{\langle u_j \rangle^{-\beta}}{|u_0-u_j|^{\frac{n-1}{2}}|u_j-u_{j+1}|^{\frac{n-1}{2}}}+\frac{\langle u_j \rangle^{-\beta}}{|u_0-u_j|^{\frac{n-1}{2}}|u_j-u_{j+1}|^{n-4}}\\
 &+\frac{\langle u_j \rangle^{-\beta}}{|u_0-u_j|^{n-4a}|u_j-u_{j+1}|^{\frac{n-1}{2}}} +\frac{\langle u_j \rangle^{-\beta}}{|u_0-u_j|^{n-4a}|u_j-u_{j+1}|^{n-4}}.
\end{align*}
Consequently,  the integral with respect to 
 $u_j$ in \eqref{n>>R1} can be divided into four terms,  each corresponding to one of the four parts above. 
 For the first term, since 
 \begin{align*}
 	\frac{1}{|u_0-u_j|^{\frac{n-1}{2}}|u_j-u_{j+1}|^{\frac{n-1}{2}}}
 	&\lesssim	|u_0-u_{j+1}|^{-\frac{n-1}{2}}	\bigg(\frac{1}{|u_0-u_j|^{\frac{n-1}{2}}}+\frac{1}{|u_j-u_{j+1}|^{\frac{n-1}{2}}}\bigg).
 	\end{align*}
 	Then, by Lemma \ref{Appendix 1} and  $\beta>\frac{n+1}{2},$ one has
 	 \begin{align*}
 	\int_{\R^n}	\frac{\langle u_j \rangle^{-\beta}}{|u_0-u_j|^{\frac{n-1}{2}}|u_j-u_{j+1}|^{\frac{n-1}{2}}}du_j\lesssim	|u_0-u_{j+1}|^{-\frac{n-1}{2}}.
 	\end{align*}
 	Consider that  $n-4a\geq n-4\big\lfloor\frac{n}{8}\big\rfloor>\frac{n-1}{2},$ for $a=1, 2, \cdots, \big\lfloor\frac{n}{8}\big\rfloor,$  then similarly, by  Lemma \ref{Appendix 1} and  $\beta>4\big\lfloor\frac{n}{8}\big\rfloor,$ we derive the two terms (the second and the third terms)
 	 \begin{align*}
 		\int_{\R^n}	\frac{\langle u_j \rangle^{-\beta}}{|u_0-u_j|^{\frac{n-1}{2}}|u_j-u_{j+1}|^{n-4}}du_j\ \ \text{and}\ \  \int_{\R^n}	\frac{\langle u_j \rangle^{-\beta}}{|u_0-u_j|^{n-4a}|u_j-u_{j+1}|^{\frac{n-1}{2}}}du_j \lesssim	|u_0-u_{j+1}|^{-\frac{n-1}{2}}.
 	\end{align*}
 	As for the fourth  term,
 	note that $n-4a>\frac{n-1}{2},$   $n-4(a+1)>0$ for $a=1, 2, \cdots, \big\lfloor\frac{n}{8}\big\rfloor,$ then  utilize Lemma \ref{Appendix 2} to obtain that 
 	 \begin{align*}
 		\int_{\R^n}	\frac{\langle u_j \rangle^{-\beta}}{|u_0-u_j|^{n-4a}|u_j-u_{j+1}|^{n-4}}du_j\lesssim
 		\begin{cases} 
 		\frac{1}{|u_0-u_{j+1}|^{\frac{n-1}{2}}}& {\rm if}\  |u_0-u_{j+1}| > 1, \\
 				\frac{1}{|u_0-u_{j+1}|^{n-4(a+1)}}	 &{\rm if} \ |u_0-u_{j+1}| \leq1.
 		\end{cases}
 	\end{align*}
 Additionally,  take into account $n-4\big(\big\lfloor\frac{n}{8}\big\rfloor+1\big)
 	\leq\frac{n-1}{2},$ then 
 	\begin{align*}
 		|u_0-u_{j+1}|^{-n+4(\lfloor\frac{n}{8}\rfloor+1)}\leq|u_0-u_{j+1}|^{-\frac{n-1}{2}}, \ \ \ \ \   \text{for}\  |u_0-u_{j+1}| \leq1,
 		\end{align*}
 	which yields \eqref{n>>R1}.
 	
 	Consider \eqref{n>>R} and  \eqref{n>>R1}, then for $s=\lfloor \frac{n}{8}\rfloor+1 $  and  by Lemma \ref{Appendix 1}, it follows that 
 	  	\begin{align*}
 	  	\left\|\big(R_0^+ V\big)^s(x, \cdot)\right\|_{L^{2, \sigma}}\lesssim\eta^{s\big(\frac{n+1}{2}-4\big)}
 	  	\bigg(\int_{\R^n}\bigg|
 	  	\frac{\big\langle u_s\big\rangle^{-(\beta-\sigma)}}{|u_0-u_s|^{\frac{n-1}{2}}}\bigg|^2du_s\bigg)^{\frac{1}{2}}\lesssim\eta^{s\big(\frac{n+1}{2}-4\big)}\big\langle x\big\rangle^{-\frac{n-1}{2}}. 
 	  \end{align*}
 	   Similarly,  $ 	\big\|\big(VR_0^+\big)^{s-1}\big(V\big(R_0^+-R_0^-\big)\big)(\cdot, y)	\big\|_{L^{2, \sigma}}\lesssim\eta^{s\big(\frac{n+1}{2}-4\big)}\big\langle y\big\rangle^{-\frac{n-1}{2}}$ for $s=\lfloor \frac{n}{8}\rfloor+1$. 
 	  	
 	$\bullet$  \underline {Given $\beta>2$  and $\beta>2\sigma,$}, in the following we are devoted  to showing that 
 	  \begin{align}\label{high2}
 	  \left\|	\big(R_0^+ V\big)^{k-s}R_V^+\big(VR_0^+\big)^{k-s+1} \right\|_{L^{2, \sigma}\to L^{2, -\sigma}}\lesssim\eta^{-3(2k-2s+2)}, \ \ \ 
 	   \sigma>\frac{1}{2}.
 	  	\end{align}
 	  	In fact,   by Lemma \ref{lemma_high_4}, provided $\beta>2$  and $\beta>2\sigma$ such that $	\left\|\langle \cdot\rangle^{2\sigma}V\right\|_{L^\infty}<\infty,$ take $f\in L^{2, \sigma},$   then
 	  	  \begin{align*}
 	  \Big\|	\Big(\big(R_0^+ V\big)^{k-s}R_V^+\big(VR_0^+\big)^{k-s+1}\Big)f \Big\|_{ L^{2, -\sigma}}	  	&\lesssim
 	  	  	\left\|R_0^+\right\|_{L^{2, \sigma}\to L^{2, -\sigma}}	\left\|	\Big(V\big(R_0^+ V\big)^{k-s-1}R_V^+\big(VR_0^+\big)^{k-s+1}\Big)f \right\|_{ L^{2, \sigma}}\\
 	  	  	&  \lesssim
 	  	  \eta^{-3}
 	  	  	\left\|\langle \cdot\rangle^{2\sigma}V\right\|_{L^\infty}
 	  	  		\left\|	\Big(\big(R_0^+ V\big)^{k-s-1}R_V^+\big(VR_0^+\big)^{k-s+1}\Big)f \right\|_{ L^{2, -\sigma}}.
 	  	  \end{align*}
 	  	Repeating the process above, we conclude that 
 	  	  \begin{align*}
 	  		 \Big\|	\Big(\big(R_0^+ V\big)^{k-s}R_V^+\big(VR_0^+\big)^{k-s+1}\Big)f \Big\|_{ L^{2, -\sigma}}\lesssim\eta^{-3(2k-2s+1)}\left\|	R_0^+f \right\|_{ L^{2, -\sigma}}
 	  		 \lesssim\eta^{-3(2k-2s+2)}\left\|f \right\|_{ L^{2, \sigma}}, 
 	  	\end{align*}
 	  	     which yields \eqref{high2}.  
 	  	     
 	 Utilize \eqref{high1},  \eqref{high2} and H\"older's inequality to obtain that $\Omega_H^-(x, y)$ given by \eqref{high3} satisfies
 	 	\begin{align}\label{high4}
 	 	\Big|\Omega_H^-(x, y)\Big|&\lesssim\int_0^\infty\bigg( \eta(2\eta^2+1)\widetilde{\chi}(\eta)
 	 	\big\|\big(R_0^+ V\big)^{s}(x, \cdot)\big\|_{L^{2, \sigma}}  \big\|	\big(R_0^+ V\big)^{k-s}R_V^+\big(VR_0^+\big)^{k-s+1} \big\|_{L^{2, \sigma}\to L^{2, -\sigma}}\nonumber\\ &\ \ \ \ \ \ \ \ \ \ \ \ \ \ \ \ \ \ \  \ \ \ \ \ \ \ \ \ \ \ \ \ \  \times\big\|\big(VR_0^+\big)^{s-1}\big(V\big(R_0^+-R_0^-\big)\big)(\cdot, y)	\big\|_{L^{2, \sigma}}\bigg)d\eta\nonumber\\
 	 	&\lesssim\big\langle x\big\rangle^{-\frac{n-1}{2}}\big\langle y\big\rangle^{-\frac{n-1}{2}}\int_1^\infty \eta^{-6k+s(n-1)-3}d\eta\lesssim\big\langle x\big\rangle^{-\frac{n-1}{2}}\big\langle y\big\rangle^{-\frac{n-1}{2}},
 	 \end{align}
 	provided $k$ enough large such that  $-6k+s(n-1)-3<-1.$
 	
	$\bullet$ \underline{ Next,	it remains to demonstrate that }
 		\begin{align*}
 		\big|\Omega_H^-(x, y)\big|\lesssim\big\langle x\big\rangle^{-\frac{n-1}{2}}\big\langle y\big\rangle^{-\frac{n-1}{2}}\big||x|\pm|y|\big|^{-N}.
 	\end{align*}
Indeed, observe that $\Omega_H^-(x, y)=\Omega_{H,+}^-(x, y)-\Omega_{H,-}^-(x, y),$ where 
	\begin{align*}
	\Omega_{H,\pm}^-(x, y)=&\frac{1}{\pi i}\int_0^\infty e^{i\eta\big(|x|\pm|y|\big)} \eta(2\eta^2+1)\widetilde{\chi}(\eta)
\Bigg(	\int_{\R^{n}}\Big[\big(\mathcal{G}^+(\eta) V\big)\big(R_0^+ V\big)^{s-1}\Big](x, z_1)\nonumber\\
&	\times\bigg[\bigg(\big(R_0^+ V\big)^{k-s}R_V^+\big(VR_0^+\big)^{k-s+1}\bigg)
	\bigg(\big(VR_0^+\big)^{s-1}\big(V\mathcal{G}^{\pm}(\eta)\big)\bigg)\bigg](z_1, y)dz_1\Bigg)d\eta.
\end{align*} 	 
Here,	$\mathcal{G}^\pm(\eta)(x, y):=	e^{\mp i\eta|x|} R_0^\pm(\eta^4+\eta^2)(x, y)$ and $s\in\N^+.$
  Taking into account the support of $\widetilde{\chi}(\eta)$ and applying Lemma \ref{lemma_high_4}, we observe that by selecting $k$ sufficiently large, no boundary terms appear when performing integration by parts with respect to $\eta$. 

Hence, integrating by parts  $N=\lceil\frac{n}{2}\rceil+1$ times with respect to  $\eta$ and Leibniz formula, we derive
\begin{align}\label{high6}
\Big|\Omega_{H,\pm}^-(x, y)\Big|&\lesssim
\frac{1}{ \big||x|\pm|y|\big|^N}\sum_{\ell_1+\cdots+\ell_4=N}\ \int_1^\infty  \eta^{3-\ell_1}
	\Bigg|	\int_{\R^{n}}\partial_\eta^{\ell_2}\Big[\big(\mathcal{G}^+(\eta) V\big)\big(R_0^+ V\big)^{s-1}\Big](x, z_1)\nonumber\\
	&	\times\bigg[\partial_\eta^{\ell_3}\bigg(\big(R_0^+ V\big)^{k-s}R_V^+\big(VR_0^+\big)^{k-s+1}\bigg)
	\partial_\eta^{\ell_4}\bigg(\big(VR_0^+\big)^{s-1}\big(V\mathcal{G}^{\pm}(\eta)\big)\bigg)\bigg](z_1, y)dz_1\Bigg|d\eta.
\end{align} 
Consider \eqref {RR1}, then $\big| V(z)\partial_\eta^{\ell}R_0^+(\eta^4+\eta^2) (z, u)V(u)\big|$ is bounded by 
\begin{align*}
	\lesssim
	\begin{cases}
		\eta^{\frac{n+1}{2}-4}	
		\big\langle z\big\rangle^{-(\beta-\ell)}\big\langle u\big\rangle^{-(\beta-\ell)}\big |z-u\big|^{-\frac{n-1}{2}}
		& {\rm if}\ 5\leq n\leq7,\\ 
		\eta^{\frac{n+1}{2}-4}	
		\big\langle z\big\rangle^{-(\beta-\ell)}\big\langle u\big\rangle^{-(\beta-\ell)} \Big(|z-u|^{-\frac{n-1}{2}}+|z-u|^{-n+4}\Big)
		& {\rm if}\ n>7.
	\end{cases}
	\end{align*}
	Hence, following the approach used to establish \eqref{high1} and combining with \eqref{GG1},  provided $\beta > \frac{n+1}{2} + (\ell_2 + \ell_4)$ and $2(\beta - \ell_2 - \ell_4 - \sigma) > n$, it holds that
	\begin{align}\label{high7}
		&\left\|\partial_\eta^{\ell_2}\Big[\big(\mathcal{G}^+(\eta) V\big)\big(R_0^+ V\big)^{s-1}\Big](x, \cdot)\right\|_{L^{2, \sigma}}\lesssim\eta^{s\big(\frac{n+1}{2}-4\big)} \big\langle x\big\rangle^{-\frac{n-1}{2}},\nonumber\\
		&\left\|\partial_\eta^{\ell_4}\Big(\big(VR_0^+\big)^{s-1}\big(V\mathcal{G}^{\pm}(\eta)\big)\Big)(\cdot, y)	\right\|_{L^{2, \sigma}}\lesssim\eta^{s\big(\frac{n+1}{2}-4\big)}\big\langle y\big\rangle^{-\frac{n-1}{2}}.
	\end{align}
Additionally, by virtue of Lemma \ref{lemma_high_4} and under the condition $\beta>2\sigma$ and $\beta > 2 + 2\ell_3$ for $0 \leq \ell_3 \leq N$ and $\ell_3 \in \N$, similar to \eqref{high2}, it follows that 
	 \begin{align}\label{high8}
		\left\|	\partial_\eta^{\ell_3}\bigg(\big(R_0^+ V\big)^{k-s}R_V^+\big(VR_0^+\big)^{k-s+1}\bigg) \right\|_{L^{2, \sigma}\to L^{2, -\sigma}}\lesssim\eta^{-3(2k-2s+2)}, \ \ 
		\sigma>\ell_3+\frac{1}{2}.
	\end{align}
	By \eqref{high6}, \eqref{high7}  \eqref{high8} and H\"older's inequality, similar to \eqref{high4},  we  derive that 
		\begin{align*}
		\Big|\Omega_{H,\pm}^-(x, y)\Big|
		&\lesssim\big\langle x\big\rangle^{-\frac{n-1}{2}}\big\langle y\big\rangle^{-\frac{n-1}{2}} \big||x|\pm|y|\big|^{-N}\int_1^\infty \eta^{-6k+s(n-1)-3}d\eta\\
		&\lesssim\big\langle x\big\rangle^{-\frac{n-1}{2}}\big\langle y\big\rangle^{-\frac{n-1}{2}}\big||x|\pm|y|\big|^{-N},
	\end{align*}
which gives the desired result, combining with Lemma \ref{lermmaK}.

		Additionally, we emphasize that the decay rate $\beta > 2 + 2\left(\lceil \frac{n}{2} \rceil + 1\right)$ (i.e., $\beta > n + 5$ when $n$ is odd and $\beta > n + 4$ when $n$ is even) is required when $\ell_3 = \lceil \frac{n}{2} \rceil + 1$ and all derivatives are applied to either $R^+_V$ or  some $R^+_0.$	
		Thus we complete the proof. 
\end{proof}

 		\appendix
 		\section{Technical lemmas}  \label{section5}
 		In this section, we provide  several technical lemmas that play important roles in  this paper.
 		To begin, we present one of the Hardy-Sobolev type inequalities, which is frequently used in the proof of Lemma \ref{lemma F}.
 		\begin{lemma}[{\cite[Corollary 4, Page 139]{Sobolev}}]\label{Appendix 5}
 Let $n>\ell p,$ $1\leq p\leq\frac{np}{n-\ell p}$ and $\beta=\alpha-\ell>-\frac{n}{p}.$	Then		
 			$$\big \||y|^\beta u\big\|_{L^p(\R^n)}\lesssim\big\||y|^\alpha \nabla^\ell u\big\|_{L^p(\R^n)}, \ \ \ \ \ \ \  u\in C_0^\infty(\R^n).$$
 				\end{lemma}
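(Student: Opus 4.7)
The plan is to reduce the weighted inequality to a Stein--Weiss type bound for the Riesz potential via the classical integral representation of a smooth compactly supported function in terms of its derivatives. First I would invoke the pointwise estimate valid for $u\in C_0^\infty(\R^n)$ with $n>\ell$: there is a constant $c_{n,\ell}$ such that
\begin{equation*}
|u(y)| \;\le\; c_{n,\ell}\int_{\R^n}\frac{|\nabla^\ell u(z)|}{|y-z|^{n-\ell}}\,dz \;=\; c_{n,\ell}\,\bigl(I_\ell\bigl|\nabla^\ell u\bigr|\bigr)(y),
\end{equation*}
where $I_\ell$ denotes the Riesz potential of order $\ell$. Setting $f=|\nabla^\ell u|\ge 0$, the inequality to prove becomes
\begin{equation*}
\bigl\||y|^{\beta}\,I_\ell f\bigr\|_{L^p(\R^n)} \;\lesssim\; \bigl\||y|^{\alpha}\,f\bigr\|_{L^p(\R^n)}, \qquad \beta=\alpha-\ell.
\end{equation*}

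Second, I would invoke the Stein--Weiss weighted inequality in the equal-exponent regime $L^p\to L^p$. The homogeneity check $|y|^{\beta}\,I_\ell f(\lambda y)$ scales correctly precisely because $\beta-\alpha=-\ell$, matching the homogeneity of $I_\ell$. The Stein--Weiss conditions in this regime reduce to $-\beta<n/p$ (i.e.\ $\beta>-n/p$, which is exactly the stated hypothesis), together with a compatibility on $\alpha$ coming from $\alpha<n/p'$, and the admissibility $\alpha+(-\beta)=\ell\ge 0$. The range hypothesis $1\le p\le np/(n-\ell p)$ and $n>\ell p$ ensures that the derivatives lie in an admissible Sobolev-like exponent range so that the iteration/interpolation underlying Stein--Weiss applies.

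For execution, I would first prove the $p=2$ case directly via Plancherel and a dyadic splitting $\R^n=\bigsqcup_{k\in\Z}\{2^k\le|y|<2^{k+1}\}$, rescaling each annulus to unit size and using the uniform bound on $I_\ell$ between isotropic Muckenhoupt classes. A discrete Hardy inequality in $k$ then sums the annular contributions, yielding the weighted $L^2$ bound. Real interpolation between $L^{p_1}$ and $L^{p_2}$ estimates obtained this way (at the cost of mildly shrinking the admissible range of $\alpha$) gives the result for $1<p<\infty$.

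The main obstacle will be the endpoints $p=1$ and $p=\infty$, where the Riesz potential is not of strong type and the Stein--Weiss inequality fails in its usual form. At these endpoints I would bypass Stein--Weiss and rely directly on the dyadic annular decomposition: on each annulus $\{|y|\sim 2^k\}$, the kernel $|y-z|^{\ell-n}$ restricted to $|z|\sim 2^j$ is controlled pointwise by $2^{(\ell-n)\max(j,k)}$, reducing the weighted estimate to summing a geometric series in $|j-k|$ whose convergence is guaranteed exactly by the strict inequality $\beta>-n/p$ (interpreted as $\beta>-n$ when $p=\infty$). Handling this endpoint argument carefully, while keeping the constant uniform over the admissible $\alpha$-range, is the delicate point; a reference to Sobolev's original argument suffices once the dyadic reduction is in place.
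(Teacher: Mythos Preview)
The paper does not prove this lemma: it is quoted verbatim from Maz'ya's book (the reference in the lemma heading) and no argument whatsoever is supplied. There is therefore nothing in the paper's own text to compare your sketch against.

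Your outline is a standard and essentially correct route to such weighted Hardy--Sobolev estimates for $1<p<\infty$: the pointwise Riesz-potential representation $|u|\lesssim I_\ell|\nabla^\ell u|$ is valid for $u\in C_0^\infty(\R^n)$ with $\ell<n$, and the resulting weighted bound for $I_\ell$ is exactly the equal-exponent Stein--Weiss inequality with weights $|y|^{-\gamma}=|y|^{\beta}$ and $|y|^{-\delta}=|y|^{-\alpha}$, whose scaling condition $\gamma+\delta=\ell$ is precisely $\alpha-\beta=\ell$. One caveat: Stein--Weiss in this regime also needs $\alpha<n/p'$, a restriction not displayed in the lemma as stated; you notice this, but you should be explicit that without it the inequality can fail, so either the lemma carries an implicit hypothesis from the cited source or your proof will only cover that sub-range. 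At $p=1$ the Stein--Weiss framework is unavailable (and $n/p'=0$ forces $\alpha<0$ in that approach), so your proposed dyadic-annulus argument, or an integration-by-parts proof for $\ell=1$ followed by iteration, is indeed the way to handle the endpoint. Since the paper simply defers to the reference, any self-contained proof you write is supplementary rather than a replacement for an existing argument.
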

The next two Lemmas \ref{Appendix 1}-\ref{Appendix 2} have been  already well-established and are frequently used.
	\begin{lemma}[{\cite[Lemma 3.8]{Goldberg-Visan-CMP} }]\label{Appendix 1}
 			Let $\mu, \beta$ satisfy that  $\mu < n$ and $n < \beta + \mu$. Then
 			$$
 			\int_{\R^n} \frac{\langle y\rangle ^{-\beta}}{ |x-y|^{\mu} }dy\lesssim 
 				\begin{cases} 
 					\langle x\rangle^{n-\beta - \mu} & {\rm if} \ \beta < n, \\
 					\langle x\rangle^{- \mu} & {\rm if}\ \beta >n.
 				\end{cases}
 				$$
 			\end{lemma}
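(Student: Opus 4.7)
The plan is to establish this standard convolution-type decay estimate by a localization argument, separating the integration domain into regions where one can clearly identify which of $|y|$, $|x-y|$, and $|x|$ dominates. First I would observe that the hypotheses $\mu < n$ (which gives integrability of $|x-y|^{-\mu}$ near $y = x$) and $\beta + \mu > n$ (which gives integrability of $\langle y \rangle^{-\beta} |y|^{-\mu}$ at infinity) imply that the integral is a uniformly bounded function of $x$ on the compact region $|x| \leq 1$, so the estimate reduces to the regime $|x| \geq 1$ where $\langle x \rangle \approx |x|$.

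For $|x| \geq 1$, I would decompose $\mathbb{R}^n = A_1 \cup A_2 \cup A_3$ with
\begin{equation*}
A_1 = \{|y| \leq |x|/2\}, \quad A_2 = \{|y - x| \leq |x|/2\}, \quad A_3 = \mathbb{R}^n \setminus (A_1 \cup A_2),
\end{equation*}
and estimate the contribution of each region separately. On $A_1$ one has $|x-y| \approx |x|$, so after factoring out $|x|^{-\mu}$ the remaining integral $\int_{|y| \leq |x|/2} \langle y \rangle^{-\beta} dy$ is $O(1)$ when $\beta > n$ and $O(|x|^{n-\beta})$ when $\beta < n$, producing the desired bound $|x|^{-\mu}$ or $|x|^{n-\beta-\mu}$ respectively. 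On $A_2$ one has $|y| \approx |x|$, so factoring out $\langle x \rangle^{-\beta}$ and using $\mu < n$ for the remaining integral $\int_{|y-x|\leq |x|/2} |x-y|^{-\mu} dy \lesssim |x|^{n-\mu}$ yields the bound $|x|^{n-\beta-\mu}$, which is dominated by $|x|^{-\mu}$ in the case $\beta > n$.

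The region $A_3$ requires a further split into $A_3 \cap \{|y| \leq 2|x|\}$ and $A_3 \cap \{|y| > 2|x|\}$: on the former, $\langle y \rangle \approx |x|$ and $|x-y|$ ranges over $[|x|/2, 3|x|]$, giving the same $|x|^{n-\beta-\mu}$ bound as on $A_2$; on the latter, $|x-y| \approx |y|$, so the integrand is bounded by $|y|^{-\beta-\mu}$, and the hypothesis $\beta + \mu > n$ makes this integrable at infinity and produces $\int_{|y| > 2|x|} |y|^{-\beta-\mu} dy \lesssim |x|^{n-\beta-\mu}$. Summing the three contributions and comparing $|x|^{n-\beta-\mu}$ with $|x|^{-\mu}$ according to the sign of $n - \beta$ yields the stated dichotomy.

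There is no genuine obstacle here; the argument is a textbook exercise in dyadic-type decomposition. The only point requiring attention is the bookkeeping in the case $\beta > n$ on $A_2$ and on $A_3 \cap \{|y| \leq 2|x|\}$, where the naive estimate produces $|x|^{n-\beta-\mu}$ rather than $|x|^{-\mu}$: one must invoke $|x| \geq 1$ together with $n - \beta < 0$ to absorb the extra $|x|^{n-\beta}$ into a constant. The hypothesis that $\beta \neq n$ is also implicitly used to avoid the borderline logarithmic case, where the estimate would need to be modified.
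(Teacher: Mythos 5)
Your proof is correct. Note that the paper does not prove this lemma at all: it is quoted verbatim from Goldberg--Visan \cite[Lemma 3.8]{Goldberg-Visan-CMP}, so there is no in-paper argument to compare against; your decomposition into $\{|y|\leq |x|/2\}$, $\{|x-y|\leq |x|/2\}$ and the complementary region (further split according to $|y|\lessgtr 2|x|$), after disposing of $|x|\leq 1$ by the local integrability from $\mu<n$ and the decay from $\beta+\mu>n$, is exactly the standard proof of this estimate. One small remark: the hypotheses already force $\beta>n-\mu>0$, so the bound $\int_{|y|\leq |x|/2}\langle y\rangle^{-\beta}dy\lesssim |x|^{n-\beta}$ in the case $\beta<n$ needs no extra assumption, and your observation that the $|x|^{n-\beta-\mu}$ contributions are absorbed into $|x|^{-\mu}$ when $\beta>n$ (using $|x|\geq1$) is precisely the right bookkeeping.
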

 			\begin{lemma}[{\cite[Lemma 6.3]{Erdogan-Green10} }]\label{Appendix 2}
 		 Fix $u_1, u_2 \in \mathbb{R}^n.$ Let  $0 \leq k, \ell < n$, $\beta> 0$, $k + \ell + \beta > n$, $k + \ell \neq n$. Then
 			\begin{align*}
 			\int_{\mathbb{R}^n} \frac{\langle u \rangle ^{-\beta}}{|u- u_1|^k | u- u_2|^\ell} du
 			\lesssim 
 			\begin{cases} 
 			\Big(\frac{1}{|u_1-u_2|}\Big)^{\max(0, k + \ell - n) }& {\rm if}\  |u_1 - u_2| \leq 1, \\
 			\Big(\frac{1}{|u_1-u_2|}\Big)^{\min(k, \ell, k+\ell+\beta-n)}	 &{\rm if} \ |u_1 - u_2| > 1.
 			\end{cases}
 			\end{align*}
 			\end{lemma}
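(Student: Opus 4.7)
Set $\rho = |u_1 - u_2|$. The plan is to partition $\R^n$ into three regions and bound the integral on each separately: let
\begin{equation*}
A_1 = \{|u - u_1| \le \rho/2\}, \quad A_2 = \{|u - u_2| \le \rho/2\}, \quad A_3 = \R^n \setminus (A_1 \cup A_2).
\end{equation*}
A short triangle-inequality argument shows that $A_1 \cap A_2 = \emptyset$, that $|u - u_2| \asymp \rho$ on $A_1$ and $|u - u_1| \asymp \rho$ on $A_2$, and that $|u - u_1|, |u - u_2| > \rho/2$ throughout $A_3$.

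On $A_1$, I would factor out $\rho^{-\ell}$ and estimate the remaining integral $\int_{|u-u_1|\le\rho/2} \langle u \rangle^{-\beta} |u - u_1|^{-k} du$ by splitting at $|u - u_1|=1$: the piece $|u - u_1| \le 1$ is bounded by a constant using $k < n$, while the piece $1 < |u - u_1| \le \rho/2$ is controlled using Lemma \ref{Appendix 1} (with the subcase $\beta + k < n$ producing an additional $\rho$-factor growth). The analogous bound on $A_2$ follows by symmetry. On $A_3$, I would subdivide by $|u - u_1| \le |u - u_2|$ versus $|u - u_1| > |u - u_2|$; on each piece one of the factors can be absorbed to obtain an integrand of the form $\langle u \rangle^{-\beta} |u - u_i|^{-(k+\ell)}$ with $|u - u_i| > \rho/2$. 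Integrability at infinity is then guaranteed by the hypothesis $k+\ell+\beta > n$, and Lemma \ref{Appendix 1} gives the desired decay.

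Once the three pieces are in hand, the two cases of the conclusion are a matter of bookkeeping. For $\rho \le 1$, the $\langle u \rangle^{-\beta}$ weight is harmless and only the $A_1, A_2$ contributions with $k + \ell > n$ give a singular bound $\rho^{n-k-\ell}$, matching $\rho^{-\max(0, k+\ell-n)}$ (when $k + \ell < n$ all contributions are $O(1)$, and the excluded case $k + \ell = n$ is why the statement requires $k + \ell \ne n$). For $\rho > 1$, I would track each of the three pieces: $A_1$ contributes $\rho^{-\ell}$ when $\beta + k > n$ or $\rho^{-(k+\ell+\beta-n)}$ when $\beta + k < n$; $A_2$ contributes the symmetric $\rho^{-k}$ or $\rho^{-(k+\ell+\beta-n)}$; and $A_3$ contributes at most $\rho^{-(k+\ell+\beta-n)}$. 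Taking the maximum over these exponents in the integral bound produces the minimum of $k$, $\ell$, and $k+\ell+\beta-n$ in the decay rate, as required.

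The principal obstacle is the careful case analysis needed to pass from the $\langle u_1 \rangle^{-?}$ or $\langle u_2 \rangle^{-?}$ decay delivered by Lemma \ref{Appendix 1} to the translation-invariant $\rho^{-?}$ decay appearing on the right-hand side; in particular, one must distinguish whether $|u_i|$ is comparable to $\rho$ or smaller, and exploit that on $A_1$, since $|u - u_1| \le \rho/2$, a large $|u_1|$ forces $\langle u \rangle \asymp |u_1|$ uniformly, allowing the $\langle u_1 \rangle^{-?}$ factors to be reabsorbed into $\rho^{-?}$ after considering the extreme case $|u_i| \lesssim \rho$.
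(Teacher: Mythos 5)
The paper never proves this lemma: it is quoted verbatim from \cite[Lemma 6.3]{Erdogan-Green10}, so there is no internal proof to compare you against, and I can only judge your argument on its own terms. Your route is the natural (and standard) one: split $\R^n$ into the balls $A_1,A_2$ of radius $\rho/2$ about $u_1,u_2$ and the exterior $A_3$, absorb one singular factor on each piece, and track exponents; the bookkeeping you describe does reproduce $\rho^{-\max(0,k+\ell-n)}$ for $\rho\le 1$ and $\rho^{-\min(k,\ell,k+\ell+\beta-n)}$ for $\rho>1$, and your last paragraph shows you are aware that the bound must be uniform in the positions of $u_1,u_2$, not just in $\rho$.

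Two steps are weaker than you present them. First, Lemma \ref{Appendix 1} cannot carry the load you assign to it. On $A_1$ in the subcase $\beta+k<n$ its hypothesis $n<\beta+\mu$ fails (the untruncated integral diverges), and on $A_3$ it either fails when $k+\ell>n$ or, when it applies, returns decay in $\langle u_i\rangle$ rather than in $\rho$ (take $u_1=0$: Lemma \ref{Appendix 1} gives only an $O(1)$ bound, whereas you need $\rho^{-(k+\ell+\beta-n)}$ from the restriction $|u-u_i|>\rho/2$). These truncated integrals must be estimated directly, e.g.\ over dyadic shells $|u-u_i|\sim 2^j$ using $\int_{|u-u_i|\sim 2^j}\langle u\rangle^{-\beta}du\lesssim 2^{j(n-\beta)}$ and summing $2^{j(n-\beta-k)}$ up to $2^j\sim\rho$ (resp.\ $2^{j(n-\beta-k-\ell)}$ from $2^j\gtrsim\rho$); your closing remarks about distinguishing $|u_i|\lesssim\rho$ from $|u_i|\gg\rho$ amount to the same thing, but this is the actual core estimate and should be written out. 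Second, your dichotomy ``$\beta+k>n$ or $\beta+k<n$'' silently omits the borderline $\beta+k=n$ (and $\beta+\ell=n$), where the same computation produces an extra $\log\rho$; indeed for $n=5$, $k=3$, $\ell=1$, $\beta=2$, $u_1=0$, $u_2=\rho e_1$ the integral is of size $\rho^{-1}\log\rho$, so the inequality as transcribed here cannot be proved uniformly in that configuration --- the original Erdo\u{g}an--Green formulation has extra decay in the weight precisely to absorb such logarithms. Away from these borderline exponents (which is the only regime in which the present paper uses the lemma, since there $\beta>n$) your argument is correct; to prove the statement as literally given you would need to either add the hypotheses $\beta+k\neq n\neq\beta+\ell$ or concede an $\epsilon$-loss.
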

 		Below, we provide a sufficient condition to ensure that the operator is admissible, which is used in the proof of Proposition \ref{proposition-WL}.
 			
 			\begin{lemma}\label{Appendix 3}
 			Let $n-k\leq\alpha<n$ and $k>n.$ Then the  operator $K$ with kernel: 
 			\begin{align*}
 				K(x,y):=\int_{\R^{2n}}
 				\frac{\langle z_1\rangle^{-n-}\langle z_2\rangle^{-n-}\chi_{|y-z_2|\gg\langle x-z_1 \rangle}}{|x-z_1|^\alpha|y-z_2|^k}dz_1dz_2
 				\end{align*}
 			is admissible, i.e.
 				\begin{align*}
 				\sup_{x\in\R^n}\int_{\R^n}\big|K(x,y)\big|dy+\sup_{y\in\R^n}\int_{\R^n}\big|K(x,y)\big|dx<+\infty.
 				\end{align*}
 			\end{lemma}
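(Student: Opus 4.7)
The plan is to apply Fubini's theorem to exchange the order of integration in the definition of $K(x,y)$, performing either the $y$-integral or the $x$-integral first depending on which supremum we are estimating, and then exploit the constraint $|y-z_2|\gg\langle x-z_1\rangle$ together with the hypotheses $k>n$, $\alpha<n$, and $\alpha\geq n-k$.

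To estimate $\sup_x\int_{\R^n}|K(x,y)|\,dy$, I will perform the $y$-integral first. Since $k>n$, the condition $|y-z_2|\gg\langle x-z_1\rangle$ yields
\begin{equation*}
\int_{|y-z_2|\gg\langle x-z_1\rangle}|y-z_2|^{-k}\,dy\lesssim \langle x-z_1\rangle^{n-k}.
\end{equation*}
The $z_2$-integral of $\langle z_2\rangle^{-n-}$ contributes a harmless constant. It then remains to show
\begin{equation*}
\int_{\R^n}\frac{\langle z_1\rangle^{-n-}\langle x-z_1\rangle^{n-k}}{|x-z_1|^\alpha}\,dz_1\lesssim 1,
\end{equation*}
uniformly in $x$. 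Splitting the domain into $|x-z_1|\le 1$ (where $\alpha<n$ makes the local singularity integrable and $\langle x-z_1\rangle\approx 1$) and $|x-z_1|>1$ (where $\langle x-z_1\rangle^{n-k}\approx |x-z_1|^{n-k}$ and the total power is $n-k-\alpha\le 0$ by $\alpha\ge n-k$), I reduce to $\int\langle z_1\rangle^{-n-}\,dz_1<\infty$, which is uniform in $x$.

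For $\sup_y\int_{\R^n}|K(x,y)|\,dx$, I will perform the $x$-integral first. The constraint $|y-z_2|\gg\langle x-z_1\rangle$ in particular forces $|x-z_1|\lesssim|y-z_2|$, so using $\alpha<n$,
\begin{equation*}
\int_{|x-z_1|\lesssim|y-z_2|}|x-z_1|^{-\alpha}\,dx\lesssim |y-z_2|^{n-\alpha}.
\end{equation*}
After integrating out $z_1$ against $\langle z_1\rangle^{-n-}$, the task reduces to bounding
\begin{equation*}
\int_{|y-z_2|\gtrsim 1}\frac{\langle z_2\rangle^{-n-}}{|y-z_2|^{k-n+\alpha}}\,dz_2,
\end{equation*}
where the implicit lower bound on $|y-z_2|$ comes from $\langle x-z_1\rangle\ge 1$. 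Since $k-n+\alpha\ge 0$, the factor $|y-z_2|^{-(k-n+\alpha)}$ is uniformly bounded on the region of integration, so the whole quantity is controlled by $\int\langle z_2\rangle^{-n-}\,dz_2<\infty$, uniformly in $y$.

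None of these steps involves a genuine obstacle: the three hypotheses $\alpha<n$, $k>n$, and $\alpha\ge n-k$ are exactly what is needed, respectively, to handle the local singularity of $|x-z_1|^{-\alpha}$, the tail decay of $|y-z_2|^{-k}$, and to ensure that the product $\langle x-z_1\rangle^{n-k}|x-z_1|^{-\alpha}$ stays bounded at infinity (equivalently, that $|y-z_2|^{n-\alpha-k}$ stays bounded). The only point requiring a little care is to use the constraint region correctly in each order of integration, which is the step one should write out carefully.
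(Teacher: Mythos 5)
Your proposal is correct and follows essentially the same route as the paper: integrate first in $y$ (using $k>n$ over the region $|y-z_2|\gg\langle x-z_1\rangle$) respectively in $x$ (using $\alpha<n$ over $|x-z_1|\lesssim|y-z_2|$), then control the remaining $z_1$- or $z_2$-integral via $k+\alpha\geq n$ and the decay $\langle z_i\rangle^{-n-}$. The only cosmetic difference is that you handle the final weighted integral by splitting $|x-z_1|\le 1$ and $|x-z_1|>1$ by hand, where the paper simply invokes its Lemma \ref{Appendix 1}.
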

 			\begin{proof}
 			Let $r_1=|x-z_1|$ and $r_2=|y-z_2|.$ We can check that
 				\begin{align*}
 				\int_{\R^n}\big|K(x,y)\big|dy
 				&\lesssim	\int_{\R^{2n}}
 				\frac{\langle z_1\rangle^{-n-}\langle z_2\rangle^{-n-}}{r_1^\alpha }\int_{ \langle r_1\rangle}^\infty \frac{1}{r^{k-n+1}}drdz_1dz_2\\
 				&\lesssim	\int_{\R^{2n}}
 				\frac{\langle z_1\rangle^{-n-}\langle z_2\rangle^{-n-}}{r_1^\alpha \langle r_1\rangle^{k-n}}dz_1dz_2\lesssim1.
 			\end{align*}
 		Here, the first inequality follows from the polar coordinate transformation: 
 			$y - z_2 = r\omega, \quad (r, \omega) \in \mathbb{R}^+ \times S^{n-1}.$
 		The second and the last inequalities are derived from $k > n$ and Lemma \ref{Appendix 1} with $\alpha < n$, $k - n + \alpha \geq 0$,  respectively.
 		
 		Similarly, we conclude that 
 			\begin{align*}
 			\int_{\R^n}\big|K(x,y)\big|dx
 			&\lesssim	\int_{\R^{2n}}
 			\frac{\langle z_1\rangle^{-n-}\langle z_2\rangle^{-n-}}{r_2^k  }r_2^{n-\alpha}\chi_{r_2\gg1}dz_1dz_2\lesssim1.
 		\end{align*}		
			Thus we complete the proof. 
 				\end{proof}

\end{document}